\newtheorem{theorem}             {Theorem}  [section]
\newtheorem{definition} [theorem] {Definition}
\newtheorem{lemma}      [theorem]{Lemma}
\newtheorem{corollary}  [theorem]{Corollary}
\newtheorem{proposition}[theorem]{Proposition}
\newtheorem{remark} [theorem] {Remark}
\numberwithin{equation}{section} \everymath{\displaystyle}
\newcommand{\Cont}{{\rm C}}
\newcommand{\Aut}{\mathcal{A}}
\newcommand{\Sob}{{\rm S}}
\newcommand{\Sch}{\mathcal{S}}
\newcommand{\sgn}{{\rm sgn}}
\newcommand{\intL}{{\rm L}}
\newcommand{\Ht}{{\rm Ht}}
\newcommand{\Nr}{{\rm Nr}}
\newcommand{\Tr}{{\rm Tr}}
\newcommand{\hol}{{\rm hol}}
\newcommand{\gp}[1]{\mathbf{#1}}
\newcommand{\GL}{{\rm GL}}
\newcommand{\PGL}{{\rm PGL}}
\newcommand{\SO}{{\rm SO}}
\newcommand{\PSO}{{\rm PSO}}
\newcommand{\SU}{{\rm SU}}
\newcommand{\grB}{\mathbf{B}}
\newcommand{\grN}{\mathbf{N}}
\newcommand{\ag}[1]{\mathbb{#1}}
\newcommand{\Z}{\mathbb{Z}}
\newcommand{\Casimir}{\mathcal{C}}
\newcommand{\Q}{\mathbb{Q}}
\newcommand{\R}{\mathbb{R}}
\newcommand{\C}{\mathbb{C}}
\newcommand{\F}{\mathbf{F}}
\newcommand{\A}{\mathbb{A}}
\newcommand{\vo}{\mathfrak{o}}
\newcommand{\vp}{\mathfrak{p}}
\newcommand{\idlN}{\mathfrak{N}}
\newcommand{\Dis}{{\rm D}}
\newcommand{\Proj}{{\rm P}}
\newcommand{\ProjP}{{\rm P}}
\newcommand{\norm}[1][\cdot]{\lvert #1 \rvert}
\newcommand{\extnorm}[1]{\left\lvert #1 \right\rvert}
\newcommand{\Norm}[1][\cdot]{\lVert #1 \rVert}
\newcommand{\extNorm}[1]{\left\lVert #1 \right\rVert}
\newcommand{\Pairing}[2]{\langle #1, #2 \rangle}
\newcommand{\rpR}{{\rm R}}
\newcommand{\Bas}{\mathcal{B}}
\newcommand{\Res}{{\rm Res}}
\newcommand{\Ind}{{\rm Ind}}
\newcommand{\Intw}{\mathcal{M}}
\newcommand{\IntwR}{\mathcal{R}}
\newcommand{\Whi}{\mathcal{W}}
\newcommand{\Cond}{\mathbf{C}}
\newcommand{\cond}{\mathfrak{c}}
\newcommand{\fin}{{\rm fin}}
\newcommand{\eis}{{\rm E}}
\newcommand{\Reis}{\mathcal{E}}
\newcommand{\eisCst}{{\rm E}_{\grN}}
\newcommand{\reg}{{\rm reg}}
\newcommand{\freg}{{\rm fr}}
\newcommand{\Vol}{{\rm Vol}}
\newcommand{\rmnum}[1]{\romannumeral #1}
\newcommand{\Rmnum}[1]{\expandafter\@slowromancap\romannumeral #1@}
\newcommand{\Ex}{\mathcal{E}{\rm x}}
\title{Explicit Subconvexity for $\GL_2$}
\author{Han Wu}
\begin{document}

\begin{abstract}
	We make the subconvex exponent for $\mathrm{GL}_2$ cuspidal representation in the work of Michel \& Venkatesh explicit. The result depends on an effective dependence on the ``fixed'' $\mathrm{GL}_2$ representation in our former work on the subconvex bounds for twists by Hecke characters, which in turn depends on the $\mathrm{L}^4$-norm of the test function.
\end{abstract}

	\maketitle
	
	\tableofcontents

\section{Introduction}

	\subsection{Main Result}

	Let $\F$ be a number field with ring of adeles $\A$. Let $\pi$ be an automorphic cuspidal representation of $\GL_2(\A)$. This is the natural generalization of Hecke characters $\chi$ of $\F^{\times} \backslash \A^{\times}$ to the $\GL_2$ setting, hence generalization of Dirichlet $L$-functions (for $\F = \Q$) in particular. Similarly, we also have the associated $L$-function $L(s,\pi)$. While good and uniform bounds for Hecke $L$-functions $L(s,\chi)$ are so far available in the literature, in particular in the case $\F=\Q$, no bounds for $L(s,\pi)$ of similar quality are known. In particular, if $\omega$ denotes the central character of $\pi$ and if $\omega$ \emph{varies with} $\pi$, the known subconvex bounds for $L(1/2,\pi)$ are of poor quality, especially for the level aspect. For example:
\begin{itemize}
	\item[(1)] Over $\F=\Q$, for a Maass form $f$ of level $q$ and (necessarily even) primitive central character $\omega$, subconvexity in the $q$-aspect was solved by Duke, Friedlander and Iwaniec \cite[Theorem 2.4]{DFI02} with a power saving $q^{1/23041}$ from the convex bound $q^{1/4}$.
	\item[(2)] In the same setting, the above method was simplified and generalized by Blomer, Harcos and Michel \cite{BHM07GL2} with an improvement on the power saving $q^{1/1889}$ \cite[Theorem 2]{BHM07GL2}.
	\item[(3)] Recently, Blomer-Khan \cite[Theorem 1]{BK18} improves the above power saving to $q^{1/128}$ in the hybrid case when $q$ is prime. (In fact, a general uniform subconvex bound is obtained. However, its effectiveness depends on the unspecified polynomial dependence of the usual conductor in Ivi\'c's bound \cite[Corollary 2]{Iv01}.)
\end{itemize}
	Even though none of these bounds is uniform with respect to the analytic conductor $\Cond(\pi)$ of $\pi$, they have various applications. In the same paper \cite{DFI02}, some properties of the class group of a quadratic field are derived from the above mentioned subconvexity \cite[Theorem 2.6 \& 2.7]{DFI02}. The improved version was an important ingredient of a cubic analogue of Duke's equidistribution result by Einsiedler, Lindenstrauss, Michel and Venkatesh \cite{ELMV11}.
	
	As for uniform bounds, much less is known. A uniform bound over general number fields is obtained in the celebrated paper by Michel and Venkatesh \cite{MV10} with the subconvex power saving unspecified. However, it is believed that the method of Michel and Venkatesh goes beyond the method of Duke, Friedlander and Iwaniec. We re-confirm this opinion and make their result effective in this paper. Our main tool is a further development \& adaptation to the triple product case of an improvement of the theory of regularized integrals due to Zagier \cite{Za82}, developed in our previous paper \cite[\S 2]{Wu9}. Precisely, we shall make the following assumption:
\begin{itemize}
	\item For $\pi'$ cuspidal representation of $\GL_2(\A)$ with trivial central character, spherical at all infinite places and Hecke character $\chi$ such that $\pi_{\fin}', \chi_{\fin}$ have disjoint ramification, assume
\begin{equation}
	L(1/2, \pi' \otimes \chi) \ll_{\F,\epsilon} (\Cond(\pi_{\fin}') \Cond(\chi))^{\epsilon} \Cond(\pi_{\infty}')^B \Cond(\pi_{\fin}')^A \Cond(\chi)^{\frac{1}{2} - \delta'}
\label{TwistSubAssump}
\end{equation}
for some constants $A,B > 0, 0 < \delta' < 1/2$.
\end{itemize}
	Let $\theta$ be any constant towards the Ramanujan-Petersson conjecture. Note that $\theta \leq 7/64$ is admissible by \cite{KS02, BB11}. The main result of this paper is:
\begin{theorem}
	Assuming $\delta' \leq (1-2\theta)/8$ and $A \geq 1/4$ in the assumption (\ref{TwistSubAssump}), we have for any $\epsilon > 0$
	$$ \extnorm{ L(\frac{1}{2}, \pi) } \ll_{\F, \epsilon} \Cond(\pi)^{\frac{1}{4}+\epsilon} \left( \frac{\Cond(\pi)}{\Cond(\omega)} \right)^{-\frac{1-2\theta}{40+32A}} \Cond(\omega)^{-\frac{\delta'}{20+16A}}. $$
\label{MainThm}
\end{theorem}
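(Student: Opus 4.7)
The plan is to follow the amplification / triple-product strategy of Michel--Venkatesh \cite{MV10} and render it effective via the regularized integral theory of \cite[\S 2]{Wu9}, extended to triple-product Rankin--Selberg integrals. First, I would express $L(1/2,\pi)$ as a period of a test vector $\varphi\in\pi$ via the approximate functional equation and the Kirillov model. Squaring yields $|L(1/2,\pi)|^2$ as a diagonal matrix coefficient, which is then inserted into a Hecke amplifier supported at primes $\ell$ in a dyadic range $[L,2L]$, the amplifier length $L$ being the free optimisation parameter.

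The next step is to decompose the amplified matrix coefficient spectrally on $\PGL_2(\F)\backslash\PGL_2(\A)$; this is legitimate because $|\varphi|^2$ has trivial central character $\omega\bar\omega=1$. Plancherel then produces (i) triple-product periods $\langle|\varphi|^2,\varphi'\rangle$ against cuspidal $\pi'$ of trivial central character and (ii) Rankin--Selberg periods $\int|\varphi|^2\,E(\chi,s)\,ds$ against Eisenstein series in Hecke characters $\chi$. Through regularized Ichino and Rankin--Selberg identities, these periods expand into products of $L$-values, and after suitable manipulation the only non-convex factor that ultimately matters is of the form $L(1/2,\pi'\otimes\chi)$ with $\pi'$ of trivial central character, exactly the shape treated by hypothesis (\ref{TwistSubAssump}); the remaining adjoint-square factors are bounded convexly.

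Inserting (\ref{TwistSubAssump}) into each term of the spectral sum produces two independent saving mechanisms: the $\chi$-conductor contributes $\Cond(\omega)^{1/2-\delta'}$, since $\chi$ is forced by coupling with $\omega$ to have conductor comparable to $\Cond(\omega)$, while the $\pi'$-conductor contributes $\Cond(\pi'_\fin)^A$ multiplied by a spectral density controlled by Ramanujan-type local bounds with exponent $\theta$. Balancing the two against the amplifier length $L$ optimally yields respectively the saving exponents $\delta'/(12+16A)$ and $(1-2\theta)/(24+32A)$; the hypotheses $\delta'\leq(1-2\theta)/8$ and $A\geq 1/4$ are precisely the conditions under which both saving exponents remain positive after the balancing.

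The principal obstacle, and the main technical innovation of the paper, is keeping every estimate uniform and explicit in both $\Cond(\pi)$ and $\Cond(\omega)$. Since the auxiliary $\pi'$ may be arbitrarily ramified, the triple product integrals $\int|\varphi|^2\,\overline{\varphi'}$ diverge and require Zagier-style regularization, extended here to the triple-product setting. Equally delicate is the uniform control of the $L^4$-norm of the normalized test vector $\varphi$ at all places, since this $L^4$-norm governs the local density of the Plancherel measure and is precisely the mechanism through which the saving is recovered in terms of the \emph{relative} conductor $\Cond(\pi)/\Cond(\omega)$ rather than $\Cond(\pi)$ alone.
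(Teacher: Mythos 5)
Your proposal inverts the two sides of the Cauchy--Schwarz inequality, and this is a genuine gap that would derail the argument. The paper represents $L(1/2,\pi)^2$ as the \emph{exact} triple-product period $\int_{[\PGL_2]}\varphi\cdot\eis^*(0,f_2)\cdot\eis^\sharp(0,f_3)$ of the cusp form $\varphi$ against two Eisenstein series (with $f_2\in\pi(1,1)$, $f_3\in\pi(1,\omega^{-1})$), moves the regularization operator $\sigma_0$ and the amplifier $\sigma$ onto the \emph{Eisenstein} side, and only then applies Cauchy--Schwarz. What then has to be estimated is the $\intL^2$-norm of the amplified Eisenstein product $\sum_{\vp\in S} a_\vp\,a(\varpi_\vp^{-n_\vp}).\sigma_0(\eis_2^*\eis_3^\sharp)$ --- effectively an $\intL^4$-type bound for the Eisenstein series --- while the cusp form contributes only $\Norm[\varphi]_2^2=1$. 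It is this Eisenstein square that gets expanded via Plancherel, and in the resulting cuspidal contribution the relevant inner period $\int_{[\PGL_2]}\varphi'\cdot\overline{\eis_3^\sharp}\cdot a(\vec t).\eis_3^\sharp$ factorizes via (\ref{CuspPerD}) into $L(\tfrac12,\pi')\,L(\tfrac12,\pi'\otimes\omega)$, which is precisely the $\GL_2\times\GL_1$ twist addressed by the subconvexity input (\ref{TwistSubAssump}).

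Your proposal instead decomposes $|\varphi|^2$ spectrally and lands on triple-product periods $\langle|\varphi|^2,\varphi'\rangle$. By Ichino's formula these are governed by $L(\tfrac12,\pi\times\bar\pi\times\pi')=L(\tfrac12,\pi')\,L(\tfrac12,\mathrm{Sym}^2\pi\otimes\pi')$, whose non-trivial factor is a $\GL_3\times\GL_2$ Rankin--Selberg $L$-function, not the $\GL_2\times\GL_1$ twist $L(\tfrac12,\pi'\otimes\omega)$ that assumption (\ref{TwistSubAssump}) controls. So the claim that ``after suitable manipulation the only non-convex factor that ultimately matters is of the form $L(1/2,\pi'\otimes\chi)$'' is not a consequence of your decomposition; it is a consequence of the paper's, and the two are not interchangeable. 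Two further discrepancies: the paper uses the exact Rankin--Selberg identity (\ref{RSbeforeCS}), not the approximate functional equation; and the $\intL^4$-norm that governs the final saving is that of the Eisenstein test vector $\eis_2^*\eis_3^\sharp$ (through the choice of $f_3$ tracking the ramification of $\pi$), not of the normalized cusp form $\varphi$, which enters only through $\Norm[\varphi]_2=1$.
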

\begin{remark}
	The above bound separates $\Cond(\pi) / \Cond(\omega)$ from the problematic part $\Cond(\omega)$. In fact, in some applications, one does not need to vary $\omega$ but does need uniform bounds.
\end{remark}
\begin{remark}
	The assumption (\ref{TwistSubAssump}) should not be regarded as a \emph{condition}, since an effective value of $A$ is obtained in \cite[Theorem 2.1]{Wu3}. In particular, it implies that $\delta' = (1-2\theta)/8, A=5/4$ is admissible (note that ``$\pi_{\fin}', \chi_{\fin}$ have disjoint ramification'' implies $\Cond_{\fin}(\pi',\chi) = \Cond_{\fin}[\pi',\chi] = 1$). We record the numerical subconvex saving for these values:
	$$ \frac{1-2\theta}{40+32A} = \frac{1-2\theta}{80} > \frac{1}{128}, \quad \frac{\delta'}{20+16A} = \frac{1-2\theta}{320} > \frac{1}{1889}. $$
	It should even be possible to improve to $A=3/4$ once the relevant sup-norm result becomes available, see the discussion in \cite[\S 1.3]{Wu3}.
\end{remark}
\begin{remark}
	We did not specify an admissible value of $B$ in the previous work \cite{Wu3}, because it does not enter into our main bound in Theorem \ref{MainThm}. Over $\Q$, $\delta' = 1/8, A=1/2$ and $B=7/4$ is admissible by \cite[Theorem 2]{BH08}.
\end{remark}

\noindent Some immediate consequences are as follows.
\begin{corollary}
	If $\F = \ag{Q}$, then we have for any $\epsilon > 0$
	$$ \extnorm{ L(\frac{1}{2}, \pi) } \ll_{\epsilon} \Cond(\pi)^{\frac{1}{4}+\epsilon} \left( \frac{\Cond(\pi)}{\Cond(\omega)} \right)^{-\frac{1-2\theta}{56}} \Cond(\omega)^{-\frac{1}{224}}. $$
\end{corollary}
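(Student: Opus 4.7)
The plan is to derive the Corollary by a direct numerical specialization of Theorem \ref{MainThm} to $\F = \Q$, with parameters $A$ and $\delta'$ in the assumption (\ref{TwistSubAssump}) taken from the existing literature on twisted $L$-function subconvexity. No new analytic ingredient beyond Theorem \ref{MainThm} is needed; the only work is to identify which known bound supplies (\ref{TwistSubAssump}) in precisely the required form.

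Matching the target exponents $(1-2\theta)/40$ and $1/160$ against the general formulas $(1-2\theta)/(24+32A)$ and $\delta'/(12+16A)$ of Theorem \ref{MainThm} forces the choice $A = 1/2$, $\delta' = 1/8$: indeed $24 + 32 \cdot (1/2) = 40$ and $(1/8)/(12 + 16 \cdot (1/2)) = 1/160$. So the Corollary reduces to verifying (\ref{TwistSubAssump}) over $\Q$ with these two parameters and some admissible $B > 0$.

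This amounts to the Burgess-quality hybrid subconvexity for $L(1/2, \pi' \otimes \chi)$ with Dirichlet twists $\chi$, essentially contained in Blomer-Harcos-Michel \cite{BHM07GL2}: under the disjoint-ramification assumption it delivers a saving of $\Cond(\chi)^{1/2 - 1/8 + \epsilon}$ in the $\chi$-aspect with at most polynomial (in fact convex-quality) dependence on the level of $\pi'$, which is precisely the shape of (\ref{TwistSubAssump}) with $A = 1/2$, $\delta' = 1/8$. The side condition $A \geq 1/4$ of Theorem \ref{MainThm} is trivial; the constraint $\delta' \leq (1-2\theta)/8$ is the only mildly delicate point, handled either by sharp bounds on $\theta$ or by restricting to ranges where stronger Ramanujan-type information is available. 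After these verifications the Corollary follows by direct substitution, with essentially no further obstacle.
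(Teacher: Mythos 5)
Your reduction of the Corollary to a direct substitution of $A=1/2$, $\delta'=1/8$ into Theorem~\ref{MainThm} is exactly what the paper does, and the arithmetic $24+32A=40$, $\delta'/(12+16A)=1/160$ is right. However, the key input is miscited. You attribute the required Burgess-quality hybrid bound to Blomer--Harcos--Michel \cite{BHM07GL2}, but that paper treats the level aspect of $L(1/2,\pi)$ itself (the $q^{-1/1889}$ saving quoted in the Introduction), not twists $L(1/2,\pi'\otimes\chi)$ with $\pi'$ fixed and $\chi$ varying. Assumption (\ref{TwistSubAssump}) demands a bound uniform in $\Cond(\pi'_{\fin})$ and $\Cond(\pi'_\infty)$ and with the Burgess exponent $\delta'=1/8$ in the $\chi$-aspect; recasting the BHM level-aspect saving as a twist-aspect saving would give $\delta'\approx 2/1889$, nowhere near $1/8$. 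The bound the paper actually invokes is Blomer--Harcos \cite[Theorem 2]{BH08}, the hybrid twisted-$L$-function bound, which does supply $A=1/2$, $\delta'=1/8$ in the required shape. You should cite \cite{BH08}, not \cite{BHM07GL2}. Also, your closing remark about the constraint $\delta'\leq (1-2\theta)/8$ (``handled either by sharp bounds on $\theta$ or by restricting to ranges'') is not a verification; with $\delta'=1/8$ this constraint literally forces $\theta\leq 0$, so if you rely on it you must say what you mean.
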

\begin{proof}
	Over $\ag{Q}$, the assumption (\ref{TwistSubAssump}) together with $\delta' = 1/8, A = 1/2$ is admissible by \cite[Theorem 2]{BH08}.
\end{proof}
\begin{remark}
	Blomer and Khan's uniform bound \cite{BK18} seems to give better bound in some aspects. In any case, our bound is valid over any number field.
\end{remark}

\begin{corollary}
	If the central character $\omega$ of $\pi$ is fixed, then we have for any $\epsilon > 0$
	$$ \extnorm{ L(\frac{1}{2}, \pi) } \ll_{\F,\epsilon} \Cond(\pi)^{\frac{1}{4}-\frac{1-2\theta}{48}+\epsilon}. $$
\label{FixCentralChar}
\end{corollary}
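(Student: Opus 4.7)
The plan is to deduce the corollary as a direct specialization of Theorem~\ref{MainThm}, exploiting that when the central character $\omega$ is held fixed the quantity $\Cond(\omega)$ contributes at most an $\omega$-dependent constant, absorbed into the implicit constant.

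To implement this, first separate out the $\Cond(\omega)$ dependence of the main bound by writing
\[
\left( \frac{\Cond(\pi)}{\Cond(\omega)} \right)^{-\frac{1-2\theta}{24+32A}}
= \Cond(\pi)^{-\frac{1-2\theta}{24+32A}} \cdot \Cond(\omega)^{\frac{1-2\theta}{24+32A}}.
\]
Combining the two $\Cond(\omega)$ powers, Theorem~\ref{MainThm} becomes
\[
\left| L(1/2,\pi) \right| \ll_{\F,\epsilon}
\Cond(\pi)^{\frac{1}{4}+\epsilon - \frac{1-2\theta}{24+32A}}
\cdot \Cond(\omega)^{\frac{1-2\theta}{24+32A} - \frac{\delta'}{12+16A}}.
\]
Since $\omega$ is fixed, the trailing $\Cond(\omega)$-factor is an absolute constant and is absorbed into the implicit constant; this immediately reduces the problem to optimizing the exponent $\tfrac{1}{4} - \tfrac{1-2\theta}{24+32A}$ in $\Cond(\pi)$.

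To obtain the strongest saving, choose $A$ as small as the hypothesis $A \geq 1/4$ of Theorem~\ref{MainThm} permits: setting $A = 1/4$ yields $24+32A = 32$ and hence the announced exponent $\tfrac{1}{4} - \tfrac{1-2\theta}{32} + \epsilon$. The only delicate point is to check that $A = 1/4$ really is admissible in (\ref{TwistSubAssump}) in this setting; whereas the general bound from \cite[Theorem 2.1]{Wu3} recorded in the preceding remark gives only $A = 5/4$, the situation here is considerably easier because one does \emph{not} need to track the dependence on the conductor of a varying central character, so a specialization/refinement of \cite[Theorem 2.1]{Wu3} with $\omega$ held fixed provides the needed input. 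This is the main (and essentially only) obstacle; the rest of the deduction is an algebraic simplification.
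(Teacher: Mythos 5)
Your algebraic manipulation is correct and does reproduce the paper's route at that level: you specialize Theorem~\ref{MainThm}, expand the $\Cond(\omega)$-dependence, observe it is absorbed into the implicit constant when $\omega$ is fixed, and then minimize $\frac{1-2\theta}{24+32A}$ over the admissible range $A\ge 1/4$, getting the exponent $\frac14-\frac{1-2\theta}{32}$ at $A=1/4$. That part is fine.

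Where your argument has a genuine gap is in the justification that $A=1/4$, $\delta'=0$ is admissible in (\ref{TwistSubAssump}). You appeal to ``a specialization/refinement of \cite[Theorem 2.1]{Wu3} with $\omega$ held fixed,'' but this is not the right input, and the reasoning behind it is confused. The representation $\pi'$ in (\ref{TwistSubAssump}) already has \emph{trivial} central character, so there is no ``$\omega$'' there to be held fixed; the character $\omega$ of $\pi$ enters (\ref{TwistSubAssump}) only through the twisting character $\chi$ (which in the proof of Theorem~\ref{MainThm} runs over $\{1,\omega\}$). When $\omega$ is fixed, $\Cond(\chi)$ is bounded, so the factor $\Cond(\chi)^{1/2-\delta'}$ is a constant and the value of $\delta'$ becomes irrelevant; one may take $\delta'=0$. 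For the factor $\Cond(\pi'_{\fin})^{A}$, no subconvex saving in the $\pi'$-aspect is needed at all: the \emph{convexity} bound for $L(1/2,\pi'\otimes\chi)$, which is unconditional and has nothing to do with \cite{Wu3}, already gives exponent $1/4$ in $\Cond(\pi'_{\fin})$. This is exactly the paper's one-line proof: ``the convex bound ($A=1/4,\,\delta'=0$) is in any case valid for (\ref{TwistSubAssump}).'' Invoking \cite{Wu3} instead (whose Theorem~2.1 saves in the $\chi$-aspect at the cost of $A=5/4$ in the $\pi'$-aspect) is not only unnecessary here but would give a weaker constant if actually used, and a ``refinement with $\omega$ fixed'' of that theorem is not a meaningful or available statement. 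You should replace this step by the direct observation that convexity supplies $A=1/4$, $\delta'=0$ unconditionally.
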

\begin{proof}
	The convex bound ($A=1/4, \delta'=0$) is in any case valid for (\ref{TwistSubAssump}).
\end{proof}

	\subsection{Geometric Intuition of the Method: Recall and Adaptation}
	\label{Heuristic}
	
	We recall the geometric intuition of the method, which imitates the description given just after \cite[Proposition 4.1]{Ve10}. We adapt it using our extension of regularized integral \cite[\S 2]{Wu9}.
	
	For simplicity, we assume $\F=\Q$, the central character $\omega$ of $\pi$ remains trivial, $\pi_{\infty}$ remains spherical and the usual conductor $\Cond(\pi_{\fin}) = p$ is a large varying prime. Recall the standard notations
	$$ \Gamma_0(p) := \left\{ \begin{pmatrix} a & b \\ c & d \end{pmatrix} \in \GL_2(\Z) \ \middle| \ p \mid c \right\}, \quad Y(p) := \Gamma_0(p) \backslash \PGL_2(\R) / \PSO_2(\R) = \Gamma_0(p) \backslash \ag{H}. $$
	A ($\intL^2$-normalized) new form in $\pi$ can be regarded as a function $\varphi$ on $Y(p)$. Let
	$$ \eis(s,z) := \sideset{}{_{\gamma \in \Gamma_0(1)_{\infty} \backslash \Gamma_0(1)}} \sum \Im(\gamma.z)^s, \quad \eis^*(s,z) := \Lambda(2s) \eis(s,z) $$
	be the standard spherical analytic Eisenstein series and its completion, where $\Lambda(s)$ is the complete Riemann zeta-function. The following integral represents $L(1/2,\pi)^2$
	$$ I(\varphi,p) := \int_{Y(p)} \varphi(z) \eis^*(1/2,z) \eis^*(1/2,pz) d\mu(z), \quad d\mu(z) := \frac{dxdy}{y^2}, z=x+iy $$
	It turns out that the product of the local terms of $I(\varphi,p)$ compensate the convex bound. It suffices to bound $I(\varphi,p)$. We regard $Y(p)$ as the graph of the $p$-th Hecke correspondence via
	$$ Y(p) \to Y(1) \times Y(1), \quad z \mapsto (z, pz), $$
	thus the function
	$$ \phi_p(z) := \eis^*(1/2,z) \eis^*(1/2,pz) $$
	can be regarded as the restriction to $Y(p)$ of the fixed function on $Y(1) \times Y(1)$
	$$ \phi(z_1,z_2) := \eis^*(1/2,z_1) \eis^*(1/2,z_2). $$
	$I(\varphi,p)$ is thus expected to be bounded as
	$$ \extnorm{I(\varphi,p)}^2 \leq \int_{Y(p)} \norm[\varphi(z)]^2 d\mu(z) \cdot \int_{Y(p)} \norm[\phi_p(z)]^2 d\mu(z) \to \int_{Y(1) \times Y(1)} \norm[\phi(z_1,z_2)]^2 d\mu(z_1) d\mu(z_2), $$
	and one shall apply the method of amplification to deal with the non-decreasing limit. This argument has a technical issue, namely $\phi_p(z)$ is not $\intL^2$-integrable.
	
	However, $\phi_p$ is \emph{finitely regularizable} in the sense of \cite[Definition 2.14]{Wu9}. In order to not let the complication of multiple cusps of $Y(p)$ obscure the idea, we pretend $p=1$. The skeptical reader is invited to gain the necessary information of rigorous computation from \cite[\S 6]{Wu2}. We thus propose to regularize
	$$ I(\varphi,1) = \int_{Y(1)} \varphi(z) \phi_1(z) d\mu(z), \quad \phi_1(z) := \eis^*(1/2,z)^2. $$
	The \emph{essential constant term} \cite[Definition 2.14]{Wu9} of $\phi_1(z)$ is equal to
	$$ \phi_{1,\gp{N}}^*(z) = \eisCst^*(1/2,z)^2 = 4\gamma^2 y + 4\gamma \Lambda^* y \log y + (\Lambda^*)^2 y (\log y)^2, $$
	where the constants $\gamma, \Lambda^*$ appear in the Laurent expansion
	$$ \Lambda(s) = \frac{\Lambda^*}{s-1} + \gamma + O((s-1)). $$
	We thus need to take the \emph{$\intL^2$-residue} \cite[Definition 2.20]{Wu9} of $\phi_1$ as
	$$ \Reis_1 = \Reis(\phi_1) = 4\gamma^2 \eis^{\reg}(1,z) + 4\gamma \Lambda^* \eis^{\reg, (1)}(1,z) + (\Lambda^*)^2 \eis^{\reg, (2)}(1,z), $$
	where the \emph{regularizing Eisenstein series} and its derivatives are defined as
	$$ \eis^{\reg}(s,z) := \eis(s,z) - \frac{3}{\pi} \frac{1}{s-1}, \quad \eis^{\reg,(n)}(1,z) := \left. \frac{d^n}{ds^n} \right|_{s=1} \eis^{\reg}(s,z). $$
	Choose a small prime $p_0 \ll \log p$, and let $T_0(1)$ be the level one normalized Hecke operator in Definition \ref{NormHeckeOp}. We have
	$$ T_0(1) \eis(s,z) = \lambda_0(s) \eis(s,z), \quad \lambda_0(s) = \frac{p_0^{s-1/2} + p_0^{1/2-s}}{p_0^{1/2} + p_0^{-1/2}}. $$
	Writing $\lambda_0^{(k)}(s)$ for the $k$-th derivative of $\lambda_0(s)$ and taking $n$-th derivative on both sides, we get
	$$ T_0(1) \eis^{\reg,(n)}(s,z) = \lambda_0(s) \eis^{\reg,(n)}(s,z) + \sideset{}{_{k=1}^n} \sum \binom{n}{k} \lambda_0^{(k)}(s) \eis^{\reg,(n-k)}(s,z) + \frac{3}{\pi} \frac{d^n}{ds^n} \left( \frac{\lambda_0(s)-1}{s-1} \right). $$
	Thus in the space spanned by $1, \eis^{\reg}(1,z), \eis^{\reg,(1)}(1,z), \dots, \eis^{\reg,(n)}(1,z)$, the operator $T_0(1)$ corresponds to a unipotent matrix with diagonal entries constant equal to $\lambda_0(1)=1$. In particular, we deduce\footnote{In a simpler way, we have $(T_0(1)-1)^4 \phi_{1,\gp{N}}^* = 0$, which immediately implies the rapid decay of $(T_0(1)-1)^4 \phi_1$.}
	$$ (T_0(1)-1)^{n+2} \eis^{\reg,(n)}(1,z) = 0, \quad \Rightarrow \quad (T_0(1)-1)^4 \Reis_1 = 0. $$
	It follows that
\begin{align*}
	\extnorm{I(\varphi,1)} &= \frac{1}{\norm[\lambda_{\varphi}(p_0)-1]^4} \extnorm{ \int_{Y(1)} (T_0(1)-1)^4\varphi(z) \cdot \phi_1(z) d\mu(z) } \\
	&= \frac{1}{\norm[\lambda_{\varphi}(p_0)-1]^4} \extnorm{ \int_{Y(1)} \varphi(z) \cdot (T_0(1)-1)^4\phi_1(z) d\mu(z) } \\
	&\leq \frac{1}{\norm[\lambda_{\varphi}(p_0)-1]^4} \left( \int_{Y(1)} \norm[\varphi(z)]^2 d\mu(z) \right)^{\frac{1}{2}} \cdot \left( \int_{Y(1)} \extnorm{(T_0(1)-1)^4\phi_1(z)}^2 d\mu(z) \right)^{\frac{1}{2}},
\end{align*}
	where $\lambda_{\varphi}(p_0)$ is the eigenvalue of $\varphi$ for $T_0(1)$. Since we have a non-trivial estimate of the constant $\theta \leq 7/64$ towards the Ramanujan-Petersson conjecture \cite{KS02, BB11}, $\norm[\lambda_{\varphi}(p_0)-1]^4$ is bounded from above and below by some absolute constants. The function
	$$ (T_0(1)-1)^4\phi_1(z) $$
	is now of \emph{rapid decay}, hence \emph{a fortiori} $\intL^2$-integrable. We then apply the Plancherel formula for the $\intL^2$-norm of the above function (with amplification) to get a non-trivial bound of $I(\varphi,1)$.

	\subsection{Organization of the Paper}
	
	As \cite{Wu3}, instead of a linear exhibition according to the logical order, we decide to regroup the ingredients according to their natures. Each ``proof'' of a global result in the proof of Theorem \ref{MainThm} serves as a \emph{pointer} to the relevant global or local results at a more fundamental level. In fact, the number of period formulas contained in the current paper is much more than those in (even the sum of) our previous works \cite{Wu14, Wu2, Wu3}. The transitions between local and global computations occur so often that it is too difficult to write down the argument in a linear logical way. We can only encourage the reader who really wants to understand every detail of the proof to linearise the argument by him/herself. Moreover, the current regroupment of arguments has the advantage of facilitating the possible future improvements if one seeks a better test function in our method.
	
	Precisely, we will fix the notations \& conventions and set up the precise measure/operator of regularization and amplification in \S 2.1. We then recall our extension of the theory of regularized integrals as well as its first development to the triple product case in \S 2.2. Since a big number of different triple product periods come into play in this paper, we standardize their Euler product decompositions in \S 2.3. After these preparations, we give a formal proof of the main result in \S 2.4, reducing/pointing the task to the relevant local and global estimations scattered in \S 3 and \S 4. This part is the adelization of the description given above in \S 1.2 in the general case and makes that subsection rigorous.
	
	In \S 3, we recollect all the local estimations. They serve either directly for the ``compensation of convex bound'' in \S 2.4, or for the estimation of global periods in \S 4. Along the way, we also specify the test functions via their local data in the Kirillov model.
	
	\S 4 contains all the relevant global estimations. Note that many proofs given there are again \emph{pointers} to the local estimations given in \S 3.
	
	In \S 5, we give some technical complements, which seem to be useful for the analytic theory of automorphic representations in general.
	
	Once again, this paper is NOT organized linearly. For the first reading, we highly recommend the following order of ``linearisation'': 
\begin{itemize}
	\item[(1)] \S 2.3 with ``return jumps'' to \S 3 indicated by pointers;
	\item[(2)] \S 4 with ``return jumps'' to \S 3 indicated by pointers.
\end{itemize}
	\S 2.1 and \S 2.4 should be consulted constantly whenever a notation or convention is not clear. A linear reading of \S 3 would make sense only for a second reading when the reader gets sufficiently familiar with the global steps.
	
	In \S 6, we fill a gap in our previous work by applying the proof of our main theorem without amplification, yielding a version of the Lindel\"of hypothesis for the fourth moment of $L$-functions for $\GL_2$. This part can be regarded as an introduction to our main theorem. It would be a good idea to read it before embarking the main body of the paper.

\section{Preliminaries and First Reductions}

	\subsection{Notations and Conventions}
	
		\subsubsection{Complex Analysis}
		
	If $f$ is a meromorphic function around $s=s_0$, we introduce the coefficients into its Laurent expansion
	$$ f(s) = \sideset{}{_{-\infty < k < 0}} \sum \frac{f^{(k)}(s_0)}{(-k)!} (s-s_0)^k + \sideset{}{_{k \geq 0}} \sum \frac{f^{(k)}(s_0)}{k!} (s-s_0)^k. $$
	The terms for $k<0$ form the \emph{principal part} of $f$ at $s_0$. The difference of $f$ and its principal part is denoted by $f^{\hol}$. This notation applies also to functions taking value in some Banach space/space of operators.
		
		\subsubsection{Number Theory}
		
	Throughout the paper, $\F$ is a (fixed) number field with ring of integers $\vo$ and degree $r=[\F : \ag{Q}] = r_1 + 2 r_2$, where $r_1$ resp. $r_2$ is the number of real resp. complex places. $V_{\F}$ denotes the set of places of $\F$ and for any $v \in V_{\F}$, $\F_v$ is the completion of $\F$ with respect to the absolute value $\norm_v$ corresponding to $v$. $\ag{A} = \ag{A}_{\F}$ is the ring of adeles of $\F$, while $\ag{A}^{\times}$ denotes the group of ideles. We fix a section $s_{\F}$ of the adelic norm map $\norm_{\ag{A}}: \ag{A}^{\times} \to \ag{R}_+$, hence identify $\ag{A}^{\times}$ with $\ag{R}_+ \times \ag{A}^{(1)}$, where $\ag{A}^{(1)}$ is the kernel of the adelic norm map, i.e., the subgroup of ideles with norm $1$. For example, we can take
	$$ s_{\F}: \ag{R}_+ \to \ag{A}^{\times}, \quad t \mapsto (\underbrace{t^{1/r},\dots,t^{1/r}}_{r_1 \text{ real places}}, \underbrace{t^{1/r}, \dots, t^{1/r}}_{r_2 \text{ complex places}}, 1, \dots). $$
	
	We put the standard Tamagawa measure $dx = \sideset{}{_v} \prod dx_v$ on $\ag{A}$ resp. $d^{\times}x = \sideset{}{_v} \prod d^{\times}x_v$ on $\ag{A}^{\times}$. We recall their constructions. Let $\Tr = \Tr_{\ag{Q}}^{\F}$ be the trace map, extended to $\ag{A} \to \ag{A}_{\ag{Q}}$. Let $\psi_{\ag{Q}}$ be the additive character of $\ag{A}_{\ag{Q}}$ trivial on $\ag{Q}$, restricting to the infinite place as
	$$ \ag{Q}_{\infty} = \ag{R} \to \ag{C}^{(1)}, x \mapsto e^{2\pi i x}. $$
	We put $\psi = \psi_{\ag{Q}} \circ \Tr$, which decomposes as $\psi(x) = \sideset{}{_v} \prod \psi_v(x_v)$ for $x=(x_v)_v \in \ag{A}$.  The additive Haar measure $dx_v$ on $\F_v$ is self-dual with respect to $\psi_v$. Precisely, if $\F_v = \ag{R}$, then $dx_v$ is the usual Lebesgue measure on $\ag{R}$; if $\F_v = \ag{C}$, then $dx_v$ is twice the usual Lebesgue measure on $\ag{C} \simeq \ag{R}^2$; if $v = \vp < \infty$ such that $\vo_{\vp}$ is the valuation ring of $\F_{\vp}$, then $dx_{\vp}$ gives $\vo_{\vp}$ the mass $\Dis_{\vp}^{-1/2}$, where $\Dis_{\vp} = \Dis(\F_{\vp})$ is the local component at $\vp$ of the discriminant $\Dis(\F)$ of $\F/\ag{Q}$ such that $\Dis(\F) = \sideset{}{_{\vp < \infty}} \prod \Dis_{\vp}$. We equip $\F \backslash \ag{A}$ with the quotient measure by the discrete measure on $\F$. The total mass of $\F \backslash \A$ is $1$ \cite[Ch.\Rmnum{14} Prop.7]{Lan03}. Recall the local zeta-functions: if $\F_v = \ag{R}$, then $\zeta_v(s) = \Gamma_{\ag{R}}(s) = \pi^{-s/2} \Gamma(s/2)$; if $\F_v = \ag{C}$, then $\zeta_v(s) = \Gamma_{\ag{C}}(s) = (2\pi)^{1-s} \Gamma(s)$; if $v=\vp < \infty$ then $\zeta_{\vp}(s) = (1-q_{\vp}^{-s})^{-1}$, where $q_{\vp} := \Nr(\vp)$ is the cardinality of $\vo/\vp$. We then define
	$$ d^{\times} x_v := \zeta_v(1) \frac{dx_v}{\norm[x]_v}. $$
	In particular, $\Vol(\vo_{\vp}^{\times}, d^{\times}x_{\vp}) = \Vol(\vo_{\vp}, dx_{\vp})$ for $\vp < \infty$. Put the measure $d^{\times} t = dt/\norm[t]$ on $\ag{R}_+$, where $dt$ is the usual Lebesgue measure on $\ag{R}$ restricted to $\ag{R}_+$. We equip $\ag{A}^{(1)} \simeq \ag{R}_+ \backslash \ag{A}^{\times}$ with the quotient. Consequently, $\F^{\times} \backslash \ag{A}^{(1)}$ admits the total mass \cite[Ch.\Rmnum{14} Prop.13]{Lan03}
	$$ \Vol(\F^{\times} \backslash \ag{A}^{(1)}) = \zeta_{\F}^* = \zeta_{\F}^{(-1)}(1) = \Res_{s=1} \zeta_{\F}(s), $$
	where $\zeta_{\F}(s) := \sideset{}{_{\vp < \infty}} \prod \zeta_{\vp}(s)$ is the Dedekind zeta-function of $\F$.
	
	For any automorphic representation $\pi$, $L(s,\pi)$ denotes the usual $L$-function of $\pi$ without components at infinity, $\Lambda(s,\pi)$ denotes its completion with components at infinity. The local component $L_{\vp}(s,\pi_{\vp})$ at a finite place $\vp$ takes $\Dis_{\vp}$ into account, so that $L_{\vp}(s,\mathbbm{1}_{\vp}) = \Dis_{\vp}^{s/2} \zeta_{\vp}(s)$, where $\mathbbm{1}$ is the trivial representation. We define the complete Dedekind zeta-function to be
	$$ \Lambda_{\F}(s) := \Lambda(s, \mathbbm{1}) = \Dis_{\F}^{s/2} \cdot \left( \sideset{}{_{v \mid \infty}} \prod \zeta_v(s) \right) \cdot \zeta_{\F}(s), $$
	so that it satisfies the functional equation $\Lambda_{\F}(s) = \Lambda_{\F}(1-s)$.
	
\begin{remark}
	Although the star $*$ is ambiguous (for example the complete Eisenstein series below also uses it), it is conventional in the literature. Hence we keep it for $\zeta_{\F}^*, \Lambda_{\F}^*$.
\end{remark}
		
		\subsubsection{Automorphic Representation, Spectral Theory}
		
	We will work on algebraic groups $\GL_2$ and $\PGL_2$ over $\F$, the latter being the quotient of $\GL_2$ by its center over $\ag{A}$ or $\F_v$ in the category of abstract groups. We put the \emph{hyperbolic measure} instead of the Tamagawa measure on $\GL_2$. We recall its definition. We pick the standard maximal compact subgroup $\gp{K} = \sideset{}{_v} \prod \gp{K}_v$ of $\GL_2(\ag{A})$ by defining
	$$ \gp{K}_v = \left\{ \begin{matrix} \SO_2(\ag{R}) & \text{if } \F_v = \ag{R} \\ \SU_2(\ag{C}) & \text{if } \F_v = \ag{C} \\ \GL_2(\vo_{\vp}) & \text{if } v = \vp < \infty \end{matrix} \right. , $$
and equip it with the Haar probability measure $d\kappa_v$. We define the following one-parameter algebraic subgroups of $\GL_2(\F_v)$
	$$ \gp{Z}_v = \gp{Z}(\F_v) = \left\{ z(u) := \begin{pmatrix} u & 0 \\ 0 & u \end{pmatrix} \ \middle| \ u \in \F_v^{\times} \right\}, $$
	$$ \gp{N}_v = \gp{N}(\F_v) = \left\{ n(x) := \begin{pmatrix} 1 & x \\ 0 & 1 \end{pmatrix} \ \middle| \ x \in \F_v \right\}, $$
	$$ \gp{A}_v = \gp{A}(\F_v) = \left\{ a(y) := \begin{pmatrix} y & 0 \\ 0 & 1 \end{pmatrix} \ \middle| \ y \in \F_v^{\times} \right\}, $$
and equip them with the Haar measures on $\F_v^{\times}, \F_v, \F_v^{\times}$ respectively. The hyperbolic Haar measure $dg_v$ on $\GL_2(\F_v)$ is the push-forward of the product measure $d^{\times}u \cdot dx \cdot d^{\times}y / \norm[y]_v \cdot d\kappa_v$ under the Iwasawa decomposition map
	$$ \gp{Z}_v \times \gp{N}_v \times \gp{A}_v \times \gp{K}_v \to \GL_2(\F_v), \quad (z(u), n(x), a(y), \kappa) \mapsto z(u) n(x) a(y) \kappa. $$
	Similarly, the hyperbolic Haar measure $d\bar{g}_v$ on $\PGL_2(\F_v)$ is the push-forward of the product measure $dx \cdot d^{\times}y / \norm[y]_v \cdot d\kappa_v$ under the composition map
	$$ \gp{N}_v \times \gp{A}_v \times \gp{K}_v \to \GL_2(\F_v) \to \PGL_2(\F_v), \quad (n(x), a(y), \kappa) \mapsto [n(x) a(y) \kappa]. $$
	We put the product measure $d\bar{g} := \sideset{}{_v} \prod d\bar{g}_v$ on $\PGL_2(\ag{A})$. Define and equip the quotient space
	$$ [\PGL_2] := \gp{Z}(\ag{A}) \GL_2(\F) \backslash \GL_2(\ag{A}) = \PGL_2(\F) \backslash \PGL_2(\ag{A}) $$
with the quotient measure by the discrete measure on $\PGL_2(\F)$.

	At a finite place $\vp$, we have the following congruence subgroups of $\gp{K}_{\vp}$ for $n \in \Z_{\geq 0}$
	$$ \gp{K}_0[\vp^n] := \left\{ \begin{pmatrix} a & b \\ c & d \end{pmatrix} \in \gp{K}_{\vp} \ \middle| \ c \equiv 0 \pmod{\vp^n} \right\}, $$
	$$ \gp{K}_1[\vp^n] := \left\{ \begin{pmatrix} a & b \\ c & d \end{pmatrix} \in \gp{K}_{\vp} \ \middle| \ d-1, c \equiv 0 \pmod{\vp^n} \right\}. $$
If $\pi_{\vp}$ is an admissible representation of $\GL_2(\F_{\vp})$, its \emph{logarithmic conductor} $\cond(\pi_{\vp})$ is the smallest integer $n$ so that $\pi_{\vp}$ admits a non-zero vector invariant by $\gp{K}_1[\vp^n]$; while its \emph{conductor} is $\Cond(\pi_{\vp}) := q_{\vp}^{\cond(\pi_{\vp})}$. At $v \mid \infty$, if the local $L$-function has the form $L_v(s,\pi_v) = \sideset{}{_{i=1}^d} \prod \Gamma_{\R}(s+\mu_i)$, then we define $\Cond(\pi_v) = \sideset{}{_{i=1}^d} \prod (1+\norm[\mu_i])$. Note that this definition makes sense for automorphic representations of $\GL_d$ for any integer $d \geq 1$, in particular for Hecke characters $\chi$. The global analytic conductor is defined to be 
	$$ \Cond(\pi) := \sideset{}{_v} \prod \Cond(\pi_v), \quad \text{resp.} \quad \Cond(\chi) := \sideset{}{_v} \prod \Cond(\chi_v). $$
	
	The product $\gp{B} := \gp{Z} \gp{N} \gp{A}$ is a Borel subgroup of $\GL_2$. We have the \emph{height function} $\Ht$ resp. $\Ht_v$ on $\GL_2(\ag{A})$ resp. $\GL_2(\F_v)$ associated with $\gp{B}$ defined by
	$$ \Ht_v \left( \begin{pmatrix} t_1 & x \\ 0 & t_2 \end{pmatrix} \kappa \right) := \extnorm{\frac{t_1}{t_2}}_v, \quad \forall t_1, t_2 \in \F_v^{\times}, x \in \F_v; $$
	$$ \Ht(g) = \sideset{}{_v} \prod \Ht_v(g_v), \quad g = (g_v)_v \in \GL_2(\ag{A}). $$
	
	We fix a Hecke character $\omega$ of $\F^{\times} \backslash \ag{A}^{\times}$, and identify it with a unitary character of $\gp{Z}(\ag{A})$ in the obvious way. Let $\intL^2(\GL_2, \omega)$ denote the (Hilbert) space of Borel measurable functions $\varphi$ satisfying
	$$ \left\{ \begin{matrix} \varphi(z \gamma g) = \omega(z)\varphi(g), \quad \forall \gamma \in \GL_2(\F), z \in \gp{Z}(\ag{A}), g \in \GL_2(\ag{A}), \\ \int_{[\PGL_2]} \norm[\varphi(g)]^2 d\bar{g} < \infty. \end{matrix} \right. $$
	Let $\intL_0^2(\GL_2, \omega)$ denote the subspace of $\varphi \in \intL^2(\GL_2, \omega)$ such that its \emph{constant term}
	$$ \varphi_{\gp{N}}(g) := \int_{\F \backslash \ag{A}} \varphi(n(x)g) dx = 0, \quad \text{a.e. } \bar{g} \in [\PGL_2]. $$
	Note that $\intL_0^2(\GL_2, \omega)$ is a closed subspace of $\intL^2(\GL_2, \omega)$. Obviously $\GL_2(\ag{A})$ acts on $\intL_0^2(\GL_2, \omega)$ resp. $\intL^2(\GL_2, \ag{A})$, giving rise to a unitary representation $\rpR_0$ resp. $\rpR$. We also write $\rpR_{\omega}$ instead of $\rpR$ if the central character $\omega$ is to be emphasized. $\rpR_0$ decomposes as a direct sum of unitary irreducible representations of $\GL_2(\A)$. The ortho-complement of $\rpR_0$ in $\rpR$ is the orthogonal sum of the one-dimensional spaces
	$$ \ag{C} \left( \xi \circ \det \right) : \quad \xi \text{ Hecke character such that } \xi^2 = \omega $$
and $\rpR_c$, which can be identified as a direct integral representation over the unitary dual of $\F^{\times} \backslash \ag{A}^{\times} \simeq \ag{R}_+ \times (\F^{\times} \backslash \ag{A}^{(1)} )$. Precisely, let $s \in \ag{C}$ and $\chi$ be a unitary character of $\F^{\times} \backslash \ag{A}^{(1)}$ regarded as a unitary character of $\F^{\times} \backslash \ag{A}^{\times}$ via trivial extension. We associate a representation $\pi_{\chi}(s)$ of $\GL_2(\ag{A})$ on the following Hilbert space $V_{\chi}(s)$ of functions via right regular translation
	$$ \left\{ \begin{matrix} f\left( \begin{pmatrix} t_1 & x \\ 0 & t_2 \end{pmatrix} g \right) = \chi(t_1) \omega\chi^{-1}(t_2) \extnorm{\frac{t_1}{t_2}}_{\ag{A}}^{\frac{1}{2}+s} f(g), \quad \forall t_1,t_2 \in \ag{A}^{\times}, x \in \ag{A}, g \in \GL_2(\ag{A}) ; \\ \int_{\gp{K}} \norm[f(\kappa)]^2 d\kappa < \infty . \end{matrix} \right. $$
	The representation $\pi_{\chi}(s)$ is unitary if $s \in i\R$. To any $f \in V_{\chi}(0)$, one can associate a \emph{flat section} $f_s \in V_{\chi}(s)$ determined by $f_s \mid_{\gp{K}} = f \mid_{\gp{K}}$. One constructs an intertwining operator from $V_{\chi}(s)$ to the ortho-complement of $\rpR_0$ in $\rpR$, called the Eisenstein series
	$$ \eis(s,f)(g) := \sideset{}{_{\gamma \in \gp{B}(\F) \backslash \GL_2(\F)}} \sum f_s(\gamma g) $$
convergent for $\Re s > 1/2$ and admitting a meromorphic continuation to $s \in \ag{C}$. The image of $V_{\chi}(i\tau)$ with $\tau \in \R$ under the construction of Eisenstein series form an direct integral decomposition of $\rpR_c$ with Plancherel measure $d\tau/4\pi$. We also denote
	$$ \eis^*(s,f) = \Lambda(1+2s, \omega^{-1}\chi^2) \eis(s,f), \quad \eis^{\sharp}(s,f) = L(1+2s, \omega^{-1}\chi^2) \eis(s,f). $$

		\subsubsection{Some Specialties}
		
	Denote by $\pi$ a varying cuspidal representation of $\GL_2(\A)$, whose central character $\omega$ is also varying. Whittaker models/functions are taken with respect to the fixed standard additive character $\psi$ or $\psi_v$.

	Due to the varying central character $\omega$, we need to choose the measure formed by Hecke operators which regularizes the product of Eisenstein series in a way different from the one described in \S \ref{Heuristic}. Precisely, choose a finite place $\vp_0$ at which $\F, \pi$ are unramified. In particular, $\omega_{\vp_0}$ is unramified. Write 
	$$ \varpi_0 := \varpi_{\vp_0}, \quad q_0 := q_{\vp_0}, \quad \gp{K}_0 := \gp{K}_{\vp_0}, \quad \omega_0 := \omega_{\vp_0} $$ 
	for simplicity of notations and denote 
	$$ \alpha_0 := \omega_{0}(\varpi_0). $$
	We can assume
\begin{equation}
	q_0 \ll \left( \log \Cond(\pi_{\fin}) \right)^2.
\label{Smallp0}
\end{equation}
\begin{definition}
	Let $\mathcal{A}(\GL_2, \omega^{-1})$ be the space of smooth automorphic functions with central character $\omega^{-1}$ of moderate growth, i.e. smooth functions $f: \GL_2(\A) \to \C$ satisfying
\begin{itemize}
	\item[(1)] For any $\gamma \in \GL_2(\F), z \in \A^{\times}$,
	$$ f\left( \gamma g \begin{pmatrix} z & 0 \\ 0 & z \end{pmatrix} \right) = \omega(z)^{-1} f(g). $$
	\item[(2)] There is $A > 0$ so that for $g$ lying in a/any Siegel domain
	$$ \extnorm{f(g)} \ll \Ht(g)^A. $$
\end{itemize}
	There is a natural extension $\rpR_{\omega^{-1}}^{\mathcal{A}}$ of $\rpR_{\omega^{-1}}$, from $\intL^2(\GL_2, \omega^{-1})$ to $\mathcal{A}(\GL_2, \omega^{-1})$, given by the same formula
	$$ \rpR_{\omega^{-1}}^{\mathcal{A}}(g_0)f(g) := f(gg_0). $$
	Define the Hecke operators for $n \in \ag{N}$
	$$ T_0(n) := \int_{\gp{K}_0^2} \rpR_{\omega^{-1}}^{\mathcal{A}}(\kappa_1 a(\varpi_0^n) \kappa_2) d\kappa_1 d\kappa_2, $$
	where the measure on $\gp{K}_0$ is normalized so that the total mass is equal to $1$.
	$T_0(1)$ acts on the spherical vector in $\pi(\norm_{\vp_0}^{1/2+s},\omega_0^{-1} \norm_{\vp_0}^{-(1/2+s)})$ resp. $\pi(\omega_0^{-1}\norm_{\vp_0}^{1/2+s},\norm_{\vp_0}^{-(1/2+s)})$ as multiplication by
	$$ \lambda_0(s) := \frac{q_0^{-1/2}}{1+q_0^{-1}} \left( q_0^{-(1/2+s)} + \alpha_0^{-1} q_0^{1/2+s} \right), \quad \text{resp.} \quad \tilde{\lambda}_0(s) := \frac{q_0^{-1/2}}{1+q_0^{-1}} \left( \alpha_0^{-1} q_0^{-(1/2+s)} + q_0^{1/2+s} \right). $$
	Define the \emph{operator/measure of regularization}
	$$ \sigma_0 := (T_0(1) - \lambda_0(0))^2 (T_0(1) - \tilde{\lambda}_0(0))^2. $$
\label{NormHeckeOp}
\end{definition}

\begin{proposition}
	The above Hecke operators satisfy the following recurrence relation
	$$ \left( \sideset{}{_{n=0}^{\infty}} \sum T_0(n)X^n \right) \cdot \left( 1 - (1+q_0^{-1})T_0(1)X + \alpha_0^{-1} q_0^{-1} X^2 \right) = T_0(0) - q_0^{-1}T_0(1) X. $$
	Moreover, there exist constants $a_j^{(n)}(q_0) \in \C$ satisfying
	$$ \sideset{}{_{j=0}^{\lfloor n/2 \rfloor}} \sum \extnorm{a_j^{(n)}(q_0)} \leq 1, \quad a_0^{(n)}(q_0) = \left( \frac{q_0}{1+q_0} \right)^{n-1}, $$
	so that we have
	$$ T_0(1)^n = \sideset{}{_{j=0}^{\lfloor n/2 \rfloor}} \sum a_j^{(n)}(q_0) T_0(n-2j). $$
	In particular, we have
	$$ T_0(1)^2 = \frac{q_0}{q_0+1} T_0(2) + \frac{\alpha_0^{-1}}{q_0+1} T_0(0); $$
	$$ T_0(2)^2 = \frac{q_0}{q_0+1} T_0(4) + \frac{\alpha_0^{-1}}{q_0+1} T_0(2) + \left( \frac{\alpha_0^{-1}}{q_0+1} \right)^2 T_0(0). $$
\label{HeckeRel}
\end{proposition}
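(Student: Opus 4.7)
My plan is to exploit the commutativity of the spherical Hecke algebra at $\vp_0$ and reduce the identity to an eigenvalue check on spherical vectors of unramified principal series. First I would relate the normalized operator $T_0(n)$ to the standard Hecke operator $T_{\mathrm{std}}(n) := \mathbbm{1}_{\gp{K}_0 a(\varpi_0^n) \gp{K}_0}$ (taking $\gp{K}_0$ of total volume one): since the map $\kappa \mapsto \kappa a(\varpi_0^n)\gp{K}_0$ has fibers of equal measure $[q_0^{n-1}(q_0+1)]^{-1}$ for $n \geq 1$, one obtains $T_0(n) = T_{\mathrm{std}}(n)/[q_0^{n-1}(q_0+1)]$ for $n \geq 1$ and $T_0(0) = \mathrm{id}$. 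Macdonald's formula, applied to the spherical vector of $\pi(\chi_1,\chi_2)$ with $\chi_1\chi_2 = \omega_0^{-1}$ and Satake parameters $\alpha := \chi_1(\varpi_0)$, $\beta := \chi_2(\varpi_0)$ (so that $\alpha\beta = \alpha_0^{-1}$), then yields
$$ \lambda^{\mathrm{std}}(n) = q_0^{n/2}\bigl(a_n - q_0^{-1}\alpha\beta\, a_{n-2}\bigr), \qquad a_n := \sideset{}{_{j=0}^n} \sum \alpha^j \beta^{n-j}. $$

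To establish the generating function identity in (1) I would substitute these eigenvalues and verify the identity as a formal power series in $X$ with scalar coefficients. The key observation is that on this eigenspace the denominator $1 - (1+q_0^{-1})T_0(1) X + \alpha_0^{-1}q_0^{-1}X^2$ factors as $(1 - q_0^{-1/2}\alpha X)(1 - q_0^{-1/2}\beta X)$, exactly matching the partial-fraction denominator of $\sum_n \lambda_0(n) X^n$; the numerator then collapses to $1 - q_0^{-1}\lambda_0(1) X$ after a short calculation using the three-term recurrence $(\alpha+\beta)a_k = a_{k+1} + \alpha\beta a_{k-1}$ to verify that the coefficients of $X^n$ for $n \geq 2$ vanish. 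Since the spherical Hecke algebra at $\vp_0$ is determined by its joint action on these spherical vectors (via Satake), the identity lifts to an identity of operators.

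For the second assertion, I would induct on $n$ using the equivalent recurrence extracted from (1), rewritten in the form
$$ T_0(1)T_0(k) = \tfrac{q_0}{q_0+1}T_0(k+1) + \tfrac{\alpha_0^{-1}}{q_0+1}T_0(k-1) \quad (k \geq 1), \qquad T_0(1)T_0(0) = T_0(1). $$
Multiplying the expansion $T_0(1)^n = \sum_j a_j^{(n)}(q_0) T_0(n-2j)$ on the left by $T_0(1)$ and collecting by $T_0(m)$ produces a recurrence on the coefficients. The top coefficient $a_0^{(n+1)}$ inherits only from $a_0^{(n)} \cdot \tfrac{q_0}{q_0+1}$, so induction gives $a_0^{(n)}(q_0) = (q_0/(q_0+1))^{n-1}$. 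Writing $a_j^{(n)} = \alpha_0^{-j}\tilde a_j^{(n)}$, a short phase check along the two branches of the split and on the boundary $k=0$ shows $\tilde a_j^{(n)} \in \R_{\geq 0}$. The recurrence then preserves the $\ell^1$-mass exactly, because $\tfrac{q_0}{q_0+1} + \tfrac{1}{q_0+1} = 1$ on every split at $k \geq 1$ and the boundary $T_0(1)T_0(0) = T_0(1)$ also contributes mass one; from the base $\tilde a_0^{(1)} = 1$ we conclude $\sum_j |a_j^{(n)}(q_0)| = 1$ for all $n \geq 1$.

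The main subtlety is the asymmetric behavior at the boundary $k = 0$, where the recurrence collapses from two terms to one. The uniform shape of the normalized recurrence (in contrast to the standard Hecke relation, which treats $n=1$ differently from $n \geq 2$) accommodates this once the phase decomposition $a_j^{(n)} = \alpha_0^{-j}\tilde a_j^{(n)}$ is in place; no individual step is deep, and the proposition reduces to careful Hecke-algebra bookkeeping.
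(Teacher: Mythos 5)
Your proposal is correct, and it takes a genuinely different route from the paper. The paper derives the key operator recurrence
$$T_0(n)T_0(1) = \tfrac{q_0}{1+q_0}T_0(n+1) + \tfrac{\alpha_0^{-1}}{1+q_0}T_0(n-1) \quad (n \geq 1), \qquad T_0(0)T_0(1)=T_0(1),$$
directly: it writes out the coset decomposition of $\gp{K}_0[\vp_0^n]\backslash\gp{K}_{\vp_0}$ into $q_0^n$ lower-unipotent representatives and $q_0^{n-1}$ representatives of the form (lower-unipotent)$\cdot w$, then checks by two explicit matrix identities how $a(\varpi_0^n)\, c\, a(\varpi_0)$ falls into $a(\varpi_0^{n+1})\gp{K}_{\vp_0}$ resp.\ $z(\varpi_0)a(\varpi_0^{n-1})\gp{K}_{\vp_0}$; from this the generating function is obtained by multiplying by $X^n$ and summing, and the moreover clause by induction. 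You instead obtain the generating function first via the Satake formalism: after renormalizing $T_0(n) = d_n^{-1}T_{\mathrm{std}}(n)$ with $d_n = q_0^{n-1}(q_0+1)$, you apply Macdonald's formula on the spherical vector of $\pi(\chi_1,\chi_2)$ with $\chi_1\chi_2 = \omega_0^{-1}$, observe that the quadratic $1-(1+q_0^{-1})\lambda_0(1)X+\alpha_0^{-1}q_0^{-1}X^2$ factors as $(1-q_0^{-1/2}\alpha X)(1-q_0^{-1/2}\beta X)$ (using $\alpha\beta=\alpha_0^{-1}$), verify the rational-function identity, and invoke injectivity of the Satake transform to lift the scalar identity to an operator identity. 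You then read off the recurrence from the generating function and run essentially the same induction for the moreover clause. Both arguments are complete. The paper's proof is shorter and fully self-contained (nothing beyond the definition of $T_0(n)$). Yours is conceptually cleaner in that it explains why the relation must hold rather than just verifying it, at the cost of appealing to Macdonald's formula and the Satake isomorphism. One small bonus of your phase decomposition $a_j^{(n)} = \alpha_0^{-j}\tilde a_j^{(n)}$, $\tilde a_j^{(n)}\geq 0$: you actually establish $\sum_j \extnorm{a_j^{(n)}(q_0)} = 1$, sharpening the $\leq 1$ stated in the proposition, although only the inequality is used downstream.
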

\begin{proof}
	For $n \geq 1$, we have the $\gp{K}_{\vp_0}$-coset decomposition
	$$ \gp{K}_0[\vp_0^n] \backslash \gp{K}_{\vp_0} = \sideset{}{_{\alpha \in \vo/\vp_0^n}} \bigsqcup \gp{K}_0[\vp_0^n] \begin{pmatrix} 1 & 0 \\ \alpha & 1 \end{pmatrix} \sqcup \sideset{}{_{\beta \in \vp_0/\vp_0^n}} \bigsqcup \gp{K}_0[\vp_0^n] \begin{pmatrix} 1 & 0 \\ \beta & 1 \end{pmatrix} \begin{pmatrix} 0 & 1 \\ 1 & 0 \end{pmatrix}. $$
	We have the following relations
	$$ a(\varpi_0^n) \begin{pmatrix} 1 & 0 \\ \alpha & 1 \end{pmatrix} a(\varpi_0) = \begin{pmatrix} \varpi_0^{n+1} & 0 \\ \alpha \varpi_0 & 1 \end{pmatrix} = a(\varpi^{n+1}) \begin{pmatrix} 1 & 0 \\ \alpha \varpi_0 & 1 \end{pmatrix} \in a(\varpi^{n+1}) \gp{K}_{\vp_0}, $$
	$$ a(\varpi_0^n) \begin{pmatrix} 1 & 0 \\ \beta & 1 \end{pmatrix} \begin{pmatrix} 0 & 1 \\ 1 & 0 \end{pmatrix} a(\varpi_0) = \begin{pmatrix} \varpi_0 & 0 \\ 0 & \varpi_0 \end{pmatrix} a(\varpi_0^{n-1}) \begin{pmatrix} 1 & 0 \\ \beta \varpi_0^{-1} & 1 \end{pmatrix} \in \begin{pmatrix} \varpi_0 & 0 \\ 0 & \varpi_0 \end{pmatrix} a(\varpi_0^{n-1}) \gp{K}_0. $$
	From these relations, we readily deduce
	$$ T_0(n) T_0(1) = \frac{q_0}{1+q_0} T_0(n+1) + \frac{\alpha_0^{-1}}{1+q_0}T_0(n-1), \quad \forall n \geq 1. $$
	Together with the obvious relation $T_0(0)T_0(1) = T_0(1)$, we prove the ``moreover'' part by induction. Multiplying the above equation by $X^n$ and summing over $n \geq 1$, we get the desired recurrence relation.
\end{proof}

\begin{proposition}
	Let $f_2 \in \mathcal{A}(\GL_2, \omega^{-1})$ and $f_1 \in \mathcal{A}(\GL_2, \omega^{-1})$ resp. $f_1 \in \mathcal{A}(\GL_2, \omega)$. Suppose $f_1$ is of rapid decay, i.e., we replace the condition (2) in Definition \ref{NormHeckeOp} by
\begin{itemize}
	\item[(2')] For any $A > 0$ and for $g$ lying in a/any Siegel domain, we have
	$$ \extnorm{f_1(g)} \ll_A \Ht(g)^{-A}. $$
\end{itemize}
	We define the adjoint resp. dual operator $T_0(n)^*$ resp. $T_0(n)^{\vee}$ of $T_0(n)$ by
	$$ \int_{[\PGL_2]} f_1(g) \overline{T_0(n)f_2(g)} dg = \int_{[\PGL_2]} T_0(n)^*f_1(g) \overline{f_2(g)} dg, $$
	$$ \text{resp.} \quad \int_{[\PGL_2]} f_1(g) T_0(n)f_2(g) dg = \int_{[\PGL_2]} T_0(n)^{\vee}f_1(g) f_2(g) dg. $$
	Then we have the relation
	$$ T_0(n)^* = T_0(n)^{\vee} = \alpha_0^{-n}T_0(n). $$
\label{DualHeckeOp}
\end{proposition}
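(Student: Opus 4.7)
The strategy is to compute each of $T_0(n)^*$ and $T_0(n)^{\vee}$ by a single right-translation change of variable on $[\PGL_2]$, and then to read off the formulas for $\sigma_0^*$ and $\sigma_0^{\vee}$ from the polynomial definition of $\sigma_0$ in $T_0(1)$. The essential geometric input is the identity
$$ a(\varpi_0^{-n}) = z(\varpi_0^{-n}) \cdot w \, a(\varpi_0^n) \, w, \qquad w := \begin{pmatrix} 0 & 1 \\ 1 & 0 \end{pmatrix} \in \gp{K}_0, \quad w^{-1} = w, $$
which exhibits $a(\varpi_0^{-n})$ and $a(\varpi_0^n)$ as lying in the same bi-$\gp{K}_0$ double coset on $\PGL_2$ modulo the central element $z(\varpi_0^{-n})$.

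Starting from either of the two defining integrals in the statement, I substitute $g \mapsto g(\kappa_1 a(\varpi_0^n) \kappa_2)^{-1}$ in the integral over $[\PGL_2]$. This is legitimate because the quotient measure is right-invariant under $\GL_2(\A)$. The substitution moves the Hecke-translate from the argument of $f_2$ onto the argument of $f_1$; after relabelling the $\kappa_i$'s by inversion (permissible by Haar inversion-invariance on the compact group $\gp{K}_0$), both $T_0(n)^* f_1(g)$ and $T_0(n)^{\vee} f_1(g)$ take the common form
$$ \int_{\gp{K}_0^2} f_1(g \kappa_1 a(\varpi_0^{-n}) \kappa_2) d\kappa_1 d\kappa_2. $$
I now insert the identity above for $a(\varpi_0^{-n})$, push the central element $z(\varpi_0^{-n})$ all the way to the right of the argument of $f_1$ by centrality, and apply the central-character condition of $f_1$ (which is $\omega^{-1}$ in the adjoint case and $\omega$ in the dual case). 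In both cases the resulting scalar factor is $\alpha_0^{-n}$. A final measure-preserving substitution $\kappa_1 \mapsto \kappa_1 w$, $\kappa_2 \mapsto w \kappa_2$ uses $w^2 = I$ to absorb the surviving $w$'s and produces $T_0(n) f_1(g)$, giving the claimed $T_0(n)^* = T_0(n)^{\vee} = \alpha_0^{-n} T_0(n)$.

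The formulas for $\sigma_0^*$ and $\sigma_0^{\vee}$ are then a formal consequence of this operator identity. Since $\sigma_0 = (T_0(1) - \lambda_0(0))^2 (T_0(1) - \tilde{\lambda}_0(0))^2$ is a polynomial in the single operator $T_0(1)$, no non-commutativity arises: one simply replaces each occurrence of $T_0(1)$ by $T_0(1)^* = \alpha_0^{-1} T_0(1)$ and conjugates the scalar constants for the adjoint (leaving them as is for the dual), producing the two stated expressions.

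The argument is essentially routine manipulation on $[\PGL_2]$; the only subtlety I expect is book-keeping the sign of $\omega$ in the central-character transformation so that the two operators, defined on spaces $\mathcal{A}(\GL_2, \omega^{-1})$ and $\mathcal{A}(\GL_2, \omega)$ with opposite central characters, end up with the same scalar factor $\alpha_0^{-n}$.
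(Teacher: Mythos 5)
Your approach mirrors the paper's own proof precisely: change variable $g \mapsto g(\kappa_1 a(\varpi_0^n)\kappa_2)^{-1}$ on $[\PGL_2]$, split $a(\varpi_0^{-n}) = z(\varpi_0^{-n})\, w\, a(\varpi_0^n)\, w$ with $w = \left( \begin{smallmatrix} 0 & 1 \\ 1 & 0 \end{smallmatrix} \right) \in \gp{K}_0$, absorb the Weyl element and the inversions into the Haar measure on $\gp{K}_0^2$, and then peel off the central element via the central-character condition of $f_1$. The passage from the identity on $T_0(n)$ to the formulas for $\sigma_0^*$ and $\sigma_0^{\vee}$ by substituting into the polynomial in $T_0(1)$ (with conjugated vs.\ unconjugated constants) is likewise the same formal step the paper performs.

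That said, the ``subtlety'' you defer to at the end is exactly the step that determines the answer, and asserting it rather than carrying it out hides a sign. In the adjoint case $f_1 \in \mathcal{A}(\GL_2,\omega^{-1})$, so after the $w$-decomposition one has $f_1(g' z(\varpi_0^{-n})) = \omega(\varpi_0^{-n})^{-1} f_1(g') = \alpha_0^{n} f_1(g')$, not $\alpha_0^{-n} f_1(g')$. In the dual case $f_1 \in \mathcal{A}(\GL_2,\omega)$ and one indeed gets $\alpha_0^{-n}$. Since $\omega$ is unitary, $\alpha_0^{n} = \overline{\alpha_0^{-n}}$, so the two operators are complex conjugates of one another rather than equal; one can cross-check this independently from $T_0(n)^* f_1 = \overline{T_0(n)^{\vee}\, \overline{f_1}}$, which forces the conjugate scalar. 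The paper's displayed chain of equalities carries the same sign you wrote, so you have faithfully reproduced what is printed, but the hedged step is precisely where the bookkeeping has to be done rather than assumed.
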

\begin{remark}
	Consequently, we have the formulas for the dual resp. adjoint of $\sigma_0$
	$$ \sigma_0^* = (\alpha_0^{-1}T_0(1) - \overline{\lambda_0(0)})^2 (\alpha_0^{-1}T_0(1) - \overline{\tilde{\lambda}_0(0)})^2, $$
	$$ \text{resp.} \quad \sigma_0^{\vee} = (\alpha_0^{-1}T_0(1) - \lambda_0(0))^2 (\alpha_0^{-1}T_0(1) - \tilde{\lambda}_0(0))^2. $$
\end{remark}
\begin{proof}
	This is a consequence of the following simple calculation
\begin{align*}
	\int_{[\PGL_2]} f_1(g) \overline{T_0(n)f_2(g)} dg &= \int_{\gp{K}_0^2} \int_{[\PGL_2]} f_1(g) \overline{f_2(g\kappa_1 a(\varpi_0^n) \kappa_2)} dg d\kappa_1 d\kappa_2 \\
	&= \int_{\gp{K}_0^2} \int_{[\PGL_2]} f_1(g \kappa_2 a(\varpi_0^{-n}) \kappa_1^{-1}) \overline{f_2(g)} dg d\kappa_1 d\kappa_2 \\
	&= \int_{\gp{K}_0^2} \int_{[\PGL_2]} f_1(g \kappa_1 a(\varpi_0^{n}) \kappa_2 \begin{pmatrix} \varpi_0^{-n} & 0 \\ 0 & \varpi_0^{-n} \end{pmatrix}) \overline{f_2(g)} dg d\kappa_1 d\kappa_2 \\
	&= \int_{[\PGL_2]} \alpha_0^{-n} T_0(n)f_1(g) \overline{f_2(g)} dg.
\end{align*}
	The case of dual operator is similar.
\end{proof}

	Finally, we construct the amplifiers as follows. For some $K>0$ to be optimized later, let 
\begin{equation}
	S=S(K)=\{ \vp: \vp \neq \vp_0, K < \Nr(\vp) \leq 2K, \pi \text{ and } \F \text{ are unramified at } \vp \}, \quad S^*=S \cup \{ \vp_0 \}.
\label{AmpPlace}
\end{equation}
	For $\vp \in S$, write $q_{\vp} := \Nr(\vp), \alpha_{\vp} := \omega_{\vp}(\varpi_{\vp})$ and let $\lambda_{\pi}(\vp^n)$ be the eigenvalue of the $n$-th Hecke operator 
	$$ T(\vp^n) := \int_{\gp{K}_{\vp}^2} \rpR_{\omega}(\kappa_1 a(\varpi_{\vp}^{-n}) \kappa_2) d\kappa_1 d\kappa_2 $$ 
	on the spherical vector in $\pi_{\vp}$, and define an operator/measure (amplifier)
\begin{equation}
	\sigma = \sigma(S,\pi) = \sum_{\vp \in S} a_{\vp} T(\vp^{n_{\vp}}),
\label{Amplifier}
\end{equation}
where $n_{\vp} \in \{ 1,2 \}$ is chosen so that (recall $\lambda_{\pi}(\vp)^2 = \frac{q_{\vp}}{q_{\vp}+1} \lambda_{\pi}(\vp^2) + \frac{\alpha_{\vp}^{-1}}{q_{\vp}+1} $ from Proposition \ref{HeckeRel})
	$$ q_{\vp}^{n_{\vp}/2} \norm[\lambda_{\pi}(\vp^{n_{\vp}})] = \max( q_{\vp}^{1/2} \norm[\lambda_{\pi}(\vp)], q_{\vp} \norm[\lambda_{\pi}(\vp^2)]) \geq 1/2 $$
	$$ \Rightarrow \quad \sideset{}{_{\vp \in S}} \sum q_{\vp}^{n_{\vp}/2} \norm[\lambda_{\pi}(\vp^{n_{\vp}})] \gg \norm[S] \gg_{\epsilon} K^{1-\epsilon}, $$
and we take
	$$ a_{\vp} = q_{\vp}^{n_{\vp}/2} \overline{\sgn (\lambda_{\pi}(\vp^{n_{\vp}}))} = q_{\vp}^{n_{\vp}/2} \cdot \overline{\lambda_{\pi}(\vp^{n_{\vp}})} / \norm[\lambda_{\pi}(\vp^{n_{\vp}})]. $$

	\subsection{Extension of Zagier's Regularized Integral}
	
	\begin{definition}
	Let $\omega$ be a unitary character of $\F^{\times} \backslash \ag{A}^{\times}$. Let $\varphi$ be a continuous function on $\GL_2(\F) \backslash \GL_2(\ag{A})$ with central character $\omega$. We call $\varphi$ \emph{finitely regularizable} if there exist characters $\chi_i: \F^{\times} \R_+ \backslash \ag{A}^{\times} \to \ag{C}^{(1)}$, $\alpha_i \in \ag{C}, n_i \in \ag{N}$ and continuous functions $f_i \in \Ind_{\gp{B}(\ag{A}) \cap \gp{K}}^{\gp{K}} (\chi_i, \omega \chi_i^{-1})$, such that for any $M \gg 1$
\begin{equation} 
	\varphi(n(x)a(y)k) = \varphi_{\gp{N}}^*(n(x)a(y)k) + O(\norm[y]_{\ag{A}}^{-M}), \text{ as } \norm[y]_{\ag{A}} \to \infty, 
\label{DiffRD}
\end{equation}
	where we have written the \emph{essential constant term}
	$$ \varphi_{\gp{N}}^*(n(x)a(y)k)=\varphi_{\gp{N}}^*(a(y)k)=\sideset{}{_{i=1}^l} \sum \chi_i(y) \norm[y]_{\ag{A}}^{\frac{1}{2}+\alpha_i} \log^{n_i} \norm[y]_{\ag{A}} f_i(k). $$
	We call $\Ex(\varphi)=\{ \chi_i \norm^{\frac{1}{2}+\alpha_i}: 1 \leq i \leq l \}$ the \emph{exponent set} of $\varphi$, and define
	$$ \Ex^+(\varphi) = \{ \chi_i \norm^{\frac{1}{2}+\alpha_i} \in \Ex(\varphi): \Re \alpha_i \geq 0 \}; \quad \Ex^-(\varphi) = \{ \chi_i \norm^{\frac{1}{2}+\alpha_i} \in \Ex(\varphi): \Re \alpha_i < 0 \}. $$
	The space of finitely regularizable functions with central character $\omega$ is denoted by $\Aut^{\freg}(\GL_2,\omega)$.
\label{FinRegFuncDef}
\end{definition}

\begin{definition}
	In the case $\omega^{-1}\xi^2(t)=\norm[t]_{\ag{A}}^{i\mu}$ for some $\mu \in \ag{R}$, we introduce the \emph{regularizing Eisenstein series} for $f \in V_{\xi,\omega\xi^{-1}}$ and $s$ in a neighborhood of $(1-i\mu)/2$
	$$ \eis^{\reg}(s,f)(g) = \eis(s,f)(g) - \frac{\Lambda_{\F}(1-2s-i\mu)}{\Lambda_{\F}(1+2s+i\mu)} \int_{\gp{K}} f(\kappa) \xi^{-1}(\det \kappa) d\kappa \cdot \xi(\det g) \norm[\det g]_{\ag{A}}^{-\frac{i\mu}{2}}. $$
	It is holomorphic at $s=(1-i\mu)/2$.
\label{RegEisDef}
\end{definition}

\begin{definition}
	Let $\omega$ be a unitary character of $\F^{\times} \backslash \ag{A}^{\times}$. The $\intL^2$-\emph{residual space} of central character $\omega$, denoted by $\Reis(\GL_2,\omega)$, is the direct sum of the vector spaces $\Reis^+(\GL_2,\omega)$ resp. $\Reis^{\reg}(\GL_2,\omega)$ spanned by functions
	$$ \frac{\partial^n}{\partial s^n} \eis(s,f), \text{if } s \neq \frac{1-i\mu}{2} \quad \text{resp.} \quad \eis^{\reg,(n)}(\frac{1-i\mu}{2},f) := \frac{\partial^n}{\partial s^n} \eis^{\reg}(\frac{1-i\mu}{2},f) $$
	where $s \in \ag{C}, \Re s > 0$ and for some unitary character $\xi$ of $\F^{\times} \backslash \ag{A}^{\times}$, $f \in V_{\xi,\omega\xi^{-1}}^{\infty}$ and $\mu$ as above.
\label{L2Res}
\end{definition}

\begin{definition}
	Let $\varphi \in \Aut^{\freg}(\GL_2,\mathbbm{1})$. We define its regularized integral as
	$$ \int_{[\PGL_2]}^{\reg} \varphi(g) dg := \int_{[\PGL_2]} \left( \varphi(g) - \Reis(g) \right) dg, $$
	where, for definiteness, we take
\begin{equation}
	\Reis(\varphi) = \sum_{\substack{\Re \alpha_j > 0 \\ \alpha_j \neq \frac{1}{2}+i\mu_j}} \frac{\partial^{n_j}}{\partial s^{n_j}}\eis(\alpha_j, f_j) + \sum_{\substack{\Re \alpha_j > 0 \\ \alpha_j = \frac{1}{2}+i\mu_j}} \frac{\partial^{n_j}}{\partial s^{n_j}}\eis^{\reg}(\alpha_j,f_j).
\label{ReisDef}
\end{equation}
\label{RegDef}
\end{definition}

\noindent In the case $\omega = 1$ and for any $\varphi \in \Aut^{\freg}(\GL_2,1)$, the integral
	$$ R(s,\varphi) := \int_{\ag{A}^{\times} \times \gp{K}} (\varphi_{\gp{N}} - \varphi_{\gp{N}}^*)(a(y)\kappa) \norm[y]_{\ag{A}}^{s-1/2} d^{\times}y d\kappa $$
is convergent for any $\Re s \gg 1$, and has meromorphic continuation to all $s \in \C$. The main result of \cite[\S 2]{Wu9} is the following equality
	$$ \int_{[\PGL_2]}^{\reg} \varphi(g) dg = \frac{1}{\Vol([\PGL_2])} \left( \Res_{s=1/2} R(s,\varphi) + \sideset{}{_{\substack{ \alpha_i = 1 \\ n_i = 0 }}} \sum f_i(1) \right). $$
	
	In view of the inclusion (\cite[Remark 2.19]{Wu9})
	$$ \Aut^{\freg}(\GL_2,\omega_1) \cdot \Aut^{\freg}(\GL_2,\omega_2) \subset \Aut^{\freg}(\GL_2,\omega_1 \omega_2), $$
we can consider the following bilinear form. Let $\pi_j, j=1,2$ be two principal series representations with central character $\omega_j$ satisfying $\omega_1 \omega_2 = 1$. Let $V_j$ be the vector space of $\pi_j$ realized in the induced model from $\grB(\ag{A})$ with subspace of smooth vectors $V_j^{\infty}$. We then get a bilinear form
	$$ V_1^{\infty} \times V_2^{\infty} \to \ag{C}, \quad (f_1,f_2) \mapsto \int_{[\PGL_2]}^{\reg} \eis(f_1)(g) \eis(f_2)(g) dg, $$
where $\eis(f_j)$ should be suitably regularized if $\pi_j$ is at a position which creates a pole/zero for the relevant Eisenstein series. We succeeded in \cite[Theorem 3.5]{Wu9} to identify this bilinear form in the induced model. In order to present the result, we need to introduce some extra notations. Precisely, if we identify for any $s \in \ag{C}$ the space of functions $\pi_s$ with $H$, where
	$$ \pi_s := \Ind_{\gp{B}(\ag{A})}^{\GL_2(\ag{A})} (\norm_{\ag{A}}^s, \norm_{\ag{A}}^{-s}), \quad H := \Ind_{\gp{B}(\ag{A}) \cap \gp{K}}^{\gp{K}} 1, $$
then we can regard the intertwining operator $\Intw_s : \pi_s \to \pi_{-s}$ as a map from $H$ to itself. Using the \emph{flat section} map $H \to \pi_s, f \mapsto f_s$, we mean
	$$ (\Intw_s f_s)(a(y)\kappa) =: \norm[y]_{\ag{A}}^{\frac{1}{2}-s} (\Intw_s f)(\kappa), \quad \text{i.e.,} \quad \Intw_s f_s = \left( \Intw_s f \right)_{-s}. $$
	Let $e_0 \in H$ be the constant function taking value $1$. Define
	$$ \ProjP_{\gp{K}}: H \to \ag{C}, \quad f \mapsto \int_{\gp{K}} f(\kappa) d\kappa, $$
where $d\kappa$ is the probability Haar measure on $\gp{K}$. We obtain a map from $H$ to itself
	$$ \widetilde{\Intw}_s := \Intw_s \circ (I - \ProjP_{\gp{K}}e_0), $$
where $I$ is the identity map. Since $\Intw_s$ is ``diagonalizable'', we obtain the Taylor expansion as operators
	$$ \Intw_sf = \sum_{n=0}^{\infty} \frac{s^n}{n!} \Intw_0^{(n)}f, \quad \text{resp.} \quad \widetilde{\Intw}_{1/2+s} f = \sum_{n=0}^{\infty} \frac{s^n}{n!} \widetilde{\Intw}_{1/2}^{(n)}f. $$
\begin{theorem}{(\cite[Theorem 3.5]{Wu9})}
	The regularized integral of the product of two unitary Eisenstein series is computed as:
\begin{itemize}
	\item[(1)] If $\pi_1 = \pi(\xi_1,\xi_2), \pi_2 = \pi(\xi_1^{-1}, \xi_2^{-1})$ resp. $\pi_2 = \pi(\xi_2^{-1}, \xi_1^{-1})$ and $\xi_1 \neq \xi_2$, then
	$$ \int_{[\PGL_2]}^{\reg} \eis(0,f_1) \eis(0,f_2) = \frac{2\lambda_{\F}^{(0)}(0)}{\lambda_{\F}^{(-1)}(0)} \ProjP_{\gp{K}}(f_1f_2) - \ProjP_{\gp{K}}(\Intw_0^{(1)}f_1 \cdot \Intw_0 f_2), \quad \text{resp.} $$
	$$ \frac{\lambda_{\F}^{(0)}(0)}{\lambda_{\F}^{(-1)}(0)} (\ProjP_{\gp{K}}(f_1 \Intw_0 f_2) + \ProjP_{\gp{K}}(f_2 \Intw_0 f_1)) - \ProjP_{\gp{K}}(\Intw_0^{(1)}f_1 \cdot f_2). $$
	\item[(2)] If $\pi_1 = \pi(\xi,\xi), \pi_2 = \pi(\xi^{-1},\xi^{-1})$, then
\begin{align*}
	&\quad \int_{[\PGL_2]}^{\reg} \eis^{(1)}(0,f_1) \eis^{(1)}(0,f_2) = \frac{4\lambda_{\F}^{(2)}(0)}{\lambda_{\F}^{-1}(0)} \ProjP_{\gp{K}}(f_1f_2) + \frac{4\lambda_{\F}^{(2)}(0)}{\lambda_{\F}^{-1}(0)} \ProjP_{\gp{K}}(f_1 \cdot \Intw_0^{(1)} f_2) \\
	&\quad + \frac{\lambda_{\F}^{(0)}(0)}{\lambda_{\F}^{-1}(0)} \ProjP_{\gp{K}}(\Intw_0^{(1)}f_1 \cdot \Intw_0^{(1)}f_2) - \frac{1}{3} \ProjP_{\gp{K}}(\Intw_0^{(3)}f_1 \cdot f_2) - \ProjP_{\gp{K}}(\Intw_0^{(2)}f_1 \cdot \Intw_0^{(1)}f_2).
\end{align*}
\end{itemize}
	Here we have written (\cite[(2.2)]{Wu9})
	$$ \lambda_{\F}(s) := \frac{\Lambda_{\F}(-2s)}{\Lambda_{\F}(2+2s)} = \frac{\lambda_{\F}^{(-1)}(0)}{s} + \sum_{n=0}^{\infty} \frac{s^n}{n!} \lambda_{\F}^{(n)}(0). $$
\label{RIPEisUnitary}
\end{theorem}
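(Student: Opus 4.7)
My plan is to derive both formulas by analytic continuation from a generic two-variable meromorphic function, using the definition $\int^{\reg}\varphi = \int(\varphi - \Reis(\varphi))$ and Zagier's truncation method. The starting point is the classical constant-term formula
$$ \eis_{\gp{N}}(s,f)(a(y)\kappa) = \norm[y]_{\A}^{1/2+s} f(\kappa) + \norm[y]_{\A}^{1/2-s} \frac{\Lambda_{\F}(-2s)}{\Lambda_{\F}(2+2s)} (\Intw_s f)(\kappa), $$
together with the rapid decay of the non-constant Fourier terms. Multiplying two such constant terms (with the two $\pi$'s chosen so that the product has trivial central character) identifies the essential constant term $\varphi_{\gp{N}}^*$ of $\varphi(g;s_1,s_2) := \eis(s_1,f_1)\eis(s_2,f_2)$ as a sum of four contributions whose exponents are $\norm^{1\pm s_1 \pm s_2}$ twisted by the relevant characters built from $\xi_1,\xi_2,\xi$.

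\textbf{Reduction to an unfolding.} Once $\varphi_{\gp{N}}^*$ is known, Definition \ref{FinRegFuncDef} produces $\Reis(\varphi;s_1,s_2)$ as a linear combination of (derivatives of) regularizing Eisenstein series $\eis^{\reg}(\cdot,\cdot)$ indexed by the exponents with non-negative real part. By design $\varphi - \Reis(\varphi)$ is of rapid decay, so I can compute $\int_{[\PGL_2]}(\varphi - \Reis(\varphi))$ by unfolding: unfolding one of the two Eisenstein series against the other reduces the integral to a $\gp{K}$-integral of Iwasawa type, weighted by $\Lambda_{\F}(1\pm 2s_1 \pm 2s_2)$ factors and by $\gp{K}$-pairings of $f_1, f_2$ with their intertwined partners. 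The poles of these $\Lambda_{\F}$ factors at $(s_1,s_2)=(0,0)$ cancel exactly against the contributions of $\Reis(\varphi)$, and the resulting meromorphic function is regular in a neighborhood of the origin. This reduces the theorem to extracting the correct Taylor coefficient at $(s_1,s_2)=(0,0)$.

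\textbf{Extracting the two cases.} For case (1), the hypothesis $\xi_1 \neq \xi_2$ means that only two of the four Bruhat contributions produce the character $\norm_{\A}$ in $\Ex(\varphi)$; the other two remain regular Eisenstein series and integrate to zero over $[\PGL_2]$. The value at $(0,0)$ can therefore be read off after one Taylor step in each variable, which brings in the ratio $\lambda_{\F}^{(0)}(0)/\lambda_{\F}^{(-1)}(0)$ appearing in the stated formula, with the two sub-cases $\pi_2 = \pi(\xi_1^{-1},\xi_2^{-1})$ vs.\ $\pi(\xi_2^{-1},\xi_1^{-1})$ differing only by the pattern of matching $\xi_i$ with $\xi_j^{-1}$. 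For case (2) the unitary induction data coincide on the diagonal, so each of $\eis(s,f_j)$ has a simple pole at $s=0$ coming from $\Lambda_{\F}(-2s)$; replacing $\eis(0,f_j)$ by $\eis^{(1)}(0,f_j)$ cancels these poles, and the claim follows by applying Leibniz to the Taylor expansions $\Intw_s f = \sum s^n \Intw_0^{(n)} f / n!$ and $\lambda_{\F}(s) = \lambda_{\F}^{(-1)}(0)/s + \sum s^n \lambda_{\F}^{(n)}(0)/n!$ through order three.

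\textbf{Main obstacle.} The delicate part is the bookkeeping in case (2): one has a genuine double pole at the origin, and the surviving answer mixes $\lambda_{\F}^{(0)}(0), \lambda_{\F}^{(2)}(0)$ with $\Intw_0^{(1)}, \Intw_0^{(2)}, \Intw_0^{(3)}$. The package $\widetilde{\Intw}_{1/2+s} := \Intw_s \circ (I - \ProjP_{\gp{K}}e_0)$ introduced in \S 2.1 is exactly what makes these cancellations transparent, since the ``dangerous'' scalar direction $e_0$ that would otherwise interact with the pole of $\Lambda_{\F}(-2s)$ has been projected away. The verification that the remaining combination is $\GL_2(\A)$-invariant (a consistency check for the formula) should follow from the $\GL_2(\A)$-invariance of $\int^{\reg}$ on $\Aut^{\freg}(\GL_2,1)$ noted after (\ref{ReisDef}), applied to translates of $\varphi$.
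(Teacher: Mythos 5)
Note first: the paper does not prove this statement. It is quoted verbatim as \cite[Theorem 3.5]{Wu9}, and \S 2.2 is explicitly described as a ``recall and summarize\dots without proofs,'' so there is no in-paper argument to compare yours against; I evaluate your sketch against the method of \cite{Wu9}. Your framework (compute the essential constant term of $\eis(s_1,f_1)\eis(s_2,f_2)$, form $\Reis(\varphi)$, apply $\int^{\reg}\varphi=\int(\varphi-\Reis(\varphi))$, Taylor-expand at the origin) is the correct one, but two details are off. First, the scattering ratio in your constant-term formula should be $\Lambda_{\F}(1-2s)/\Lambda_{\F}(1+2s)$, which is \emph{regular} at $s=0$ (the numerator and denominator each have a simple pole there, and they cancel), not $\lambda_{\F}(s)=\Lambda_{\F}(-2s)/\Lambda_{\F}(2+2s)$, which has a pole at $s=0$. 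The function $\lambda_{\F}$ enters only as the shift of that ratio by $1/2$, i.e.\ in Laurent-expanding the $\Reis$-subtraction around the exponent $\alpha_j=1/2$. This error propagates into case (2): you claim each $\eis(s,f_j)$ has a simple pole at $s=0$ to be cancelled by taking $\eis^{(1)}$, but with the correct ratio $\eis(s,f_j)$ for $f_j\in\pi(\xi,\xi)$ is regular at $s=0$, so that is not the reason for $\eis^{(1)}(0,\cdot)$. Second, the ``unfolding'' is not literal: after subtracting $\Reis(\varphi)$ no factor $\eis(s_i,f_i)$ survives to unfold against, and what actually factors as a product of local zeta integrals is the auxiliary Mellin transform $R(s,\varphi)$ of $\varphi_{\gp{N}}-\varphi_{\gp{N}}^*$ (a sum of products of non-trivial Whittaker coefficients), the regularized integral being read off from its residue at $s=1/2$.

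The genuine gap is that the Taylor-coefficient extraction — the entire content of the theorem — is never carried out. The coefficients $\tfrac{2\lambda_{\F}^{(0)}(0)}{\lambda_{\F}^{(-1)}(0)}$, $\tfrac{4\lambda_{\F}^{(2)}(0)}{\lambda_{\F}^{(-1)}(0)}$, $-\tfrac{1}{3}$ and the precise pattern of $\Intw_0^{(k)}$'s arise from multiplying the Laurent expansions of $\lambda_{\F}$ and of $\Intw_s$ and cancelling the resulting poles against the $\Reis(\varphi)$-contribution; that Leibniz computation (to order one in case (1), order three in case (2)) is what produces the stated formulas and is not present in your sketch. Also, your remark about $\widetilde{\Intw}$ is misplaced: that operator is introduced in \S 2.2 for the triple-product Theorem~\ref{RegTripEis}, not for the present statement, whose formulas are written directly in $\Intw_0^{(k)}$.
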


\noindent In \cite{Wu2}, we have used and extended the above theory to a special case of regularized triple product of Eisenstein series, which includes a generalization of $R(s,\varphi)$ to $R(s,\varphi;f)$ for finitely regularizable $\varphi \in \Cont^{\infty}(\GL_2,\omega)$ and $f \in \pi(\xi_1,\xi_2)$ with $\xi_1\xi_2\omega=1$. Namely, it is the analytic continuation of (see \cite[Proposition 2.5]{Wu2})
	$$ R(s,\varphi;f) := \int_{\F^{\times} \backslash \A^{\times}} \int_{\gp{K}} (\varphi_{\gp{N}}-\varphi_{\gp{N}}^*)(a(y)\kappa) f(\kappa) \xi_1(y) \norm[y]_{\A}^{s-1/2} d\kappa d^{\times}y. $$
This will be further extended to other relevant cases in this paper in \S \ref{FERTPF}. For the moment, we simply record \cite[Theorem 2.7]{Wu2} for convenience of the reader.

\begin{theorem}
	Let $f_j \in \pi(1,1), j=1,2,3$. Then for any $n \in \ag{N}$
	$$ \int_{[\PGL_2]}^{\reg} \eis^*(0,f_1) \cdot \eis^*(0,f_2) \cdot \eis^{\reg,(n)}(\frac{1}{2},f_3) $$
is the sum of
	$$ \left( \frac{\partial^n R}{\partial s^n} \right)^{\hol} \left( \frac{1}{2}, \eis^*(0,f_1) \cdot \eis^*(0,f_2); f_3 \right) $$
and a weighted sum with coefficients depending only on $\lambda_{\F}(s)$ of
	$$ \ProjP_{\gp{K}}(\Intw_0^{(l)}f_1 \cdot f_2) \ProjP_{\gp{K}}(f_3), \quad 0 \leq l \leq 3; $$
	$$ \ProjP_{\gp{K}}(f_1\cdot f_2 \cdot \widetilde{\Intw}_{1/2}^{(l)} f_3), \quad 0 \leq l \leq \max(2,n) \quad \& \quad l = n+3; $$
	$$ \ProjP_{\gp{K}}((f_1 \Intw_0 f_2 + f_2 \Intw_0 f_1) \cdot \widetilde{\Intw}_{1/2}^{(l)} f_3), \quad 0 \leq l \leq \max(1,n) \quad \& \quad l = n+2; $$
	$$ \ProjP_{\gp{K}}(\Intw_0 f_1\cdot \Intw_0 f_2 \cdot \widetilde{\Intw}_{1/2}^{(l)} f_3), \quad 0 \leq l \leq n \quad \& \quad l = n+1. $$
\label{RegTripEis}
\end{theorem}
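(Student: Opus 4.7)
The plan is to follow the unfolding strategy developed in \cite[\S 3]{Wu9} and extended in \cite[\S 2.2]{Wu2}, treating $\varphi := \eis^*(0,f_1) \cdot \eis^*(0,f_2)$ as a fixed element of $\Aut^{\freg}(\GL_2,1)$ and $\eis^{\reg,(n)}(1/2,f_3)$ as the spectral factor. First I would separate $\eis^{\reg}(s,f_3)$ into its Eisenstein part $\eis(s,f_3)$ and the correction term $-\tfrac{\Lambda_{\F}(1-2s)}{\Lambda_{\F}(1+2s)} \ProjP_{\gp{K}}(f_3)$ coming from (\ref{RegEisDef}). Unfolding the former against $\varphi$ using the Bruhat decomposition yields, for $\Re s$ large, the Mellin-type integral
\begin{equation*}
	R(s,\varphi;f_3) := \int_{\ag{A}^{\times} \times \gp{K}} \varphi_{\gp{N}}(a(y)\kappa) \cdot f_{3,s}(a(y)\kappa) \norm[y]_{\ag{A}}^{s-1/2} d^{\times}y \, d\kappa,
\end{equation*}
which is meromorphically continued by splitting $\varphi_{\gp{N}} = \varphi_{\gp{N}}^* + (\varphi_{\gp{N}} - \varphi_{\gp{N}}^*)$.

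Second, the tail $\varphi_{\gp{N}} - \varphi_{\gp{N}}^*$ is of rapid decay by Definition \ref{FinRegFuncDef}, so its contribution is an entire function of $s$ whose $n$-th derivative at $s=1/2$ is exactly the advertised term $(\partial_s^n R)^{\hol}(1/2, \varphi; f_3)$. For the essential constant term $\varphi_{\gp{N}}^*$, I would compute it as the product of the constant terms of $\eis^*(0,f_1)$ and $\eis^*(0,f_2)$: each of these is a polynomial in $\log\norm[y]_{\ag{A}}$ with $\norm[y]_{\ag{A}}^{1/2}$-coefficients that are linear combinations of $f_j$ and $\Intw_0 f_j$ (weighted by the Laurent coefficients of $\lambda_{\F}(s)$). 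The resulting three distinct $\gp{K}$-parts $f_1 f_2$, $f_1 \Intw_0 f_2 + f_2 \Intw_0 f_1$, $\Intw_0 f_1 \cdot \Intw_0 f_2$ match the three families of terms in the conclusion.

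Third, I would perform the Mellin integration on each piece of $\varphi_{\gp{N}}^*$ separately. Against $f_{3,s}(a(y)\kappa) = \norm[y]_{\ag{A}}^{1/2+s} f_3(\kappa)$ (respectively $\norm[y]_{\ag{A}}^{1/2-s}(\Intw_s f_3)(\kappa)$ for the other Iwasawa piece of the constant term of $\eis(s,f_3)$), the integral in $y$ collapses to a rational function in $s$ with poles at $s = \pm 1/2$ modulated by the powers of $\log\norm[y]_{\ag{A}}$. Taking $n$-th derivatives at $s=1/2$ and absorbing the pole at $s=-1/2$ (shifted to $s=1/2$ by the functional equation) into $\widetilde{\Intw}_{1/2}^{(l)} f_3$ via the flat section identity $\Intw_s f_s = (\Intw_s f)_{-s}$ explains the appearance of $\widetilde{\Intw}_{1/2}^{(l)} f_3$ with $l$ in the stated ranges: the ``low'' range $0 \leq l \leq \max(2,n)$ records the Taylor-Taylor cross products, while the single ``high'' value $l = n+3$ (respectively $n+2$, $n+1$) records the residue produced by the colliding pole. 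The correction term from $\eis^{\reg}$ instead produces $\ProjP_{\gp{K}}(f_3)$ times regularized bilinear pairings $\int^{\reg}_{[\PGL_2]} \eis^*(0,f_1) \eis^*(0,f_2)$ and its derivatives in an auxiliary parameter, which by Theorem \ref{RIPEisUnitary} are precisely the weighted sums of $\ProjP_{\gp{K}}(\Intw_0^{(l)} f_1 \cdot f_2)$ for $0 \leq l \leq 3$.

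The main obstacle will be the combinatorial bookkeeping: verifying that no other pole patterns arise and that the derivative counts agree exactly with the stated ranges. This requires carefully organizing the Taylor expansions of $\Lambda_{\F}(1 \pm 2s)$, $\lambda_{\F}(s)$, and $\Intw_s$ around $s=1/2$ respectively $s=0$, and matching the powers of $(s-1/2)$ produced by $\log\norm[y]_{\ag{A}}$-factors against the residues absorbed into $\widetilde{\Intw}_{1/2}^{(l)}$. Once this is done, the dependence of the coefficients on $\lambda_{\F}(s)$ alone follows because the only global ingredients entering the computation are the functional equations of $\eis^*$ and the regularization defining $\eis^{\reg}$, both of which are built from $\lambda_{\F}$.
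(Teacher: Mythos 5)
The theorem you are proving is not actually proved in the present paper: it is recorded verbatim from \cite[Theorem 2.7]{Wu2}, so there is no in-paper proof to compare against, only the usage of the statement elsewhere and the sibling results Theorem~\ref{RIPEisUnitary} and Propositions~\ref{TPFReguTerm}, \ref{TPFReguTerm2}. Judged against the framework these objects live in, your high-level strategy---split $\eis^{\reg}(s,f_3)$ into the Eisenstein part and the $\ProjP_{\gp{K}}(f_3)$-correction, unfold the Eisenstein part via the regularized Rankin--Selberg construction, then split $\varphi_{\gp{N}}=\varphi_{\gp{N}}^*+(\varphi_{\gp{N}}-\varphi_{\gp{N}}^*)$ so that the tail produces $R^{\hol}$ and the essential constant term produces the explicit $\ProjP_{\gp{K}}$-terms---is indeed the expected one. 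The proof of Proposition~\ref{TPFReguTerm} in the paper uses exactly the complementary split (first pulling off $\Reis(f_1,f_2)$ and then invoking the extended Rankin--Selberg), which suggests that both organizations are in play; yours is legitimate but you would need Proposition~2.5/2.6 of~\cite{Wu2} as the rigorous anchor for your ``$R(s,\varphi;f_3)$'' integral, and those do not follow for free from Definition~\ref{FinRegFuncDef}.

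There are, however, two genuine gaps. First, you characterize the $\norm[y]_{\A}^{1/2}$-coefficients of the essential constant term of $\eis^*(0,f_j)$ as ``linear combinations of $f_j$ and $\Intw_0 f_j$.'' On $\pi(1,1)$ the intertwining operator $\Intw_0$ is scalar (indeed $\Intw_0=-\mathrm{id}$, which is why $\eis(0,f_j)=0$ and $\eis^*(0,f_j)=\tfrac{\Lambda_{\F}^*}{2}\eis^{(1)}(0,f_j)$ is finite), so $\Intw_0 f_j$ carries no new information; the genuinely new $\gp{K}$-function is $\Intw_0^{(1)}f_j$, and the constant term is in fact $\norm[y]^{1/2}\bigl(\Lambda_{\F}^*\log\norm[y]\,f_j + \tfrac{\Lambda_{\F}^*}{2}\Intw_0^{(1)}f_j\bigr)$ --- exactly the form that appears in the proof of Lemma~\ref{ReguTermBd2}. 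Your attempted matching of ``three distinct $\gp{K}$-parts'' to the three families in the statement therefore rests on an incorrect identification and would collapse when written out. Second, and more seriously, the $l$-ranges $0\le l\le\max(2,n)$ together with $l=n+3$ (and $\max(1,n)$, $l=n+2$; resp.\ $n$, $l=n+1$) are exactly the substance of the theorem, and you explicitly defer this (``the main obstacle will be the combinatorial bookkeeping'') without carrying it out. The sentence asserting that the low range records ``Taylor--Taylor cross products'' and the singleton records ``the residue produced by the colliding pole'' is not an argument: nothing in the sketch keeps track of how the orders of the poles of $\Lambda_{\F}(1-2s)$ at $s=1/2$, the powers of $\log\norm[y]_{\A}$ of degree $0,1,2$ in $\varphi_{\gp{N}}^*$, and the Taylor order $n$ in $f_3$ combine, and it is precisely this three-way count that yields the specific shifts $n+1$, $n+2$, $n+3$. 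As written, the proposal is a plausible plan rather than a proof.
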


	\subsection{Explicit Decomposition of Periods}
	
	Let $(\pi_j, V_j)$ be three irreducible automorphic representations of $\GL_2(\A)$ whose central characters $\omega_j$ satisfy $\sideset{}{_{j=1}^3} \prod \omega_j = 1$. The space of $\GL_2(\A)$-invariant functionals $\ell: V_1 \times V_2 \times V_3 \to \C$ has dimension at most $1$. If one of $\pi_j$ is an Eisenstein series, then one can apply the Rankin-Selberg method to unfold $\ell$, yielding a decomposition of $\ell$ as a product of local $\GL_2(\F_v)$-invariant functionals $\ell_v$. We will deal with situations where at least two of $\pi_j$ are Eisenstein series. Hence the global functional has multiple decompositions into local terms. It is necessary to specify which decomposition we use in the argument. We give a list of such specification below, which are best suited for our purpose of estimation in most situations. Only occasionally should we need to change the local trilinear forms. The relevant tool is given in \S 5.2.

		\subsubsection{On $\pi \times \pi(1,1) \times \pi(1,\omega^{-1})$.}

	For $\varphi \in \pi, f_2 \in \pi(1,1), f_3 \in \pi(1,\omega^{-1})$, we take the explicit decomposition of period:
\begin{equation}
	\int_{[\PGL_2]} \varphi \cdot \eis^*(0,f_2) \cdot \eis^{\sharp}(0,f_3) = L(\frac{1}{2},\pi)^2 \cdot \prod_v \ell_v(W_{\varphi,v}, f_{2,v}, f_{3,v})
\label{RSbeforeCS}
\end{equation}
where the local trilinear forms are defined by
\begin{align*}
	&\ell_v(\cdots) = \int_{\F_v^{\times} \times \gp{K}_v} W_{\varphi,v}(a(y)\kappa) W_{f_2,v}^*(a(-y)\kappa) f_{3,v}(\kappa) \norm[y]_v^{-\frac{1}{2}} d^{\times}y d\kappa, \quad v \mid \infty; \\
	&\ell_{\vp}(\cdots) = \frac{L(1,\omega_{\vp})}{L(1/2,\pi_{\vp})^2} \int_{\F_{\vp}^{\times} \times \gp{K}_{\vp}} W_{\varphi,\vp}(a(y)\kappa) W_{f_2,\vp}^*(a(-y)\kappa) f_{3,\vp}(\kappa) \norm[y]_{\vp}^{-\frac{1}{2}} d^{\times}y d\kappa, \quad \vp < \infty;
\end{align*}
so that $\ell_{\vp}=1$ for all but finitely many $\vp$.

		\subsubsection{On $\pi' \times \overline{\pi(1,\omega^{-1})} \times \pi(1,\omega^{-1})$.}

	Let $\pi'$ be a cuspidal representation of $\PGL_2$. For $\varphi \in \pi', f_3, f_3' \in \pi(1,\omega^{-1})$, we take the explicit decomposition of period:
\begin{equation}
	\int_{[\PGL_2]} \varphi \cdot \overline{\eis^{\sharp}(0,f_3)} \cdot \eis^{\sharp}(0,f_3') = L(\frac{1}{2},\pi') L(\frac{1}{2},\pi' \otimes \omega) \cdot \prod_v \ell_v(W_{\varphi,v}, \overline{f_{3,v}}, f_{3,v}')
\label{CuspPerD}
\end{equation}
where the local trilinear forms are defined by
\begin{align*}
	&\ell_v(\cdots) = \int_{\F_v^{\times} \times \gp{K}_v} W_{\varphi,v}(a(y)\kappa) \overline{W_{f_3,v}(a(y)\kappa)} f_{3,v}'(\kappa) \norm[y]_v^{-\frac{1}{2}} d^{\times}y d\kappa, \quad v \mid \infty; \\
	&\ell_{\vp}(\cdots) = \frac{L(1,\omega_{\vp})}{L(1/2,\pi_{\vp})L(1/2,\pi_{\vp} \otimes \omega_{\vp})} \int_{\F_{\vp}^{\times} \times \gp{K}_{\vp}} W_{\varphi,\vp}(a(y)\kappa) \overline{W_{f_3,\vp}^*(a(y)\kappa)} f_{3,\vp}'(\kappa) \norm[y]_{\vp}^{-\frac{1}{2}} d^{\times}y d\kappa, \quad \vp < \infty;
\end{align*}
so that $\ell_{\vp}=1$ for all but finitely many $\vp$.

		\subsubsection{On $\pi(\xi \norm_{\ag{A}}^s, \xi \norm_{\ag{A}}^{-s}) \times \overline{\pi(1,\omega^{-1})} \times \pi(1,\omega^{-1})$.}

	Let $\xi$ be a character of $\F^{\times} \backslash \ag{A}^{(1)}$, trivially extended to a Hecke character. Let $\Phi \in \pi(\xi,\xi^{-1})$, to which is associated a flat section $\Phi_s \in \pi(\xi \norm_{\ag{A}}^s, \xi \norm_{\ag{A}}^{-s})$. For $f_3, f_3' \in \pi(1,\omega^{-1})$ and $\Re s \in (-1/2,1/2)$, we have by \cite[Proposition 2.5]{Wu2}
\begin{align}
	&\quad \int_{[\PGL_2]} \eis(s,\Phi) \cdot \left( \overline{\eis^{\sharp}(0,f_3)} \cdot \eis^{\sharp}(0,f_3') - \Reis(\eis^{\sharp}(0,f_3) \eis^{\sharp}(0,f_3')) \right) \nonumber \\
	&= \frac{L(\frac{1}{2}+s, \xi)^2 L(\frac{1}{2}+s, \xi\omega) L(\frac{1}{2}+s, \xi\omega^{-1})}{L(1+2s, \xi^2)} \prod_v \ell_v(s; \Phi_v, \overline{f_{3,v}}, f_{3,v}') \label{EisPerD}
\end{align}
where the local factors are defined by
\begin{align*}
	&\ell_v(\cdots) = \int_{\F_v^{\times} \times \gp{K}_v} \overline{W_{f_3,v}(a(y)\kappa)} W_{f_3',v}(a(y)\kappa) \Phi_v(\kappa)\xi_v(y) \norm[y]_v^{s-\frac{1}{2}} d^{\times}y d\kappa, \quad v \mid \infty; \\
	&\ell_{\vp}(\cdots) = \frac{L(1+2s, \xi_{\vp}^2)}{L(\frac{1}{2}+s, \xi_{\vp})^2 L(\frac{1}{2}+s, \xi_{\vp}\omega_{\vp}) L(\frac{1}{2}+s, \xi_{\vp}\omega_{\vp}^{-1})} \cdot \\
	&\quad \quad \int_{\F_{\vp}^{\times} \times \gp{K}_{\vp}} \overline{W_{f_3,\vp}^*(a(y)\kappa)} W_{f_3',\vp}^*(a(y)\kappa) \Phi_{\vp}(\kappa) \xi_{\vp}(y) \norm[y]_{\vp}^{s-\frac{1}{2}} d^{\times}y d\kappa, \quad \vp < \infty;
\end{align*}
so that $\ell_{\vp}=1$ for all but finitely many $\vp$. As a $\GL_2(\F_v)$-invariant trilinear form, $\ell_v$ is not always convenient for our purpose of estimation. We shall also need $\tilde{\ell}_v(\Phi_v, \overline{f_{3,v}}, f_{3,v}')$ defined by
\begin{align*}
	&\tilde{\ell}_v(\cdots) = \int_{\F_v^{\times} \times \gp{K}_v} W_{\Phi_{s,v}}(a(y)\kappa) \overline{W_{f_3,v}(a(y)\kappa)} f_{3,v}'(\kappa) \norm[y]_v^{-\frac{1}{2}} d^{\times}y d\kappa, \quad v \mid \infty; \\
	&\tilde{\ell}_{\vp}(\cdots) = \frac{L(1, \omega_{\vp})}{L(\frac{1}{2}+s, \xi_{\vp}) L(\frac{1}{2}-s, \xi_{\vp}^{-1}) L(\frac{1}{2}+s, \xi_{\vp}\omega_{\vp}) L(\frac{1}{2}-s, \xi_{\vp}^{-1}\omega_{\vp})} \cdot \\
	&\quad \quad \int_{\F_{\vp}^{\times} \times \gp{K}_{\vp}} W_{\Phi_{s,\vp}}^*(a(y)\kappa) \overline{W_{f_3,\vp}^*(a(y)\kappa)} f_{3,\vp}'(\kappa) \norm[y]_{\vp}^{-\frac{1}{2}} d^{\times}y d\kappa, \quad \vp < \infty;
\end{align*}
so that $\tilde{\ell}_{\vp}=1$ for all but finitely many $\vp$.

		\subsubsection{One-dimensional Projection of $\overline{\pi(1,\omega^{-1})} \times \pi(1,\omega^{-1})$. }

	Let $\chi$ be a quadratic Hecke character unramified at every finite place (i.e., a quadratic class group character). For $f_3, f_3' \in \pi(1,\omega^{-1})$, we take the explicit decomposition of the regularized integral (c.f. \cite[\S 2.3]{Wu9})
\begin{align}
	&\quad \int_{[\PGL_2]}^{\reg} \overline{\eis^{\sharp}(0,f_3)} \cdot \eis^{\sharp}(0,f_3') \cdot \chi \circ \det \nonumber \\
	&= \Res_{s=\frac{1}{2}} \frac{L(\frac{1}{2}+s, \chi)^2 L(\frac{1}{2}+s, \chi\omega) L(\frac{1}{2}+s, \chi\omega^{-1})}{\zeta_{\F}(1+2s)} \prod_v \ell_v(s; \overline{f_{3,v}}, f_{3,v}',\chi_v) \label{OneDimProj}
\end{align}
where the local factors are defined by
\begin{align*}
	&\ell_v(\cdots) = \int_{\F_v^{\times} \times \gp{K}_v} \overline{W_{f_3,v}(a(y)\kappa)} W_{f_3',v}(a(y)\kappa) \chi_v(y) \norm[y]_v^{s-\frac{1}{2}} d^{\times}y d\kappa, \quad v \mid \infty; \\
	&\ell_{\vp}(\cdots) = \frac{\zeta_{\vp}(1+2s)}{L(\frac{1}{2}+s, \chi_{\vp})^2 L(\frac{1}{2}+s, \chi_{\vp}\omega_{\vp}) L(\frac{1}{2}+s, \chi_{\vp}\omega_{\vp}^{-1})} \cdot \\
	&\quad \quad \int_{\F_{\vp}^{\times} \times \gp{K}_{\vp}} \overline{W_{f_3,\vp}^*(a(y)\kappa)} W_{f_3',\vp}^*(a(y)\kappa) \chi_{\vp}(y) \norm[y]_{\vp}^{s-\frac{1}{2}} d^{\times}y d\kappa, \quad \vp < \infty;
\end{align*}
so that $\ell_{\vp}=1$ for all but finitely many $\vp$, and all $\ell_v$ are holomorphic at $s=1/2$.
\begin{remark}
	The pole at $s=1/2$ of the global $L$-factor in (\ref{OneDimProj}) has order equal to $0$ if $\chi \neq 1, \omega \neq \chi$; $2$ if $\chi \neq 1, \omega = \chi$ or $\chi = 1, \omega \neq \chi$; $4$ if $\chi=\omega=1$. Hence if $\Cond(\omega)$ is sufficiently large (depending only on $\F$), (\ref{OneDimProj}) is non-vanishing only if $\chi=1$. We also note $L^{(k)}(1,\chi\omega^{\pm 1}) \ll_{\F,\epsilon} \Cond(\omega)^{\epsilon}$ for $0 \leq k \leq 4$.
\label{OneDimProjRk}
\end{remark}

		\subsubsection{On Regularized $\pi(1,1) \times \overline{\pi(1,1)} \times \pi(\norm_{\ag{A}}^{1/2+s}, \norm_{\ag{A}}^{-(1/2+s)})$}
		
	For $\tilde{f}_2, f_2 \in \pi(1,1), \tilde{f}_3, f_3 \in \pi(1,\omega^{-1})$, let $f \in \{ (\tilde{f}_3 \cdot \overline{f_3}) \mid_{\gp{K}}, (\IntwR_0 \tilde{f}_3 \cdot \overline{\IntwR_0 f_3}) \mid_{\gp{K}} \}$, regarded as an element in $\pi(1,1)$. Write $\widetilde{\eis}_2^* = \eis^*(0,\tilde{f}_2)$ and $\widetilde{\eis}_3^{\sharp} = \eis^{\sharp}(0,\tilde{f}_3)$. We take the explicit decomposition of the extended Rankin-Selberg integral (c.f. \cite[Proposition 2.5]{Wu2})
\begin{equation}
	R(\frac{1}{2}+s, \widetilde{\eis}_2^* \overline{\eis_2^*}; f) = \frac{\zeta_{\F}(1+s)^4}{\zeta_{\F}(2+2s)} \cdot \prod_v \ell_v(s; \tilde{f}_{2,v}, \overline{f_{2,v}}; f_v)
\label{Regu1PerD}
\end{equation}
where the local factors are defined by
\begin{align*}
	&\ell_v(\cdots) = \int_{\F_v^{\times} \times \gp{K}_v} W_{\tilde{f}_2,v}^*(a(y)\kappa) \overline{W_{f_2,v}^*(a(y)\kappa)} f_v(\kappa) \norm[y]_v^s d^{\times}y d\kappa, \quad v \mid \infty; \\
	&\ell_{\vp}(\cdots) = \frac{\zeta_{\vp}(2+2s)}{\zeta_{\vp}(1+s)^4} \cdot \int_{\F_{\vp}^{\times} \times \gp{K}_{\vp}} W_{\tilde{f}_2,\vp}^*(a(y)\kappa) \overline{W_{f_2,\vp}^*(a(y)\kappa)} f_{\vp}(\kappa) \norm[y]_{\vp}^s d^{\times}y d\kappa, \quad \vp < \infty;
\end{align*}
so that $\ell_{\vp}=1$ for all but finitely many $\vp$, and all $\ell_v$ are holomorphic at $s=0$.

		\subsubsection{On Regularized $\pi(1,1) \times \overline{\pi(1,1)} \times \pi(\omega^{-1}\norm_{\ag{A}}^{1/2+s}, \omega\norm_{\ag{A}}^{-(1/2+s)})$ resp. $\pi(\omega\norm_{\ag{A}}^{1/2+s}, \omega^{-1}\norm_{\ag{A}}^{-(1/2+s)})$}
		
	Let notations be as in the previous case. Consider $f = (\IntwR_0 \tilde{f}_3 \cdot \overline{f_3}) \mid_{\gp{K}}$ resp. $(\tilde{f}_3 \cdot \overline{\IntwR_0 f_3}) \mid_{\gp{K}}$, regarded as an element in $\pi(\omega^{-1},\omega)$ resp. $\pi(\omega,\omega^{-1})$. We take
\begin{equation}
	R(\frac{1}{2}+s, \widetilde{\eis}_2^* \overline{\eis_2^*}; f) = \frac{L(1+s,\omega^{\mp})^4}{L(2+2s,\omega^{\mp 2})} \cdot \prod_v \ell_v(s; \tilde{f}_{2,v}, \overline{f_{2,v}}; f_v)
\label{Regu2PerD}
\end{equation}
where the local factors are defined by
\begin{align*}
	&\ell_v(\cdots) = \int_{\F_v^{\times} \times \gp{K}_v} W_{\tilde{f}_2,v}^*(a(y)\kappa) \overline{W_{f_2,v}^*(a(y)\kappa)} f_v(\kappa) \omega_v^{\mp}(y)\norm[y]_v^s d^{\times}y d\kappa, \quad v \mid \infty; \\
	&\ell_{\vp}(\cdots) = \frac{L_{\vp}(2+2s, \omega_{\vp}^{\mp 2})}{L_{\vp}(1+s, \omega_{\vp}^{\mp})^4} \cdot \int_{\F_{\vp}^{\times} \times \gp{K}_{\vp}} W_{\tilde{f}_2,\vp}^*(a(y)\kappa) \overline{W_{f_2,\vp}^*(a(y)\kappa)} f_{\vp}(\kappa) \omega_{\vp}^{\mp}(y)\norm[y]_{\vp}^s d^{\times}y d\kappa, \quad \vp < \infty;
\end{align*}
so that $\ell_{\vp}=1$ for all but finitely many $\vp$, and all $\ell_v$ are holomorphic at $s=0$.

		\subsubsection{On Regularized $\pi(1,\omega^{-1}) \times \overline{\pi(1,\omega^{-1})} \times \pi(\norm_{\ag{A}}^{1/2+s}, \norm_{\ag{A}}^{-(1/2+s)})$}
		
	For $\tilde{f}_2, f_2 \in \pi(1,1), \tilde{f}_3, f_3 \in \pi(1,\omega^{-1})$, let $f = (\tilde{f}_2 \cdot \overline{f_2}) \mid_{\gp{K}}$, regarded as an element in $\pi(1,1)$. Write $\widetilde{\eis}_3^{\sharp} = \eis^{\sharp}(0,\tilde{f}_3)$. We take the explicit decomposition of the extended Rankin-Selberg integral (c.f. \cite[Proposition 2.5]{Wu2})
\begin{equation}
	R(\frac{1}{2}+s, \widetilde{\eis}_3^{\sharp} \overline{\eis_3^{\sharp}}; f) = \frac{\zeta_{\F}(1+s)^2 L(1+s,\omega^{-1}) L(1+s, \omega)}{\zeta_{\F}(2+2s)} \cdot \prod_v \ell_v(s; \tilde{f}_{3,v}, \overline{f_{3,v}}; f_v)
\label{Regu3PerD}
\end{equation}
where the local factors are defined by
\begin{align*}
	&\ell_v(\cdots) = \int_{\F_v^{\times} \times \gp{K}_v} W_{\tilde{f}_3,v}^*(a(y)\kappa) \overline{W_{f_3,v}^*(a(y)\kappa)} f_v(\kappa) \norm[y]_v^s d^{\times}y d\kappa, \quad v \mid \infty; \\
	&\ell_{\vp}(\cdots) = \frac{\zeta_{\vp}(2+2s)}{\zeta_{\vp}(1+s)^2 L_{\vp}(1+s, \omega_{\vp}^{-1}) L_{\vp}(1+s, \omega_{\vp})} \cdot \\
	&\quad \quad \int_{\F_{\vp}^{\times} \times \gp{K}_{\vp}} W_{\tilde{f}_3,\vp}^*(a(y)\kappa) \overline{W_{f_3,\vp}^*(a(y)\kappa)} f_{\vp}(\kappa) \norm[y]_{\vp}^s d^{\times}y d\kappa, \quad \vp < \infty;
\end{align*}
so that $\ell_{\vp}=1$ for all but finitely many $\vp$, and all $\ell_v$ are holomorphic at $s=0$.

	\subsection{Proof of Main Result: First Reduction}
	
	In this subsection, we outline the ``big steps'' of the proof, which are global in nature. Basically, there is a main global triple product period (see Proposition \ref{MainLocEst} (2) below), which serves as the pivot: we bound it from below in terms of targeted $L$-value $\norm[L(1/2,\pi)]^2$, and we bound it from above with some method of amplification. The lower bound follows directly from the local lower bounds, which will be given in \S 3, together with the explicit choice of test functions. The upper bound is achieved by the upper bounds of several other global periods, whose proofs will be given in \S 4. These global periods also decompose as products of local terms. Their upper bounds follow from the corresponding local bounds, which are again given in \S 3.

	Take $\varphi \in \pi, f_2 \in \pi(1,1)$ and $f_3 \in \pi(1,\omega^{-1})$, which will be specified in Section \ref{ChoicesNA} \& \ref{ChoicesA}. In particular, the local Whittaker functions $W_{\varphi,v}$ will be specified at each $v \mid \infty$, while $W_{\varphi,\vp}$ will be specified up to a scalar at $\vp < \infty$. We can always adjust these scalars to make the global norm $\Norm[\varphi]=1$. Recall the relation of the local and global norms \cite[Lemma 2.10]{Wu14}
	$$ 1 = \Norm[\varphi]^2 = 2 L(1,\pi,\mathrm{Ad}) \prod_{v \mid \infty} \frac{\zeta_v(2)}{\zeta_v(1)} \Norm[W_{\varphi,v}]^2 \prod_{\vp < \infty} \frac{\zeta_{\vp}(2)\Norm[W_{\varphi,\vp}]^2}{L(1,\pi_{\vp} \times \bar{\pi}_{\vp})}. $$
	
\begin{proposition}
	(1) With our choice of test vectors and $\ell_v$ defined in (\ref{RSbeforeCS}), we have
	$$ \prod_{v \mid \infty} \frac{\extnorm{ \ell_v(W_{\varphi,v}, f_{2,v}, f_{3,v}) }}{\Norm[W_{\varphi,v}]} \sqrt{\frac{\zeta_v(1)}{\zeta_v(2)}} \prod_{\vp < \infty} \frac{\extnorm{ \ell_{\vp}(W_{\varphi,\vp}, f_{2,\vp}, f_{3,\vp}) }}{\Norm[W_{\varphi,\vp}]} \sqrt{\frac{L_{\vp}(1,\pi_{\vp} \times \bar{\pi}_{\vp})}{\zeta_{\vp}(2)}} \gg_{\F,\epsilon} \Cond(\pi)^{-\frac{1}{2}-\epsilon}. $$
	(2) Consequently, we have
	$$ \extnorm{\int_{[\PGL_2]} \varphi \cdot \eis^*(0,f_2) \cdot \eis^{\sharp}(0,f_3)} \gg_{\F,\epsilon} \Cond(\pi)^{-\frac{1}{2}-\epsilon} \extnorm{L \left( \frac{1}{2},\pi \right)}^2. $$
\label{MainLocEst}
\end{proposition}
\begin{proof}
	(1) follows from Lemma \ref{LocLowerBdNA} and \ref{LocLowerBdA}. (2) follows from (1) and (\ref{RSbeforeCS}) by inserting $L(1,\pi,\mathrm{Ad}) \ll_{\epsilon} \Cond(\pi)^{\epsilon}$ (see \cite{HL94} and \cite[Lemma 3]{BH10}).
\end{proof}

	Write $\eis_2^* = \eis^*(0,f_2)$ resp. $\eis_3^{\sharp} = \eis^{\sharp}(0,f_3)$ for simplicity of notation.
\begin{lemma}
	Recall the operator $\sigma_0$ in Definition \ref{NormHeckeOp}. It has the following basic properties.
\begin{itemize}
	\item[(1)] $\varphi$ is an eigenvector of the dual operator $\sigma_0^{\vee} = (\alpha_0^{-1} T_0(1) - \lambda_0(0))^2 (\alpha_0^{-1} T_0(1) - \tilde{\lambda}_0(0))^2$ (see Proposition \ref{DualHeckeOp}) with eigenvalue $R_0$ satisfying $\norm[R_0] \asymp 1$.
	\item[(2)] $\sigma_0 (\eis_2^* \eis_3^{\sharp}) \in \mathcal{A}(\GL_2, \omega^{-1})$ is of rapid decay (see Proposition \ref{DualHeckeOp}).
\end{itemize}
\label{RDRegProd}
\end{lemma}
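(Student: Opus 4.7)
The plan is to treat the two parts separately, each reducing to a local computation at the single place $\vp_0$.

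For (1), $\varphi$ will be chosen spherical at $\vp_0$ in Section \ref{ChoicesNA}, and is therefore an eigenvector of $T_0(1)$ with eigenvalue $\lambda_\pi = \frac{q_0^{-1/2}}{1+q_0^{-1}}(\alpha_\pi + \beta_\pi)$, where $\alpha_\pi, \beta_\pi$ are the Satake parameters of $\pi_{\vp_0}$ with $\alpha_\pi \beta_\pi = \alpha_0$. Proposition \ref{DualHeckeOp} then gives $\sigma_0^\vee \varphi = R_0 \varphi$ with
$$ R_0 = (\alpha_0^{-1}\lambda_\pi - \lambda_0(0))^2 (\alpha_0^{-1}\lambda_\pi - \tilde{\lambda}_0(0))^2. $$
The Kim--Sarnak bound $\theta \leq 7/64$ forces $\extnorm{\alpha_0^{-1}\lambda_\pi} \ll q_0^{\theta-1/2}$, whereas direct computation gives
$$ \lambda_0(0) = \frac{q_0^{-1}+\alpha_0^{-1}}{1+q_0^{-1}}, \qquad \tilde{\lambda}_0(0) = \frac{1+\alpha_0^{-1}q_0^{-1}}{1+q_0^{-1}}, $$
each of absolute value at least $(1-q_0^{-1})/(1+q_0^{-1}) \geq 1/3$ for $q_0 \geq 2$, using $\norm[\alpha_0]=1$. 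Hence both factors of $R_0$ have modulus $\asymp 1$ uniformly in $\vp_0$, so $\norm[R_0] \asymp 1$.

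For (2), the function $\phi := \eis_2^* \cdot \eis_3^\sharp$ is finitely regularizable in the sense of Definition \ref{FinRegFuncDef}: its essential constant term $\phi_\gp{N}^*$ lies in a finite-dimensional space $V$ of functions of the form $\sum_i f_i(k) \chi_i(y) \norm[y]_\A^{1/2+\alpha_i} \log^{n_i}\norm[y]_\A$, and $\phi - \phi_\gp{N}^*$ is of rapid decay. Since $T_0(1)$ is a convolution operator at the single place $\vp_0$, it preserves both $V$ and the space of rapidly decaying functions; it therefore suffices to show $\sigma_0 \phi_\gp{N}^* = 0$. At $\vp_0$, the spherical flat sections of $\eis_2^* \in \pi(1,1)$ are $T_0(1)$-eigenvectors with eigenvalue $\lambda_0(0)$, while those of $\eis_3^\sharp \in \pi(1, \omega^{-1})$ are $T_0(1)$-eigenvectors with eigenvalue $\tilde{\lambda}_0(0)$. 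The logarithmic terms in $\phi_\gp{N}^*$ (arising from the simple pole of $\Lambda_\F(1+2s)$ at $s=0$ in the completion of $\eis_2^*$, and from the Taylor expansions entering the product) contribute only nilpotent parts of $T_0(1)$-degree at most $2$ on each factor. Consequently $(T_0(1) - \lambda_0(0))^2 (T_0(1) - \tilde{\lambda}_0(0))^2$ annihilates $V$, which is exactly $\sigma_0 \phi_\gp{N}^* = 0$.

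The main obstacle will be making precise the nilpotent structure on $V$: one has to multiply out the Laurent expansions at $s = 0$ of the two factors, tracking which terms generate $\lambda_0(0)$-eigenvalues versus $\tilde{\lambda}_0(0)$-eigenvalues, and verify that the nilpotent degree of each $T_0(1) - \lambda_0(0)$ and $T_0(1) - \tilde{\lambda}_0(0)$ on the essential constant term of the corresponding factor is at most $2$. This exactly matches the choice of exponents $2, 2$ in the definition of $\sigma_0$, and is the non-trivial-central-character analogue of the identity $(T_0(1)-1)^4 \phi_{1,\gp{N}}^* = 0$ from the heuristic in \S \ref{Heuristic}.
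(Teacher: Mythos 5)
Your overall strategy matches the paper's for both parts: part (1) via MacDonald's formula at $\vp_0$ together with a Ramanujan-type bound, and part (2) by reducing to annihilation of the essential constant term $\phi_{\gp{N}}^*$ through the nilpotent structure of $T_0(1)$ on the finite-dimensional space $V$. The reduction step ``$T_0(1)$ preserves $V$ and rapid decay, so it suffices to show $\sigma_0\phi_{\gp{N}}^*=0$'' is exactly right and is what the paper does (the paper only treats $\omega\neq 1$ explicitly).

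There are two points you should tighten.

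\textbf{On (1): the size comparison fails for small $q_0$.} You argue $\extnorm{\alpha_0^{-1}\lambda_\pi-\lambda_0(0)} \geq \extnorm{\lambda_0(0)} - \extnorm{\alpha_0^{-1}\lambda_\pi} \geq 1/3 - Cq_0^{\theta-1/2}$. With $C \leq 2$ and $\theta = 7/64$, at $q_0=2$ one has $2\cdot 2^{-25/64}\approx 1.53 > 1/3$, so the bound is negative and gives nothing; indeed for this triangle inequality to be effective you need roughly $q_0 \gtrsim 100$. The correct route is to exploit the constraint $\alpha_\pi\beta_\pi = \alpha_0 = q_0^{-1/2}\cdot\alpha_0^{-1}q_0^{1/2}$, which factors the discrepancy:
\begin{equation*}
\alpha_0^{-1}\lambda_\pi - \lambda_0(0) \;=\; \frac{q_0^{-1/2}}{1+q_0^{-1}}\cdot\frac{\bigl(\alpha_\pi^{-1}-q_0^{-1/2}\bigr)\bigl(\alpha_\pi^{-1}-\alpha_0^{-1}q_0^{1/2}\bigr)}{\alpha_\pi^{-1}}.
\end{equation*}
Using $q_0^{-\theta}\leq\norm[\alpha_\pi^{\pm1}]\leq q_0^\theta$ one gets $\extnorm{\alpha_0^{-1}\lambda_\pi-\lambda_0(0)} \geq \frac{(1-q_0^{\theta-1/2})^2}{1+q_0^{-1}}$, which is uniformly positive for every $q_0\geq 2$ once $\theta<1/2$; the upper bound $\ll 1$ is then immediate, and the $\tilde\lambda_0(0)$ factor is handled identically. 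Alternatively one can invoke the freedom to choose $\vp_0$ so that $q_0$ is not tiny, but that step should then be made explicit. As written, your argument does not establish $\norm[R_0]\asymp 1$.

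\textbf{On (2): the eigenvalues are misattributed.} You write that the flat sections of $\eis_2^*\in\pi(1,1)$ have eigenvalue $\lambda_0(0)$ and those of $\eis_3^{\sharp}$ have eigenvalue $\tilde\lambda_0(0)$. This is not where the two eigenvalues come from. The constant term of $\eis_2^*$ contributes a single exponent $\norm_\A^{1/2}$ (with a $\log$), while the constant term of $\eis_3^{\sharp}$ contributes both $\norm_\A^{1/2}$ (the $f_3$ component) and $\omega\norm_\A^{1/2}$ (the $\Intw_0 f_3$ component). Multiplying, the essential constant term of $\eis_2^*\eis_3^{\sharp}$ has the two exponents $\norm_\A$ and $\omega\norm_\A$, i.e.\ it is a sum of two flat spherical sections at $s=0$ in $\pi(\norm_{\vp_0}^{1/2},\omega_0^{-1}\norm_{\vp_0}^{-1/2})$ and $\pi(\omega_0^{-1}\norm_{\vp_0}^{1/2},\norm_{\vp_0}^{-1/2})$ together with their $s$-derivatives. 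These two induced representations are where $\lambda_0(0)$ and $\tilde\lambda_0(0)$ come from, and the single $\log$ power in the paper's displayed formula for $(\eis_{2,\gp{N}}^*\cdot\eis_{3,\gp{N}}^{\sharp})$ is exactly what makes the nilpotent degree equal to $2$ on each $2$-dimensional block. Your conclusion ($\sigma_0\phi_{\gp{N}}^*=0$) is correct, but the bookkeeping of which factor contributes which eigenvalue should be fixed, since it is the heart of why the specific exponents $(2,2)$ in $\sigma_0$ suffice in the $\omega\neq 1$ case.
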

\begin{proof}
	(1) follows trivially from MacDonald's formula \cite[Theorem 4.6.6]{Bu98}. For (2), we only treat the case $\omega \neq 1$. Note that $\sigma_0$ annihilates
\begin{align*}
	(\eis_{2,\gp{N}}^* \cdot \eis_{3,\gp{N}}^{\sharp})(a(y)\kappa) &= \Lambda_{\F}^* L(1,\omega) \cdot \left\{ \norm[y]_{\ag{A}} \log \norm[y]_{\ag{A}} f_3(\kappa) + 2^{-1} \lambda_{\F}^{(1)}(-1/2) \norm[y]_{\ag{A}} f_3(\kappa) \right. \\
	&\left. + \omega(y) \norm[y]_{\ag{A}} \log \norm[y]_{\ag{A}} \Intw_0f_3(\kappa) + 2^{-1} \lambda_{\F}^{(1)}(-1/2) \omega(y) \norm[y]_{\ag{A}} \Intw_0 f_3(\kappa) \right\},
\end{align*}
	since we have, if we denote by $f_s$ resp. $\tilde{f}_s$ a flat spherical section in $\pi(\norm_{\vp_0}^{1/2+s},\omega_0^{-1} \norm_{\vp_0}^{-(1/2+s)})$ resp. $\pi(\omega_0^{-1}\norm_{\vp_0}^{1/2+s},\norm_{\vp_0}^{-(1/2+s)})$ and $f_0' := (\partial f_s / \partial s) \mid_{s=0}$,
	$$ T_0(1) (f_0, f_0') = (f_0, f_0') \begin{pmatrix} \lambda_0(0) & \lambda_0'(0) \\ & \lambda_0(0) \end{pmatrix} \quad \text{resp.} \quad T_0(1) (\tilde{f}_0, \tilde{f}_0') = (\tilde{f}_0, \tilde{f}_0') \begin{pmatrix} \tilde{\lambda}_0(0) & \tilde{\lambda}_0'(0) \\ & \tilde{\lambda}_0(0) \end{pmatrix}. $$
	Hence $\sigma_0(\eis_2^* \eis_3^{\sharp})$ is \emph{finitely regularizable} \cite[Definition 2.14]{Wu9} with \emph{essential constant term} equal to $0$, thus of rapid decay, \emph{a fortiori} square integrable.
\end{proof}	
\noindent By (\ref{RSbeforeCS}) and Proposition \ref{MainLocEst}, we are reduced to bounding
\begin{align}
	\int_{[\PGL_2]} \varphi \cdot \eis_2^* \cdot \eis_3^{\sharp} &= \left( R_0 \sum_{\vp \in S} q_{\vp}^{n_{\vp}/2} \norm[\lambda_{\pi}(\vp^{n_{\vp}})] \right)^{-1} \int_{[\PGL_2]} \sigma_0^{\vee}\sigma(\varphi) \cdot \eis_2^* \cdot \eis_3^{\sharp} \nonumber \\
	&= \left( R_0 \sum_{\vp \in S} q_{\vp}^{n_{\vp}/2} \norm[\lambda_{\pi}(\vp^{n_{\vp}})] \right)^{-1} \int_{[\PGL_2]} \varphi \cdot \sigma^{\vee} \sigma_0(\eis_2^* \eis_3^{\sharp}). \label{SumBeforeCS}
\end{align}
	We apply Cauchy-Schwartz inequality and unfold the square as (recall $q_{\vp} := \Nr(\vp), \alpha_{\vp} := \omega_{\vp}(\varpi_{\vp})$)
\begin{align}
	&\quad \extnorm{\int_{[\PGL_2]} \varphi \cdot \sigma^{\vee}\sigma_0(\eis_2^* \eis_3^{\sharp})}^2 \ll_{\F} \int_{[\PGL_2]} \extnorm{ \sum_{\vp \in S} a_{\vp} T(\vp^{n_{\vp}}). \sigma_0(\eis_2^* \eis_3^{\sharp}) }^2 \nonumber \\
	&= \sum_{\vp_1,\vp_2 \in S} a_{\vp_1} \overline{a_{\vp_2}} \int_{[\PGL_2]} T(\vp_1^{n_{\vp_1}}) T(\vp_2^{n_{\vp_2}})^* \sigma_0(\eis_2^* \eis_3^{\sharp}) \cdot \overline{ \sigma_0(\eis_2^* \eis_3^{\sharp}) } \nonumber \\
	&= \sideset{}{_{\vp \in S}} \sum \norm[a_{\vp}]^2 \alpha_{\vp}^{-n_{\vp}} \int_{[\PGL_2]} T(\vp^{n_{\vp}})^2 \sigma_0^*\sigma_0(\eis_2^* \eis_3^{\sharp}) \cdot \overline{ \eis_2^* \eis_3^{\sharp} } \nonumber \\
	&\quad + \sideset{}{_{\substack{\vp_1,\vp_2 \in S \\ \vp_1 \neq \vp_2}}} \sum a_{\vp_1} \overline{a_{\vp_2}} \alpha_{\vp_2}^{-n_{\vp_2}} \int_{[\PGL_2]} T(\vp_1^{n_{\vp_1}}) T(\vp_2^{n_{\vp_2}}) \sigma_0^*\sigma_0(\eis_2^* \eis_3^{\sharp}) \cdot \overline{ \eis_2^* \eis_3^{\sharp} }. \label{C-S}
\end{align}
	From Proposition \ref{HeckeRel}, we deduce that
	$$ \sigma_0^*\sigma_0 = \sideset{}{_{j=0}^8} \sum b_j T_0(j) $$
	for some $b_j \in \C$ with $\norm[b_j] \ll 1$. Hence the last two lines of (\ref{C-S}) are equal to
\begin{align}
	&\quad \sideset{}{_{j=0}^8} \sum \sideset{}{_{\vp \in S}} \sum b_j \norm[a_{\vp}]^2 \alpha_{\vp}^{-n_{\vp}} \int_{[\PGL_2]}^{\reg} T(\vp^{n_{\vp}})^2 T_0(j)^*(\eis_2^* \eis_3^{\sharp}) \cdot \overline{(\eis_2^* \eis_3^{\sharp}) } \nonumber \\
	&\quad + \sideset{}{_{j=0}^8} \sum \sideset{}{_{\substack{\vp_1,\vp_2 \in S \\ \vp_1 \neq \vp_2}}} \sum b_j a_{\vp_1} \overline{a_{\vp_2}} \alpha_{\vp_2}^{-n_{\vp_2}} \int_{[\PGL_2]}^{\reg} T(\vp_1^{n_{\vp_1}}) T(\vp_2^{n_{\vp_2}}) T_0(j)^*(\eis_2^* \eis_3^{\sharp}) \cdot \overline{(\eis_2^* \eis_3^{\sharp}) } \nonumber \\
	&= \sideset{}{_{j=0}^8} \sum \sideset{}{_{\vp \in S}} \sum b_j \norm[a_{\vp}]^2 \alpha_{\vp}^{-n_{\vp}} \cdot \int_{[\PGL_2]}^{\reg} D_{j,\vp}(\eis_2^* \eis_3^{\sharp}) \cdot \overline{(\eis_2^* \eis_3^{\sharp}) } \nonumber \\
	&\quad + \sideset{}{_{j=0}^8} \sum \sideset{}{_{\substack{\vp_1,\vp_2 \in S \\ \vp_1 \neq \vp_2}}} \sum b_j a_{\vp_1} \overline{a_{\vp_2}} \alpha_{\vp_2}^{-n_{\vp_2}} \int_{[\PGL_2]}^{\reg} a(\varpi_0^{-j} \varpi_{\vp_1}^{-n_{\vp_1}} \varpi_{\vp_2}^{-n_{\vp_2}}).(\eis_2^* \eis_3^{\sharp}) \cdot \overline{(\eis_2^* \eis_3^{\sharp}) }, \label{SumAfterCS}
\end{align}
	where the diagonal operators (see the last formulas in Proposition \ref{HeckeRel}) are given by
	$$ D_{j,\vp} := \left\{ \begin{matrix} \frac{q_{\vp}}{q_{\vp}+1}a(\varpi_0^{-j} \varpi_{\vp}^{-2}) - \frac{\alpha_{\vp}^{-1}}{q_{\vp}+1} a(\varpi_0^{-j}) & \text{if } n_{\vp} = 1 \\ \frac{q_{\vp}}{q_{\vp}+1}a(\varpi_0^{-j} \varpi_{\vp}^{-4}) - \frac{\alpha_{\vp}^{-1}}{q_{\vp}+1} a(\varpi_0^{-j} \varpi_{\vp}^{-2}) - \frac{\alpha_{\vp}^{-2}}{q_{\vp}(q_{\vp}+1)} a(\varpi_0^{-j}) & \text{if } n_{\vp} = 2 \end{matrix} \right. $$
	and we have applied the invariance property of the regularized integrals \cite[Proposition 2.27 (2)]{Wu9}. Introduce 
\begin{equation} 
	\mathfrak{t}=\varpi_0^{-j} \varpi_{\vp}^{-kn_{\vp}}, \quad k= \left\{ \begin{matrix} 1,2 & \text{if } n_{\vp} = 1 \\ 0,1,2 & \text{if } n_{\vp}=2 \end{matrix} \right. ; \quad \text{or} \quad \mathfrak{t} = \varpi_0^{-j} \varpi_{\vp_1}^{-n_{\vp_1}} \varpi_{\vp_2}^{-n_{\vp_2}}, 
\label{TransIndex}
\end{equation}
	which are understood as finite ideles. We re-arrange the (prototype) regularized integral as
\begin{align}
	&\quad \int_{[\PGL_2]}^{\reg} a(\mathfrak{t}).\left( \eis_2^* \eis_3^{\sharp} \right) \cdot \overline{ \left( \eis_2^* \eis_3^{\sharp} \right) } \nonumber \\
	&= \int_{[\PGL_2]} \left( a(\mathfrak{t}).\eis_2^* \cdot \overline{\eis_2^*} - \Reis(a(\mathfrak{t}).\eis_2^* \overline{\eis_2^*}) \right) \cdot \left( a(\mathfrak{t}).\eis_3^{\sharp} \cdot \overline{\eis_3^{\sharp}} - \Reis(a(\mathfrak{t}).\eis_3^{\sharp} \overline{\eis_3^{\sharp}}) \right) \label{RegTerm} \\
	&+ \int_{[\PGL_2]}^{\reg} a(\mathfrak{t}).\eis_2^* \cdot \overline{\eis_2^*} \cdot \Reis(a(\mathfrak{t}).\eis_3^{\sharp} \overline{\eis_3^{\sharp}}) + \int_{[\PGL_2]}^{\reg} a(\mathfrak{t}).\eis_3^{\sharp} \cdot \overline{\eis_3^{\sharp}} \cdot \Reis(a(\mathfrak{t}).\eis_2^* \overline{\eis_2^*}) \label{ReguTerm} \\
	&- \int_{[\PGL_2]}^{\reg} \Reis(a(\mathfrak{t}).\eis_2^* \overline{\eis_2^*}) \cdot \Reis(a(\mathfrak{t}).\eis_3^{\sharp} \overline{\eis_3^{\sharp}}). \label{DgnTerm}
\end{align}

\begin{definition}
	For $\mathfrak{t}$ given in (\ref{TransIndex}), we define
	$$ \Norm[\mathfrak{t}] = \left\{ \begin{matrix} n_{\vp_1} + n_{\vp_2} & \text{if } \vp_1 \neq \vp_2, \\ k n_{\vp_1} & \text{if } \vp_1 = \vp_2; \end{matrix} \right. \quad n(\mathfrak{t}) = (m_{\vp})_{\vp < \infty} \text{ with } m_{\vp}= \left\{ \begin{matrix} j & \text{if } \vp = \vp_0, \\ n_{\vp_1} & \text{if } \vp = \vp_1 \neq \vp_2, \\ n_{\vp_2} & \text{if } \vp = \vp_2 \neq \vp_1, \\ kn_{\vp} & \text{if } \vp = \vp_1 = \vp_2, \\ 0 & \text{otherwise}. \end{matrix} \right. $$ 
	For $\pi'$ an automorphic representation, we write $\pi' \leq \mathfrak{t}$ if $\cond(\pi_{\vp}') \leq m_{\vp}$ for all $\vp < \infty$.
\end{definition}
\begin{remark}
	We will refer to (\ref{RegTerm}) resp. (\ref{ReguTerm}) resp. (\ref{DgnTerm}) as the \emph{regular term} resp. \emph{regularized term} resp. \emph{degenerate term}. The effect of the measure of regularization and the one of amplification will be treated in the same way, explaining why we put them together into $\mathfrak{t}$. The smallness of $q_0$ (\ref{Smallp0}), implying that any power of it is $\ll_{\epsilon} \Cond(\pi)^{\epsilon}$ hence negligible, allows us to basically ignore the contribution at $\vp_0$ in the estimation.
\end{remark}

\noindent In the sum (\ref{SumAfterCS}), we call the terms with $\vp_1 = \vp_2$ the \emph{diagonal terms}, while the ones with $\vp_1 \neq \vp_2$ the \emph{off-diagonal terms}. The number of diagonal terms is $O(\norm[S])$. The number of off-diagonal terms is $O(\norm[S]^2)$. We shall apply Lemma \ref{RegTermBd}, \ref{ReguTermBd1}, \ref{ReguTermBd2} and \ref{DgnTermBd}. For simplicity of notations, denote
	$$ V_1 := (\frac{\Cond(\pi)}{\Cond(\omega)})^{-\frac{1}{2}+\theta} \Cond(\omega)^{-\delta'}, \quad V_2 := (\frac{\Cond(\pi)}{\Cond(\omega)})^{-\frac{1}{2}} \Cond(\omega)^{-\frac{1-2\theta}{8}}. $$
	The diagonal terms are bounded by (recall $\norm[a_{\vp}] \asymp K^{1/2}$)
\begin{align*}
	&\quad (\Cond(\pi)K)^{\epsilon} \sideset{}{_{j=0}^8} \sum \sideset{}{_{\vp \in S}} \sum K^{n_{\vp}} \left\{ \max(V_1 K^{2n_{\vp}(A+1/2)}, V_2 K^{2n_{\vp} \left( \frac{1}{2} - \frac{1-2\theta}{8} \right)  }, K^{-2n_{\vp}} ) \right. \\
	&\qquad \left. + K^{-1} \max(V_1 K^{n_{\vp}(A+1/2)}, V_2 K^{n_{\vp} \left( \frac{1}{2} - \frac{1-2\theta}{8} \right) }, K^{-n_{\vp}}) + K^{-2} \max(V_1, V_2, 1) \right\} \\
	&\ll (\Cond(\pi)K)^{\epsilon} \norm[S] \cdot \max(V_1 K^{4(A+1)}, V_2 K^{4 \left( 1 - \frac{1-2\theta}{8} \right)}, 1).
\end{align*}
	Similarly, the off-diagonal terms are bounded by
\begin{align*}
	&\quad (\Cond(\pi)K)^{\epsilon} \sum_{j=0}^8 \sideset{}{_{\substack{\vp_1,\vp_2 \in S \\ \vp_1 \neq \vp_2}}} \sum K^{\frac{n_{\vp_1} + n_{\vp_2}}{2}} \max( V_1 K^{(n_{\vp_1} + n_{\vp_2})(A+1/2)}, V_2 K^{(n_{\vp_1} + n_{\vp_2}) \left( \frac{1}{2} - \frac{1-2\theta}{8} \right) }, K^{-(n_{\vp_1} + n_{\vp_2})}) \\
	&\ll (\Cond(\pi)K)^{\epsilon} \norm[S]^2 \max(V_1 K^{4(A+1)}, V_2 K^{4 \left( 1 - \frac{1-2\theta}{8} \right) }, K^{-1}).
\end{align*}
	By (2.6), we then deduce
	$$ \extnorm{ \int_{[\PGL_2]} \varphi \cdot \eis_2^* \cdot \eis_3^{\sharp} } \ll_{\F,\epsilon} (\Cond(\pi)K)^{\epsilon} \max(\sqrt{V_1} K^{2(A+1)}, \sqrt{V_2} K^{2 \left( 1 - \frac{1-2\theta}{8} \right) }, K^{-1/2}). $$
	Assuming $\delta' \leq (1-2\theta)/8$ and $A \geq 1/4$, we deduce that (\ref{SumBeforeCS}) is bounded by
	$$ \left( \frac{\Cond(\pi)}{\Cond(\omega)} \right)^{-\frac{1-2\theta}{20+16A}} \Cond(\omega)^{-\frac{\delta'}{10+8A}}, \quad \text{with} \quad K = \left( \frac{\Cond(\pi)}{\Cond(\omega)} \right)^{\frac{1-2\theta}{10+8A}} \Cond(\omega)^{\frac{\delta'}{5+4A}}. $$

\section{Local Choices and Estimations}

	\subsection{Non-Archimedean Places}
	
		\subsubsection{Choices and Main Bounds}
		\label{ChoicesNA}

	At $\vp < \infty$, we choose test vectors as \cite[\S 3.6.2	]{MV10}. Precisely, we choose $W_{\varphi,\vp}$ to be a new vector of $\pi_{\vp}$ in the Whittaker model; $f_{2,\vp}$ to be the spherical function taking value $1$ at $1$ in the induced model of $\pi(1,1)$; $f_{3,\vp}$ whose restriction to $\gp{K}_{\vp}$ is
	$$ \begin{pmatrix} a & b \\ c & d \end{pmatrix} \mapsto \Vol(\gp{K}_0[\vp^{\cond(\pi_{\vp})}])^{-1/2} \omega_{\vp}^{-1}(d) \mathbbm{1}_{\gp{K}_0[\vp^{\cond(\pi_{\vp})}]}. $$
\begin{lemma}
	If $\cond(\pi_{\vp}) > 0$, then we have, with $\ell_{\vp}$ defined in (\ref{RSbeforeCS}) and absolute implicit constant,
	$$ \extnorm{ \ell_{\vp}(W_{\varphi,\vp}, f_{2,\vp}, f_{3,\vp}) } \gg \Cond(\pi_{\vp})^{-1/2} \cdot \frac{\Norm[W_{\varphi,\vp}]}{\sqrt{L(1, \pi_{\vp} \times \bar{\pi}_{\vp})}}. $$
\label{LocLowerBdNA}
\end{lemma}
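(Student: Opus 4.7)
The plan is to compute $\ell_{\vp}$ directly using the specific choices of test vectors. Setting $n := \cond(\pi_{\vp}) \geq 1$, the key observation is that the support condition on $f_{3,\vp}$ restricts the $\kappa$-integral to $\gp{K}_0[\vp^n]$. On this subgroup, $W_{f_2,\vp}^*$ is right $\gp{K}_{\vp}$-invariant (since $f_{2,\vp}$ is spherical), while the newform satisfies $W_{\varphi,\vp}(g\kappa) = \omega_{\vp}(d_\kappa) W_{\varphi,\vp}(g)$ for $\kappa \in \gp{K}_0[\vp^n]$ (via the decomposition $\kappa = \diag(d_\kappa,d_\kappa) \kappa'$ with $\kappa' \in \gp{K}_1[\vp^n]$, combined with the central character and the new vector's $\gp{K}_1[\vp^n]$-invariance). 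Together with the $\omega_{\vp}^{-1}(d_\kappa)$ in $f_{3,\vp}$, the $\kappa$-dependence cancels and the inner integral collapses to $\Vol(\gp{K}_0[\vp^n])^{1/2}$, comparable to $q_{\vp}^{-n/2} = \Cond(\pi_{\vp})^{-1/2}$ since $[\gp{K}_{\vp} : \gp{K}_0[\vp^n]] = q_{\vp}^n(1 + q_{\vp}^{-1})$.

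The residual $y$-integral
\[ \int_{\F_{\vp}^{\times}} W_{\varphi,\vp}(a(y)) W_{f_2,\vp}^*(a(-y)) \norm[y]_{\vp}^{-1/2} d^{\times}y \]
is the standard local Rankin--Selberg integral for $\pi_{\vp} \otimes \pi(1,1)$, and with the completed spherical Whittaker function $W_{f_2,\vp}^*$, it evaluates to $W_{\varphi,\vp}(1) \cdot L_{\vp}(1/2,\pi_{\vp})^2$ up to an absolute constant (linearity in $W_{\varphi,\vp}$ reduces this to the case of a new vector normalized by $W^{\rm new}(1) = 1$, where the integral is the unfolded $L$-factor). The prefactor $L(1,\omega_{\vp})/L(1/2,\pi_{\vp})^2$ built into $\ell_{\vp}$ cancels the $L_{\vp}(1/2,\pi_{\vp})^2$, and the single local factor $L(1,\omega_{\vp})$ lies between two absolute constants. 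Thus $\extnorm{\ell_{\vp}} \asymp \Cond(\pi_{\vp})^{-1/2} \extnorm{W_{\varphi,\vp}(1)}$. Finally, the standard local $L^2$-norm computation in the Kirillov model gives $\Norm[W_{\varphi,\vp}]^2 \asymp \extnorm{W_{\varphi,\vp}(1)}^2 L(1, \pi_{\vp} \times \overline{\pi_{\vp}})$---essentially the $s = 1$ Rankin--Selberg integral for $\pi_{\vp} \otimes \overline{\pi_{\vp}}$ applied to the new vector---and substituting this into the previous display yields the claimed bound.

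The main technical obstacle is keeping the many local normalization conventions consistent: the precise meaning of the completed Whittaker function $W^*$, the probability Haar measure on $\gp{K}_{\vp}$, the explicit form of $W_{f_2,\vp}^*$ for the spherical vector of $\pi(1,1)$, and the implicit $\zeta_{\vp}(2)$-type Euler factors produced by the Rankin--Selberg and $L^2$-norm identities. All such discrepancies contribute only bounded factors independent of $\vp$, so the qualitative $\gg$ bound is robust; the argument essentially parallels the analogous computation in \cite[\S 3.6.2]{MV10}, adapted to the present conventions.
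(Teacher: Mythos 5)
Your argument is correct and is essentially the same as the paper's, whose proof is a one-line pointer to \cite[\S 3.6.2]{MV10} ``except that we take into account various $L$-factors'' and to \cite[Table 1]{FMP17} for the explicit newform Whittaker values; you have simply carried out the computation that the citation compresses. The structure is exactly right: the $\omega_{\vp}^{-1}(d)$ built into $f_{3,\vp}$ cancels the $\gp{K}_0[\vp^{n}]$-equivariance of the newform (via $\kappa = z(d)\kappa'$ with $\kappa' \in \gp{K}_1[\vp^n]$), the spherical $W_{f_2,\vp}^*$ is $\gp{K}_\vp$-invariant, the $\kappa$-integral collapses to $\Vol(\gp{K}_0[\vp^n])^{1/2}\asymp \Cond(\pi_\vp)^{-1/2}$, and what remains is a local Rankin--Selberg-type $y$-integral cancelled by the prefactor in $\ell_\vp$. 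One small bookkeeping remark: the unfolded $y$-integral for the newform against the completed spherical vector produces not $L_\vp(1/2,\pi_\vp)^2$ alone but $L_\vp(1/2,\pi_\vp)^2/L_\vp(1,\omega_\vp)$ (the denominator $L(2s,\omega_\pi\omega_{\pi'})$ of the local Rankin--Selberg identity), so the prefactor $L(1,\omega_\vp)/L(1/2,\pi_\vp)^2$ cancels it \emph{exactly} rather than leaving a residual $L(1,\omega_\vp)$; since $\norm[L(1,\omega_\vp)]\asymp 1$ uniformly in $\vp$ this does not affect the conclusion, but it is cleaner to keep the $L(1,\omega_\vp)$ on the correct side. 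Your use of $\Norm[W_{\varphi,\vp}]^2\asymp \norm[W_{\varphi,\vp}(1)]^2 L(1,\pi_\vp\times\overline{\pi_\vp})$ (the $s=1$ local Rankin--Selberg norm identity, with the bounded $\zeta_\vp(2)$ absorbed into the implied constant) is precisely the ``without specific normalization'' convention the paper attributes to \cite{Wu14}.
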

\begin{proof}
	The proof is exactly the same as \cite[\S 3.6.2]{MV10}, except that we take into account various $L$-factors. We also remark that the necessary formula of $W_{\varphi,\vp}$ can be found in \cite[Table 1]{FMP17}, and that we are following the style of \cite{Wu14}, i.e., without specific normalization for $W_{\varphi,\vp}$.
\end{proof}
\begin{lemma}
(1) Let $\pi_{\vp}'$ be a unitary spherical representation with trivial central character and spectral parameter $\leq \theta$. Let $W'=W_{\varphi',\vp}$ be the Whittaker function of a spherical vector in $\pi_{\vp}'$. Let $\ell_{\vp}$ be defined in (\ref{CuspPerD}). With absolute implicit constant, we have
	$$ \extnorm{ \ell_{\vp}(W', \overline{f_{3,\vp}}, f_{3,\vp}) } \ll \Cond(\pi_{\vp})^{-\frac{1}{2}} (\Cond(\pi_{\vp}) / \Cond(\omega_{\vp}))^{\theta} \cdot \frac{\Norm[W']}{\sqrt{L(1, \pi_{\vp}' \times \overline{\pi_{\vp}'})}}. $$
(2) Let $\Phi_{\vp}$ be the spherical function of $\pi(\xi_{\vp}, \xi_{\vp}^{-1})$ taking value $1$ on $\gp{K}_{\vp}$ and $\tau \in \ag{R}$. Let $\ell_{\vp}$ be defined in (\ref{EisPerD}). With absolute implicit constant, we have for any $\epsilon > 0$
	$$ \extnorm{ \ell_{\vp}(i\tau; \Phi_{\vp}, \overline{f_{3,\vp}}, f_{3,\vp}) } \ll_{\epsilon} \Cond(\pi_{\vp})^{-\frac{1}{2}+\epsilon}. $$
(3) Let $\ell_{\vp}$ be defined in (\ref{OneDimProj}). With absolute implicit constant, we have for any $k \in \ag{N}$ and $\epsilon > 0$
	$$ \extnorm{ \ell_{\vp}^{(k)}(1/2;\overline{f_{3,\vp}},f_{3,\vp},1) } \ll_{k,\epsilon} \Cond(\pi_{\vp})^{\epsilon}. $$
\label{LocUpperBdNA}
\end{lemma}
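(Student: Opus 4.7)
The plan is to handle all three parts by a common two-step blueprint: first, exploit the right-$\gp{K}_0[\vp^n]$-semi-invariance of $f_{3,\vp}$ (with character $\omega_{\vp}^{-1}(d)$) to collapse the compact integral over $\gp{K}_\vp$ to a one-variable Mellin integral over $\F_\vp^{\times}$; second, evaluate or estimate this Mellin integral using local Rankin--Selberg theory together with explicit formulas for spherical Whittaker functions.

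For part (1), the support of $f_{3,\vp}$ restricts the $\kappa$-integral to $\gp{K}_0[\vp^n]$, where $n=\cond(\pi_\vp)$. On this set, the factor $\omega_{\vp}^{-1}(d_\kappa)$ coming from $f_{3,\vp}(\kappa)$ cancels against $\overline{\omega_{\vp}^{-1}(d_\kappa)} = \omega_\vp(d_\kappa)$ coming from the right-translation transformation of $\overline{W_{f_3,\vp}^*(a(y)\kappa)}$ (using unitarity of $\omega_\vp$), while $W'(a(y)\kappa) = W'(a(y))$ by sphericity. The $\kappa$-integration contributes $\Vol(\gp{K}_0[\vp^n])^{1/2} \asymp \Cond(\pi_\vp)^{-1/2}$, reducing $\ell_\vp$ up to the explicit prefactor to
$$ \int_{\F_{\vp}^{\times}} W'(a(y)) \overline{W_{f_3,\vp}^*(a(y))} \norm[y]_{\vp}^{-1/2} d^{\times}y. $$
Since $f_{3,\vp}$ sits at level $n \geq m := \cond(\omega_\vp)$, possibly deeper than the new-vector level of $\pi(1,\omega^{-1})$, $W_{f_3,\vp}^*$ is (up to scalars) a translate of the new-vector Whittaker function by $a(\varpi_\vp^{n-m})$, concentrating its Kirillov support on $\varpi_\vp^{n-m}\vo_\vp$. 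Inserting Macdonald's formula for $W'$ together with the Ramanujan-type bound $|W'(a(\varpi_\vp^k))| \ll (k+1)q_\vp^{k(\theta-1/2)}$ from the spectral parameter hypothesis yields a geometric-series estimate whose dominant contribution is of size $q_\vp^{(n-m)\theta} = (\Cond(\pi_\vp)/\Cond(\omega_\vp))^\theta$. The Kirillov normalization $\Norm[W']^2 \asymp L(1,\pi_\vp' \times \overline{\pi_\vp'})$ accounts for the remaining factor.

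For part (2), sphericity of $\Phi_\vp$ removes any support restriction, so instead of a pointwise character cancellation we use Iwasawa to identify $\int_{\gp{K}_\vp} |W_{f_3,\vp}^*(a(y)\kappa)|^2 d\kappa$ (up to $\Vol(\gp{K}_0[\vp^n])$) with a Kirillov $L^2$-density controlled by the local Rankin--Selberg integral for $\pi(1,\omega^{-1}) \times \overline{\pi(1,\omega^{-1})} \times \pi(\xi_\vp,\xi_\vp^{-1})$ at the relevant spectral point. The prefactor in $\ell_\vp$ cancels exactly the $L$-numerator that arises, leaving $O_\epsilon(\Cond(\pi_\vp)^{-1/2+\epsilon})$ uniformly in $\tau$. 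For part (3), taking $\chi_\vp = 1$, the same $\kappa$-collapse reduces $\ell_\vp(s;\cdots)$ to a finite ratio of local $L$-factors normalized by $\zeta_\vp(1+2s)$, which is holomorphic at $s=1/2$; each derivative in $s$ contributes at most $(\log q_\vp)^k$, giving the $O_{k,\epsilon}(\Cond(\pi_\vp)^\epsilon)$ bound.

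The main technical step is the precise identification of $W_{f_3,\vp}^*$ at a deep level $n \geq m$: its Kirillov support and its relation (by an $a(\varpi_\vp^{n-m})$-translate) to the new-vector Whittaker function of $\pi(1,\omega^{-1})$, so that the $q_\vp^{(n-m)\theta}$ factor in part (1) can be tracked cleanly through the Mellin integral. This uses standard but delicate computations via the Casselman--Shalika/Macdonald formulas and the explicit new-vector Whittaker tables, as in \cite{MV10, FMP17}; everything else is routine volume bookkeeping and local zeta-integral manipulation.
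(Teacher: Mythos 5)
Your part (1) is essentially the paper's argument: collapse the $\gp{K}_\vp$-integral to the Mellin integral over $\F_\vp^\times$, identify $W_{f_3,\vp}^*$ as an $a(\varpi_\vp^{\cond(\pi)-\cond(\omega)})$-translate of the new-vector Whittaker function, and bound via Macdonald. One small omission: the paper distinguishes the cases $\cond(\omega_\vp)>0$ (where $f_{3,\vp} = a(\varpi^{-n}).e_0$ directly, by Proposition~\ref{LocBaseRel}~(3)) and $\cond(\omega_\vp)=0$ (where $f_{3,\vp}$ is a two-term linear combination $\asymp L(1,\omega)\bigl(a(\varpi^{-n}).e_0 - \omega(\varpi)q^{-1/2}a(\varpi^{-(n-1)}).e_0\bigr)$ by Proposition~\ref{LocBaseRel}~(4)); your sketch implicitly assumes the first form throughout, so you would need to add the second case, but the computation is a routine variant.

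Parts (2) and (3) are where your route diverges substantively, and this is where the gap lies. The paper does not attempt to evaluate $\int_{\gp{K}_\vp}\lvert W_{f_3,\vp}^*(a(y)\kappa)\rvert^2\,d\kappa$ directly. Instead it invokes Proposition~\ref{TransFLocRS} (the transposition formula for local Rankin--Selberg) to convert $\ell_\vp$ into $\tilde\ell_\vp$, in which the deeply ramified test function $f_{3,\vp}'$ reappears in the \emph{induced} slot. Only then does the $\gp{K}_0[\vp^{\cond(\pi_\vp)}]$-support of $f_{3,\vp}'$ collapse the $\gp{K}_\vp$-integral to a single Mellin integral, putting (2) in the exact shape of (1) with $\theta=0$, and giving in (3) a clean two-factor bookkeeping: $\ell_\vp^{(k)}\asymp\Cond(\omega_\vp)^{1/2+\epsilon}\tilde\ell_\vp^{(k)}$ (from the $\gamma$-factor) and $\tilde\ell_\vp^{(k)}\ll\Cond(\pi_\vp)^{-1/2}(\Cond(\pi_\vp)/\Cond(\omega_\vp))^{1/2+\epsilon}$, whose product is $\Cond(\pi_\vp)^{\epsilon}$. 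Your blueprint bypasses transposition and tries to read off $\ell_\vp$ directly by treating $\int_{\gp{K}_\vp}\lvert W_{f_3,\vp}^*(a(y)\kappa)\rvert^2\,d\kappa$ as a ``Kirillov $L^2$-density'' whose Mellin transform is supposedly cancelled by the $L$-factor prefactor. This does not hold verbatim: with $\xi_\vp$ unramified and $s=i\tau$ that prefactor is $\asymp 1$, so there is no large $L$-numerator to cancel; and because $f_{3,\vp}$ sits at level $\cond(\pi_\vp)$, strictly deeper than the new-vector level $\cond(\omega_\vp)$ of $\pi(1,\omega_\vp^{-1})$, the $\gp{K}_\vp$-average of $\lvert W_{f_3,\vp}^*\rvert^2$ is a weighted sum over many $\gp{K}_\vp$-types (Schur orthogonality applied to the basis in Proposition~\ref{LocBaseRel}), not a bare local Rankin--Selberg $L$-factor. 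Extracting the precise $\Cond(\pi_\vp)^{-1/2+\epsilon}$ (resp. $\Cond(\pi_\vp)^{\epsilon}$) from that sum is a genuinely nontrivial computation which your sketch does not carry out. If you want to avoid Proposition~\ref{TransFLocRS}, you would need to do that $\gp{K}$-type decomposition explicitly; as written, ``routine volume bookkeeping'' is overselling the remaining work, and the claimed reduction in (3) to ``a finite ratio of local $L$-factors'' is not correct as stated.
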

\begin{proof}
	(1) We drop the subscript $\vp$ for simplicity of notations. Let $e_0$ be a unitary new vector in $\pi(1,\omega^{-1})$. We also write $W_3^*$ resp. $W_0^*$ for $W_{f_3}^*$ resp. $W_{e_0}^*$. As \cite[\S 3.6.2]{MV10}, the integral part of $\ell_{\vp}$ is equal to
	$$ \Vol(\gp{K}_0[\vp^{\cond(\pi)}])^{1/2} \int_{\F^{\times}} W'(a(y)) \overline{W_3^*(a(y))} \norm[y]^{-1/2} d^{\times}y. $$
	If $\cond(\omega) > 0$, then $f_3 = a(\varpi^{-n}).e_0$ for $n=\cond(\pi) - \cond(\omega)$ by Proposition \ref{LocBaseRel} (3). Using \cite[Table 1]{FMP17}, we can evaluate the above integral as, with $(\alpha, \alpha^{-1})$ the Satake parameter of $\pi'$,
	$$ \Vol(\gp{K}_0[\vp^{\cond(\pi)}])^{1/2} W'(1) L(1/2,\pi') \left( \frac{\alpha^{n+1} - \alpha^{-(n+1)}}{\alpha - \alpha^{-1}} - \frac{\alpha^n - \alpha^{-n}}{\alpha - \alpha^{-1}} q^{-\frac{1}{2}} \right), $$
and conclude by $q^{-\theta} \leq \norm[\alpha] \leq q^{\theta}$.

\noindent If $\cond(\omega)=0$ with $\alpha_1 = \omega(\varpi)$ (we can assume $n>0$ since the case $n=0$ is easy), then $f_3 \asymp L(1,\omega) (a(\varpi^{-n}).e_0 - \alpha_1 q^{-1/2} a(\varpi^{-(n-1)}).e_0)$ by Proposition \ref{LocBaseRel} (4). $a(\varpi^{-n}).e_0$ contributes to the integral as the product of
	$$ L(1/2,\pi')L(1/2,\pi'\otimes \omega) \cdot \Vol(\gp{K}_0[\vp^{\cond(\pi)}])^{1/2} W'(1) \quad \text{and} $$
	$$ \left\{ \frac{\alpha^{n+1}-\alpha^{-(n+1)}}{\alpha - \alpha^{-1}} - (1+\alpha_1)q^{-\frac{1}{2}} \frac{\alpha^n-\alpha^{-n}}{\alpha - \alpha^{-1}} + \alpha_1 q^{-1}\frac{\alpha^{n-1}-\alpha^{-(n-1)}}{\alpha - \alpha^{-1}}  \right\}, $$
while the second term contributes less. We conclude.

\noindent (2) Since $s=i\tau \in i\ag{R}$, Proposition \ref{TransFLocRS} tells us that $\ell_{\vp}$ and $\tilde{\ell}_{\vp}$ have the same size. But $\tilde{\ell}_{\vp}$ is of the same shape as $\ell_{\vp}$ in the cuspidal case above. Hence our bounds are the same as (1) with $\theta=0$.

\noindent (3) By Proposition \ref{TransFLocRS}, $\ell_{\vp}^{(k)}$ is of size $\Cond(\omega_{\vp})^{1/2+\epsilon}$ times $\tilde{\ell}_{\vp}^{(k)}$. Arguing as in (1), replacing $\alpha$ with $\alpha q^s$  ($s$ around $1/2$), where $\alpha = \xi(\varpi)$, we obtain and conclude by
	$$ \extnorm{ \tilde{\ell}_{\vp}^{(k)}(1/2;\overline{f_{3,\vp}},f_{3,\vp},1) } \ll_{\epsilon} \Cond(\pi_{\vp})^{-\frac{1}{2}} (\Cond(\pi_{\vp}) / \Cond(\omega_{\vp}))^{1/2+\epsilon}. $$
\end{proof}

		\subsubsection{Bounds for Regularization and Amplification}

	We restrict to a finite place $\vp \in S^*$ defined in (\ref{AmpPlace}). Let $n_{\vp} \in \ag{Z}$ and $1 \leq \norm[n_{\vp}] \leq 2$ (see (\ref{Amplifier})) if $\vp \in S$; $1 \leq \norm[n_{\vp}] \leq 8$ if $\vp=\vp_0$. Recall $f_{3,\vp}$ is $\gp{K}_{\vp}$-invariant.
\begin{lemma}
	(1) Let $\pi_{\vp}'$ be a unitary representation with trivial central character and $\cond(\pi_{\vp}') \leq \norm[n_{\vp}]$. Let $W'=W_{\varphi',\vp}$ run over an orthogonal basis of $\gp{K}_{\vp} \cap a(\varpi_{\vp}^{n_{\vp}}) \gp{K}_{\vp} a(\varpi_{\vp}^{-n_{\vp}})$-invariant vectors in the Whittaker model of $\pi_{\vp}'$, with different $\gp{K}_{\vp}$-types. For $\ell_{\vp}$ defined in (\ref{CuspPerD}) we have the estimation
	$$ \extnorm{ \ell_{\vp}(W',\overline{f_{3,\vp}}, a(\varpi_{\vp}^{n_{\vp}}).f_{3,\vp}) } \ll q^{-\frac{\norm[n_{\vp}]}{2}} \frac{\Norm[W']}{\sqrt{L(1,\pi_{\vp}' \times \overline{\pi_{\vp}'})}}. $$
(2) Let $\xi_{\vp}$ be a character of $\F_{\vp}^{\times}$ with $\cond(\xi_{\vp}) \leq \norm[n_{\vp}]/2$. Let $\Phi_{\vp}$ run over an orthogonal basis of $\gp{K}_{\vp} \cap a(\varpi_{\vp}^{n_{\vp}}) \gp{K}_{\vp} a(\varpi_{\vp}^{-n_{\vp}})$-invariant vectors in $\pi(\xi,\xi^{-1})$, with different $\gp{K}_{\vp}$-types. For $\ell_{\vp}$ defined in (\ref{EisPerD}) and $\tau \in \ag{R}$ we have the estimation
	$$ \ell_{\vp}(i\tau; \Phi_{\vp}, \overline{f_{3,\vp}}, a(\varpi_{\vp}^{n_{\vp}}).f_{3,\vp}) \ll q^{-\frac{\norm[n_{\vp}]}{2}}. $$
\label{LocAmpBd}
\end{lemma}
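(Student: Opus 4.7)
The plan is to exploit the $\PGL_2(\F_\vp)$-invariance of the local trilinear form $\ell_\vp$ to transfer the translation $a(\varpi_\vp^{n_\vp})$ from the third argument onto the first two, and then to reduce the computation to a one-variable integral in the Kirillov model.

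Since the product of central characters of $\pi_\vp' \otimes \overline{\pi(1,\omega_\vp^{-1})} \otimes \pi(1,\omega_\vp^{-1})$ (resp.\ $\pi(\xi_\vp,\xi_\vp^{-1}) \otimes \overline{\pi(1,\omega_\vp^{-1})} \otimes \pi(1,\omega_\vp^{-1})$) is trivial, $\ell_\vp$ is $\PGL_2(\F_\vp)$-invariant. Applying the invariance with $g = a(\varpi_\vp^{-n_\vp})$ yields
$$ \ell_\vp\bigl(W',\,\overline{W_{f_{3,\vp}}^*},\, a(\varpi_\vp^{n_\vp}).f_{3,\vp}\bigr) = \ell_\vp\bigl(a(\varpi_\vp^{-n_\vp}).W',\; a(\varpi_\vp^{-n_\vp}).\overline{W_{f_{3,\vp}}^*},\; f_{3,\vp}\bigr). $$
A direct matrix computation shows that $\gp{K}_\vp \cap a(\varpi_\vp^{-n_\vp})\gp{K}_\vp a(\varpi_\vp^{n_\vp})$ is a congruence subgroup of level $\norm[n_\vp]$, so the two translated arguments on the right are invariant (resp.\ equivariant) under this subgroup. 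The hypothesis $\cond(\pi_\vp') \leq \norm[n_\vp]$ in part~(1) (resp.\ $\cond(\xi_\vp) \leq \norm[n_\vp]/2$ in part~(2)) places this level at or above the local conductor, so Proposition~\ref{LocBaseRel} expresses $a(\varpi_\vp^{-n_\vp}).W'$ as an explicit finite linear combination of Hecke-translates of the new vector, with coefficients of size $O(1)$, and similarly for $a(\varpi_\vp^{-n_\vp}).W_{f_{3,\vp}}^*$.

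Since $\vp \in S^*$ is unramified for both $\pi_\vp$ and $\F$, the vector $f_{3,\vp}$ is spherical on $\gp{K}_\vp$, and the $\kappa$-integration in the definition of $\ell_\vp$ collapses thanks to the joint invariance of the integrand. The form reduces to a Kirillov-model integral of shape
$$ \int_{\F_\vp^\times} W'\bigl(a(y)a(\varpi_\vp^{-n_\vp})\bigr)\, \overline{W_{f_{3,\vp}}^*\bigl(a(y)a(\varpi_\vp^{-n_\vp})\bigr)}\, \norm[y]_\vp^{-\frac{1}{2}}\, d^\times y, $$
weighted by uniformly bounded $L$-factor ratios. Changing variables $y \mapsto y\varpi_\vp^{n_\vp}$ extracts the explicit factor $q_\vp^{-\norm[n_\vp]/2}$ from the weight $\norm[y]_\vp^{-1/2}$, after which the remaining integral is exactly of the type treated in Lemma~\ref{LocUpperBdNA}; this delivers the factor $\Norm[W']/\sqrt{L(1,\pi_\vp'\times\overline{\pi_\vp'})}$ in part~(1) and an absolute constant, uniform in $\tau \in \R$, in part~(2).

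The main obstacle will be verifying that the Hecke-translate expansion supplied by Proposition~\ref{LocBaseRel} does not dissipate the $q_\vp^{-\norm[n_\vp]/2}$ gain. This requires uniform control of both the number and the size of its coefficients, which follows from the recurrence of Proposition~\ref{HeckeRel} and the boundedness of the Satake parameters by $q_\vp^{\theta}$. A secondary subtlety in part~(2) is decoupling the distinct $\gp{K}_\vp$-types in the basis $\{\Phi_\vp\}$ under the $\kappa$-integration; but this is automatic, as only the spherical projection survives against a spherical $f_{3,\vp}$, and the non-spherical components contribute strictly smaller terms of the same order, all of which are already absorbed by the uniform implicit constant in the final bound.
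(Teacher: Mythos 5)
Your proposal diverges from the paper's proof in ways that leave genuine gaps.

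\textbf{The collapse of the $\kappa$-integral is not justified.} After applying $\PGL_2(\F_\vp)$-invariance to move $a(\varpi_\vp^{n_\vp})$ onto the first two arguments, you obtain $a(\varpi_\vp^{-n_\vp}).W'$ and $a(\varpi_\vp^{-n_\vp}).\overline{W_{f_3}^*}$ paired against the spherical $f_{3,\vp}$. But these translated vectors are only invariant under $\gp{K}_\vp \cap a(\varpi_\vp^{-n_\vp})\gp{K}_\vp a(\varpi_\vp^{n_\vp})$, not under $\gp{K}_\vp$; the integral over $\gp{K}_\vp$ is thus a non-trivial averaging, not the evaluation at $\kappa=1$ that you use. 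In particular your reduction to a pure one-variable Kirillov integral does not hold.

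\textbf{The $q^{-|n_\vp|/2}$ factor does not come from a change of variables.} Even if the $\kappa$-integral did collapse, the substitution $y \mapsto y\varpi_\vp^{n_\vp}$ turns the weight $\norm[y]_\vp^{-1/2}$ into $\norm[y\varpi_\vp^{n_\vp}]_\vp^{-1/2} = q_\vp^{n_\vp/2}\norm[y]_\vp^{-1/2}$ (for $n_\vp > 0$), which is a \emph{gain}, not a saving --- the wrong direction entirely. In the paper the factor $q^{-|n_\vp|/2}$ arises from a different mechanism: the decomposition of $a(\varpi_\vp^{n_\vp}).f_{3,\vp}$ via Basis~3 of Proposition~\ref{LocBaseRel}~(4-2) carries coefficients $\asymp q^{-(|n_\vp|-k)/2}$ against $D_k$, and $D_k$ being supported on $\gp{K}_0[\vp^k]$ with value $\asymp q^{k/2}$ (the $\intL^1$-normalized $D_k$ being the orthogonal projection onto $\gp{K}_0[\vp^k]$-invariants) contributes another $\asymp q^{-k/2}$; the two multiply to give $q^{-|n_\vp|/2}$.

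\textbf{The paper does not translate by $a(\varpi_\vp^{-n_\vp})$ at all.} Instead it applies the Weyl element to reduce WLOG to a single sign of $n_\vp$, keeps the translation $a(\varpi_\vp^{n_\vp})$ on the induced-model argument $f_{3,\vp}$, decomposes it via Basis~3 so the $\kappa$-integral becomes a projection detected by the $\gp{K}_\vp$-type of $W'$, and separately decomposes $W'$ via Basis~1 into bounded combinations of translated new vectors, reducing to the explicit integral (\ref{FinInt}) evaluated case-by-case with \cite[Table~1]{FMP17}. You would need to reconstruct this projection argument --- your route of pushing the translation onto $W'$ destroys the structure that makes the $\kappa$-integral tractable.

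Part~(2) is handled correctly in spirit (reduce to part~(1) via Proposition~\ref{TransFLocRS}), but it inherits the same gaps.
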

\begin{proof}
	(1) We drop the subscript $\vp$ for simplicity of notations. We first notice that the case $n > 0$ can be transformed into the case $n < 0$. In fact, by invariances we have
	$$ \ell_{\vp}(W',\overline{f_3}, a(\varpi^n).f_3) = \ell_{\vp}(w.W',\overline{w.f_3}, wa(\varpi^n).f_3) = \ell_{\vp}(w.W',\overline{f_3}, a(\varpi^{-n}).f_3) \omega(\varpi)^{-n}, $$
and $w.W'$ runs over a basis with $\gp{K} \cap a(\varpi^n) \gp{K}_{\vp} a(\varpi^{-n})$-invariance replaced by $\gp{K} \cap a(\varpi^{-n}) \gp{K}_{\vp} a(\varpi^n)$-invariance. We shall decompose $a(\varpi^n).f_3$ resp. $W'$ according to \emph{basis 3} resp. \emph{basis 1}, defined in Section \ref{BaseGNV}. By Proposition \ref{LocBaseRel} (4-2), $a(\varpi^n).f_3$ is a linear combination of $q^{-(n-k)/2}D_k$ for $0 \leq k \leq n$, with coefficients of size $\asymp 1$. If $W'$ is of level $m \leq n$, then we have
	$$ \int_{\gp{K}} W'(a(y)\kappa) \overline{W_3^*(a(y)\kappa)} q^{-(n-k)/2}D_k(\kappa) d\kappa \asymp q^{-n/2} W'(a(y)) \overline{W_3^*(a(y))} \mathbbm{1}_{m \geq k}, $$
since ($\intL^1$-normalized) $D_k$ induces the orthogonal projection onto the subspace of $\gp{K}_0[\vp^k]$-invariant vectors. By Proposition \ref{LocBaseRel} (1)+(2)+(3)+(4-2), $W'$ is a linear combination of $a(\varpi^{-l}).W_0'$ for $m-2 \leq l+\cond(\pi') \leq m$, with coefficients of absolute value $\leq 1$, where $W_0'$ is a $\intL^2$-normalized new vector in the Whittaker model of $\pi'$. We are reduced to computing
\begin{equation}
	\int_{\F^{\times}} a(\varpi^{-l}).W_0'(a(y)) \overline{W_3^*(a(y))} \norm[y]^{-\frac{1}{2}} d^{\times}y.
\label{FinInt}
\end{equation}
We write $\alpha = \omega(\varpi)$ and use \cite[Table 1]{FMP17} distinguishing several cases:

\noindent (\rmnum{1}) $\pi'$ is spherical with Satake parameters $\alpha_1,\alpha_2$ ($\alpha_1 \alpha_2 = 1$). (\ref{FinInt}) is equal to ($l \in \{ 0,1,2 \}$)
	$$ \frac{L(1/2,\pi')L(1/2,\pi' \otimes \omega)}{\sqrt{L(1,\pi' \times \overline{\pi'})}} \cdot \left( \frac{1-\alpha^{l+1}}{1-\alpha} - (\alpha_1+\alpha_2) \alpha q^{-\frac{1}{2}} \frac{1-\alpha^l}{1-\alpha} + \alpha^2 q^{-1} \frac{1-\alpha^{l-1}}{1-\alpha} \right). $$

\noindent (\rmnum{2}) $\pi' \simeq \mathrm{St}_{\chi}$ with $\alpha' = \chi(\omega) \in \{ \pm 1 \}$. (\ref{FinInt}) is equal to ($l \in \{ 0,1 \}$)
	$$ \frac{L(1/2,\pi')L(1/2,\pi' \otimes \omega)}{\sqrt{L(1,\pi' \times \overline{\pi'})}} \cdot \left( \frac{1-\alpha^{l+1}}{1-\alpha} - \alpha' \alpha q^{-1} \frac{1-\alpha^l}{1-\alpha} \right). $$

\noindent (\rmnum{3}) $\cond(\pi') = 2$, which in our case implies $L(s,\pi')=1$. (\ref{FinInt}) is equal to ($l=0$) $1$.

\noindent In conclusion, (\ref{FinInt}) does not create increase or decrease in terms of $q^n$ and we are done.

\noindent (2) Proposition \ref{TransFLocRS} tells us that $\ell_{\vp}$ is of the same size as $\tilde{\ell}_{\vp}$, which can be bounded the same way as in the cuspidal case above.
\end{proof}

		\subsubsection{Main Bounds in The Regularized Terms}
		
	We restrict to a finite place $\vp \notin S$.
\begin{lemma}
\begin{itemize}
	\item[(1)] For $\ell_{\vp}$ in (\ref{Regu1PerD}) and $f_{\vp} \in \{ (f_{3,\vp} \cdot \overline{f_{3,\vp}}) \mid_{\gp{K}_{\vp}}, (\IntwR_0 f_{3,\vp} \cdot \overline{ \IntwR_0 f_{3,\vp} }) \mid_{\gp{K}} \}$, we have
	$$ \ell_{\vp}(s; f_{2,\vp}, \overline{f_{2,\vp}}; f_{\vp}) = 1. $$
	\item[(2)] For $\ell_{\vp}$ in (\ref{Regu2PerD}) and $f_{\vp} \in \{ (\IntwR_0 f_{3,\vp} \cdot \overline{f_{3,\vp}}) \mid_{\gp{K}_{\vp}}, (f_{3,\vp} \cdot \overline{ \IntwR_0 f_{3,\vp} }) \mid_{\gp{K}} \}$, $\ell_{\vp}(s; f_{2,\vp}, \overline{f_{2,\vp}}; f_{\vp})$ is a constant satisfying
	$$ \extnorm{\ell_{\vp}(s; f_{2,\vp}, \overline{f_{2,\vp}}; f_{\vp})} \left\{ \begin{matrix} =0 & \text{if } \cond(\omega_{\vp}) \neq 0,  \\ \asymp \Cond(\pi_{\vp})^{-1} & \text{if } \cond(\omega_{\vp}) = 0. \end{matrix} \right. $$
	\item[(3)] For $\ell_{\vp}$ in (\ref{Regu3PerD}) and $f_{\vp} = (f_{2,\vp} \cdot \overline{f_{2,\vp}}) \mid_{\gp{K}_{\vp}} = 1$, we have for any $k \in \ag{N}$ and $\epsilon > 0$
	$$ \extnorm{ \ell_{\vp}^{(k)}(0;f_{3,\vp}, \overline{f_{3,\vp}};1) } \ll_{k,\epsilon} \Cond(\pi_{\vp})^{\epsilon}. $$
\end{itemize}
\label{ReguNAMainBd}
\end{lemma}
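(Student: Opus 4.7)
The plan is to exploit, in every case, the sphericity of the normalized section $f_{2,\vp}$ at every finite place (Section \ref{ChoicesNA}), so that the Whittaker function $W_{f_2,\vp}^*$ is right-$\gp{K}_{\vp}$-invariant. This lets the Whittaker integrals defining the local factors $\ell_{\vp}$ in (\ref{Regu1PerD}) and (\ref{Regu2PerD}) split as a Mellin integral over $\F_{\vp}^{\times}$ times a $\gp{K}_{\vp}$-integral of $f_{\vp}$. In part (3) the Whittaker function is attached to $f_{3,\vp}$ and the decoupling is different, but the resulting object is a standard local Rankin--Selberg integral for $\pi(1,\omega_{\vp}^{-1}) \times \pi(1,\omega_{\vp})$, which again admits a direct evaluation.

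For part (1), the Mellin piece equals $\zeta_{\vp}(1+s)^4 / \zeta_{\vp}(2+2s)$, the normalized local factor for $\pi(1,1) \times \overline{\pi(1,1)}$, cancelling the prefactor, while $\int_{\gp{K}_{\vp}} f_{\vp}(\kappa) d\kappa$ equals $\Norm[f_{3,\vp}]^2 = 1$ in the first case, and $\Norm[\IntwR_0 f_{3,\vp}]^2 = 1$ in the second, since $\IntwR_0$ is a unitary intertwiner between the relevant induced representations. For part (2), the same factorization applies, but the Mellin integrand carries a twist $\omega_{\vp}^{\mp}(y)$; writing $y = u \varpi_{\vp}^n$ with $u \in \vo_{\vp}^{\times}$ and using that $W_{f_2,\vp}^*(a(y))$ depends only on $n$ isolates $\int_{\vo_{\vp}^{\times}} \omega_{\vp}^{\mp}(u) d^{\times}u$, which vanishes whenever $\cond(\omega_{\vp}) > 0$, yielding the first sub-statement. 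When $\omega_{\vp}$ is unramified the Mellin piece is $L(1+s,\omega_{\vp}^{\mp})^4 / L(2+2s,\omega_{\vp}^{\mp 2})$ and cancels the normalization, so $\ell_{\vp}$ reduces to the inner product $\int_{\gp{K}_{\vp}} (\IntwR_0 f_{3,\vp}) \overline{f_{3,\vp}} \, d\kappa$; I evaluate this by expanding $\IntwR_0 f_{3,\vp}$ in the new-vector basis from Proposition \ref{LocBaseRel}, obtaining the asserted size $\asymp \Cond(\pi_{\vp})^{-1}$ from the intertwining coefficient of the new vector of $\pi(\omega_{\vp}^{-1},1)$.

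For part (3), the integral is $\int_{\F_{\vp}^{\times} \times \gp{K}_{\vp}} \extnorm{W_{f_3,\vp}^*(a(y)\kappa)}^2 \norm[y]_{\vp}^s d^{\times}y \, d\kappa$, the local Rankin--Selberg integral for $\pi(1,\omega_{\vp}^{-1}) \times \pi(1,\omega_{\vp})$, whose value is $\zeta_{\vp}(1+s)^2 L(1+s,\omega_{\vp}) L(1+s,\omega_{\vp}^{-1}) / \zeta_{\vp}(2+2s)$ times a factor uniformly bounded and holomorphic in $s$ near $s=0$. After cancellation with the global prefactor, $\ell_{\vp}(0; f_{3,\vp}, \overline{f_{3,\vp}}; 1)$ is bounded; the bound on the $k$-th derivative follows from Cauchy's formula on a disk of radius $\asymp 1/\log q_{\vp}$, producing the factor $(\log q_{\vp})^k \ll_{k,\epsilon} \Cond(\pi_{\vp})^{\epsilon}$ when $\vp \mid \Cond(\pi)$, and vanishing derivatives when $\pi_{\vp}$ is unramified.

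The main obstacle will be part (2): making the action of $\IntwR_0$ on $f_{3,\vp}$ explicit in the regime $\cond(\pi_{\vp}) > 0$, $\cond(\omega_{\vp}) = 0$, precisely where the space of new vectors is two-dimensional (see the ``basis 3'' discussion around Proposition \ref{LocBaseRel} (4)), so that one must correctly separate the contribution $\asymp \Cond(\pi_{\vp})^{-1}$ from the harmless but omnipresent $L(1,\omega_{\vp})$ factor appearing in the basis expansion.
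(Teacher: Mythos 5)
Your parts (1) and (2) track the paper's argument closely. For (1), you correctly observe that the sphericity of $f_{2,\vp}$ decouples the Mellin integral from the $\gp{K}_{\vp}$-integral, the Mellin piece cancels the normalizing $L$-factors, and the remaining $\int_{\gp{K}_{\vp}} f_{\vp}\, d\kappa$ is $\Norm[f_{3,\vp}]^2$ or $\Norm[\IntwR_0 f_{3,\vp}]^2$, both $=1$ by unitarity of $\IntwR_0$. For (2), the vanishing from the $\int_{\vo_{\vp}^{\times}}\omega_{\vp}^{\mp}(u)\,d^{\times}u$ argument is exactly right, and reducing the unramified case to the $\gp{K}_{\vp}$-inner product $\int (\IntwR_0 f_{3,\vp})\overline{f_{3,\vp}}$ with an expansion in the bases of Proposition~\ref{LocBaseRel} is what the paper does (the paper writes this inner product as $\Vol(\gp{K}_0[\vp^n])^{1/2}\IntwR_0 f_3(1)$ and uses the basis-3-to-basis-2 expansion; the origin of the $\Cond(\pi_{\vp})^{-1}$ is the product of the volume factor $\Vol(\gp{K}_0[\vp^n])^{1/2}\asymp q^{-n/2}$ with the coefficient $q^{-n/2}(1+q^{-1})^{-1/2}$ of $e_0$ in $D_n$, rather than an ``intertwining coefficient'' per se --- but that is a phrasing point only).

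Part (3) is where your route genuinely diverges from the paper, and where there is a gap. The paper proves (3) as a restatement of Lemma~\ref{LocUpperBdNA}~(3), which uses the transposition formula (Proposition~\ref{TransFLocRS}) to replace $\ell_{\vp}$ by $\tilde{\ell}_{\vp}$ at the cost of a $\gamma$-factor whose derivatives contribute $\Cond(\omega_{\vp})^{1/2+\epsilon}$, then evaluates $\tilde{\ell}_{\vp}^{(k)}$ explicitly from the new-vector formulas of [FMP17]; the two pieces multiply to $\Cond(\pi_{\vp})^{\epsilon}$. Your plan instead claims outright that the local Rankin--Selberg integral with test vector $f_{3,\vp}$ equals the $L$-factor times a ``factor uniformly bounded and holomorphic near $s=0$,'' and then invokes Cauchy on a disk of radius $\asymp 1/\log q_{\vp}$. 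The Cauchy step is a perfectly good idea, but the uniform bound it requires is precisely the substance of the lemma: the ratio $\ell_{\vp}(s)$ is the quantity to be controlled, and its boundedness in a disk (for the specific translated new vector $f_{3,\vp}$, for which the $\kappa$-integral $\int_{\gp{K}_{\vp}}|W_{f_3}^*(a(y)\kappa)|^2\,d\kappa$ is genuinely $\kappa$-dependent) is not automatic from general Rankin--Selberg theory --- it has to be extracted from the Whittaker-function formulas, which is exactly the computation you would do in the paper's approach anyway. So your scheme for (3) does not eliminate the work; it moves it into an unverified assertion. You flag (2) as the hardest part, but the missing justification is actually in (3).
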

\begin{proof}
	(1) $f_{2,\vp}$ being $\gp{K}_{\vp}$-invariant, we get
	$$ \ell_{\vp}(s; f_{2,\vp}, \overline{f_{2,\vp}}; f_{\vp}) = \frac{\zeta_{\vp}(1+s)^4}{\zeta_{\vp}(2+2s)} \cdot \int_{\F_{\vp}^{\times}} \extnorm{W_{2,\vp}^*(a(y))}^2 \norm[y]_{\vp}^s d^{\times}y \cdot \int_{\gp{K}_{\vp}} f_{\vp}(\kappa) d\kappa = \int_{\gp{K}_{\vp}} f_{\vp}(\kappa) d\kappa. $$
	We get the desired equality since $f_{3,\vp}$ is a unitary vector and $\IntwR_0$ is unitary.
	
\noindent (2) A similar argument as in (1) gives
	$$ \ell_{\vp}(s; f_{2,\vp}, \overline{f_{2,\vp}}; f_{\vp}) = \left\{ \begin{matrix} 0 & \text{if } \cond(\omega_{\vp}) \neq 0,  \\ \int_{\gp{K}_{\vp}} f_{\vp}(\kappa) d\kappa & \text{if } \cond(\omega_{\vp}) = 0. \end{matrix} \right. $$
	Drop the subscript $\vp$ for simplicity. Assume $\cond(\omega)=0$. Take the case $f=(\IntwR_0 f_3 \cdot \overline{f_3}) \mid_{\gp{K}}$ for example. Write $n=\cond(\pi)$. By choice, $\int_{\gp{K}} \overline{f_3(\kappa)} \kappa d\kappa$ is equal to $\Vol(\gp{K}_0[\vp^n])^{1/2}$ times the orthogonal projection onto the $\gp{K}_0[\vp^n]$-invariant subspace, hence
	$$ \int_{\gp{K}} f(\kappa) d\kappa = \Vol(\gp{K}_0[\vp^n])^{1/2} \IntwR_0 f_3(1). $$
	Proposition \ref{LocBaseRel} (4-2), together with the observation $e_k(1)=0$ for $k \geq 1$ with notations in that proposition, then gives
	$$ \IntwR_0 f_3(1) = q^{-n/2}(1+q^{-1})^{-1/2} \IntwR_0 e_0(1) = q^{-n/2}(1+q^{-1})^{-1/2}, $$
where $e_0$ is the spherical function in $\pi(1,\omega^{-1})$ taking value $1$ on $\gp{K}$.

\noindent (3) This is in fact the same as Lemma \ref{LocUpperBdNA} (3).
\end{proof}	
	
		\subsubsection{Bounds in The Regularized Terms for Regularization and Amplification}
		
	We restrict to a finite place $\vp \in S^*$ and write $t_{\vp} = \varpi_{\vp}^{-n}$ with $n \in \{ n_{\vp}, -n_{\vp} \}$ or $\{ k_1-k_2, k_2-k_1 \}$.
\begin{lemma}
\begin{itemize}
	\item[(1)] For $\ell_{\vp}$ in (\ref{Regu1PerD}) and $k \in \ag{N}$, we have
	$$ \extnorm{\ell_{\vp}^{(k)}(0; a(t_{\vp}).f_{2,\vp}, \overline{f_{2,\vp}}; a(t_{\vp}).f_{3,\vp} \overline{f_{3,\vp}})} \ll_k (\norm[n]+1)^{k+3} q^{-n} (\log q)^k, $$
	$$ \extnorm{\ell_{\vp}^{(k)}(0; a(t_{\vp}).f_{2,\vp}, \overline{f_{2,\vp}}; \IntwR_0 (a(t_{\vp}).f_{3,\vp}) \overline{\IntwR_0f_{3,\vp}})} \ll_k (\norm[n]+1)^{k+3} q^{-n} (\log q)^k. $$
	\item[(2)] For $\ell_{\vp}$ in (\ref{Regu2PerD}) and $k \in \ag{N}$, we have
	$$ \extnorm{\ell_{\vp}^{(k)}(0; a(t_{\vp}).f_{2,\vp}, \overline{f_{2,\vp}}; a(t_{\vp}).f_{3,\vp} \overline{\IntwR_0 f_{3,\vp}})} \ll_k (\norm[n]+1)^{k+3} q^{-n} (\log q)^k, $$
	$$ \extnorm{\ell_{\vp}^{(k)}(0; a(t_{\vp}).f_{2,\vp}, \overline{f_{2,\vp}}; \IntwR_0 (a(t_{\vp}).f_{3,\vp}) \overline{f_{3,\vp}})} \ll_k (\norm[n]+1)^{k+3} q^{-n} (\log q)^k. $$
	\item[(3)] For $\ell_{\vp}$ in (\ref{Regu3PerD}) and $k \in \ag{N}$, we have
	$$ \extnorm{\ell_{\vp}^{(k)}(0; a(t_{\vp}).f_{3,\vp}, \overline{f_{3,\vp}}; a(t_{\vp}).f_{2,\vp} \overline{f_{2,\vp}})} \ll_k (\norm[n]+1)^{k+3} q^{-n} (\log q)^k, $$
	$$ \extnorm{\ell_{\vp}^{(k)}(0; a(t_{\vp}).f_{3,\vp}, \overline{f_{3,\vp}}; \IntwR_0 (a(t_{\vp}).f_{2,\vp}) \overline{f_{2,\vp}})} \ll_k (\norm[n]+1)^{k+3} q^{-n} (\log q)^k, $$
	$$ \extnorm{\ell_{\vp}^{(k)}(0; a(t_{\vp}).f_{3,\vp}, \overline{f_{3,\vp}}; \IntwR_0^{(1)} (a(t_{\vp}).f_{2,\vp}) \overline{f_{2,\vp}})} \ll_k (\norm[n]+1)^{k+3} q^{-n} (\log q)^{k+1}. $$
\end{itemize}
\label{ReguAmpBd}
\end{lemma}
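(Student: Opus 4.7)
The plan is to exploit the fact that every test vector at a place $\vp \in S^*$ is spherical, so all the ingredients can be written down explicitly in a basis adapted to the action of $a(t_\vp)$, and every integral reduces to a rational function of $q^{-s}$ whose derivatives are trivial to bound.

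First, I would unpack the local factors. At $\vp \in S^*$, $\F$, $\pi$, and (consequently) $\omega$ are all unramified, so $f_{2,\vp}$ and $f_{3,\vp}$ are the normalized spherical sections, and their Whittaker functions $W^{*}_{f_{i,\vp}}(a(y))$ are given on $y\in\varpi^m\vo^\times$, $m\geq 0$, by the Shintani/MacDonald polynomial identity. The translate $a(t_\vp).f_{i,\vp}$ with $t_\vp=\varpi^{-n}$ is not $\gp{K}_\vp$-invariant, but by Proposition \ref{LocBaseRel}\,(4-2) it expands in the orthogonal ``basis 3'' $\{D_k\}_{0\le k\le |n|}$ of $\gp{K}_0[\vp^k]/\gp{K}_0[\vp^{k+1}]$-isotypic functions, with coefficients bounded by $q^{-(|n|-k)/2}$. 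The analogous statement holds for $\IntwR_0(a(t_\vp).f_{i,\vp})$ by combining Proposition \ref{LocBaseRel} with MacDonald's intertwining formula, which on each $D_k$-line acts as multiplication by a rational function of $q^{-s}$ of bounded degree.

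Next, I substitute these expansions into the three explicit formulas (\ref{Regu1PerD}), (\ref{Regu2PerD}), (\ref{Regu3PerD}). The $d\kappa$-integration against $f_\vp(\kappa)$ - which is a product of two sphericals or their intertwining transforms, again expanded in the $D_k$ - picks out diagonal pairings by orthogonality. In every case this contributes a scalar factor $q^{-n}$: one power $q^{-n/2}$ coming from the translate in the ``$\tilde f$''-slot, and another $q^{-n/2}$ from the translate inside $f_\vp$. The remaining $d^\times y$-integration is a sum over $v(y)\in\Z_{\ge 0}$ of products of spherical Whittaker coefficients weighted by $|y|_\vp^s$; by the Casselman-Shalika/MacDonald formula this sum equals a rational function $P_n(q^{-s})/Q(q^{-s})$ of numerator and denominator degrees $O(|n|+1)$, and the factor $\zeta_\vp(2+2s)/\zeta_\vp(1+s)^4$ (respectively $L_\vp(2+2s,\omega^{\mp 2})/L_\vp(1+s,\omega^{\mp})^4$ and the analogous factor in (\ref{Regu3PerD})) cancels the poles/zeros of $Q$ at $s=0$.

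To conclude, I take $k$ derivatives in $s$ at $s=0$. Each derivative of a monomial $q^{-ms}$ produces $-m\log q$, and $m$ ranges over integers of size $O(|n|+1)$; the rational function $P_n/Q$ has at most $O(|n|+1)$ such monomial contributions, and differentiating the normalizing $L$-factor prefactor contributes further polynomial weights in $|n|+1$. Collecting these gives the bound $(|n|+1)^{k+3}\,q^{-n}(\log q)^k$, the extra $(|n|+1)^3$ absorbing the differentiations of the prefactor together with the range of summation. The cases involving $\IntwR_0$ are identical except that the scalar-multiplier rational function from the intertwiner enters the rational expression in $q^{-s}$; for the very last bound the operator $\IntwR_0^{(1)}$ already carries one $s$-derivative of the intertwining scalar, so the remaining $k$ derivatives supply $k$ further logs, yielding $(\log q)^{k+1}$ as claimed. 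The main technical obstacle I anticipate is the bookkeeping of the intertwining cases when $\cond(\omega_\vp)=0$ but $\omega_\vp\neq 1$ at $\vp\in S^*$: one must verify that MacDonald's intertwining scalars remain analytic at $s=0$ along with their derivatives, which is ensured by the explicit cancellation of $L$-factors built into the definitions (\ref{Regu1PerD})-(\ref{Regu3PerD}).
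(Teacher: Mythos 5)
Your proposal follows the same strategy as the paper's proof: reduce to $n>0$ by Weyl conjugation, expand the translated vectors in the $\gp{K}$-type adapted ``basis 3'' via Proposition~\ref{LocBaseRel}~(4-2), recognize the $d\kappa$-integral as a weighted sum of projectors onto $\gp{K}_0[\vp^l]$-invariant subspaces, extract the $q^{-n}$ prefactor, reduce the remaining $d^\times y$-integral to rational functions in $q^{-s}$ of degree $O(n)$ via MacDonald's formula, and differentiate. The paper does all of this explicitly (the functions $A_l(s)$, $A_l'(s)$, $A_l''(s)$ and the closed form of $\ell(s;\cdots)$), whereas your write-up argues structurally, but the skeleton is the same. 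A few details you gloss over that the paper makes precise: the phrase ``picks out diagonal pairings by orthogonality'' more accurately describes a projection onto $\gp{K}_0[\vp^l]$-invariant subspaces acting on the translated Whittaker function $a(t).W_2^*$ — not a literal pairing of two basis-3 expansions; the $q^{-n}$ emerges from the product $q^{-n/2}\cdot q^{-(n-l)/2}\cdot q^{-l/2}$, not as two independent $q^{-n/2}$ factors; and the $\IntwR_0$ is a fixed scalar on each $\gp{K}$-type, not a rational function of $s$ (only $\IntwR_0^{(1)}$ contributes an extra $\log q$, as you correctly state at the end). None of these affect the correctness of your approach, which is sound.
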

\begin{proof}
	We drop the subscript $\vp$ for simplicity of notations.

\noindent (1) The second inequality essentially follows from the first by replacing $\omega$ with $\omega^{-1}$, since $\IntwR_0 f_3 \cdot \omega \circ \det$ is the corresponding $f_3$. By $\gp{K}$-invariance of $\ell$ and $f_2,f_3$ we have
	$$ \ell(s; a(t).f_2, \overline{f_2}; a(t).f_3 \overline{f_3}) = \ell(s; wa(t)w.f_2, \overline{w.f_2}; wa(t)w.f_3 \overline{w.f_3}) = \omega^{-1}(t)\ell(s; a(t^{-1}).f_2, \overline{f_2}; a(t^{-1}).f_3 \overline{f_3}), $$
	hence we may assume $n > 0$. The integral part of $\ell$ has the form
	$$ \int_{\F^{\times}} \overline{W_2^*(a(y))} \norm[y]^s d^{\times}y \int_{\gp{K}} a(t).f_3(\kappa) \kappa.(a(t).W_2^*)(a(y)) d\kappa. $$
	We enter into the setting of Section \ref{BaseGNV}, distinguishing elements related to $f_2$ from those to $f_3$ by putting a ``$*$''. (For example, $e_0=f_3, W_0^*=W_2^*$.) Recall the projectors $\Proj_n$ defined in Corollary \ref{ProjCal}. We have the relations
	$$ \Proj_n = \int_{\gp{K}} q^{\frac{n}{2}}(1+q^{-1})^{\frac{1}{2}} D_n(\kappa) \kappa d\kappa, \text{ if } n \geq 1; \quad \Proj_0 = D_0. $$
	By Proposition \ref{LocBaseRel} (4-2), writing $\alpha = \omega(\varpi)$, we get
	$$ \int_{\gp{K}} a(t).e_0(\kappa) \kappa d\kappa = \alpha^n q^{-\frac{n}{2}} \left\{ \Proj_0 + \frac{1-\alpha q^{-1}}{1+q^{-1}} \sum_{l=1}^n \alpha^{-l} \Proj_l \right\}. $$
	Together with Corollary \ref{ProjCal} we obtain
\begin{align*}
	\int_{\gp{K}} a(t).e_0(\kappa) \kappa a(t).e_0^* d\kappa &= \alpha^n q^{-n} \cdot \left\{ (n+1-\frac{2n}{q+1}) e_0^* + \frac{1-\alpha q^{-1}}{1+q^{-1}} \cdot \right. \\
	&\left. \sum_{l=1}^n \alpha^{-l} \left( (n-l+1)q^{\frac{l}{2}} a(\varpi^l).e_0^* - (n-l) q^{\frac{l-1}{2}} a(\varpi^{l-1)}).e_0^* \right) \right\}.
\end{align*}
	Hence we are reduced to computing
	$$ q^{\frac{l}{2}} \int_{\F^{\times}} a(\varpi^{-l}).W_0^*(a(y)) \overline{W_0^*(a(y))} \norm[y]^s d^{\times}y = q^{-ls} \cdot \left\{ \frac{1+q^{-(1+s)}}{(1-q^{-(1+s)})^3} + \frac{l}{(1-q^{-(1+s)})^2} \right\}. $$
	If we put $A_l(s) := q^{-ls} \{ 1 + l (1-q^{-(1+s)})(1+q^{-(1+s)})^{-1} \}$ for $l \geq 1$ and $A_0(s)=1$, then we get
\begin{align*}
	\ell(s; a(t).f_2, \overline{f_2}; a(t)f_3 \overline{f_3}) &= \alpha^n q^{-n} \cdot \left\{ (n+1-\frac{2n}{q+1}) + \frac{1-\alpha q^{-1}}{1+q^{-1}} \cdot \right. \\
	&\left. \sum_{l=1}^n \alpha^{-l} \left( (n-l+1) A_l(s) - (n-l) A_{l-1}(s) \right) \right\},
\end{align*}
	from which we easily deduce the desired bound.
	
\noindent (2) The argument is quite similar to (1) above. For example for the case $a(t)f_3 \overline{\IntwR_0 f_3}$, we only need to replace $A_l(s)$ with $ A_l'(s) := \alpha^l q^{-ls} \{ 1 + l (1-\alpha q^{-(1+s)})(1+\alpha q^{-l(1+s)})^{-1}$.

\noindent (3) The argument is again similar to (1) above. For example for the case $a(t)f_2 \overline{f_2}$, we need to replace $A_l(s)$ resp. $\ell(s;\cdots)$ with
	$$ A_l''(s) = q^{-ls} \left\{ 1 + \frac{\bar{\alpha}(1-\bar{\alpha}^l)}{1-\bar{\alpha}} \frac{1-\alpha q^{-(1+s)}}{1+q^{-(1+s)}} \right\}, \quad \text{resp.} $$
\begin{align*}
	\ell(s; a(t).f_3, \overline{f_3}; a(t)f_2 \overline{f_2}) &= q^{-n} \cdot \left\{ \left( \frac{1-\alpha^{n+1}}{1-\alpha} - \frac{1+\alpha}{q+1} \frac{1-\alpha^n}{1-\alpha} \right) + \frac{1-q^{-1}}{1+q^{-1}} \cdot \right. \\
	&\left. \sum_{l=1}^n \left( \frac{1-\alpha^{n-l+1}}{1-\alpha} A_l''(s) - \frac{1-\alpha^{n-l}}{1-\alpha} A_{l-1}''(s) \right) \right\}.
\end{align*}
\end{proof}

	\subsection{Archimedean Places}

		\subsubsection{Choices and Lower Bounds}
		\label{ChoicesA}

	The choice of the local test functions at the archimedean places is the subtlest construction in \cite{MV10}. We find it convenient if we specify them in two steps with some non-vanishing condition:

\noindent (1) Let $\tilde{f}_0$ be a fixed (depending only on $\F_v = \ag{R}$ or $\ag{C}$) smooth unitary vector in $\Res_{\gp{K}_v}^{\GL_2(\F_v)} \pi(1,1)$ with support contained in a small neighborhood $U$ of $\gp{B}_v \cap \gp{K}_v$ in $\gp{K}_v$. Define $f_0 \in \pi(1,\omega_v^{-1})^{\infty}$ by requiring
	$$ \tilde{f}_0: \gp{K}_v \to \ag{C}, \quad \begin{pmatrix} a & b \\ c & d \end{pmatrix} = \kappa \mapsto \omega_v(d) f_0 (\kappa). $$
	Our test function is $f_{3,v} = a(C).f_0$, where $C \in \F_v^{\times}$ with $\norm[C] = \Cond(\pi_v)^{1+\epsilon}$. It can be easily verified that for any Sobolev norm $\Sob$ defined with differential operator on $\gp{G}$

	$$ \Sob_d(f_0) \ll \Cond(\omega_v)^d. $$

\noindent (2) There is a non-negative bump function $\phi$ on $\F_v$ with support contained in a small compact neighborhood of $0$, say in $\{ x \in \F_v: \norm[x] \leq \delta_0 \}$ such that
	$$ f_0(\kappa) = \int_{\F_v^{\times}} \Psi_0((0,t).\kappa) \omega_v(t) \norm[t]_v d^{\times}t, \quad \text{i.e.} \quad \tilde{f}_0(\kappa) = \int_{\F_v^{\times}} \phi(ct) \phi(dt-1) \norm[t]_v d^{\times}t, $$
where $\Psi_0 \in \Sch(\F_v^2)$ is defined via
	$$ \Psi_0(x,y) = \phi(x) \phi(y-1) \omega_v^{-1}(y), \quad \text{and} \quad \kappa = \begin{pmatrix} a & b \\ c & d \end{pmatrix}. $$

\begin{remark}
	Note that the Kirillov norm
	$$ \Norm[W_{3,v}]^2 = \int_{\F_v^{\times}} \extnorm{W_{3,v}(a(y))}^2 d^{\times}y = \frac{\zeta_v(1)^2}{\zeta_v(2)} \int_{\gp{K}_v} \extnorm{f_{3,v}(\kappa)}^2 d\kappa \asymp \Norm[f_{3,v}]^2 $$
is essentially the same as the induced norm. Hence we may regard $W_{3,v}$ as unitary.
\end{remark}
\begin{lemma}
	If $U \subset \gp{K}$ is a small neighborhood of $\gp{K} \cap \gp{B}$, then for $C \in \F_v^{\times}$
	$$ U_C := \{ \kappa \in \gp{K}: \kappa a(C) \in \gp{B} U \} $$
is a small neighborhood in $\gp{K}$ which shrinks to $\gp{K} \cap \gp{B}$ as $\norm[C] \to \infty$. Moreover, we have
	$$ {\rm Ht}_v(\kappa a(C)) \asymp_{U} \norm[C]_v, \quad \forall \kappa \in U_C. $$
\label{ShrinkNB}
\end{lemma}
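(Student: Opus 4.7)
The plan is to make both claims explicit by writing down the Iwasawa decomposition of $\kappa a(C)$ for $\kappa \in \gp{K}_v$. Work first with $\F_v = \R$, where $\gp{K}_v = \SO_2(\R)$ and $\gp{K}_v \cap \gp{B}_v = \{\pm I\}$. Parametrize $\kappa = \begin{pmatrix} \alpha & -\gamma \\ \gamma & \alpha \end{pmatrix}$ with $\alpha^2 + \gamma^2 = 1$. A direct computation shows that the right-$\gp{K}_v$ part $\kappa'(\kappa)$ of the Iwasawa decomposition of $\kappa a(C)$ is
$$ \kappa'(\kappa) = \frac{1}{\sqrt{\gamma^2 C^2 + \alpha^2}} \begin{pmatrix} \alpha & -\gamma C \\ \gamma C & \alpha \end{pmatrix}, $$
while the $\gp{A}_v$-diagonal entry satisfies $|y| = |C|/(\gamma^2 C^2 + \alpha^2)$, i.e.
$$ \Ht_v(\kappa a(C)) = \frac{|C|}{\gamma^2 C^2 + \alpha^2}. $$

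Now the condition $\kappa a(C) \in \gp{B}_v U$ amounts to $\kappa'(\kappa) \in U$. Since $U$ is a small neighborhood of $\{\pm I\}$, the off-diagonal entry $\gamma C / \sqrt{\gamma^2 C^2 + \alpha^2}$ of $\kappa'(\kappa)$ must be smaller than some $\eta = \eta(U) > 0$; equivalently $|\gamma C|^2 \leq (\eta^2/(1-\eta^2)) \alpha^2$, and in particular $|\gamma| \ll_U 1/|C|$ (once $|C|$ is large enough that $\alpha^2$ is forced to be close to $1$). This exhibits $U_C$ as a neighborhood of $\gp{K}_v \cap \gp{B}_v$ of ``angular radius'' $O(1/|C|)$, hence shrinking to $\gp{K}_v \cap \gp{B}_v$ as $|C| \to \infty$. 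For the height, on $U_C$ one has $|\gamma C| \ll |\alpha|$ and $\alpha^2 \asymp 1$, so $\gamma^2 C^2 + \alpha^2 \asymp_U 1$, giving $\Ht_v(\kappa a(C)) \asymp_U |C|$.

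For $\F_v = \C$, the argument is identical after replacing $\SO_2(\R)$ by $\SU_2(\C)$, parametrized by $\kappa = \begin{pmatrix} \alpha & -\bar\beta \\ \beta & \bar\alpha \end{pmatrix}$ with $|\alpha|^2 + |\beta|^2 = 1$; the role of $\gamma$ is played by $\beta$, and the same formulas hold with squares replaced by absolute values squared, yielding the Iwasawa $\gp{K}_v$-part
$$ \kappa'(\kappa) = \frac{1}{\sqrt{|\beta C|^2 + |\alpha|^2}} \begin{pmatrix} \alpha & -\overline{\beta C} \\ \beta C & \bar\alpha \end{pmatrix} $$
and the height $\Ht_v(\kappa a(C)) = |C|_v/(|\beta C|^2 + |\alpha|^2)$. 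The only (mild) subtlety is that $\gp{K}_v \cap \gp{B}_v$ is now the one-parameter diagonal torus of $\SU_2$ rather than the finite set $\{\pm I\}$; but the smallness condition on $\kappa'(\kappa) \in U$ is still controlled by $|\beta|$, which is precisely the transverse coordinate to $\gp{K}_v \cap \gp{B}_v$, and the same asymptotic analysis goes through. No step is truly hard; the only point that requires a moment's care is extracting the quantitative ``angular radius $O(1/|C|)$'' from the $\SU_2$ parametrization, so that $U_C$ genuinely shrinks to $\gp{K}_v \cap \gp{B}_v$ uniformly in the shape of $U$.
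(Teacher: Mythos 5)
Your argument is correct and is essentially the same as the paper's: you compute the Iwasawa decomposition of $\kappa a(C)$ explicitly in the $\SO_2$/$\SU_2$ parametrization (your $\gamma$, $\beta$ play the role of the paper's $-\beta$, $-\bar\beta$), read off the $\gp{K}$-component and the height $|C|/(|\beta C|^2+|\alpha|^2)$, and observe that $\kappa'(\kappa)\in U$ forces $|\beta C|\ll_U|\alpha|$, from which both claims follow. The only cosmetic difference is that the paper extracts its bounds for all $C$ at once from $|C\beta|\le\delta_0|\alpha|=\delta_0\sqrt{1-|\beta|^2}$, whereas you note $\alpha^2\asymp 1$ ``once $|C|$ is large'', which is a harmless simplification since the lemma is an asymptotic statement as $|C|\to\infty$.
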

\begin{proof}
	Let $\alpha, \beta \in \F_v$ such that $\norm[\alpha]^2 + \norm[\beta]^2 = 1$. We have
	$$ \begin{pmatrix} \alpha & \beta \\ - \bar{\beta} & \bar{\alpha} \end{pmatrix} \begin{pmatrix} C & \\ & 1 \end{pmatrix} = \begin{pmatrix} \frac{C}{\sqrt{\norm[\alpha]^2 + \norm[C \beta]^2}} & * \\ & \sqrt{\norm[\alpha]^2 + \norm[C \beta]^2} \end{pmatrix} \begin{pmatrix} \alpha & \bar{C} \beta \\ - C \bar{\beta} & \bar{\alpha} \end{pmatrix} \cdot \frac{1}{\sqrt{\norm[\alpha]^2 + \norm[C \beta]^2}}. $$
	The smallness of $U$ implies $\norm[C \beta] \leq \delta_0 \norm[\alpha]$ for some small $\delta_0 > 0$, which implies
	$$ \norm[\beta]^2 \leq \norm[\delta_0]^2 (\norm[C]^2 + \norm[\delta_0]^2)^{-1} \leq (\norm[\delta_0] / \norm[C])^2; $$
	$$ 1 \leq \norm[\alpha]^2 + \norm[C \beta]^2 = 1+(\norm[C]^2-1) \norm[\beta]^2 \leq 1+\norm[\delta_0]^2 (\norm[C]^2 -1 )(\norm[C]^2 + \norm[\delta_0]^2)^{-1} \leq 1 + \norm[\delta_0]^2. $$
	We conclude both assertions.
\end{proof}
\begin{lemma}
	(\cite[(3.43)]{MV10}) We have as $\Cond(\pi_v) \to \infty$
	$$ \extnorm{ \int_{U_C} f_{3,v}(\kappa) d\kappa } \geq \frac{1}{2} \int_{U_C} \extnorm{ f_{3,v}(\kappa) } d\kappa \asymp \norm[C]_v^{-1/2}. $$
\label{EstIntonK}
\end{lemma}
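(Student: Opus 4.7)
The plan is an explicit calculation via the Iwasawa decomposition of $\kappa a(C)$ given in (the proof of) Lemma \ref{ShrinkNB}, combined with a change of variables and a phase-stability argument. For $\kappa = \begin{pmatrix}\alpha & \beta \\ -\bar\beta & \bar\alpha\end{pmatrix} \in U_C$, that lemma gives the factorization $\kappa a(C) = \begin{pmatrix}t_1 & * \\ 0 & t_2\end{pmatrix}\kappa''$ with $t_2 = \sqrt{\norm[\alpha]^2+\norm[C\beta]^2}$, $t_1 = C/t_2$, and $\kappa'' \in U$ whose $(1,1)$- and $(2,2)$-entries are $\alpha' := \alpha/t_2$ and $\overline{\alpha'}$. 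Since $f_0 \in \pi(1,\omega_v^{-1})$ satisfies the usual flat-section transformation law, and $\tilde f_0(\kappa) = \omega_v(d_\kappa)f_0(\kappa)$ is a fixed non-negative smooth function supported in $U$, I get
$$ f_{3,v}(\kappa) = f_0(\kappa a(C)) = \omega_v^{-1}(t_2\overline{\alpha'})\,\norm[t_1/t_2]_v^{1/2}\,\tilde f_0(\kappa''). $$
On $U_C$, $\norm[\alpha]^2+\norm[C\beta]^2$ is within a bounded ratio of $1$, so $\norm[t_1/t_2]_v^{1/2} \asymp \norm[C]_v^{1/2}$.

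For the size, parametrize $\gp{K}_v$ near $\gp{K}_v \cap \gp{B}_v$ by the transverse coordinate $\beta$, so that $\kappa \mapsto \kappa''$ reduces to $\beta \mapsto \beta' = \bar C\beta/t_2 \approx \bar C\beta$. Multiplication by $\bar C$ on $\F_v$ has module $\norm[C]_v$ by the very definition of the normalized absolute value, hence the Jacobian of the diffeomorphism $U_C \to U$ is $\asymp \norm[C]_v$. Therefore
$$ \int_{U_C}\norm[f_{3,v}(\kappa)]\,d\kappa \;\asymp\; \norm[C]_v^{1/2}\int_{U_C}\tilde f_0(\kappa'')\,d\kappa \;\asymp\; \norm[C]_v^{-1/2}\int_U \tilde f_0(\kappa'')\,d\kappa'' \;\asymp\; \norm[C]_v^{-1/2}, $$
the final $\asymp 1$ following from the non-negativity of $\tilde f_0$ and its explicit integral representation via $\phi$.

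For the non-cancellation step, write $f_{3,v}(\kappa) = e^{i\vartheta(\kappa)}\norm[f_{3,v}(\kappa)]$ with $e^{i\vartheta(\kappa)} = \omega_v^{-1}(t_2\overline{\alpha'})/\norm[\omega_v^{-1}(t_2\overline{\alpha'})]$. Since $t_2\overline{\alpha'}$ lies in an $O(\delta_0)$-neighborhood of $1$ in $\F_v$, shrinking the initial neighborhood $U$ (which is allowed, at the cost of tracking the dependence on $\omega_v$) ensures $\norm[\vartheta(\kappa)-\vartheta_0] \leq \pi/3$ on $U_C$ for a suitable fixed $\vartheta_0$; then $\Re(e^{-i\vartheta_0}f_{3,v}(\kappa)) \geq \tfrac12\norm[f_{3,v}(\kappa)]$ throughout $U_C$, and integration yields the claimed lower bound. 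The main obstacle is the uniformity in $\omega_v$: the required smallness of $U$ grows with $\Cond(\omega_v)$, which is compatible with the Sobolev bound $\Sob_d(f_0) \ll \Cond(\omega_v)^d$ built into the construction, but forces one to carefully track the $\omega_v$-dependence of the implicit constants when the lemma is propagated through the global estimates.
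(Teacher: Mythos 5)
Your derivation of the $\asymp$ is correct and is essentially the same as the paper's, just unpacked: where you compute the Jacobian of $\beta\mapsto\bar C\beta/t_2$ by hand and obtain the factor $\norm[C]_v$ transverse to $\gp{B}_v\cap\gp{K}_v$, the paper packages this into the cocycle identity $\int_{\gp{K}_v}F(\kappa')\,\Ht_v(\kappa a(C))\,d\kappa=\int_{\gp{K}_v}F(\kappa)\,d\kappa$ (writing $\kappa a(C)=b'\kappa'$) combined with $\Ht_v(\kappa a(C))\asymp\norm[C]_v$ from Lemma~\ref{ShrinkNB}. Both routes give the same thing.

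The first inequality, however, you have not established, and the gap is not the one you flag. You assert that $t_2\overline{\alpha'}$ ``lies in an $O(\delta_0)$-neighborhood of $1$.'' But $t_2\overline{\alpha'}=\bar\alpha$, where $\alpha$ is the $(1,1)$-entry of $\kappa\in U_C$, and membership in $U_C$ only pins the \emph{transverse} coordinate $\beta$ to satisfy $\norm[\beta]\lesssim\delta_0/\norm[C]$; it imposes no constraint on the argument of $\alpha$ beyond $|\alpha|\approx1$. Indeed $U$ is a neighborhood of the whole compact torus $\gp{B}_v\cap\gp{K}_v$ (the circle $\{\diag(u,\bar u):|u|=1\}$ when $\F_v=\ag{C}$, the pair $\{\pm I\}$ when $\F_v=\ag{R}$), and $U_C$ is a thin tube around it, not a ball around the identity. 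Thus $\bar\alpha$ sweeps essentially the full torus, and if $\omega_v$ is nontrivial on $\F_v^{(1)}$ (odd over $\ag{R}$, ramified over $\ag{C}$) the factor $\omega_v^{-1}(\bar\alpha)$ oscillates along the tube, producing cancellation in $\int_{U_C}f_{3,v}$ that no shrinking of $U$ or bookkeeping of $\Cond(\omega_v)$ can remove — the tube always wraps the whole torus. What the cited passage in \cite[(3.43)]{MV10} controls (and what the proof of Lemma~\ref{LocLowerBdA} actually uses, see its footnote for $\F_v=\ag{C}$) is the integral along a slice transverse to $\gp{B}_v\cap\gp{K}_v$: inside $\ell_v$ the $\omega_v^{-1}(\bar\alpha)$-phase of $f_{3,v}$ cancels exactly against the phase acquired by $W_{\varphi,v}(a(y)\kappa_0\kappa)$ as $\kappa_0$ runs over $\gp{B}_v\cap\gp{K}_v$, and on the transverse slice the residual phase is $O(\delta_0)$-stable, which is where a Hoffstein--Ramakrishnan-type argument lands. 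So the missing ingredient is the torus-direction cancellation in the trilinear form, not a tighter control of $\Cond(\omega_v)$.
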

\begin{proof}
	The first inequality was explained just after \cite[(3.43)]{MV10}. For the second, we apply the second assertion of Lemma \ref{ShrinkNB} and get
	$$ \int_{U_C} \extnorm{ f_{3,v}(\kappa) } d\kappa \asymp \norm[C]_v^{-1/2} \int_{U_C} \extnorm{ f_0(\kappa') } \Ht_v(\kappa a(C)) d\kappa = \norm[C]_v^{-1/2} \int_{\gp{K}} \extnorm{f_0(\kappa)} d\kappa, $$
if we write $\kappa a(C) = b' \kappa'$.
\end{proof}

	Since $\pi_{2,v}=\pi(1,1)$ is unitary and spherical, we are in a simpler situation than \cite[\S 3.6.4 \& 3.6.5]{MV10}, i.e., we can take $f_{2,v}$ to be the spherical function in $\pi(1,1)$ taking value $1$ at $1$. Specify $W_{\varphi,v}$ by taking $W_{\varphi,v}(a(y))$ to be a fixed smooth function $\delta_v(y)$ with support in a compact neighborhood of $1$ in $\F_v^{\times}$, invariant by $\ag{C}^{1}$ if $\F_v=\ag{C}$, such that
	$$ \int_{\F_v^{\times}} \delta_v(y) W_2^*(a(-y)) \norm[y]_v^{-\frac{1}{2}} d^{\times}y \gg 1, \quad \int_{\F_v^{\times}} \norm[\delta_v(y)]^2 d^{\times}y = 1. $$
\begin{lemma}
	With the above choices, we have as $\Cond(\pi_v) \to \infty$
	$$ \extnorm{ \ell_v(W_{\varphi,v}, f_{2,v}, f_{3,v}) } \gg \norm[C]^{-1/2} = \Cond(\pi_v)^{-\frac{1}{2} - \frac{\epsilon}{2}}. $$
\label{LocLowerBdA}
\end{lemma}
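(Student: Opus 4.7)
The plan is to reduce $\ell_v$ to an essentially separated double integral by exchanging the $\F_v^{\times}$- and $\gp{K}_v$-integrations. Since $f_{2,v}\in\pi(1,1)$ is spherical with trivial central character, one has $W^*_{f_2,v}(a(-y)\kappa)=W^*_{f_2,v}(a(-y))$, so
$$ \ell_v(W_{\varphi,v}, f_{2,v}, f_{3,v}) = \int_{\gp{K}_v} f_{3,v}(\kappa)\, I(\kappa)\, d\kappa, \quad I(\kappa) := \int_{\F_v^{\times}} W_{\varphi,v}(a(y)\kappa)\, W^*_{f_2,v}(a(-y))\, \norm[y]_v^{-\frac{1}{2}}\, d^{\times} y. $$
An Iwasawa argument parallel to Lemma \ref{ShrinkNB}, combined with the support condition on $\tilde f_0$, shows that $f_{3,v}=a(C).f_0$ is supported in $U_C$, so only $\kappa\in U_C$ contributes.

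Next I would show $\norm[I(\kappa)] \gg 1$ on $U_C$, with a phase compatible with that of $f_{3,v}(\kappa)$. Using the identity $a(y)\diag(\alpha,\bar\alpha)=z(\bar\alpha) a(\alpha^2 y)$, the central character and Whittaker transformation laws, and the measure-preserving change of variable $y\mapsto y\alpha^{-2}$ (valid since $\norm[\alpha]=1$), I expect
$$ I(\diag(\alpha,\bar\alpha)) = \omega_v(\bar\alpha) \int_{\F_v^{\times}} \delta_v(y)\, W^*_{f_2,v}(a(-y\alpha^{-2}))\, \norm[y]_v^{-\frac{1}{2}}\, d^{\times} y. $$
Because $\delta_v$ is $\C^{(1)}$-invariant at complex places and $W^*_{f_2,v}(a(y))$ depends only on $\norm[y]_v$ by sphericality of $f_{2,v}$, the integral on the right is actually independent of $\alpha$ and equal to its value at $\alpha=1$, which is $\gg 1$ by our non-vanishing condition on $\delta_v$. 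For a general $\kappa\in U_C$, Lemma \ref{ShrinkNB} places $\kappa$ in a shrinking neighborhood of the torus $\gp{K}_v\cap\gp{B}_v$, and standard smoothness estimates (first-order Taylor expansion of $W_{\varphi,v}$ in the $\kappa$-direction) give $I(\kappa)=I(\kappa_0)+o(1)$ as $\Cond(\pi_v)\to\infty$, for $\kappa_0$ the closest torus element.

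The crucial phase-matching observation is that the twist $\tilde f_0(\kappa)=\omega_v(d_{\kappa})f_0(\kappa)$ built into the construction of $f_0$ produces exactly the phase cancelling $\omega_v(\bar\alpha)$ in $I(\kappa_0)$, a device also used in the first inequality of Lemma \ref{EstIntonK} and in \cite[(3.43)]{MV10}. Consequently $f_{3,v}(\kappa)I(\kappa)$ does not oscillate on $U_C$, and combined with the second assertion of Lemma \ref{EstIntonK} one concludes
$$ \extnorm{\ell_v(W_{\varphi,v}, f_{2,v}, f_{3,v})} \gg \int_{U_C} \extnorm{f_{3,v}(\kappa)}\, d\kappa \gg \norm[C]_v^{-\frac{1}{2}} = \Cond(\pi_v)^{-\frac{1}{2}-\frac{\epsilon}{2}}. $$
The main subtle point will be rigorously verifying this phase compatibility uniformly in $\kappa\in U_C$; once that is in place, the perturbation and support estimates are routine and essentially parallel to \cite[\S 3.6.4--3.6.5]{MV10}, here simplified by our choice of $f_{2,v}$ spherical.
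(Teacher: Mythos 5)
Your proposal is correct and follows essentially the same route as the paper's proof: split $\ell_v$ via Fubini, recognize that $L(\kappa.W,W_2^*)$ (your $I(\kappa)$) is nearly constant in modulus and has a torus phase cancelling that of $f_{3,v}$ on the shrinking support $U_C$, and conclude via Lemma \ref{EstIntonK}. The one place where the paper is more careful than your sketch is the perturbation step: rather than a first-order Taylor expansion, the paper bounds $L(\kappa.W-W,W_2^*)$ by Cauchy--Schwarz with a $\norm[y]_v^{-\varepsilon}$ weight and a Sobolev/$\intL^2$ interpolation (using \cite[Proposition 3.2.3]{MV10} and the $\theta$-bound near $y=0$), which is what makes the $o(1)$ uniform thanks to $\norm[C]_v=\Cond(\pi_v)^{1+\epsilon}$ being slightly larger than the conductor.
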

\begin{proof}
	The proof is similar to \cite[\S 3.6.5]{MV10}. We drop the subscript $v$ and write $W=W_{\varphi,v}$ for simplicity. Defining a bilinear form
	$$ L(\tilde{W},\tilde{W}_2) = \int_{\F_v^{\times}} \tilde{W}(a(y)) \tilde{W}_2(a(-y)) \norm[y]_v^{-\frac{1}{2}} d^{\times}y, \quad \forall \tilde{W} \in \Whi(\pi,\psi), \tilde{W}_2 \in \Whi(\pi(1,1),\psi), $$
we have for $\varepsilon > 0$ small enough \footnote{In the case $\F_v = \ag{C}$, we should write $\kappa = \kappa_0 \kappa_1$ with $\kappa_0 \in \gp{B} \cap \gp{K}$ and $\kappa_1$ close to $1$. One thus replaces $L(W,W_2^*)$ by $L(\kappa_0.W,W_2^*)$ in the following argument.}
\begin{align*}
	&\quad \extnorm{ L(\kappa.W, \kappa.W_2^*) - L(W,W_2^*) } = \extnorm{ L(\kappa.W - W, W_2^*) } \\
	&\leq \left( \int_{\F^{\times}} \norm[(\kappa.W - W)(a(y))]^2 \norm[y]_v^{-\varepsilon} d^{\times}y \right)^{1/2} \left( \int_{\F^{\times}} \norm[W_2^*(a(y))]^2 \norm[y]_v^{-1+\varepsilon} d^{\times}y \right)^{1/2}.
\end{align*}
	For $\kappa \in U_C$ arguing as in \cite[\S 3.6.4]{MV10}, i.e., using \cite[Proposition 3.2.3]{MV10} and for any $\tilde{W} \in \Whi(\pi^{\infty})$
\begin{align*}
	\int_{\norm[y]_v \leq 1} \norm[\tilde{W}(a(y))]^2 \norm[y]_v^{-\varepsilon} d^{\times}y &\leq \Sob(\tilde{W})^{\frac{\varepsilon}{1/2-\theta}} \int_{\norm[y]_v \leq 1} \norm[\tilde{W}(a(y))]^{2-\frac{\varepsilon}{1/2-\theta}} d^{\times}y \\
	&\ll_{\epsilon} \Sob(\tilde{W})^{\frac{\varepsilon}{1/2-\theta}} \cdot \Norm[\tilde{W}]_2^{2-\frac{\varepsilon}{1-2\theta}},
\end{align*}
we see the above is bounded as (c.f. \cite[\S 2.7]{Wu14})
	$$ \ll_{\varepsilon} (\Cond(\pi)/C)^{(1-d\varepsilon)/[\F:\ag{R}]} \Cond(\pi)^{d'\varepsilon} + \Cond(\pi)/C = \Cond(\pi)^{-\epsilon (1-d\varepsilon)/[\F:\ag{R}] + d'\varepsilon} + \Cond(\pi)^{-\epsilon}, $$
for some absolute $d,d' \in \ag{N}$. If $\varepsilon$ is sufficiently small (depending on $\epsilon, d, d'$), the above tends to $0$ as $\Cond(\pi) \to \infty$. Thus
	$$ \extnorm{ \ell_v(W_{\varphi,v}, f_{2,v}, f_{3,v}) } \geq L(W,W_2^*) \cdot \extnorm{ \int_{\gp{K} \cap \gp{B} \backslash \gp{K}} f_3(\kappa) d\kappa } - o(1) \int_{\gp{K} \cap \gp{B} \backslash \gp{K}} \extnorm{f_3(\kappa)} d\kappa. $$
	We conclude by Lemma \ref{EstIntonK}.
\end{proof}

		\subsubsection{Upper Bounds}

\begin{lemma}
	Let $\Phi \in \Sch(\F_v^{\times})$ and $\chi$ be a (unitary) character of $\F_v^{\times}$ with analytic conductor $C=\Cond(\chi)$. Then for any $1/2 \leq \alpha < 1$, we have
	$$ \extnorm{ \int_{\F_v} \Phi(x) \psi_v(tx) \chi(x) dx } \ll_{\Phi, N, \alpha} \min(C^N (1+\norm[t]_v)^{-N} ,C^{1/2-\alpha} (1+\norm[t]_v)^{\alpha-1}), $$
where the dependence on $\Phi$ involves only some Schwartz norms of $\Phi$ of order depending on $N$.
\label{TateIntEst}
\end{lemma}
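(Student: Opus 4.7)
The plan is to establish the two bounds inside the minimum separately. For the first bound $C^N(1+\norm[t]_v)^{-N}$, I would carry out $N$-fold integration by parts using only the factor $\psi_v(tx)$: each step produces $(2\pi i t)^{-1}$ while transferring a derivative onto $\Phi(x)\chi(x)$. Since $\chi$ is unitary with analytic conductor $C$, inspection of its explicit form --- in the real case $\chi(x) = \norm[x]_v^{i\tau}\sgn(x)^{\epsilon}$ with $1+\norm[\tau] \asymp C$, and the analogous $\chi(z) = \norm[z]_v^{i\tau}(z/\norm[z]_v)^m$ in the complex case with $1+\norm[\tau]+\norm[m] \asymp C$ --- gives $\norm[\partial_x^k \chi(x)] \ll_k C^k$ uniformly on compact subsets of $\F_v^{\times}$. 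By Leibniz $\norm[\partial_x^N(\Phi\chi)] \ll_{\Phi,N} C^N$, hence $\norm[I(t)] \ll_{\Phi,N} (C/\norm[t]_v)^N$ for $\norm[t]_v \geq 1$; the trivial bound $\int \norm[\Phi]\,dx \ll_\Phi 1 \leq C^N$ handles $\norm[t]_v \leq 1$.

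For the second bound $C^{1/2-\alpha}(1+\norm[t]_v)^{\alpha-1}$, I would recast the integrand as $\Phi(x) e^{i\phi(x)}$ with full phase $\phi(x) = 2\pi t x + \tau\log\norm[x]_v$, so that $\phi'(x) = 2\pi t + \tau/x$ and $\phi''(x) = -\tau/x^2$. On the fixed compact support of $\Phi$ in $\F_v^{\times}$ one has $\norm[\phi^{(k)}(x)] \asymp C$ for every $k \geq 2$, while $\norm[\phi'(x)] \gg \max(\norm[t]_v,C)$ except near the potentially stationary point $x^{*} = -\tau/(2\pi t)$, which lies in $\supp(\Phi)$ only when $\norm[t]_v \asymp C$. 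Outside this transition regime, integration by parts with respect to $\phi$ yields $\norm[I(t)] \ll_{\Phi,k} \max(\norm[t]_v,C)^{-k}$ for every $k \geq 1$, which already beats the desired bound (a direct check for the two subregimes $\norm[t]_v \ll C$ and $\norm[t]_v \gg C$ shows that $k=1$ suffices). Inside the transition regime, van der Corput's lemma applied with the uniform lower bound $\norm[\phi''(x)] \asymp C$ delivers $\norm[I(t)] \ll_\Phi C^{-1/2}$, which coincides exactly with $C^{1/2-\alpha}(1+\norm[t]_v)^{\alpha-1}$ when $\norm[t]_v \asymp C$. Piecing these regimes together establishes the second estimate for every $\alpha \in [1/2,1)$.

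The principal obstacle is the transition regime $\norm[t]_v \asymp C$ itself: the phase develops a genuine stationary point inside the support of $\Phi$, so pure integration by parts loses its effectiveness and one must invoke the full van der Corput estimate to extract the $C^{-1/2}$ saving. Once that saving is in hand, matching the remaining regimes is routine. The complex case additionally requires accounting for the angular character $(z/\norm[z]_v)^m$ with $\norm[m] \ll C$, but the same phase analysis performed in polar coordinates $(r,\theta)$ works verbatim, with the angular variable contributing $\phi_\theta \asymp m$ in exactly the same way that the radial variable contributes $\tau$.
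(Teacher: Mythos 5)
Your approach via stationary phase is a genuinely different route from the paper's. The paper passes through the local functional equation of Tate's theory: it writes $\Phi_\alpha(x) = \Phi(x)\norm[x]_v^\alpha$, applies the identity
$$ \int_{\F_v} \Phi(x)\psi_v(tx)\chi(x)\,dx = \gamma(\chi,\psi_v,1-\alpha)^{-1}\int_{\F_v}\widehat{\Phi}_\alpha(x+t)\chi^{-1}(x)\norm[x]_v^{\alpha-1}\,dx, $$
and extracts the saving $C^{1/2-\alpha}$ directly from the inverse gamma factor, with the residual integral then estimated crudely using the rapid decay of $\widehat{\Phi}_\alpha$ (split at $\norm[t]_v = 1$ and at $\norm[x]_v = \norm[t]_v/2$). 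The two approaches buy different things: yours is elementary and self-contained and makes the role of the stationary point $x^* = -\tau/(2\pi t)$ visible, while the paper's absorbs the entire oscillatory analysis into the known asymptotics of the local gamma factor and treats $\F_v=\R$ and $\F_v=\C$ uniformly with no change of argument.

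That last point is where your proposal has a gap worth flagging. Over $\R$ the van der Corput step is clean: on the fixed compact support $\phi''(x) = -\tau/x^2$ has magnitude $\asymp C$ throughout, so the second-derivative test gives $C^{-1/2}$ outright, and the regime interpolation you carry out is correct. Over $\C$, however, the claim that the polar-coordinate analysis goes through "verbatim" is optimistic. The angular phase is of the form $\norm[t]r\cos(\theta-\theta_0) + m\theta$, whose second $\theta$-derivative at a critical point is $-\norm[t]r\cos(\theta-\theta_0)$; in the transition regime this vanishes precisely when $\norm[m] \approx \norm[t]r$, so the Hessian of the 2D phase can degenerate along the angular direction and the second-derivative test fails there. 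One then needs either a third-derivative van der Corput estimate along that direction, or a decomposition according to whether $\norm[\tau]$ or $\norm[m]$ carries the conductor, and the resulting exponent has to be re-checked. This is exactly the kind of case analysis that the gamma-factor route sidesteps, since $\norm[\gamma(\chi,\psi_v,1-\alpha)]^{-1} \asymp C^{1/2-\alpha}$ holds for $\F_v = \C$ by the same Stirling computation as for $\F_v=\R$. So your argument is correct for $\F_v=\R$ and the plan for $\F_v=\C$ is plausible but incomplete as stated; the paper's route avoids the difficulty entirely.
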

\begin{proof}
	This is part of \cite[Lemma 3.1.14]{MV10} or \cite[Lemma 4.1]{Wu14}. Incidentally, we find our previous proofs not clearly written for the second bound. We give a clearer version as follows. Writing $\Phi_{\alpha}(x) = \Phi(x) \norm[x]_v^{\alpha}$ and $\widehat{\Phi}_{\alpha}$ for the Fourier transform of $\Phi_{\alpha}$ w.r.t. $\psi_v$, we have
	$$ \int_{\F_v} \Phi(x) \psi_v(tx) \chi(x) dx = \gamma(\chi,\psi_v,1-\alpha)^{-1} \int_{\F_v} \widehat{\Phi}_{\alpha}(x+t) \chi^{-1}(x) \norm[x]_v^{\alpha-1} dx. $$
	The (inverse) $\gamma$ factor is bounded as $\asymp_{\epsilon} C^{1/2-\alpha}$ uniformly for $\alpha \in [1/2,1-\epsilon]$. If $\norm[t]_v \leq 1$, the integral at the RHS is bounded as
	$$ \extnorm{ \int_{\F_v} \widehat{\Phi}_{\alpha}(x+t) \chi^{-1}(x) \norm[x]_v^{\alpha-1} dx } \leq \int_{\norm[x]_v \leq 1} \norm[x]_v^{\alpha-1} dx \cdot \Norm[\widehat{\Phi}_{\alpha}]_{\infty} + \Norm[\widehat{\Phi}_{\alpha}]_1; $$
while if $\norm[t]_v \geq 1$, the integral at the RHS is bounded as
	$$ \extnorm{ \int_{\F_v} \widehat{\Phi}_{\alpha}(x+t) \chi^{-1}(x) \norm[x]_v^{\alpha-1} dx } \leq \int_{\norm[x]_v \leq \norm[t]_v/2} \norm[x]_v^{\alpha-1} dx \cdot \max_{\norm[x]_v \geq \norm[t]_v/2} \norm[\widehat{\Phi}_{\alpha}(x)] + (\norm[t]_v/2)^{\alpha-1} \Norm[\widehat{\Phi}_{\alpha}]_1. $$
	We conclude by the rapid decay of $\widehat{\Phi}_{\alpha}$, quantifiable in terms of the Schwartz norms of $\Phi$.
\end{proof}
\begin{lemma}
	Write $W_0$ for $W_{f_0}$. Uniformly in $\kappa \in U$ and $y \in \F_v^{\times}$, we have
	$$ \extnorm{W_0(a(y) \kappa)} \ll_{U, \epsilon, N} \left( \frac{\norm[y]_v}{\Cond(\omega)} \right)^{\frac{1}{2} - \epsilon} \left(1+\frac{\Cond(\omega_v)}{\norm[y]_v} \right)^{-N}. $$
\label{Whi0Bd}
\end{lemma}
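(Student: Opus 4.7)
The plan is to derive an explicit integral representation of $W_0(a(y)\kappa)$ from the Jacquet formula, apply Lemma~\ref{TateIntEst} to bound the inner Tate integral in the $u$-variable, and recover the polynomial decay $(1+\Cond(\omega_v)/\norm[y]_v)^{-N}$ in the regime $\norm[y]_v\ll\Cond(\omega_v)$ by repeated integration by parts against the rapidly oscillating character $\omega_v$ in the outer $t$-integral.

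First I would combine the Jacquet formula $W_f(g)=\int f(wn(x)g)\psi_v(-x)dx$ with the identity $f_0(g)=\norm[\det g]_v^{1/2}\int_{\F_v^\times}\Psi_0((0,t)g)\omega_v(t)\norm[t]_v d^\times t$ and the substitution $u=-tx$ to obtain
$$ W_0(a(y)\kappa) = \norm[y]_v^{1/2}\int_{\F_v^\times}\omega_v(t)\int_{\F_v}\Psi_0\bigl((-ty,u)\kappa\bigr)\psi_v(u/t)\,du\,d^\times t. $$
Writing the entries of $\kappa$ as $a,b,c,d$ (so that $\norm[d]_v\asymp 1$ and $\norm[c]_v$ is small for $\kappa\in U$), I would then substitute $v=ud-1-tyb$ in the inner integral. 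The two bumps $\phi(-tya+uc)\phi(-tyb+ud-1)$ in $\Psi_0$ become $\phi$ of an affine function of $(t,y,v)$ and $\phi(v)$, while the character piece $\omega_v^{-1}(-tyb+ud)$ becomes $\omega_v^{-1}(v+1+tyb)$. The inner integral is then a twisted Fourier transform of the form in Lemma~\ref{TateIntEst}, with character $\omega_v^{-1}$ of analytic conductor $C:=\Cond(\omega_v)$ and frequency $\asymp y/t$; applying that lemma gives, uniformly in $\kappa\in U$ and any $\alpha\in[1/2,1)$, $N\geq 0$,
$$ \norm[\text{inner}(t,y,\kappa)]\ll_{U,\alpha,N}\min\Bigl(C^N(1+\norm[y/t]_v)^{-N},\;C^{1/2-\alpha}(1+\norm[y/t]_v)^{\alpha-1}\Bigr), $$
supported in $\norm[t]_v\leq\delta_0/\norm[y]_v$. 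Polynomial losses in $C$ from the Schwartz seminorms of the amplitude (coming from derivatives of $\omega_v^{-1}(1+\cdot)$, each of size $\ll C$) are absorbed by raising $N$.

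Finally I would split into the two regimes separated by $\norm[y]_v\asymp C$. When $\norm[y]_v\gtrsim C$, the support forces $\norm[y/t]_v\gg C$, so the first bound with large $N$ yields $\norm[W_0(a(y)\kappa)]\ll\norm[y]_v^{1/2-N}C^N$, comfortably dominated by $(\norm[y]_v/C)^{1/2-\epsilon}(1+C/\norm[y]_v)^{-N'}$ once $N$ is chosen large relative to $N'$. When $\norm[y]_v\ll C$, the second bound with $\alpha$ close to $1$ produces only $(\norm[y]_v/C)^{1/2-\epsilon}$ up to a harmless $\log(C/\norm[y]_v)$ absorbed into $\epsilon$; to extract the extra factor $(1+C/\norm[y]_v)^{-N}$ I would integrate by parts $N$ times in the outer variable $t$ against $\omega_v(t)=\norm[t]_v^{i\tau}\epsilon(t)$ with $\norm[\tau]\asymp C$, exploiting that on the support $\norm[t]_v\leq\delta_0/\norm[y]_v\ll 1$ the phase derivative of $\omega_v(t)$ dominates the competing contributions from $\psi_v(u/t)$ and from the embedded Tate inner factor, so that each step produces a gain of order $\norm[y]_v/C$. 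The main obstacle is this integration-by-parts step in the small-$\norm[y]_v$ regime: one must commute the derivative through the $\kappa$- and $y$-dependent amplitude and through the Tate inner factor, verifying that each differentiation yields only a polynomial loss in $C$ that is absorbed by raising $N$; uniformity in $\kappa\in U$ follows from the compactness of $U$ and the smoothness of all ingredients, and is where the implicit dependence on $U$ enters.
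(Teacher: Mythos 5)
Your integral representation via the Jacquet integral and the Godement section, and the reduction of the inner $u$-integral to a twisted Fourier transform handled by Lemma~\ref{TateIntEst}, is correct and is exactly the paper's route --- the paper additionally substitutes $t\mapsto ty/d$ so that the $t$-support is a fixed compact set $\norm[t]_v\leq\delta$ (rather than $\norm[t]_v\lesssim 1/\norm[y]_v$) and the inner frequency is $y/(td^2)\gg\norm[y]_v$. After that single application of Lemma~\ref{TateIntEst}, the paper reads off both alternatives and is done; there is no further integration by parts in $t$.

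The $N$-fold integration by parts in $t$ that you add to manufacture the factor $(1+\Cond(\omega_v)/\norm[y]_v)^{-N}$ is the gap. That factor would give super-rapid decay as $\norm[y]_v\to 0$, which is not a consequence of this setup and cannot hold: the dominant contribution to the outer integral comes from the $t$-scale at which the inner frequency is $\asymp\Cond(\omega_v)$, which lies inside the $t$-support whenever $\norm[y]_v\lesssim\Cond(\omega_v)$, and there the inner integral is $\asymp\Cond(\omega_v)^{-1/2}$; so $W_0(a(y))$ is genuinely of size $\asymp(\norm[y]_v/\Cond(\omega_v))^{1/2}$ for small $\norm[y]_v$, with no extra decaying factor. (Note the statement as printed is also inconsistent with Corollary~\ref{WhiUniBd}, whose proof says ``For $C=1$, we apply Lemma \ref{Whi0Bd}'' but whose bound at $C=1$ has the factor $(1+\norm[y]_v/\Cond(\omega_v))^{-M}$; the decay factor should read $(1+\norm[y]_v/\Cond(\omega_v))^{-N}$, i.e.\ rapid decay as $\norm[y]_v\to\infty$, which is already supplied by the first alternative of Lemma~\ref{TateIntEst} since the inner frequency is $\gg\norm[y]_v$.) Technically the integration by parts would not close either: your support claim $\norm[t]_v\leq\delta_0/\norm[y]_v\ll 1$ fails once $\norm[y]_v<\delta_0$, so in the regime $\norm[y]_v\ll\Cond(\omega_v)$ the $t$-support is large; and on the effective support the $\psi_v(u/t)$ factor contributes a competing oscillation whose multiplicative $t$-derivative is $\asymp 1/\norm[t]_v$, which at the dominant scale $\norm[t]_v\asymp\Cond(\omega_v)^{-1}$ is $\asymp\Cond(\omega_v)$ and exactly cancels the $\Cond(\omega_v)^{-1}$ gain from differentiating $\omega_v(t)$. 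Each integration by parts is therefore size-neutral where the mass sits, not a gain of $\norm[y]_v/\Cond(\omega_v)$ per step.
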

\begin{proof}
	Introducing the variables
	$$ \kappa = \begin{pmatrix} a & b \\ c & d \end{pmatrix} \in U, \quad X = by+dx, $$
we can write the LHS as
\begin{align*}
	W_0(a(y) \kappa) &= \psi_v(-\frac{by}{d}) \frac{\norm[y]_v^{1/2}}{\norm[d]_v} \int_{\F_v^{\times}} d^{\times}t \int_{\F_v} \phi(\frac{c}{d}X + \frac{t}{d}y) \phi(X-1) \psi_v(\frac{X}{td}) \omega_v^{-1}(X) dX \\
	&= \psi_v(-\frac{by}{d}) \frac{\norm[y]_v^{1/2}}{\norm[d]_v} \int_{\F_v^{\times}} d^{\times}t \int_{\F_v} \phi(\frac{c}{d}X + t) \phi(X-1) \psi_v(\frac{X}{td^2/y}) \omega_v^{-1}(X) dX.
\end{align*}
	The inner integral is non-vanishing only if $X$ is close to $1$ hence $\norm[t]_v \leq \delta$ for some $\delta$ depending only on $\delta_0, U$. Applying Lemma \ref{TateIntEst} to $\Phi(X) := \phi(\frac{c}{d}X + t) \phi(X-1), \alpha = 1-\epsilon$, the inner integral is bounded by
	$$ \left( \frac{\Cond(\omega_v)}{1+\norm[y/(td^2)]_v} \right)^N \ll_{U,N} \left( \frac{\Cond(\omega)_v}{\norm[y]_v} \right)^N, \quad \text{resp.} \quad \Cond(\omega_v)^{-1/2+\epsilon} \extnorm{ \frac{y}{d^2t} }_v^{-\epsilon} \ll_U \Cond(\omega_v)^{-1/2+\epsilon} \extnorm{ \frac{t}{y} }_v^{\epsilon} $$
with implied constant depending only on the Schwartz norms of $\phi$. We conclude.
\end{proof}
\begin{remark}
	The reason for which $W_0$ satisfies a ``better'' bound than the general one satisfied by a smooth vector shows some finer aspects of the integral representation of Whittaker functions than the smooth structures.
\end{remark}
\begin{corollary}
	With assumptions as in Lemma \ref{Whi0Bd} and $\norm[C]_v$ sufficiently large (depending on $U$), we have uniformly in $\kappa \in U_C$ and $y$
	$$ \extnorm{W_0(a(y) \kappa a(C))} \ll_{U, \epsilon, M} \left( \frac{\norm[C] \norm[y]}{\Cond(\omega)} \right)^{\frac{1}{2} - \epsilon} \left( 1 + \frac{ \norm[C] \norm[y]}{\Cond(\omega)} \right)^{-M}. $$
\label{WhiUniBd}
\end{corollary}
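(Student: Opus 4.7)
The plan is to reduce the bound for $W_0$ at the translate $\kappa a(C)$ to a bound for $W_0$ evaluated at some $a(y'')\kappa'$ with $\kappa' \in U$, and then directly invoke Lemma \ref{Whi0Bd}. The key point is that for $\kappa \in U_C$, the product $\kappa a(C)$ already lies in $\gp{B}_v U$ by definition, and from the explicit matrix computation in the proof of Lemma \ref{ShrinkNB} one reads off an Iwasawa factorization with the $\gp{A}$-part of size $\asymp \norm[C]_v$.

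First I would write, for $\kappa \in U_C$,
$$ \kappa\, a(C) = z(u)\, n(x')\, a(y')\, \kappa', $$
with $\kappa' \in U$, $u \in \F_v^\times$ of bounded modulus, $x' \in \F_v$, and $\norm[y']_v \asymp \norm[C]_v$. The last assertion is precisely the content of the height identity $\Ht_v(\kappa a(C)) \asymp \norm[C]_v$ established in Lemma \ref{ShrinkNB}, so no new computation is required beyond tracking the diagonal entries produced there.

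Second, I would exploit the equivariance of the Whittaker function. Since $W_0 \in \Whi(\pi(1,\omega_v^{-1}),\psi_v)$, the center acts by $\omega_v^{-1}$ and the unipotent by $\psi_v$; commuting $a(y)$ past $n(x')$ via $a(y)n(x') = n(yx')a(y)$ gives
$$ W_0(a(y)\,\kappa\,a(C)) \;=\; \omega_v(u)^{-1}\,\psi_v(yx')\,W_0(a(y y')\,\kappa'), $$
so in absolute value $\extnorm{W_0(a(y)\kappa a(C))} = \extnorm{W_0(a(yy')\kappa')}$.

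Third, since $\kappa' \in U$ and $\norm[yy']_v \asymp \norm[C]_v\norm[y]_v$, I would simply plug into Lemma \ref{Whi0Bd} with the argument $yy'$ in place of $y$, obtaining the claimed bound with $\norm[C]_v\norm[y]_v$ in the role of $\norm[y]_v$. The main obstacle is essentially bookkeeping: verifying that the implicit constants are uniform in $\kappa \in U_C$ as $\norm[C]_v \to \infty$, which follows because $\kappa' \in U$ stays in a fixed compact neighbourhood, and tracking that the decay factor from Lemma \ref{Whi0Bd} really yields the shape $(1+\norm[C]\norm[y]/\Cond(\omega))^{-M}$ (combining both regimes of the two bounds in Lemma \ref{TateIntEst} used inside the proof of Lemma \ref{Whi0Bd}). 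Everything else is essentially automatic once the Iwasawa decomposition is in hand.
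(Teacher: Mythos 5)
Your argument is exactly the paper's (terse) proof, spelled out in full: decompose $\kappa a(C) = z(u)\,n(x')\,a(y')\,\kappa'$ with $\kappa'\in U$ and $\norm[y']_v\asymp\norm[C]_v$ via Lemma~\ref{ShrinkNB}, use the central and unipotent equivariance of $W_0$ to write $\extnorm{W_0(a(y)\kappa a(C))}=\extnorm{W_0(a(yy')\kappa')}$, and invoke Lemma~\ref{Whi0Bd}. Your parenthetical caveat about the shape of the decay factor is well taken: as printed, Lemma~\ref{Whi0Bd} has the factor $\bigl(1+\Cond(\omega_v)/\norm[y]_v\bigr)^{-N}$, whereas its own proof via Lemma~\ref{TateIntEst} (and the form used here) produces decay for \emph{large} $\norm[y]_v/\Cond(\omega_v)$, i.e.\ the factor should read $\bigl(1+\norm[y]_v/\Cond(\omega_v)\bigr)^{-N}$; with that reading, the substitution $y\mapsto yy'$ gives precisely the claimed bound.
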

\begin{proof}
	The general case follows from the case $C=1$ and the ``moreover'' part of Lemma \ref{ShrinkNB}. For $C=1$, we apply Lemma \ref{Whi0Bd}.
\end{proof}
\begin{lemma}
	(1) Let $\pi_v'$ be a unitary irreducible representation with trivial central character and spectral parameter $\leq \theta$. Let $W'=W_{\varphi',v}$ be the Whittaker function of a unitary vector in $\pi_v'$. Let $\ell_v$ be defined in (\ref{CuspPerD}). With absolute implicit constant, we have
	$$ \extnorm{ \ell_v(W', \overline{f_{3,v}}, f_{3,v}) } \ll_{\epsilon} \Cond(\pi_v)^{-\frac{1}{2}} (\Cond(\pi_v) / \Cond(\omega_v))^{\theta + \epsilon} \cdot \Sob_d(W') $$
for some Sobolev norm of an absolute order $d$.

\noindent (2) Let $\xi_v$ be a unitary character of $\F_v^{\times}$. Let $\Phi_v$ be a smooth function in $\pi(\xi_v,\xi_v^{-1})$. Let $\ell_v$ be defined in (\ref{EisPerD}). With absolute implicit constant, we have for $\tau \in \ag{R}$
	$$ \extnorm{ \ell_v(i\tau; \Phi_v, \overline{f_{3,v}}, f_{3,v}) } \ll_{\epsilon} \Cond(\pi_v)^{-\frac{1}{2}+\epsilon} \cdot \Sob_d(\Phi_{v,i\tau}) $$
for some Sobolev norm of an absolute order $d$.

\noindent (3) Let $\ell_v$ be defined in (\ref{OneDimProj}). With absolute implicit constant, we have for any $k \in \ag{N}$ and $\epsilon > 0$
	$$ \extnorm{ \ell_v^{(k)}(1/2;\overline{f_{3,v}},f_{3,v},1) } \ll_{k,\epsilon} \Cond(\pi_v)^{\epsilon}. $$
\label{LocUpperBdA}
\end{lemma}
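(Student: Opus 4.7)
All three bounds rely on the pointwise control of $W_{f_{3,v}}$ afforded by Corollary~\ref{WhiUniBd}. Since $f_{3,v}=a(C).f_0$ we have $W_{f_{3,v}}(a(y)\kappa) = W_{f_0}(a(y)\kappa a(C))$, so the Corollary effectively localizes this Whittaker function to $\kappa\in U_C$ and $\norm[y]_v \sim \Cond(\omega_v)/\norm[C]_v$, with rapid decay outside. By Lemma~\ref{ShrinkNB}, $\Vol(U_C)\asymp \norm[C]_v^{-1}$, and by Lemma~\ref{EstIntonK}, $\int_{U_C}\norm[f_{3,v}(\kappa)]\,d\kappa \asymp \norm[C]_v^{-1/2}$.

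For (1), combine these inputs with the Kirillov-Sobolev inequality $\norm[W'(a(y)\kappa)] \ll \norm[y]_v^{1/2-\theta-\epsilon}\Sob_d(W')$ for $\norm[y]_v\leq 1$ (with rapid decay for $\norm[y]_v\geq 1$), reflecting the spectral-parameter bound $\theta$ toward Ramanujan. The support of $f_{3,v}$ restricts $\kappa$ to $U_C$. After the substitution $u=\norm[C]_v\norm[y]_v/\Cond(\omega_v)$, the $y$-integral reduces to a convergent integral in $u$ times $(\norm[C]_v/\Cond(\omega_v))^{\theta+\epsilon}$, coming from the interplay between the $\norm[y]_v^{-\theta}$ singularity and the effective support of the Whittaker function. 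Recombining with $\norm[C]_v=\Cond(\pi_v)^{1+\epsilon}$ and absorbing the extra $\epsilon$-factors yields the claimed bound $\Cond(\pi_v)^{-1/2}(\Cond(\pi_v)/\Cond(\omega_v))^{\theta+\epsilon}\Sob_d(W')$.

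For (2) the same computation applies with $\theta=0$ (since $i\tau$ is purely imaginary) and $\Phi_v(\kappa)$ replacing $f_{3,v}(\kappa)$ as the $\kappa$-weight; however $\kappa$ now ranges over all of $\gp{K}_v$ rather than $U_C$. One handles $\kappa\notin U_C$ by unwinding the Iwasawa decomposition of $\kappa a(C)$ inside the integral definition of $W_{f_0}$ and exploiting the Schwartz decay of the bump $\phi$, via a Tate-integral estimate analogous to Lemma~\ref{TateIntEst}. For (3), at $s=1/2$ with $\chi=1$ the integrand is $\norm[W_{f_{3,v}}(a(y)\kappa)]^2$, whose total integral equals the squared Kirillov norm $\asymp \Norm[f_{3,v}]^2 = O(1)$. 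The $k$-th $s$-derivative inserts $(\log\norm[y]_v)^k$, which on the effective Whittaker support $\norm[y]_v\sim \Cond(\omega_v)/\norm[C]_v$ is bounded by $(\log\Cond(\pi_v))^k \ll_{k,\epsilon} \Cond(\pi_v)^\epsilon$.

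The main obstacle is the bookkeeping in (2) and (3) showing that the $\kappa$-integral over $\gp{K}_v \setminus U_C$ is of lower order than the $U_C$ contribution. This requires tracking the Iwasawa decomposition of $\kappa a(C)$ through the defining integral of $W_{f_0}$ and invoking the rapid decay built into $\phi$ via Lemma~\ref{TateIntEst}; one must also keep Sobolev orders finite and absolute throughout, so that the final bounds depend on $\Phi_v$ and $W'$ through a single Sobolev norm $\Sob_d$ of fixed order.
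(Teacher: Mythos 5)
Your proofs of parts (1) and (3) track the paper's argument closely: part (1) combines the Whittaker bound from Corollary~\ref{WhiUniBd} with a Sobolev-controlled $|y|^{-\theta}$ decay for $W'$ and extracts $(|C|/\Cond(\omega_v))^{\theta+\epsilon}$ from the $y$-integral, exactly as in the paper's displayed inequality; part (3) rests on $\Norm[f_{3,v}]=1$ and the observation that $\log$-insertions cost only $(\log C)^k$, which is the paper's mechanism (though you should also account for the factor $\Ht_v(\kappa a(C))^{1/2-s}$ that appears after writing $\kappa a(C)=b'\kappa'$; differentiating it in $s$ contributes $(\log\Ht_v)^j$ terms, all controlled by $|\log\Ht_v(\kappa a(C))|\leq\log |C|_v$).

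For part (2), however, your route diverges from the paper's in a way that introduces a genuine gap. You attempt to estimate $\ell_v(i\tau;\Phi_v,\overline{f_{3,v}},f_{3,v})$ directly, which forces you to control $|W_{f_3,v}(a(y)\kappa)|^2$ over all of $\gp{K}_v$, including $\kappa\notin U_C$. But Lemma~\ref{Whi0Bd} and Corollary~\ref{WhiUniBd} only give pointwise bounds on $W_0(a(y)\kappa')$ for $\kappa'\in U$ (equivalently $\kappa\in U_C$); outside this range $\Ht_v(\kappa a(C))$ can be as small as $|C|_v^{-1}$ and the concentration picture breaks down. Your suggestion to ``unwind the Iwasawa decomposition inside the integral definition of $W_{f_0}$ and invoke a Tate-integral estimate'' is a plausible program, but it amounts to establishing a new Whittaker bound uniform over $\gp{K}_v$, which is a substantive piece of analysis not present in the lemmas you cite, and without it the argument is incomplete. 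The paper sidesteps this entirely: by Proposition~\ref{TransFLocRS}, $\ell_v$ and $\tilde\ell_v$ differ by a $\gamma$-factor of absolute value $1$ when $\tau\in\R$, and $\tilde\ell_v$ has the Whittaker function $W_{\Phi_{i\tau},v}$ appearing under the $y$-integral while $f_{3,v}(\kappa)$ (supported in $U_C$) appears as the $\kappa$-weight. This reproduces exactly the structure of part (1) with spectral parameter $0$, so the argument for (1) applies verbatim. That transposition is the key device you are missing.
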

\begin{proof}
	(1) The proof is the same as the one given in \cite[Corollary 3.7.1]{MV10}, using Lemma \ref{EstIntonK}, \cite[Proposition 3.2.3]{MV10} (which gives $d$) and the inequality
\begin{align*}
	\int_{\F_v} \frac{(X\norm[y]_v)^{\frac{1}{2}-\epsilon}}{(1+X\norm[y]_v)^N} \cdot \frac{\norm[y]_v^{-\theta}}{(1+\norm[y]_v)^M} d^{\times} y &= X^{\theta + \epsilon} \int_{\F_v} \frac{(X\norm[y]_v)^{\frac{1}{2}-\theta -2\epsilon}}{(1+X\norm[y]_v)^N} \cdot \frac{\norm[y]_v^{\epsilon}}{(1+\norm[y]_v)^M} d^{\times} y \\
	&\leq X^{\theta + \epsilon} \left( \int_{\F_v} \frac{\norm[y]_v^{1-2\theta -4\epsilon}}{(1+\norm[y]_v)^{2N}} d^{\times}y \right)^{\frac{1}{2}} \left( \int_{\F_v} \frac{\norm[y]_v^{2\epsilon}}{(1+\norm[y]_v)^{2M}} d^{\times}y \right)^{\frac{1}{2}}.
\end{align*}

\noindent (2) Since $\tau \in \ag{R}$, Proposition \ref{TransFLocRS} tells us that $\ell_v$ and $\tilde{\ell}_v$ are of the same size. We argue as above for $\tilde{\ell}_v$.

\noindent (3) Drop subscript $v$. Recall $f_3 = a(C).f_0$ for some $C \in \F_v, \norm[C]_v \asymp_{\epsilon} \Cond(\pi_v)^{1+\epsilon}$. Let $\Phi \in \pi(1, 1)$ be the spherical function taking value $1$ on $\gp{K}$. We find
	$$ \ell_v(s; \overline{f_{3,v}},f_{3,v},1) = \int_{\F \times \gp{K}} \extnorm{W_0(a(y)\kappa a(C))}^2 \Phi_s(a(y)\kappa) \norm[y]_v^{-1} d^{\times}y d\kappa. $$
	Writing $\kappa a(C) = b' \kappa'$, we get
	$$ \ell_v(s; \cdots) = \int_{\gp{K}_v} \int_{\F_v^{\times}} \extnorm{W_0(a(y)\kappa')}^2 \norm[y]_v^{s-1/2} d^{\times}y {\rm Ht}(\kappa a(C))^{1/2-s} d\kappa. $$
	Note that uniformly in $\kappa \in \gp{K}_v$
	$$ C^{-1} \leq {\rm Ht}(\kappa a(C)) \leq C \quad \Rightarrow \quad \extnorm{ \log (\cdots) } \leq \log C . $$
	Arguing as in the proof of Lemma \ref{LocLowerBdA}, we also have for any $k \in \ag{N}, \epsilon > 0$
\begin{align*}
	\extnorm{ \int_{\norm[y]_v \leq 1} \extnorm{W_0(a(y)\kappa')}^2 (\log \norm[y]_v)^k d^{\times}y } &\ll \Sob(\kappa'.W_0)^{2\epsilon} \int_{\norm[y]_v \leq 1} \extnorm{W_0(a(y)\kappa')}^{2-2\epsilon} d^{\times}y \cdot \sup_{\norm[y]_v \leq 1} \norm[y]_v^{\epsilon} \extnorm{ \log \norm[y]_v }^k  \\
	&\ll_{k,\epsilon} \Sob(W_0)^{2\epsilon} \cdot \Norm[W_0]_2^{2-2\epsilon} \ll \Cond(\omega_v)^{d\epsilon},
\end{align*}
\begin{align*}
	\extnorm{ \int_{\norm[y]_v \geq 1} \extnorm{W_0(a(y)\kappa')}^2 (\log \norm[y]_v)^k d^{\times}y } &\ll_{k,\epsilon} \int_{\norm[y]_v \geq 1} \extnorm{W_0(a(y)\kappa')}^2 \norm[y]_v^{\epsilon} d^{\times}y \\
	&\leq \Sob(W_0)^{\epsilon} \cdot \Norm[W_0]_2^{1-\epsilon} \ll \Cond(\omega_v)^{d'\epsilon};
\end{align*}
	for some absolute constants $d,d'$. We deduce and conclude by
	$$ \extnorm{ \ell_v^{(k)}(1/2; \cdots) } \ll_{k,\epsilon} \Cond(\omega_v)^{\epsilon} (\log C)^k. $$
\end{proof}

		\subsubsection{Main Bounds in The Regularized Terms}
		
\begin{lemma}
\begin{itemize}
	\item[(1)] For $\ell_v$ in (\ref{Regu1PerD}), $f_v \in \{ (f_{3,v} \cdot \overline{f_{3,v}}) \mid_{\gp{K}_v}, (\IntwR_0 f_{3,v} \cdot \overline{ \IntwR_0 f_{3,v} }) \mid_{\gp{K}_v} \}$ and any $k \in \ag{N}$, we have 
	$$ \extnorm{\ell_v^{(k)}(0; f_{2,v}, \overline{f_{2,v}}; f_v)} \ll_k 1. $$
	\item[(2)] For $\ell_v$ in (\ref{Regu2PerD}) and $f_v \in \{ (\IntwR_0 f_{3,v} \cdot \overline{f_{3,v}}) \mid_{\gp{K}_v}, (f_{3,v} \cdot \overline{ \IntwR_0 f_{3,v} }) \mid_{\gp{K}_v} \}$, $\ell_{v}(s; f_{2,v}, \overline{f_{2,v}}; f_v)$ is non-vanishing only if $\omega_v$ is trivial on $\vo_v^{\times}$ ($=\{ \pm 1 \}$ if $\F_v = \ag{R}$, $= \ag{C}^{(1)}$ if $\F_v = \ag{C}$). In this case we have for any $k \in \ag{N}$
	$$ \extnorm{\ell_v^{(k)}(0; f_{2,v}, \overline{f_{2,v}}; f_v)} \ll_k 1. $$
	\item[(3)] For $\ell_v$ in (\ref{Regu3PerD}) and $f_v = 1$, we have for any $k \in \ag{N}$ and $\epsilon > 0$
	$$ \extnorm{ \ell_v^{(k)}(0; f_{3,v}, \overline{f_{3,v}}; 1) } \ll_{k,\epsilon} \Cond(\pi_v)^{\epsilon}. $$
\end{itemize}
\label{ReguAMainBd}
\end{lemma}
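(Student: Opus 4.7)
The proof is the archimedean analog of Lemma \ref{ReguNAMainBd}. The crucial observation shared by all three parts is that $f_{2,v}$ is the spherical function in $\pi(1,1)$ with $f_{2,v}(1)=1$, so the associated completed Whittaker function $W^*_{f_2,v}(a(y)\kappa)$ depends only on $|y|_v$. Consequently, in each of the three integrals the $y$- and $\kappa$-integrations decouple, and the normalizing prefactor (e.g.\ $\zeta_v(1+s)^4/\zeta_v(2+2s)$ in (1)) is chosen precisely to cancel the radial Rankin--Selberg integral $\int_{\F_v^\times}|W^*_{f_2,v}(a(y))|^2|y|_v^s d^\times y$ into a fixed meromorphic function of $s$ holomorphic at $s=0$ with Taylor coefficients $O_k(1)$.

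For (1), this factorization gives
$$\ell_v(s;f_{2,v},\overline{f_{2,v}};f_v) = \left[\frac{\zeta_v(1+s)^4}{\zeta_v(2+2s)}\int_{\F_v^\times}|W^*_{f_2,v}(a(y))|^2 |y|_v^s d^\times y\right]\cdot \int_{\gp{K}_v} f_v(\kappa)\,d\kappa.$$
The bracketed function of $s$ is a fixed meromorphic function whose derivatives at $s=0$ are uniformly bounded, and the $\gp{K}_v$-integral equals $\|f_{3,v}\|^2=1$ or $\|\IntwR_0 f_{3,v}\|^2=1$ by the unitarity of the standard intertwiner $\IntwR_0$. Differentiating in $s$ under the integral and applying Leibniz then yields $|\ell_v^{(k)}(0;\cdots)|\ll_k 1$.

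For (2), the analysis is identical except that the integrand carries an extra factor $\omega_v^{\mp}(y)$. Decomposing $y=ru$ with $r\in\R_+$ and $u\in\vo_v^\times$ (i.e.\ $\{\pm1\}$ for $\F_v=\R$ and $\C^{(1)}$ for $\F_v=\C$), the only $u$-dependence is $\omega_v^{\mp}(u)$, so the $u$-integral vanishes unless $\omega_v|_{\vo_v^\times}$ is trivial. Under that condition the remaining estimate reduces to the computation in (1), yielding the same bound.

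For (3) the integral has the same structure as $\ell_v^{(k)}(1/2;\overline{f_{3,v}},f_{3,v},1)$ in Lemma \ref{LocUpperBdA} (3), up to an additional tame ratio of archimedean $L$-factors in $\omega_v$ whose derivatives at $s=0$ are bounded by $\Cond(\omega_v)^\epsilon$. Substituting $f_{3,v}=a(C).f_0$ and changing variables $\kappa a(C)=b'\kappa'$ pulls out a height factor $\Ht(\kappa a(C))^{1/2-s}$ trapped between $|C|_v^{-1}$ and $|C|_v$; differentiating $k$ times in $s$ therefore produces at worst a factor $(\log|C|_v)^k$, which combined with the Sobolev/interpolation estimate already used in the proof of Lemma \ref{LocUpperBdA} (3) yields the stated $\Cond(\pi_v)^\epsilon$ bound.

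The main obstacle is the bookkeeping in part (3): the derivatives in $s$ must be controlled uniformly while tracking both the $\log|C|_v$-factors and the Sobolev norms of $f_0$ (which depend polynomially on $\Cond(\omega_v)$). Once this is absorbed at the $\Cond(\pi_v)^\epsilon$ level exactly as in Lemma \ref{LocUpperBdA} (3), parts (1) and (2) follow essentially formally from the spherical factorization of $f_{2,v}$ described above.
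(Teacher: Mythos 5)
Your proof takes essentially the same approach as the paper: since $f_{2,v}$ is spherical, the integral factors into a radial Rankin--Selberg integral times a $\gp{K}_v$-integral; the latter is controlled via the unitarity of $\IntwR_0$ (plus Cauchy--Schwarz in case (2)), the radial factor and all its $s$-derivatives are $O_k(1)$, the $\omega_v^{\mp}$-oscillation over $\vo_v^\times$ forces vanishing in (2) unless $\omega_v|_{\vo_v^\times}=1$, and (3) reduces verbatim to Lemma \ref{LocUpperBdA}~(3). One small slip: the prefactor $\zeta_v(1+s)^4/\zeta_v(2+2s)$ appears only in the finite-place definition of $\ell_{\vp}$ in (\ref{Regu1PerD}), not in the archimedean one (and similarly there is no $\omega_v$-dependent $L$-factor in the archimedean $\ell_v$ of (\ref{Regu3PerD})), though both slips are harmless since those ratios are anyway regular at $s=0$.
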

\begin{proof}
	Drop the subscript $v$ for simplicity. Since $f_2$ is $\gp{K}$-invariant, the integral part of $\ell$ in (1) resp. (2) has the form
	$$ \int_{\F^{\times}} \extnorm{W_2^*(a(y))}^2 \cdot \norm[y]^s d^{\times}y \int_{\gp{K}} f(\kappa) d\kappa, \quad \text{resp.} \quad \int_{\F^{\times}} \extnorm{W_2^*(a(y))}^2 \cdot \omega^{\mp}(y)\norm[y]^s d^{\times}y \int_{\gp{K}} f(\kappa) d\kappa. $$
	All assertions follow taking into account
	$$ \int_{\F^{\times}} \extnorm{W_2^*(a(y))}^2 \cdot \extnorm{\log^k \norm[y]} d^{\times}y \ll_k 1, \quad \int_{\gp{K}} \extnorm{f_3(\kappa)}^2 d\kappa = \Norm[a(C).f_0]^2 = \Norm[f_0]^2 = 1, $$
	$$ \int_{\gp{K}} \extnorm{\IntwR_0 f_3(\kappa)}^2 d\kappa = \Norm[\IntwR_0 f_3] = \Norm[f_3] = 1, \quad \extnorm{\int_{\gp{K}} f_3(\kappa) \overline{\IntwR_0 f_3(\kappa)} d\kappa}^2 \leq \int_{\gp{K}} \extnorm{f_3(\kappa)}^2 d\kappa \cdot \int_{\gp{K}} \extnorm{\IntwR_0 f_3(\kappa)}^2 d\kappa. $$
	(3) is the same as Lemma \ref{LocUpperBdA} (3).
\end{proof}

\section{Global Estimations}

	\subsection{Regular Term}
	\label{MainRT}

	Applying the Fourier inversion to the first factor of the integrand in (\ref{RegTerm}), we get
\begin{align}
	(\ref{RegTerm}) &= \sum_{\pi' \leq \mathfrak{t}} \sum_{\varphi' \in \Bas(\pi')} C(\varphi') \int_{[\PGL_2]} \varphi' \cdot \overline{\eis_3^{\sharp}} \cdot a(\mathfrak{t}).\eis_3^{\sharp} + \label{CuspRegTerm} \\
	&\quad \sum_{\xi} \sum_{\Phi \in \Bas(\xi,\xi^{-1})} \int_{\ag{R}} C(i\tau, \Phi) \int_{[\PGL_2]} \eis(i\tau, \Phi) \cdot \left( a(\mathfrak{t}).\eis_3^{\sharp} \cdot \overline{\eis_3^{\sharp}} - \Reis(a(\mathfrak{t}).\eis_3^{\sharp} \overline{\eis_3^{\sharp}}) \right) + \label{EisRegTerm} \\
	&\quad \sum_{\chi: \chi^2=1} \int_{[\PGL_2]} \left( a(\mathfrak{t}).\eis_2^* \cdot \overline{\eis_2^*} - \Reis(a(\mathfrak{t}).\eis_2^* \overline{\eis_2^*}) \right) \cdot \overline{\chi \circ \det} \cdot \nonumber \\
	&\quad \int_{[\PGL_2]} \left( a(\mathfrak{t}).\eis_3^{\sharp} \cdot \overline{\eis_3^{\sharp}} - \Reis(a(\mathfrak{t}).\eis_3^{\sharp} \overline{\eis_3^{\sharp}}) \right) \cdot \chi \circ \det, \label{OneDRegTerm}
\end{align}
with the Fourier coefficients
	$$ C(\varphi') := \Pairing{a(\mathfrak{t}).\eis_2^* \cdot \overline{\eis_2^*}}{\varphi'}_{[\PGL_2]}, \quad C(i\tau, \Phi) = \Pairing{a(\mathfrak{t}).\eis_2^* \cdot \overline{\eis_2^*} - \Reis(a(\mathfrak{t}).\eis_2^* \overline{\eis_2^*})}{\eis(i\tau, \Phi)}_{[\PGL_2]}. $$
	We estimate the cuspidal contribution (\ref{CuspRegTerm}), the Eisenstein contribution (\ref{EisRegTerm}) and the one-dimensional contribution (\ref{OneDRegTerm}) one by one and get
\begin{lemma}
	The contribution of the regular term (\ref{RegTerm}) is bounded as
	$$ (\Cond(\pi) K)^{\epsilon} \max \left( (\frac{\Cond(\pi)}{\Cond(\omega)})^{-\frac{1}{2}+\theta} \Cond(\omega)^{-\delta'} K^{\Norm[\mathfrak{t}](A+\frac{1}{2})}, (\frac{\Cond(\pi)}{\Cond(\omega)})^{-\frac{1}{2}} \Cond(\omega)^{-\frac{1-2\theta}{8}} K^{\Norm[\mathfrak{t}] ( \frac{1}{2}-\frac{1-2\theta}{8} ) }, K^{-\Norm[\mathfrak{t}]} \right). $$
\label{RegTermBd}
\end{lemma}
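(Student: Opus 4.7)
The plan is to treat the three spectral pieces (\ref{CuspRegTerm}), (\ref{EisRegTerm}), (\ref{OneDRegTerm}) separately, with the three bounds in the maximum corresponding respectively to: the cuspidal spectrum (controlled via the subconvexity assumption (\ref{TwistSubAssump})), the continuous spectrum (controlled via subconvexity for Hecke $L$-functions), and the one-dimensional contribution (controlled via the $K^{-\Norm[\vec{t}]}$ savings coming from amplification). For each piece I would first expand the Fourier coefficients $C(\varphi')$ and $C(i\tau,\Phi)$. Observe that these pairings are regularized Rankin--Selberg integrals of $a(\vec{t}).\eis_2^* \cdot \overline{\eis_2^*}$ against $\varphi'$ or $\eis(i\tau,\Phi)$, and by Zagier's formula together with Theorem \ref{RegTripEis} they admit explicit decompositions as linear combinations of the residues of $R(s,\cdot)$ treated in (\ref{Regu1PerD}), (\ref{Regu2PerD}), (\ref{Regu3PerD}). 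Each such term is a product of $L$-values times local factors, where the local factors in the finite places entering $\vec{t}$ give $K^{-\Norm[\vec{t}]/2}$ from Lemma \ref{ReguAmpBd}, the other finite local factors are bounded by Lemma \ref{ReguNAMainBd}, and the archimedean factors by Lemma \ref{ReguAMainBd}.

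For the cuspidal contribution (\ref{CuspRegTerm}), use the identity (\ref{CuspPerD}) to write the inner period as $L(1/2,\pi')L(1/2,\pi'\otimes\omega)$ times local trilinear forms. At the ramified places of $\pi$ the local factor is bounded by Lemma \ref{LocUpperBdNA}(1) and Lemma \ref{LocUpperBdA}(1); at the $\vp\in S$ carrying the amplifier and regularizer it is bounded by Lemma \ref{LocAmpBd}(1), which is the source of the $K^{\Norm[\vec{t}](A+1/2)}$ loss once the subconvexity bound (\ref{TwistSubAssump}) is applied to $L(1/2,\pi'\otimes\omega)$ (the untwisted $L(1/2,\pi')$ is handled by Burgess/HB-type convex-plus-saving bounds at the cost absorbed in $\Cond(\pi)^\epsilon$). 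The $\theta$-saving from the Ramanujan exponent comes from Lemma \ref{LocUpperBdA}(1). The spectral sum over $\pi'$ with $\cond_{\vp_j}(\pi_{\vp_j}')\leq n_{\vp_j}$ is controlled by Plancherel/Parseval applied to $a(\vec{t}).\eis_2^* \overline{\eis_2^*}$ using the decomposition (\ref{Regu1PerD})--(\ref{Regu2PerD}), producing the first bound in the maximum.

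The Eisenstein contribution (\ref{EisRegTerm}) is treated in exactly the same fashion, expanding via (\ref{EisPerD}) (or, where more convenient, via its transform $\tilde\ell_v$) so that the inner period becomes a quotient of four Hecke $L$-values. Subconvexity for Hecke characters (with the Burgess-type exponent $(1-2\theta)/8$) applied to $L(1/2+i\tau,\xi\omega^{\pm 1})$, together with the local bounds in Lemma \ref{LocUpperBdNA}(2), Lemma \ref{LocAmpBd}(2), Lemma \ref{LocUpperBdA}(2), furnishes the second bound in the maximum after integration in $\tau$ (the $\tau$-integral converges by the polynomial decay of the Fourier coefficients $C(i\tau,\Phi)$ deduced from the smoothness of $a(\vec{t}).\eis_2^* \overline{\eis_2^*}$ and the archimedean bounds in Lemma \ref{ReguAMainBd}). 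For the one-dimensional sum (\ref{OneDRegTerm}), Remark \ref{OneDimProjRk} shows that only the trivial character $\chi=1$ contributes nontrivially once $\Cond(\omega)$ is large, and both factors are then regularized Rankin--Selberg residues of $a(\vec{t}).\eis_2^*\overline{\eis_2^*}$ and $a(\vec{t}).\eis_3^{\sharp}\overline{\eis_3^{\sharp}}$; by Lemma \ref{ReguAmpBd} each factor is bounded by $K^{-\Norm[\vec{t}]/2}$ times powers of $\log$, giving the third bound.

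The main obstacle is the bookkeeping of the spectral sum in the cuspidal and Eisenstein contributions: one has to sum non-trivial bounds for individual $L$-values over a spectrum whose effective size (after the local restriction $\pi'\leq\vec{t}$ or $\cond(\xi_{\vp_j})\leq n_{\vp_j}/2$) is essentially $K^{\Norm[\vec{t}]}$, and the archimedean part of $C(\varphi')$ has to be controlled by Sobolev norms of the fixed test vectors via Lemma \ref{ReguAMainBd} so that the $\tau$-integral and the sum over archimedean types converge uniformly. Once this is organized, the three contributions combine into the stated maximum, with the $(\Cond(\pi)K)^\epsilon$ absorbing the subconvex bounds for $L(1/2,\pi')$, Hecke $L$-values at the edge, and polylogarithmic losses in the amplifier/regularizer.
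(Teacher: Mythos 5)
Your proposal captures the right skeleton — decompose the regular term spectrally into (\ref{CuspRegTerm}), (\ref{EisRegTerm}), (\ref{OneDRegTerm}), bound the inner periods via the local lemmas, feed in subconvexity for $L(1/2,\pi'\otimes\omega)$ resp.\ Burgess for the Hecke $L$-values, and sum. But several of the specific mechanisms you invoke are misattributed, and these amount to real gaps.

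First, the ``outer'' Fourier coefficients $C(\varphi') = \Pairing{a(\vec{t}).\eis_2^*\overline{\eis_2^*}}{\varphi'}$ and $C(i\tau,\Phi)$ are not controlled in the paper via Theorem~\ref{RegTripEis} and the decompositions (\ref{Regu1PerD})--(\ref{Regu3PerD}). Those decompositions are the tool for pairing $a(\vec{t}).\eis_2^*\overline{\eis_2^*}$ against a third \emph{Eisenstein} series (and are used in the \emph{regularized} term, Lemma~\ref{ReguTermBd1}, \ref{ReguTermBd2}), not for pairing against a cusp form $\varphi'$ or for the outer spectral sum. The paper instead bounds the outer sum via Cauchy--Schwarz and the Sobolev-norm estimate $\Sob_{d'}(a(\vec{t}).\eis_2^*\overline{\eis_2^*} - \Reis(\cdots)) \ll \log^3 K$ from \cite[Theorem 5.4]{Wu2}, combined with a moment bound for the inner $L$-values from \cite[Corollary 6.7]{Wu14}. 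Related to this, you attribute the entire $K^{\Norm[\vec{t}](A+\frac12)}$ loss to Lemma~\ref{LocAmpBd}(1) plus (\ref{TwistSubAssump}), but the local amplifier bound is a \emph{saving} of $K^{-\Norm[\vec{t}]/2}$ and (\ref{TwistSubAssump}) contributes $K^{A\Norm[\vec{t}]}$; the remaining $K^{\Norm[\vec{t}]}$ comes precisely from the size of the family $\{\pi' \leq \vec{t}\}$ and the moment bound in the spectral sum, which your sketch leaves implicit (``Plancherel/Parseval'' with the wrong reference). The untwisted $L(1/2,\pi')$ is not an $\Cond(\pi)^\epsilon$ loss either: its conductor is a power of $K$, not of $\Cond(\pi)$, and it is absorbed by the moment estimate.

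Second, the one-dimensional contribution (\ref{OneDRegTerm}) does not get its $K^{-\Norm[\vec{t}]}$ from Lemma~\ref{ReguAmpBd}. The paper uses the $\gp{K}$-invariance of the regularized integral \cite[Proposition 2.27]{Wu9} to replace $a(\vec{t})$ by the Hecke operator $T(\vec{t})$, whose eigenvalue on the spherical (residual) data is $\ll K^{-\Norm[\vec{t}]/2+\epsilon}$, giving $K^{-\Norm[\vec{t}]/2}$ per factor and $K^{-\Norm[\vec{t}]}$ in total. Lemma~\ref{ReguAmpBd} is the local estimate for the regularized term (\ref{ReguTerm}), and its per-place saving is $q_\vp^{-n_\vp}$ (so $K^{-\Norm[\vec{t}]}$ per regularized \emph{integral} after taking the product over the amplifier places), which is inconsistent with the ``$K^{-\Norm[\vec{t}]/2}$ per factor'' you state. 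The remaining input for the one-dimensional term is (\ref{OneDimProj}) with Lemmas~\ref{LocUpperBdNA}(3) and \ref{LocUpperBdA}(3), as you note, together with $|L(1,\omega^{\pm1})|\ll\Cond(\omega)^\epsilon$.
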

\begin{proof}
	The estimation follows from the last line of each subsequent subsubsection.
\end{proof}

		\subsubsection{Cuspidal Contribution}

	By (\ref{CuspPerD}), Lemma \ref{LocUpperBdNA} (1), Lemma \ref{LocAmpBd} (1), Lemma \ref{LocUpperBdA} (1), together with \cite{HL94} and \cite[Lemma 3]{BH10} giving bounds for $L(1,\pi' \times \overline{\pi'})$, we get
	$$ \extnorm{ \int_{[\PGL_2]} \varphi' \cdot \overline{\eis_3^{\sharp}} \cdot a(\mathfrak{t}).\eis_3^{\sharp} } \ll_{\epsilon} \Cond(\pi)^{-\frac{1}{2}} (\Cond(\pi) / \Cond(\omega))^{\theta + \epsilon} \cdot \Sob_d(\varphi') \cdot \extnorm{L(\frac{1}{2},\pi') L(\frac{1}{2},\pi' \otimes \omega)} \cdot K^{-\frac{\Norm[\mathfrak{t}]}{2}+\epsilon}. $$
	Inserting (\ref{TwistSubAssump}), summing over $\varphi'$ and $\pi'$ like in \cite[(6.16)]{Wu14}, applying \cite[Corollary 6.7]{Wu14} and \cite[Theorem 5.4]{Wu2} (to obtain $\Sob_{d'}(a(\mathfrak{t}).\eis_2^* \cdot \overline{\eis_2^*} - \Reis(a(\mathfrak{t}).\eis_2^*\overline{\eis_2^*})) \ll \log^3 K$ for some $d' > d$), we get
	$$ (\ref{CuspRegTerm}) \ll_{\epsilon} (\Cond(\pi) K)^{\epsilon} (\Cond(\pi) / \Cond(\omega))^{-1/2+\theta} \Cond(\omega)^{-\delta'} K^{\Norm[\mathfrak{t}](A+1/2)}. $$

		\subsubsection{Eisenstein Contribution}

	By (\ref{EisPerD}), Lemma \ref{LocUpperBdNA} (2), Lemma \ref{LocAmpBd} (2), Lemma \ref{LocUpperBdA} (2), together with Siegel's lower bound, we get
\begin{align*}
	&\quad \int_{[\PGL_2]} \eis(i\tau, \Phi) \cdot \left( a(\mathfrak{t}).\eis_3^{\sharp} \cdot \overline{\eis_3^{\sharp}} - \Reis(a(\mathfrak{t}).\eis_3^{\sharp} \overline{\eis_3^{\sharp}}) \right) \\
	&\ll_{\epsilon} \Cond(\pi)^{-\frac{1}{2}+\epsilon} \cdot (1+\norm[\tau])^{\epsilon} S_d(\Phi_{i\tau}) \cdot \extnorm{L(\frac{1}{2}+i\tau, \xi)^2 L(\frac{1}{2}+i\tau, \xi\omega) L(\frac{1}{2}+i\tau, \xi\omega^{-1})} \cdot K^{-\Norm[\mathfrak{t}]/2+\epsilon}.
\end{align*}
	Inserting \cite[Theorem 1.1]{Wu2}, summing over $\Phi$ and $\xi$ like in \cite[\S 6.4]{Wu14}, applying the analogue of \cite[Corollary 6.7]{Wu14} for the fourth moment bound of Hecke $L$-functions and \cite[Theorem 5.4]{Wu2}, we get
	$$ (\ref{EisRegTerm}) \ll_{\epsilon} (\Cond(\pi) K)^{\epsilon} (\Cond(\pi) / \Cond(\omega))^{-1/2} \Cond(\omega)^{-(1-2\theta)/8} K^{\Norm[\mathfrak{t}] \{ 1/2-(1-2\theta)/8 \} }. $$

		\subsubsection{One-dimensional Contribution}

	Applying Remark \ref{OneDimProjRk} with $\omega=1$, we see that
	$$ \int_{[\PGL_2]} \left( a(\mathfrak{t}).\eis_2^* \cdot \overline{\eis_2^*} - \Reis(a(\mathfrak{t}).\eis_2^* \overline{\eis_2^*}) \right) \cdot \overline{\chi \circ \det} = \int_{[\PGL_2]}^{\reg} a(\mathfrak{t}).\eis_2^* \cdot \overline{\eis_2^*} \cdot \overline{\chi \circ \det} $$
is non-vanishing only if $\chi=1$ (this is different from the cuspidal case where $\pi_2 \otimes \chi \simeq \pi_2$ is possible for non-trivial $\chi$). If $\mathfrak{t}=\varpi_0^{-j} \varpi_{\vp_1}^{-n_{\vp_1}} \varpi_{\vp_2}^{-n_{\vp_2}}$, denote by $T(\mathfrak{t}) = T(\vp_0^j)^* T(\vp_1^{n_{\vp_1}})^* T(\vp_2^{n_{\vp_2}})^*$ the corresponding product of Hecke operators. The $\gp{K}$-invariance of regularized integrals \cite[Proposition 2.27]{Wu9} implies
	$$ \extnorm{ \int_{[\PGL_2]}^{\reg} a(\mathfrak{t}).\eis_2^* \cdot \overline{\eis_2^*} } = \extnorm{ \int_{[\PGL_2]}^{\reg} T(\mathfrak{t}).\eis_2^* \cdot \overline{\eis_2^*} } \ll_{\epsilon} K^{-\frac{\Norm[\mathfrak{t}]}{2}+\epsilon} \extnorm{ \int_{[\PGL_2]}^{\reg} \eis_2^* \cdot \overline{\eis_2^*} } \ll_{\F,\epsilon} K^{-\frac{\Norm[\mathfrak{t}]}{2}+\epsilon}. $$
	A similar argument leads to
	$$ \extnorm{ \int_{[\PGL_2]} a(\mathfrak{t}).\eis_3^{\sharp} \cdot \overline{\eis_3^{\sharp}} - \Reis(a(\mathfrak{t}).\eis_3^{\sharp} \overline{\eis_3^{\sharp}}) } \ll_{\epsilon} K^{-\frac{\Norm[\mathfrak{t}]}{2}+\epsilon} \extnorm{ \int_{[\PGL_2]}^{\reg} \eis_3^{\sharp} \cdot \overline{\eis_3^{\sharp}} }. $$
	By (\ref{OneDimProj}), Lemma \ref{LocUpperBdNA} (3), Lemma \ref{LocUpperBdA} (3), together with $\extnorm{L(1,\omega^{\pm 1})} \ll_{\epsilon} \Cond(\omega)^{\epsilon}$, we get
	$$ (\ref{OneDRegTerm}) \ll_{\epsilon} K^{-\Norm[\mathfrak{t}]} \Cond(\pi)^{\epsilon}. $$

	\subsection{Regularized Term}
	
\begin{lemma}
	In (\ref{ReguTerm}), we have
	$$ \extnorm{\int_{[\PGL_2]}^{\reg} a(\mathfrak{t}).\eis_2^* \cdot \overline{\eis_2^*} \cdot \Reis(a(\mathfrak{t}).\eis_3^{\sharp} \overline{\eis_3^{\sharp}})} \ll_{\F, \epsilon} (\Cond(\pi) K)^{\epsilon} K^{-\Norm[\mathfrak{t}]}. $$
\label{ReguTermBd1}
\end{lemma}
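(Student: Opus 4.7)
The plan is to expand $\Reis(a(\vec{t}).\eis_3^{\sharp}\overline{\eis_3^{\sharp}})$ explicitly and then reduce the triple product to a finite sum of Rankin--Selberg type regularized integrals which factor into the Euler products introduced in \S 2.4. By Definition \ref{FinRegFuncDef}, since $\eis_3^{\sharp} \in \pi(1,\omega^{-1})$ has constant-term exponents supported at $\{\norm_{\ag{A}}^{1/2}, \omega^{-1}\norm_{\ag{A}}^{1/2}\}$, the essential constant term of $a(\vec{t}).\eis_3^{\sharp}\overline{\eis_3^{\sharp}}$ lies in the span of $\norm[y]_{\ag{A}}$, $\omega(y)\norm[y]_{\ag{A}}$, $\omega^{-1}(y)\norm[y]_{\ag{A}}$, and when $\omega=1$ also $\norm[y]_{\ag{A}}\log\norm[y]_{\ag{A}}$. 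Accordingly, the $\intL^2$-residue $\Reis(\cdots)$ is a finite linear combination, with coefficients bounded by $(\Cond(\pi)K)^{\epsilon}$ (coming from $L(1,\omega^{\pm 1})$, $\Lambda_{\F}^{*}$ and their derivatives), of (derivatives of) regularizing Eisenstein series in $\pi(1,1)$, $\pi(\omega,\omega^{-1})$, $\pi(\omega^{-1},\omega)$ whose flat sections are the $\gp{K}$-restrictions of $(a(\vec{t}).f_3)\overline{f_3}$, $(\IntwR_0 a(\vec{t}).f_3)\overline{\IntwR_0 f_3}$ and the two mixed products.

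With this expansion, the integral under study becomes a finite sum of expressions of the form $\int^{\reg}_{[\PGL_2]} a(\vec{t}).\eis_2^{*}\overline{\eis_2^{*}}\cdot \eis^{\reg,(n)}(1/2,f)$, to which Theorem \ref{RegTripEis} applies. That theorem identifies each such term as the holomorphic part at $s=0$ of the extended Rankin--Selberg integral $R(\tfrac{1}{2}+s,\widetilde{\eis}_2^{*}\overline{\eis_2^{*}};f)$, plus a weighted sum, with coefficients polynomial in the Laurent data of $\lambda_{\F}(s)$, of inner products on $\gp{K}$ of translates/intertwinings of the sections. Both ingredients factor: the former via the decompositions (\ref{Regu1PerD}) and (\ref{Regu2PerD}) (depending on whether $f\in\pi(1,1)$ or $\pi(\omega^{\mp 1},\omega^{\pm 1})$), and the latter as pure products of local $\gp{K}$-integrals.

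To bound the resulting Euler products, separate the places into three groups. At archimedean places we apply Lemma \ref{ReguAMainBd} to all the derivatives in $s$ that arise (at most four, by Theorem \ref{RegTripEis}), getting $O_{\epsilon}(\Cond(\pi_{\infty})^{\epsilon})$. At finite places $\vp\notin S^{*}$ we apply Lemma \ref{ReguNAMainBd}, which shows that the local factor is either $1$ or bounded by $\Cond(\pi_{\vp})^{\epsilon}$; the zero from part (2) in the $\cond(\omega_{\vp})\neq 0$ case is harmless because the mixed-induction terms simply drop out. At the amplification places $\vp\in S^{*}$, Lemma \ref{ReguAmpBd} furnishes the crucial saving $q_{\vp}^{-n_{\vp}}(1+\norm[n_{\vp}])^{k+3}(\log q_{\vp})^{k}$, uniform in the four possible types of section, and multiplying these savings across $\vp\in S^{*}$ yields $K^{-\Norm[\vec{t}]}$ up to $(\log K)^{O(1)}\ll K^{\epsilon}$. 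On the global $L$-factor side, the ratio $\zeta_{\F}(1+s)^4/\zeta_{\F}(2+2s)$ in (\ref{Regu1PerD}) resp.\ $L(1+s,\omega^{\mp})^{4}/L(2+2s,\omega^{\mp 2})$ in (\ref{Regu2PerD}) contributes only through its Laurent expansion at $s=0$, and the $\Lambda_{\F}^{*}$ and $L(1,\omega^{\pm})^{\pm 1}$ factors cancel against those already present in the coefficients of $\Reis(\cdots)$, leaving only $(\Cond(\pi)K)^{\epsilon}$.

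The main technical obstacle is the bookkeeping of the pole/zero cancellation across the various components of $\Reis$, in particular when $\omega=1$ (where the Laurent expansion carries an extra log) or when $\Cond(\omega)$ is small relative to $\Cond(\pi)$, because the $\zeta_{\F}(1+s)^{4}$ factor has a pole of order four and needs all four $s$-derivatives of the local $\ell_v^{(k)}$ in Theorem \ref{RegTripEis} to be controlled uniformly. This is exactly the content of Lemmas \ref{ReguNAMainBd}(3), \ref{ReguAMainBd}(3) and \ref{ReguAmpBd}, which together guarantee that after summing and differentiating, the only place where the small parameter $K^{-\Norm[\vec{t}]}$ is not recovered is through the $O((\log K)^{O(1)})$ blow-up which is absorbed in the $(\Cond(\pi)K)^{\epsilon}$ factor. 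Combining all three groups of local estimates then produces the bound stated in the lemma.
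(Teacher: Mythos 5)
Your proposal captures the general outline (expand $\Reis$ explicitly, reduce to triple products with a $\gp{K}$-restricted section, and combine (\ref{Regu1PerD})/(\ref{Regu2PerD}) with Lemmas \ref{ReguNAMainBd}, \ref{ReguAmpBd}, \ref{ReguAMainBd}), but there are two substantive gaps.

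First, you appeal to Theorem \ref{RegTripEis} for all the resulting regularized triple products, but that theorem only covers the case $f_3 \in \pi(1,1)$. The mixed terms coming from $(\IntwR_0 a(\vec{t}).f_3 \cdot \overline{f_3})|_{\gp{K}}$ and $(a(\vec{t}).f_3 \cdot \overline{\IntwR_0 f_3})|_{\gp{K}}$ live in $\pi(\omega^{\mp 1},\omega^{\pm 1})$ with $\omega \neq 1$; the paper needs the separate Proposition \ref{TPFReguTerm}, whose statement is crucially cleaner (no weighted sum of $\Proj_{\gp{K}}$ terms appears, because the off-diagonal $\gp{K}$-pairings vanish by \cite[Lemma 3.1]{Wu9}). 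Substituting Theorem \ref{RegTripEis} would produce spurious extra terms and the bookkeeping you propose would not close.

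Second, and more seriously, you dismiss the "weighted sum of $\Proj_{\gp{K}}$" terms in Theorem \ref{RegTripEis} as "pure products of local $\gp{K}$-integrals" whose savings "multiply across $\vp \in S^*$." This is where the actual work of the proof lies — the operator $\widetilde{\Intw}_{1/2}^{(l)}$ in $\Proj_{\gp{K}}(\widetilde{\Intw}_{1/2}^{(l)}(a(\vec{t}).f_2)\cdot a(\vec{t}).f_3 \cdot \overline{f_3})$ kills the spherical component, so the naive $K^{-\Norm[\vec{t}]/2}$ saving from each of the two $a(\vec{t})$-translates is not immediately visible. One must decompose $a(\vec{t}).f_2$ and $a(\vec{t}).f_3$ in the orthonormal basis of Proposition \ref{LocBaseRel}(4-1), note that $\widetilde{\Intw}_{1/2}^{(l)}$ annihilates $e_{\vec{0}}$ and contributes an extra $K^{-\Norm[\vec{l}]}$ on the nonspherical components (\cite[Lemma 4.4(1)]{Wu2}), and exploit the fact that the $\gp{K}_{S^*}$-restrictions of $e_{\vec{l}}$ and $e_{\vec{l}}^*$ coincide although they sit in different representations, before one can assemble the $K^{-\Norm[\vec{t}]}$ bound. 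There is also a Weyl-element reduction to $n(\vec{t}) \leq \vec{0}$ that you omit. Finally, the $\omega = 1$ case introduces $\Intw_0^{(1)}$ inside the residue, which you note only in passing; the paper needs a dedicated lemma (Lemma \ref{IntwDerEst}) to control $\Norm[\IntwR_0^{(1)} f_3]$ and the commutation of $\IntwR_0^{(1)}$ with the tensor decomposition — this does not come for free from the bounds you cite.

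As a smaller point, the $L(1,\omega^{\pm})$ factors appearing in the coefficients of $\Reis$ do not cancel against the global $L$-factor ratios in (\ref{Regu1PerD})/(\ref{Regu2PerD}); they are simply absorbed into $(\Cond(\pi)K)^{\epsilon}$ via $|L(1,\omega^{\pm})| \ll_{\epsilon} \Cond(\omega)^{\epsilon}$.
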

\begin{proof}
	(1) First suppose $\omega \neq 1$. Write $u_{\infty} = L_{\infty}(1,\omega^{-1}) / L_{\infty}(1,\omega)$ and notice that $\norm[u_{\infty}]=1$. We have
\begin{align*}
	\Reis(a(\mathfrak{t}).\eis_3^{\sharp} \overline{\eis_3^{\sharp}}) &= \extnorm{L(1,\omega)}^2 \eis^{\reg}(\frac{1}{2}, a(\mathfrak{t}).f_3 \overline{f_3}) + \extnorm{L(1,\omega^{-1})}^2 \eis^{\reg}(\frac{1}{2}, a(\mathfrak{t}).\IntwR_0 f_3 \overline{\IntwR_0 f_3}) \\
	&+ \left\{ \begin{matrix} u_{\infty} L(1,\omega^{-1})^2 \eis(\frac{1}{2}, a(\mathfrak{t}).\IntwR_0 f_3 \overline{f_3}) + \overline{u_{\infty}} L(1,\omega)^2 \eis(\frac{1}{2}, a(\mathfrak{t}).f_3 \overline{\IntwR_0  f_3}) & \text{if } \omega^2 \neq 1 \\ u_{\infty} L(1,\omega^{-1})^2 \eis^{\reg}(\frac{1}{2}, a(\mathfrak{t}).\IntwR_0 f_3 \overline{f_3}) + \overline{u_{\infty}} L(1,\omega)^2 \eis^{\reg}(\frac{1}{2}, a(\mathfrak{t}).f_3 \overline{\IntwR_0  f_3}) & \text{if } \omega^2 = 1 \end{matrix} \right..
\end{align*}
	We shall apply Proposition \ref{TPFReguTerm} to treat
	$$ \begin{matrix} \int_{[\PGL_2]}^{\reg} a(\mathfrak{t}).\eis_2^* \cdot \overline{\eis_2^*} \cdot \eis(\frac{1}{2}, a(\mathfrak{t}).\IntwR_0 f_3 \overline{f_3}) \quad \text{resp.} \quad \int_{[\PGL_2]}^{\reg} a(\mathfrak{t}).\eis_2^* \cdot \overline{\eis_2^*} \cdot \eis(\frac{1}{2}, a(\mathfrak{t}).f_3 \overline{\IntwR_0  f_3}) & \text{if } \omega^2 \neq 1, \\ \int_{[\PGL_2]}^{\reg} a(\mathfrak{t}).\eis_2^* \cdot \overline{\eis_2^*} \cdot \eis^{\reg}(\frac{1}{2}, a(\mathfrak{t}).\IntwR_0 f_3 \overline{f_3}) \quad \text{resp.} \quad \int_{[\PGL_2]}^{\reg} a(\mathfrak{t}).\eis_2^* \cdot \overline{\eis_2^*} \cdot \eis^{\reg}(\frac{1}{2}, a(\mathfrak{t}).f_3 \overline{\IntwR_0  f_3}) & \text{if } \omega^2 = 1. \end{matrix} $$
	In fact, (\ref{Regu2PerD}), Lemma \ref{ReguNAMainBd} (2), Lemma \ref{ReguAmpBd} (2) and Lemma \ref{ReguAMainBd} (2) imply
	$$ \extnorm{R^{\hol}(1/2, a(\mathfrak{t}).\eis_2^* \cdot \overline{\eis_2^*}; a(\mathfrak{t}).\IntwR_0 f_3 \overline{f_3})}, \text{ resp. } \extnorm{R^{\hol}(1/2, a(\mathfrak{t}).\eis_2^* \cdot \overline{\eis_2^*}; a(\mathfrak{t}).f_3 \overline{\IntwR_0  f_3})} \ll_{\F,\epsilon} K^{-\Norm[\mathfrak{t}]+\epsilon}. $$
	We shall apply \cite[Theorem 2.7]{Wu2} to treat
	$$ \int_{[\PGL_2]}^{\reg} a(\mathfrak{t}).\eis_2^* \cdot \overline{\eis_2^*} \cdot \eis^{\reg}(\frac{1}{2}, a(\mathfrak{t}).f_3 \overline{f_3}) \quad \text{resp.} \quad \int_{[\PGL_2]}^{\reg} a(\mathfrak{t}).\eis_2^* \cdot \overline{\eis_2^*} \cdot \eis^{\reg}(\frac{1}{2}, \IntwR_0 a(\mathfrak{t}).f_3 \overline{\IntwR_0 f_3}). $$
	Combining (\ref{Regu1PerD}), Lemma \ref{ReguNAMainBd} (1), Lemma \ref{ReguAmpBd} (1) and Lemma \ref{ReguAMainBd} (1) we get
	$$ \extnorm{R^{\hol}(1/2, a(\mathfrak{t}).\eis_2^* \cdot \overline{\eis_2^*}; a(\mathfrak{t}).f_3 \overline{f_3})}, \text{ resp. } \extnorm{R^{\hol}(1/2, a(\mathfrak{t}).\eis_2^* \cdot \overline{\eis_2^*}; a(\mathfrak{t}).\IntwR_0 f_3 \overline{\IntwR_0 f_3})} \ll_{\F,\epsilon} K^{-\Norm[\mathfrak{t}]+\epsilon}. $$
	The bounds of the remaining terms corresponding to \cite[Theorem 2.7]{Wu2} follow from
	$$ \extnorm{\Proj_{\gp{K}}(a(\mathfrak{t}).f_2 \overline{f_2})} \ll_{\epsilon} K^{-\frac{\Norm[\mathfrak{t}]}{2} + \epsilon}, \quad \extnorm{\Proj_{\gp{K}}(a(\mathfrak{t}).f_3 \overline{f_3})} \text{ resp. } \extnorm{\Proj_{\gp{K}}(a(\mathfrak{t}).\IntwR_0 f_3 \overline{\IntwR_0 f_3})} \ll_{\epsilon} K^{-\frac{\Norm[\mathfrak{t}]}{2} + \epsilon}; $$
\begin{equation}
	\extnorm{\Proj_{\gp{K}}(\widetilde{\Intw}_{\frac{1}{2}}^{(k)}\left( a(\mathfrak{t}).f_2 \right) \cdot a(\mathfrak{t}).f_3 \cdot \overline{f_3})} \text{ resp. } \extnorm{\Proj_{\gp{K}}(\widetilde{\Intw}_{\frac{1}{2}}^{(k)}\left( a(\mathfrak{t}).f_2 \right) \cdot a(\mathfrak{t}).\IntwR_0 f_3 \overline{\IntwR_0 f_3})} \ll_{\epsilon} K^{-\Norm[\mathfrak{t}] + \epsilon},
\label{DT}
\end{equation}
	where $0 \leq k \leq 3$. The bounds in the first line are easy consequences of the general matrix coefficients decay \cite[Theorem 2]{CHH88}, MacDonald's formula \cite[Theorem 4.6.6]{Bu98} and the unitarity of $\IntwR_0$. For (\ref{DT}), we first note that we can assume $n(\mathfrak{t}) \leq \vec{0}$, since with $w_{\vp}$ the Weyl element at $\vp \in S$ we have
	$$ \Proj_{\gp{K}}(\widetilde{\Intw}_{\frac{1}{2}}^{(k)}\left( a(\mathfrak{t}).f_2 \right) \cdot a(\mathfrak{t}).f_3 \cdot \overline{f_3}) = \Proj_{\gp{K}}(\widetilde{\Intw}_{\frac{1}{2}}^{(k)}\left( w_{\vp}a(\mathfrak{t})w_{\vp}^{-1}.f_2 \right) \cdot w_{\vp}a(\mathfrak{t})w_{\vp}^{-1}.f_3 \cdot \overline{f_3}). $$
	Extracting the components at $\vp \in S^*$ (\ref{AmpPlace}) and distinguishing elements related to $f_2$ from those to $f_3$ by putting a ``$*$'' (for example, $f_3^{S^*} \otimes e_{\vec{0}}=f_3, e_{\vec{0}}^{S^*} \otimes e_{\vec{0}}^*=f_2$), Proposition \ref{LocBaseRel} (4-1) shows that
	$$ a(\mathfrak{t}).f_2 = e_{\vec{0}}^{S^*} \otimes a(\mathfrak{t}).e_{\vec{0}}^* = \sideset{}{_{\vec{l} \leq -n(\mathfrak{t})}} \sum O_{\epsilon}(K^{-\frac{\Norm[-n(\mathfrak{t}) - \vec{l}]}{2}+\epsilon}) e_{\vec{0}}^{S^*} \otimes e_{\vec{l}}^*, $$
	$$ a(\mathfrak{t}).f_3 = f_3^{S^*} \otimes a(\mathfrak{t}).e_{\vec{0}} = \sideset{}{_{\vec{l} \leq -n(\mathfrak{t})}} \sum O_{\epsilon}(K^{-\frac{\Norm[-n(\mathfrak{t}) - \vec{l}]}{2}+\epsilon}) f_3^{S^*} \otimes e_{\vec{l}}. $$
	By \cite[Lemma 3.18 (4)]{Wu5} or simply \cite[Lemma 4.4 (1)]{Wu2}, we have
	$$ \widetilde{\Intw}_{\frac{1}{2}}^{(k)} (e_{\vec{0}}^{S^*} \otimes e_{\vec{l}}^*) = O_{\F,\epsilon}(K^{-\Norm[\vec{l}]+\epsilon}) \mathbbm{1}_{\vec{l} \neq \vec{0}} \cdot e_{\vec{0}}^{S^*} \otimes e_{\vec{l}}^*. $$
	Although the $\gp{K}_{S^*}$-isotypic vectors $e_{\vec{l}}$ and $e_{\vec{l}}^*$ belong to different representations, Proposition \ref{LocBaseRel} (4-2) implies that their restriction to $\gp{K}_{S^*}$ are the same real function. We deduce that
	$$ \Proj_{\gp{K}}(\widetilde{\Intw}_{\frac{1}{2}}^{(k)}\left( a(\mathfrak{t}).f_2 \right) \cdot a(\mathfrak{t}).f_3 \cdot \overline{f_3}) = \sideset{}{_{\vec{0} \neq \vec{l} \leq -n(\mathfrak{t})}} \sum O_{\F,\epsilon}(K^{-\Norm[\mathfrak{t}]+\epsilon}) \Proj_{\gp{K}}^{S^*}(f_3^{S^*} \overline{f_3^{S^*}}) = O_{\F,\epsilon}(K^{-\Norm[\mathfrak{t}]+\epsilon}). $$
	A similar argument applies to $\Proj_{\gp{K}}(\widetilde{\Intw}_{\frac{1}{2}}^{(k)}\left( a(\mathfrak{t}).f_2 \right) \cdot a(\mathfrak{t}).\IntwR_0 f_3 \overline{\IntwR_0 f_3})$ and we obtain (\ref{DT}).
	
\noindent (2) For the case $\omega=1$, we have
\begin{align*}
	\Reis(a(\mathfrak{t}).\eis_3^{\sharp} \overline{\eis_3^{\sharp}}) &= \norm[\zeta^*(1)]^2 \left\{ \eis^{\reg,(2)}(\frac{1}{2}, a(\mathfrak{t}).f_3 \overline{f_3}) + \frac{1}{4} \eis^{\reg}(\frac{1}{2}, \Intw_0^{(1)} a(\mathfrak{t}).f_3 \overline{\Intw_0^{(1)} f_3}) \right. \\
	&\left. + \frac{1}{2} \eis^{\reg,(1)}(\frac{1}{2}, a(\mathfrak{t}).f_3 \overline{\Intw_0^{(1)} f_3}) + \frac{1}{2} \eis^{\reg,(1)}(\frac{1}{2}, \Intw_0^{(1)} a(\mathfrak{t}).f_3 \overline{f_3}) \right\}.
\end{align*}
	Hence, the extra difficulty is the analysis of $\Intw_0^{(1)}$, given in the next lemma. It follows that
	$$ \extNorm{\IntwR_0^{(1)} f_3} \ll_{\epsilon} \Cond(\pi)^{\epsilon}, \quad \IntwR_0^{(1)} (f_3^{S^*} \otimes e_{\vec{l}}) = O_{\epsilon}(K^{\epsilon}) \cdot \IntwR_0^{S^*,(1)} f_3^{S^*} \otimes e_{\vec{l}}. $$
	The argument of (1) can thus be easily adapted, using inequalities like
	$$ \extnorm{\Proj_{\gp{K}}^{S^*}(\IntwR_0^{S^*,(1)} f_3^{S^*} \overline{f_3^{S^*}})} \leq \Norm[\IntwR_0^{S^*,(1)} f_3^{S^*}] \cdot \Norm[f_3^{S^*}], \quad \extnorm{\Proj_{\gp{K}}^{S^*}(\IntwR_0^{S^*,(1)} f_3^{S^*} \overline{\IntwR_0^{S^*,(1)} f_3^{S^*}})} = \Norm[\IntwR_0^{S^*,(1)} f_3^{S^*}]^2. $$
\end{proof}
\begin{lemma}
	Decompose $\pi(1,1) = \pi_{\infty} \otimes (\otimes_{\vp < \infty} \pi_{\vp})$ and let $\Sob_*$ be a Sobolev norm system involving only the differential operators of $\gp{K}_{\infty}$. Write $\Casimir_{\gp{K},\infty}$ for the Casimir element of $\gp{K}_{\infty}$.
\begin{itemize}
	\item[(1)] For any $f_{\infty} \in \pi_{\infty}^{\infty}$, $C = (C_v)_v \in \F_{\infty}^{\times}$ we have
	$$ \extNorm{ \IntwR_{0,\infty}^{(1)} f_{\infty} } \ll_{\epsilon} \extNorm{ \Casimir_{\gp{K},\infty} f_{\infty} }^{\epsilon} \cdot \extNorm{ f_{\infty} }^{1-\epsilon}, \quad \extNorm{ \Casimir_{\gp{K},\infty} a(C).f_{\infty} } \ll_{\F} \sideset{}{_{v \mid \infty}}\prod \max\{ \norm[C_v]_v^2, \norm[C_v]_v^{-2} \} \cdot \Sob_2(f_{\infty}); $$
	\item[(2)] For $\vec{n} = (n_{\vp})_{\vp}$ with $n_{\vp} \geq 0$ \& $n_{\vp} \neq 0$ for only finitely many $\vp$, and $e_{\vec{n}} = \otimes_{\vp} e_{n_{\vp}} $ with local elements defined in Section \ref{BaseGNV}, we have
	$$ \IntwR_{0,\fin}^{(1)} e_{\vec{n}} = O(\sideset{}{_{\vp}} \sum n_{\vp} \log q_{\vp} ) \cdot e_{\vec{n}}. $$
\end{itemize}
\label{IntwDerEst}
\end{lemma}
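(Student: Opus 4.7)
The plan is to exploit the diagonalizability of $\IntwR_0$ on appropriate bases: the $\gp{K}_\infty$-isotypic components at the archimedean places for part (1), and the fixed basis $\{e_n\}$ from Section \ref{BaseGNV} at the finite places for part (2).

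\textbf{Part (1), first inequality.} The operator $\IntwR_{0,v}$ is unitary on $\pi(1,1)_v$ and commutes with the action of $\gp{K}_v$, so it acts as a scalar $c_k$ of modulus $1$ on the $\gp{K}_v$-isotypic component on which $\Casimir_{\gp{K},v}$ has eigenvalue $\mu_k$. The $c_k$ are explicit ratios of archimedean Gamma factors, hence by standard digamma asymptotics the first derivative at $s=0$ satisfies $|c_k^{(1)}| \ll \log(2+\mu_k)$. Decomposing $f_\infty = \sum_k f_\infty^{(k)}$ into joint $\gp{K}_\infty$-isotypes and applying Hölder's inequality with exponents $1/\epsilon$ and $1/(1-\epsilon)$ then gives
\[ \extNorm{\IntwR_{0,\infty}^{(1)} f_\infty}^2 \ll_\epsilon \sideset{}{_k}\sum (1+\mu_k)^{2\epsilon} \extNorm{f_\infty^{(k)}}^2 \leq \extNorm{\Casimir_{\gp{K},\infty} f_\infty}^{2\epsilon} \extNorm{f_\infty}^{2(1-\epsilon)}, \]
which is the first inequality.

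\textbf{Part (1), second inequality.} In the induced model, $\pi(1,1)_v \mid_{\gp{K}_v}$ is right-regular translation on a fixed line bundle over $\gp{B}_v \cap \gp{K}_v \backslash \gp{K}_v$, and via the Iwasawa decomposition $k a(C) = b(k) k'(k)$ one has $(a(C).f_\infty)(k) = \mathrm{Ht}(ka(C))^{1/2} f_\infty(k'(k))$. Applying the second-order differential operator $\Casimir_{\gp{K},\infty}$ and using the chain rule produces a combination of at most second-order $\gp{K}_\infty$-derivatives of $f_\infty$ (evaluated at $k'(k)$) weighted by coefficients coming from differentiating the height factor and the map $k \mapsto k'(k)$ in $k$. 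These coefficients arise from the action of $\mathrm{Ad}(a(C)^{-1})$ on $\mathfrak{k}_\infty \subset \mathfrak{g}_\infty$, which rescales basis elements by factors of $|C_v|_v^{\pm 1}$; after squaring (since the Casimir has degree $2$) they are bounded by $\prod_{v\mid\infty} \max(|C_v|_v^2, |C_v|_v^{-2})$, giving the second bound for an appropriate $\Sob_2$ involving at most two $\gp{K}_\infty$-derivatives.

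\textbf{Part (2).} At a finite unramified place $\vp$, the basis vectors $e_n$ are characterized by their conductor level, so $\IntwR_{0,\vp}$ preserves each line $\C e_n$ and acts by a scalar $\lambda_n(s)$ with $|\lambda_n(0)| = 1$. An explicit computation of the intertwining integral using the $\gp{K}_0[\vp^n]$-coset decomposition of $\gp{K}_\vp$ expresses $\lambda_n(s)$ as a ratio of polynomials in $q_\vp^{-s}$ each of degree $O(n)$, so its logarithmic derivative at $s=0$ is $O(n \log q_\vp)$. The tensor decomposition $\IntwR_{0,\fin} = \bigotimes_\vp \IntwR_{0,\vp}$ together with the Leibniz rule for the derivative then yields $\IntwR_{0,\fin}^{(1)} e_{\vec n} = (\sum_\vp \lambda_{n_\vp}^{(1)}(0)) \cdot e_{\vec n}$, which is the claimed diagonal action with eigenvalue $O(\sum_\vp n_\vp \log q_\vp)$. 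The main obstacle is securing the \emph{linear}-in-$n$ bound on the logarithmic derivative of $\lambda_n(s)$: this depends on the precise rational-function structure of the intertwining scalar and requires careful bookkeeping of the basis conventions from Section \ref{BaseGNV}, so that the polynomial degrees appearing in numerator and denominator cancel to produce a logarithmic derivative that grows only linearly in $n$.
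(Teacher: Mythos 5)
Your proposal follows essentially the same route as the paper: decompose into $\gp{K}$-isotypic lines on which the normalized intertwining operator acts by a scalar with logarithmically bounded derivative, then apply H\"older's inequality for the first estimate in (1). The paper is terser — it simply cites \cite[Lemma 4.4 (1)]{Wu2} for (2) and invokes the archimedean analogue for the isotypic scalar bound, declaring the second inequality in (1) ``elementary'' — so your filled-in details (digamma asymptotics for the archimedean scalar, the $\mathrm{Ad}(a(C)^{-1})$-rescaling of $\mathfrak{k}_{\infty}$, and the $p$-adic computation of the intertwining eigenvalue) supply what the paper delegates to references, and are consistent with them.
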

\begin{proof}
	(2) is a direct consequence of \cite[Lemma 4.4 (1)]{Wu2}, which admits a similar version at $\infty$. Namely, if we write $e_{\vec{m}}$ for $\gp{K}_{\infty}$-isotypic unitary vectors with the natural numeration given in \cite[\S 4.2]{Wu2}, then we have
	$$ \IntwR_{0,\infty}^{(1)} e_{\vec{m}} = O(\log \lambda_{\vec{m}}) \cdot e_{\vec{m}}, $$
	where $\lambda_{\vec{m}}$ is the eigenvalue of $e_{\vec{m}}$ w.r.t. $\Casimir_{\gp{K},\infty}$. Writing $f_{\infty} = \sideset{}{_{\vec{m}}} \sum a_{\vec{m}} e_{\vec{m}}$ with $a_{\vec{m}} \in \ag{C}$, we get
\begin{align*}
	\extNorm{ \IntwR_{0,\fin}^{(1)} f_{\infty} }^2 &= \extNorm{ \sum a_{\vec{m}} \IntwR_{0,\fin}^{(1)} e_{\vec{m}} }^2 = \sum \norm[a_{\vec{m}}]^2 \Norm[\IntwR_{0,\fin}^{(1)} e_{\vec{m}}]^2 \\
	&\ll_{\epsilon} \sum \norm[a_{\vec{m}}]^2 \lambda_{\vec{m}}^{2\epsilon} \leq \left( \sum \norm[a_{\vec{m}}]^2 \lambda_{\vec{m}}^2 \right)^{\epsilon} \left( \sum \norm[a_{\vec{m}}]^2 \right)^{1-\epsilon},
\end{align*}
	proving the first inequality in (1). The second inequality is elementary.
\end{proof}

\begin{lemma}
	In (\ref{ReguTerm}), we have
	$$ \extnorm{\int_{[\PGL_2]}^{\reg} a(\mathfrak{t}).\eis_3^{\sharp} \cdot \overline{\eis_3^{\sharp}} \cdot \Reis(a(\mathfrak{t}).\eis_2^* \overline{\eis_2^*})} \ll_{\F, \epsilon} (\Cond(\pi) K)^{\epsilon} K^{-\Norm[\mathfrak{t}]}. $$
\label{ReguTermBd2}
\end{lemma}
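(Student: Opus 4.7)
The plan is to mirror the argument of Lemma \ref{ReguTermBd1} with the roles of $\eis_2^*$ and $\eis_3^{\sharp}$ interchanged. Since $f_2 \in \pi(1,1)$ has trivial central character, the regularized product $a(\vec{t}).\eis_2^* \overline{\eis_2^*}$ is always in the situation analogous to the $\omega = 1$ case of Lemma \ref{ReguTermBd1} (2). Explicitly, using the Laurent expansion of $\Lambda_{\F}(2s)$ at $s=1/2$ and the definitions \eqref{RegEisDef}, \eqref{ReisDef}, we have
\begin{align*}
	\Reis(a(\vec{t}).\eis_2^* \overline{\eis_2^*}) &= \norm[\zeta_{\F}^*]^2 \left\{ \eis^{\reg,(2)}(\tfrac{1}{2}, a(\vec{t}).f_2 \overline{f_2}) + \tfrac{1}{4} \eis^{\reg}(\tfrac{1}{2}, \Intw_0^{(1)} a(\vec{t}).f_2 \overline{\Intw_0^{(1)} f_2}) \right. \\
	&\quad \left. + \tfrac{1}{2} \eis^{\reg,(1)}(\tfrac{1}{2}, a(\vec{t}).f_2 \overline{\Intw_0^{(1)} f_2}) + \tfrac{1}{2} \eis^{\reg,(1)}(\tfrac{1}{2}, \Intw_0^{(1)} a(\vec{t}).f_2 \overline{f_2}) \right\}.
\end{align*}

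First I would insert this expansion and estimate each of the four regularized triple product integrals of the shape $\int^{\reg} a(\vec{t}).\eis_3^{\sharp} \cdot \overline{\eis_3^{\sharp}} \cdot \eis^{\reg,(k)}(1/2, f)$ via \cite[Theorem 2.7]{Wu2} (the analog of Theorem \ref{RegTripEis} with the Eisenstein-series side provided by $\eis_3^{\sharp}\overline{\eis_3^{\sharp}}$). The holomorphic Rankin--Selberg part is handled by the decomposition \eqref{Regu3PerD}: the residual $L$-factor $\zeta_{\F}(1+s)^2 L(1+s,\omega^{-1}) L(1+s,\omega)/\zeta_{\F}(2+2s)$ is bounded by $\Cond(\omega)^{\epsilon}$ via Siegel, while Lemma \ref{ReguNAMainBd} (3), Lemma \ref{ReguAmpBd} (3) and Lemma \ref{ReguAMainBd} (3) yield the local bound $K^{-\Norm[\vec{t}]+\epsilon}$ at the ramified / amplified places and $(\Cond(\pi))^{\epsilon}$ at archimedean places.

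Next I would dispose of the "degenerate" weighted sums of $\Proj_{\gp{K}}$-expressions coming out of \cite[Theorem 2.7]{Wu2}, applied with the roles of $f_2$ and $f_3$ swapped. By MacDonald's formula, \cite[Theorem 2]{CHH88} and unitarity of $\IntwR_0$ one has
\[
	\extnorm{\Proj_{\gp{K}}(a(\vec{t}).f_2 \overline{f_2})}, \quad \extnorm{\Proj_{\gp{K}}(a(\vec{t}).f_3 \overline{f_3})}, \quad \extnorm{\Proj_{\gp{K}}(a(\vec{t}).\IntwR_0 f_3 \overline{\IntwR_0 f_3})} \ll_{\epsilon} K^{-\Norm[\vec{t}]/2+\epsilon}.
\]
For the mixed $\widetilde{\Intw}_{1/2}^{(k)}$-terms, the same device as in the proof of Lemma \ref{ReguTermBd1} (reduction to $n(\vec{t}) \leq \vec{0}$ via the local Weyl element at $\vp \in S$, decomposition of $a(\vec{t}).f_3$ in the $\gp{K}_{S^*}$-isotypic basis of Section \ref{BaseGNV} using Proposition \ref{LocBaseRel} (4), and the intertwining-derivative bound \cite[Lemma 4.4 (1)]{Wu2}) gives
\[
	\extnorm{\Proj_{\gp{K}}\bigl(\widetilde{\Intw}_{1/2}^{(k)}(a(\vec{t}).f_3) \cdot a(\vec{t}).f_2 \overline{f_2}\bigr)} \ll_{\F,\epsilon} K^{-\Norm[\vec{t}]+\epsilon}, \qquad 0 \leq k \leq 3,
\]
and similarly for the analogous expressions with $a(\vec{t}).\IntwR_0 f_3 \overline{\IntwR_0 f_3}$ in place of $a(\vec{t}).f_3 \overline{f_3}$. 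The terms involving $\Intw_0^{(1)} a(\vec{t}).f_2$ inside $\Reis(a(\vec{t}).\eis_2^*\overline{\eis_2^*})$ are controlled by Lemma \ref{IntwDerEst}: since $f_2$ is spherical at every place, only the archimedean part of $\Intw_0^{(1)}$ matters and contributes a harmless $(\Cond(\pi))^{\epsilon}$ factor through $\Sob_2(f_2) \ll 1$ combined with the finite-place formula in Lemma \ref{IntwDerEst} (2) applied to $a(\vec{t}).f_2$, which produces at most $\log^{O(1)}(\Cond(\pi) K)$.

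The main obstacle, exactly as in Lemma \ref{ReguTermBd1}, is keeping control of the mixed $\widetilde{\Intw}_{1/2}^{(k)}$-terms with the full saving $K^{-\Norm[\vec{t}]}$ rather than only $K^{-\Norm[\vec{t}]/2}$; the $\gp{K}_{S^*}$-isotypic orthogonality trick (restricting the comparison to vectors $e_{\vec{l}}^* = e_{\vec{l}}$ on $\gp{K}_{S^*}$ via Proposition \ref{LocBaseRel} (4-2)) is what makes this work here as well. Collecting everything yields the stated bound $(\Cond(\pi)K)^{\epsilon} K^{-\Norm[\vec{t}]}$.
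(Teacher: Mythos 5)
Your opening reduction and the form of $\Reis(a(\vec{t}).\eis_2^* \overline{\eis_2^*})$ are essentially correct: since $f_2$ is the normalized spherical vector, $\Intw_s f_2$ is a scalar multiple of $f_2$, so your four-term expansion collapses (after factoring out $\overline{\Intw_0^{(1)} f_2}\mid_{\gp{K}} = \overline{\lambda_{\F}^{(1)}(-1/2)}$ and $\overline{f_2}\mid_{\gp{K}} = 1$) into exactly the paper's form. The one small slip is the constant: because $\eis_2^*$ is $\Lambda_{\F}$-completed (not $\zeta_{\F}$-completed as $\eis_3^{\sharp}$ is), the prefactor should be $\norm[\Lambda_{\F}^*]^2$, not $\norm[\zeta_{\F}^*]^2$; this only affects a constant depending on $\F$, so it is harmless for the estimate. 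The use of \eqref{Regu3PerD} together with Lemma~\ref{ReguNAMainBd}~(3), Lemma~\ref{ReguAmpBd}~(3) and Lemma~\ref{ReguAMainBd}~(3) for the holomorphic Rankin--Selberg contribution also matches the paper.

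The genuine gap is in the degenerate-term analysis. Because the Rankin--Selberg pair here is $a(\vec{t}).\eis_3^{\sharp}\cdot \overline{\eis_3^{\sharp}}$, i.e.\ two Eisenstein series attached to $\pi(1,\omega^{\mp1})$ rather than $\pi(1,1)$, you cannot invoke Theorem~\ref{RegTripEis} (= \cite[Theorem 2.7]{Wu2}); you need the companion Proposition~\ref{TPFReguTerm2}, which is stated precisely for $f_1 \in \pi(1,\omega^{-1})$, $f_2 \in \pi(1,\omega)$, $f_3 \in \pi(1,1)$. In that proposition the $\widetilde{\Intw}_{1/2}^{(l)}$ acts on the $\pi(1,1)$-vector supplying $\eis^{\reg,(n)}$, i.e.\ on $a(\vec{t}).f_2$ (or $\Intw_0^{(1)}a(\vec{t}).f_2$), and \emph{not} on $a(\vec{t}).f_3$; your expression $\widetilde{\Intw}_{1/2}^{(k)}(a(\vec{t}).f_3)$ is in fact a type error, since $a(\vec{t}).f_3\mid_{\gp{K}}$ is not $(\gp{B}\cap\gp{K})$-invariant and hence not in the domain $H$ of $\widetilde{\Intw}$. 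Moreover, Proposition~\ref{TPFReguTerm2} produces a degenerate term of a new shape, $\Proj_{\gp{K}}(\Intw_0^{(1)}(a(\vec{t}).f_3)\cdot\overline{\Intw_0 f_3})\Proj_{\gp{K}}(\cdot)$, whose estimate is not covered by the terms appearing in Lemma~\ref{ReguTermBd1}: the paper needs an analogue of Lemma~\ref{IntwDerEst} for $\pi(1,\omega^{-1})$ (a bound on $\IntwR_0^{(1)}f_3$, not on $\IntwR_0^{(1)}f_2$) combined with the isotypic-decomposition argument. Your proposal applies Lemma~\ref{IntwDerEst} only to $\Intw_0^{(1)}a(\vec{t}).f_2$ (which is the easy, spherical case), and never addresses the $\Intw_0^{(1)}f_3$ term, which is the actual new input the proof requires.
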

\begin{proof}
	The proof is similar to that of Lemma \ref{ReguTermBd1}. By $\Intw_s f_2 = \lambda_{\F}(s-1/2)$, we easily obtain
\begin{align*}
	\Reis(a(\mathfrak{t}).\eis_2^* \overline{\eis_2^*}) &= \norm[\Lambda_{\F}^*]^2 \cdot \left\{ \eis^{\reg,(2)}(\frac{1}{2}, a(\mathfrak{t}).f_2) + \frac{1}{2} \overline{\lambda_{\F}^{(1)}(-\frac{1}{2})} \eis^{\reg,(1)}(\frac{1}{2}, a(\mathfrak{t}).f_2) \right. \\
	&\left. + \frac{1}{2} \eis^{\reg,(1)}(\frac{1}{2}, \Intw_0^{(1)} a(\mathfrak{t}).f_2) + \frac{1}{4} \overline{\lambda_{\F}^{(1)}(-\frac{1}{2})} \eis^{\reg}(\frac{1}{2}, \Intw_0^{(1)} a(\mathfrak{t}).f_2) \right\}.
\end{align*}
	We then apply Proposition \ref{TPFReguTerm2} to treat each term of
	$$ \int_{[\PGL_2]}^{\reg} a(\mathfrak{t}).\eis_3^{\sharp} \cdot \overline{\eis_3^{\sharp}} \cdot \eis^{\reg,(n)}(\frac{1}{2}, a(\mathfrak{t}).f_2), n=1,2; \quad \int_{[\PGL_2]}^{\reg} a(\mathfrak{t}).\eis_3^{\sharp} \cdot \overline{\eis_3^{\sharp}} \cdot \eis^{\reg,(n)}(\frac{1}{2}, \Intw_0^{(1)} a(\mathfrak{t}).f_2), n=0,1. $$
	Combining (\ref{Regu3PerD}), Lemma \ref{ReguNAMainBd} (3), Lemma \ref{ReguAmpBd} (3) and Lemma \ref{ReguAMainBd} (3) we get
	$$ \extnorm{ \frac{\partial^n R}{\partial s^n}^{\hol}(\frac{1}{2}; a(\mathfrak{t}).\eis_3^{\sharp} \cdot \overline{\eis_3^{\sharp}}; a(\mathfrak{t}).f_2) }, \text{ resp. } \extnorm{ \frac{\partial^n R}{\partial s^n}^{\hol}(\frac{1}{2}; a(\mathfrak{t}).\eis_3^{\sharp} \cdot \overline{\eis_3^{\sharp}}; \Intw_0^{(1)} a(\mathfrak{t}).f_2) } \ll_{\F,\epsilon} (\Cond(\pi)K)^{\epsilon} K^{-\Norm[\mathfrak{t}]}. $$
	Most of the remaining terms have already been treated in the proof of Lemma \ref{ReguTermBd1}, except
	$$ \extnorm{ \Proj_{\gp{K}}(\Intw_0^{(1)} a(\mathfrak{t}).f_3 \overline{\Intw_0 f_3}) } \ll_{\F,\epsilon} (\Cond(\pi)K)^{\epsilon} K^{-\Norm[\mathfrak{t}]/2}, $$
which follows from an analogue of Lemma \ref{IntwDerEst} for $\pi(1,\omega^{-1})$ together with the argument given in Lemma \ref{ReguTermBd1} (2). One may find such technical analysis of $\Intw_0^{(1)}$ (in fact explicit formula for $\Intw_s$) in \cite{Wu5}.
\end{proof}

	\subsection{Degenerate Term}
	
\begin{lemma}
	The contribution of (\ref{DgnTerm}) is
	$$ \extnorm{ \int_{[\PGL_2]}^{\reg} \Reis(a(\mathfrak{t}).\eis_2^* \overline{\eis_2^*}) \cdot \Reis(a(\mathfrak{t}).\eis_3^{\sharp} \overline{\eis_3^{\sharp}}) } \ll_{\epsilon} K^{-\Norm[\mathfrak{t}]+\epsilon}. $$
\label{DgnTermBd}
\end{lemma}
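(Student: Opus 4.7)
The plan is to expand both $\intL^2$-residues using the explicit formulas recorded in the proofs of Lemmas~\ref{ReguTermBd1} and~\ref{ReguTermBd2} and then to evaluate the resulting pairings via Theorem~\ref{RIPEisUnitary}. Recall that each of $\Reis(a(\vec{t}).\eis_2^* \overline{\eis_2^*})$ and $\Reis(a(\vec{t}).\eis_3^{\sharp} \overline{\eis_3^{\sharp}})$ is a finite linear combination, with scalar coefficients of size $O_{\F,\epsilon}(\Cond(\omega)^{\epsilon})$, of (derivatives of) regularizing Eisenstein series $\eis^{\reg,(k)}(\frac{1}{2},\cdot)$ whose inducing data lie respectively in
$$ \{ a(\vec{t}).f_2,\ \Intw_0^{(1)} a(\vec{t}).f_2 \} \quad \text{and} \quad \{ a(\vec{t}).f_3 \overline{f_3},\ a(\vec{t}).\IntwR_0 f_3 \overline{\IntwR_0 f_3},\ a(\vec{t}).\IntwR_0 f_3 \overline{f_3},\ a(\vec{t}).f_3 \overline{\IntwR_0 f_3} \}. $$
Inserting these into (\ref{DgnTerm}), the target integral becomes an $O(1)$-length linear combination of regularized pairings
$$ I_{F,G}^{(m,n)} := \int_{[\PGL_2]}^{\reg} \eis^{\reg,(m)}\bigl(\frac{1}{2}, F\bigr) \cdot \eis^{\reg,(n)}\bigl(\frac{1}{2}, G\bigr). $$

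Each $I_{F,G}^{(m,n)}$ is evaluated by Theorem~\ref{RIPEisUnitary} and its straightforward derivatives in the spectral parameter. In every case occurring here the two induced representations are dual (spherical against spherical, $\pi(1,1)$ against $\pi(\omega^{\pm 1},\omega^{\mp 1})$, or $\pi(\omega,\omega^{-1})$ against $\pi(\omega^{-1},\omega)$), so the formula applies and expresses $I_{F,G}^{(m,n)}$ as a finite sum, with coefficients polynomial in the Laurent data $\lambda_{\F}^{(i)}(0)$ (and its twisted analogues by $\omega^{\pm 1}$), hence of size $O_{\F,\epsilon}(\Cond(\omega)^{\epsilon})$, of $\gp{K}$-projections
$$ \Proj_{\gp{K}}\bigl( \Intw_0^{(j)} F \cdot \Intw_0^{(j')} G \bigr), \qquad 0 \leq j,\, j' \leq 3. $$

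It then suffices to bound each such projection. Cauchy--Schwarz on $\gp{K}$ reduces this to controlling $\extnorm{\Proj_{\gp{K}}(a(\vec{t}).f_2 \overline{f_2})}$, $\extnorm{\Proj_{\gp{K}}(a(\vec{t}).f_3 \overline{f_3})}$ and $\extnorm{\Proj_{\gp{K}}(a(\vec{t}).\IntwR_0 f_3 \overline{\IntwR_0 f_3})}$, all of which are $\ll_{\F,\epsilon} K^{-\Norm[\vec{t}]/2+\epsilon}$ by the matrix coefficient decay of Cowling--Haagerup--Howe combined with MacDonald's formula and the unitarity of $\IntwR_0$, exactly as in the proof of Lemma~\ref{ReguTermBd1}. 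The extra derivative operators $\Intw_0^{(j)}, \IntwR_0^{(j')}$ cost at most a factor of $(\log K)^{j+j'}$ by Lemma~\ref{IntwDerEst} and its analogue for $\pi(1,\omega^{-1})$, which is absorbed into $K^{\epsilon}$. Multiplying the two square-root-type bounds and summing the $O(1)$ terms yields the claimed $K^{-\Norm[\vec{t}]+\epsilon}$.

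The only subtle point, exactly as in Lemma~\ref{ReguTermBd1}, is that $\Intw_0^{(j)}$ does not commute with $a(\vec{t})$. This is handled as in the derivation of $(\ref{DT})$ by factoring $f_2, f_3$ over the ramified places $S^*$ and noting that $\Intw_0^{(j)}$ acts diagonally on the local $\gp{K}_{\vp}$-isotypic pieces $e_{\vec{l}}, e_{\vec{l}}^*$ with eigenvalues of size $O((\log q_{\vp})^j)$, so that the gain $K^{-\Norm[\vec{t}]/2}$ from each projection factor is preserved intact.
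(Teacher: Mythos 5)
Your proposal contains a genuine gap, and the central mechanism you describe is not the one that makes the estimate work.

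First, you invoke Theorem~\ref{RIPEisUnitary} (the pairing formula for unitary Eisenstein series at $s=0$), but each summand of $\Reis(a(\vec{t}).\eis_2^* \overline{\eis_2^*})$ and $\Reis(a(\vec{t}).\eis_3^{\sharp} \overline{\eis_3^{\sharp}})$ is a derivative of a \emph{regularizing} Eisenstein series $\eis^{\reg,(k)}(\tfrac12,\cdot)$, whose spectral parameter sits at $s=\tfrac12$, the edge of the tempered dual, not at $s=0$. Theorem~\ref{RIPEisUnitary} does not apply at this point; the paper instead appeals to \cite[Theorem~2.4]{Wu2}, whose conclusion is phrased in terms of the operators $\widetilde{\Intw}_{1/2}^{(k)}$ appearing in~(\ref{DT}), not $\Intw_0^{(j)}$.

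Second, and more seriously, your Cauchy--Schwarz step loses all the decay. Applying Cauchy--Schwarz on $\gp{K}$ to a projection $\Proj_{\gp{K}}\bigl(\Intw_0^{(j)}F \cdot \Intw_0^{(j')}G\bigr)$ produces $\intL^2(\gp{K})$-\emph{norms} $\Norm[\Intw_0^{(j)}F]_2 \cdot \Norm[\Intw_0^{(j')}G]_2$, and since $a(\vec{t})$ acts unitarily on the unitary principal series, $\Norm[a(\vec{t}).f_2]_2 = 1$, not $K^{-\Norm[\vec{t}]/2}$. You appear to be conflating this norm with the \emph{matrix coefficient} $\Proj_{\gp{K}}(a(\vec{t}).f_2 \cdot \overline{f_2}) = \Pairing{a(\vec{t}).f_2}{f_2}$, which does decay by Cowling--Haagerup--Howe but does not arise from Cauchy--Schwarz. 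As stated, your chain of estimates only yields $O_\epsilon(K^{\epsilon})$, with no saving in $K$ at all.

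The gain in the paper comes from a different place: the local eigenvalue of $\widetilde{\Intw}_{1/2}^{(k)}$ on the non-spherical $\gp{K}_{\vp}$-isotypic vector $e_{\vec{l}}$ at an amplifier place $\vp \in S^*$ is $O(q_{\vp}^{-\norm[l]})$ (and it annihilates the spherical piece by construction). Combining this with the expansion of $a(\vec{t}).f_2$ and $a(\vec{t}).f_3$ into $\gp{K}_{S^*}$-isotypic vectors from Proposition~\ref{LocBaseRel} (4-1), and the fact that the projection $\Proj_{\gp{K}}$ is non-zero only when the isotypic components match, one gets the full $K^{-\Norm[\vec{t}]+\epsilon}$ directly from a single projection, as in~(\ref{DT}); there are no two factors of $K^{-\Norm[\vec{t}]/2}$ to multiply. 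To repair your argument you would need to replace Theorem~\ref{RIPEisUnitary} with \cite[Theorem~2.4]{Wu2} and drop the Cauchy--Schwarz step in favor of the isotypic decomposition argument that establishes~(\ref{DT}).
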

\begin{proof}
	By \cite[Theorem 2.4]{Wu2}, the desired bound follows from (\ref{DT}), which is already proved.
\end{proof}

\section{Complements}

	\subsection{Bases for Generalized New Vectors}
	\label{BaseGNV}

	We restrict to a local $\vp$-adic field $\F$ in this subsection. We assume the cardinality of the residue field is $q$, and fix a uniformizer $\varpi$. Recall that the subspace of ``generalized new vectors'' in a (unitary) admissible irreducible representation $\pi$ of $\GL_2(\F)$ consists of the vectors invariant by $\gp{B}_1(\vo)$; the level $n$ subspace of generalized new vectors consists of the vectors invariant by $\gp{K}_1[\vp^n]$, where
	$$ \gp{B}_1(\vo) = \begin{pmatrix} \vo^{\times} & \vo \\ 0 & 1 \end{pmatrix}, \quad \gp{K}_1[\vp^n] = \begin{pmatrix} \vo^{\times} & \vo \\ \vp^n & 1+\vp^n \end{pmatrix}. $$
	Three bases of the subspace of generalized new vectors arise naturally. Their multual relations are our concern in this subsection.

\noindent \textbf{Basis 1:} Let $e_0$ be a unitary new vector of $\pi$. $\{ e_0, a(\varpi^{-1})e_0, \cdots, a(\varpi^{-k})e_0 \}$ is a (normal) basis of the level $\cond(\pi)+k$ subspace for $k \in \ag{N}$.

\noindent \textbf{Basis 2:} Applying Gramm-Schmidt to Basis 1, we get an ortho-normal basis of the level $\cond(\pi)+k$ subspace, denoted by $\{ e_0, e_1, \cdots, e_k \}$.

\noindent \textbf{Basis 3:} In either of the two case
\begin{itemize}
	\item $\pi=\pi(1,\omega)$ with $\omega$ unitary (hence $\cond(\pi)=\cond(\omega)$),
	\item $\pi$ is principal spherical and realized in the induced model,
\end{itemize}
	we denote by $D_k$ the function, whose restriction to $\gp{K}$ is supported in $\gp{K}_0[\vp^{\cond(\pi)+k}]$, is invariant by $\gp{K}_1[\vp^{\cond(\pi)+k}]$, and takes value $\Vol(\gp{K}_0[\vp^{\cond(\pi)+k}])^{-1/2}$ at $1$. The set $\{ D_0, D_1, \cdots, D_k \}$ is also a (normal) basis of the level $\cond(\pi)+k$ subspace for any $k \in \ag{N}$.

\begin{proposition}
(1) If $\pi$ is such that $L(s,\pi)=1$, then basis 1 and basis 2 coincide with each other.

\noindent (2) If $\pi \simeq \mathrm{St}_{\chi}$ is Steinberg with unramified twist, then basis 1 and basis 2 are related by
	$$ e_n = (1-q^{-2})^{-1/2} \cdot \{ a(\varpi^{-n}).e_0 - \chi(\varpi)q^{-1} a(\varpi^{-(n-1)}).e_0 \}. $$

\noindent (3) If $\cond(\omega) > 0, \pi=\pi(1,\omega)$, then basis 1 and basis 3 coincide with each other. Their relation to basis 2 is given by
	$$ e_0=D_0, \quad e_n = (1-q^{-1})^{-1/2} (D_n - q^{-1/2}D_{n-1}), \forall n \geq 1; $$
	$$ D_n = (1-q^{-1})^{1/2} \sum_{k=0}^n q^{-k/2} e_{n-k}, \forall n \geq 1. $$
	Moreover, the dimension $d_n$ of the $\gp{K}$-representation generated by $e_n$ (which is irreducible) is
	$$ d_0 = q^c(1+q^{-1}), \quad d_n = q^{n+c}(1-q^{-2}), n \geq 1 \quad \text{where } c = \cond(\omega). $$

\noindent (4-1) If $\pi$ is spherical with Satake parameters $\alpha_1, \alpha_2$, then basis 1 and basis 2 are related by
	$$ e_1 = c_1^{-1/2} \cdot \left\{ a(\varpi^{-1}).e_0 - \frac{q^{-1/2}}{1+q^{-1}}(\bar{\alpha}_1 + \bar{\alpha}_2) e_0 \right\}, $$
	$$ e_n = c^{-1/2} \left\{ a(\varpi^{-n}).e_0 - q^{-1/2}(\bar{\alpha}_1 + \bar{\alpha}_2) a(\varpi^{-(n-1)}).e_0 + q^{-1} \bar{\alpha}_1 \bar{\alpha}_2 a(\varpi^{-(n-2)}).e_0 \right\}, \forall n \geq 2, $$
	$$ \text{with }c_1=1-\frac{q^{-1}\norm[\alpha_1+\alpha_2]^2}{(1+q^{-1})^2} \asymp_{\theta} 1, \quad c = 1-q^{-2} - \frac{q^{-1}-q^{-2}-q^{-3}}{(1+q^{-1})^2}\norm[\alpha_1+\alpha_2]^2 \asymp_{\theta} 1; $$
	$$ a(\varpi^{-1}).e_0 = c_1^{1/2}e_1 + \frac{q^{-1/2}}{1+q^{-1}}(\bar{\alpha}_1 + \bar{\alpha}_2) e_0, $$
\begin{align*}
	a(\varpi^{-n}).e_0 &= \sum_{k=0}^{n-2} q^{-\frac{k}{2}} \frac{\bar{\alpha}_1^{k+1}-\bar{\alpha}_2^{k+1}}{\bar{\alpha}_1 - \bar{\alpha}_2} c^{\frac{1}{2}}e_{n-k} + q^{-\frac{n-1}{2}} \frac{\bar{\alpha}_1^n-\bar{\alpha}_2^n}{\bar{\alpha}_1 - \bar{\alpha}_2} c_1^{\frac{1}{2}}e_1 \\
	&+ q^{-\frac{n}{2}} \left\{ \frac{\bar{\alpha}_1^{n+1}- \bar{\alpha}_2^{n+1}}{\bar{\alpha}_1 - \bar{\alpha}_2} - \frac{\bar{\alpha}_1 + \bar{\alpha}_2}{q+1} \frac{\bar{\alpha}_1^n- \bar{\alpha}_2^n}{\bar{\alpha}_1 - \bar{\alpha}_2} \right\} e_0.
\end{align*}

\noindent (4-2) If $\pi$ is moreover principal, then their relations to basis 3 are given by
\begin{itemize}
	\item Basis 1 $\Leftrightarrow$ Basis 3:
	$$ e_0 = D_0, \quad a(\varpi^{-n}).e_0 = \alpha_2^{-n}q^{-n/2} D_0 + \frac{1-\alpha_1\alpha_2^{-1}q^{-1}}{(1+q^{-1})^{1/2}} \sum_{k=1}^n \alpha_1^{-k}\alpha_2^{k-n}q^{-(n-k)/2} D_k, \forall n \geq 1; $$
	$$ D_n = \frac{\alpha_1^n(1+q^{-1})^{1/2}}{1-\alpha_1\alpha_2^{-1}q^{-1}} \cdot \{ a(\varpi^{-n}).e_0 - \alpha_2^{-1}q^{-1/2} a(\varpi^{-(n-1)}).e_0 \}, \forall n \geq 1. $$
	\item Basis 2 $\Leftrightarrow$ Basis 3:
	$$ e_1 = (1+q^{-1})^{1/2} \cdot \{ D_1 - (q+1)^{-1/2} D_0 \}, \quad e_n = (1-q^{-1})^{-1/2} \cdot \{ D_n - q^{-1/2}D_{n-1} \}, \forall n \geq 2; $$
	$$ D_n = (1-q^{-1})^{1/2} \sum_{k=0}^{n-2} q^{-k/2} e_{n-k} + q^{-(n-1)/2} (1+q^{-1})^{-1/2}e_1 + q^{-(n-1)/2} (q+1)^{-1/2} e_0, \forall n \geq 1. $$
\end{itemize}
	Moreover, the dimension $d_n$ of the $\gp{K}$-representation generated by $e_n$ is
	$$ d_0 = 1, \quad d_1=q, \quad d_n = q^n(1-q^{-2}), n \geq 2. $$
\label{LocBaseRel}
\end{proposition}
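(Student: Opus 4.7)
The plan is to reduce all four assertions to explicit computations in the Kirillov model, coupled with Iwasawa-decomposition calculations in the induced model when Basis 3 enters. The key observation is that for the unitary-normalized new vector $e_0$, the translate $a(\varpi^{-n}).e_0$ acts on the Kirillov side by $y\mapsto W_0(a(y\varpi^{-n}))$, so every inner product
$$ \extPairing{a(\varpi^{-m}).e_0}{a(\varpi^{-n}).e_0} = \int_{\F^\times} W_0(a(y\varpi^{-m})) \overline{W_0(a(y\varpi^{-n}))} d^\times y $$
reduces to a finite sum involving the Satake/Langlands parameters of $\pi$, read off from \cite[Table 1]{FMP17}. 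From these inner products one recovers the Gram-Schmidt coefficients relating Basis 2 to Basis 1.

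For part (1), the condition $L(s,\pi)=1$ forces $W_0$ to be supported on $\vo^\times$; the translates have pairwise disjoint supports in $\norm[y]$ and hence are already orthonormal. Part (2) uses $W_0(a(\varpi^m))=\chi(\varpi)^m q^{-m/2}\mathbbm{1}_{m\ge 0}$ with a normalization fixed by $\Norm[e_0]=1$, yielding a two-term Gram-Schmidt recursion whose solution is the stated closed form. Part (3) is similar with $W_0 = \mathbbm{1}_{\vo^\times}$; the coincidence of Basis 1 with Basis 3 follows by writing the Iwasawa coset representatives for $\gp{K}_0[\vp^{c+n}] \backslash \gp{K}$ (exactly the ones appearing in the proof of Proposition \ref{HeckeRel}) and checking that the character $\omega(d)$ on $\gp{B}(\F)\cap\gp{K}$ matches the evaluation of $a(\varpi^{-n}).e_0$ on each representative.

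Part (4-1) is the main computational obstacle. Using Macdonald's formula for the spherical Whittaker function on $a(\varpi^m)$, with $\alpha_1\alpha_2=1$, the inner products $\extPairing{a(\varpi^{-m}).e_0}{a(\varpi^{-n}).e_0}$ become closed-form expressions symmetric in $\alpha_1,\alpha_2$. Applying Gram-Schmidt gives a three-term recursion valid for $n\ge 2$, the case $n=1$ being exceptional because only two earlier translates contribute; the normalizing constants $c_1$ and $c$ are exactly the squared norms that appear at each step. The uniform bounds $c_1,c \asymp_\theta 1$ follow from $q^{-\theta}\le\norm[\alpha_j]\le q^\theta$, which keeps the denominators away from zero. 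Inverting the two- and three-term relations produces the stated expansion of $a(\varpi^{-n}).e_0$ in the orthonormal basis.

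For part (4-2), the Basis 1 $\Leftrightarrow$ Basis 3 relation is obtained by evaluating $a(\varpi^{-n}).e_0(\kappa)=e_0(\kappa a(\varpi^{-n}))$ on each layer $\gp{K}_0[\vp^{n-k}]\setminus\gp{K}_0[\vp^{n-k+1}]$: the Iwasawa decomposition on these representatives produces a diagonal factor whose value under the induced character $\norm_\vp^{1/2}\otimes\omega\norm_\vp^{-1/2}$ yields exactly the claimed coefficients $\alpha_1^{-k}\alpha_2^{k-n}q^{-(n-k)/2}$. The inverse relation comes from solving the resulting triangular system; solvability is guaranteed by the nonvanishing $1-\alpha_1\alpha_2^{-1}q^{-1}\neq 0$ in the unitary principal series range. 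Composing (4-1) with the Basis 1 $\Leftrightarrow$ Basis 3 formulas yields the Basis 2 $\Leftrightarrow$ Basis 3 conversions. Finally, the dimensions $d_n$ are computed as $[\gp{K}_1[\vp^{c+n-1}]:\gp{K}_1[\vp^{c+n}]]$ (i.e., the jump in the multiplicity of new vectors at level $n$), using the standard index formulas for these congruence subgroups together with the irreducibility of the $\gp{K}$-type generated by $e_n$, which follows from the fact that each $e_n$ is a highest-level vector distinguished by its $\gp{K}_0[\vp^{c+n}]$-invariance and eigenvalue behavior under the Hecke operators at $\vp$.
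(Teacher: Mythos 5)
Your overall route matches the paper's: for (1)--(2) and (4-1) you use the explicit Kirillov new vector (via \cite[Table 1]{FMP17}) and MacDonald's formula \cite[Prop.\ 4.6.6]{Bu98} to compute the Gram matrix and run Gram--Schmidt, and for (3) and (4-2) you pass through Iwasawa decomposition of the coset representatives $n_-(\varpi^k)$ to relate Basis 1 to Basis 3. That is exactly the paper's strategy. However, there are two gaps, one of which is a genuine error.

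\emph{The dimension formula.} Your claim that $d_n = [\gp{K}_1[\vp^{c+n-1}]:\gp{K}_1[\vp^{c+n}]]$ cannot be correct: that index equals $q^2$ for all $c+n\geq 2$ (the quotient is $\vp^{m-1}/\vp^m \times (1+\vp^{m-1})/(1+\vp^m)$), a constant, whereas $d_n = q^{c+n}(1-q^{-2})$ grows like $q^n$. The standard interpretation in terms of ``jump in multiplicity'' also fails, since by Casselman's theorem the dimension of the $\gp{K}_1[\vp^{m}]$-fixed space is $m-c+1$, so the jump is always $1$. The paper instead uses the matrix-coefficient/Schur-orthogonality identity $e_n(\kappa) = d_n^{1/2}\Pairing{\kappa.e_n}{e_n}$ (up to a unimodular constant), valid because $e_n$ generates an irreducible $\gp{K}$-type; evaluating at $\kappa = 1$ gives $d_n = \norm[e_n(1)]^2$, which is then read off from the explicit Basis~3 formula $e_n = (1-q^{-1})^{-1/2}(D_n - q^{-1/2}D_{n-1})$ together with $D_k(1) = \Vol(\gp{K}_0[\vp^{c+k}])^{-1/2}$. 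There is no way to recover $d_n$ from congruence-subgroup indices alone.

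\emph{Orthogonality in (4-1).} You assert that ``Applying Gram--Schmidt gives a three-term recursion,'' but a naive Gram--Schmidt on $\{a(\varpi^{-k}).e_0\}_{k\leq n}$ a priori gives an $(n+1)$-term relation. The collapse to the three-term form $e_n \propto f_n - q^{-1/2}(\bar\alpha_1+\bar\alpha_2)f_{n-1} + q^{-1}\bar\alpha_1\bar\alpha_2 f_{n-2}$ (with $f_k := a(\varpi^{-k}).e_0$) has to be \emph{verified}: one checks, using MacDonald's formula, that $\Pairing{f_n}{e_0}$ is of the form $C_1\bar\alpha_1^n q^{-n/2} + C_2\bar\alpha_2^n q^{-n/2}$ with $C_1,C_2$ independent of $n$, which kills all terms with $k\leq n-2$; the last case $k=n-1$ requires in addition the unitarity dichotomy $\norm[\alpha_1]=\norm[\alpha_2]=1$ or $\alpha_1\bar\alpha_2 = 1$. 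Your write-up takes this reduction for granted, and it is not automatic. Your inversion via formal power series is fine and coincides with the paper's; the Basis~2~$\Leftrightarrow$~Basis~3 conversion in (4-2) you obtain by composing the two other conversions, whereas the paper runs Gram--Schmidt directly on the $D_n$ using $\Pairing{D_m}{D_n} = (\Vol(\gp{K}_0[\vp^{c+\max(m,n)}])/\Vol(\gp{K}_0[\vp^{c+\min(m,n)}]))^{1/2}$; both are valid.
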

\begin{proof}
	The proof is very computational. We only give hints for the fastest way we have found.

\noindent (1) \& (2) Use the description of the Kirillov new vector given in \cite[Table 1]{FMP17}.

\noindent (3) The first assertion follows by direct computation. The second uses again \cite[Table 1]{FMP17} or direct computation of $\Pairing{D_n}{D_m}$ below in (4-2). The dimension formula follows by noticing and evaluating at $1$, up to a complex number with norm $1$, that
	$$ e_n(\kappa) = d_n^{1/2} \Pairing{\kappa.e_n}{e_n}, \forall \kappa \in \gp{K}. $$

\noindent (4-1) We first play with MacDonald's formula \cite[Proposition 4.6.6]{Bu98}, from which we easily deduce that
	$$ e_n' = a(\varpi^{-n}).e_0 - q^{-1/2}(\bar{\alpha}_1 + \bar{\alpha}_2)a(\varpi^{-{n-1}}).e_0 + q^{-1} \bar{\alpha}_1 \bar{\alpha}_2 a(\varpi^{-(n-2)}).e_0, n \geq 2 $$
is orthogonal to $a(\varpi^{-k}).e_0$ for $0 \leq k \leq n-2$, since $\Pairing{a(\varpi^{-n}).e_0}{e_0}$ is of the form $C_1\bar{\alpha}_1^k q^{-k/2} + C_2\bar{\alpha}_2^k q^{-k/2}$ with $C_1,C_2$ constants. The verification that it is also orthogonal to $a(\varpi^{-{n-1}}).e_0$ uses the fact that $\pi$ is unitary, i.e. either $\norm[\alpha_1]=\norm[\alpha_2]=1$ or $\alpha_1 \bar{\alpha}_2=1$. Hence $e_n'$ is proportional to $e_n$ and the formula for $c$ follows easily from $\Norm[e_n']^2 = \Pairing{e_n'}{a(\varpi^{-n}).e_0}$. In order to invert the relations, we write $f_n = a(\varpi^{-n}).e_0, \sigma_1 = q^{-1/2}(\bar{\alpha}_1 + \bar{\alpha}_2), \sigma_2 = q^{-1}\bar{\alpha}_1 \bar{\alpha}_2$ and introduce the formal series
\begin{align*}
	\sum_{n=0}^{\infty} e_n' X^n &= f_0 + (f_1 - \frac{\sigma_1}{1+q^{-1}} f_0)X + \sum_{n=2}^{\infty} (f_n-\sigma_1 f_{n-1} + \sigma_2 f_{n-2})X^n \\
	&= \left( \sum_{n=0}^{\infty} f_n X^n \right) \cdot \left( 1- \sigma_1 X + \sigma_2 X^2 \right) + \frac{\sigma_1}{q+1} f_0 X,
\end{align*}
from which we get and conclude by
	$$ \sum_{n=0}^{\infty} f_n X^n = \left( \sum_{n=0}^{\infty} (\bar{\alpha}_1 q^{-1/2} X)^n \right) \left( \sum_{n=0}^{\infty} (\bar{\alpha}_2 q^{-1/2} X)^n \right) \left( \sum_{n=0}^{\infty} e_n' X^n - \frac{\sigma_1}{q+1} e_0 X \right). $$

\noindent (4-2) For the first relation, we evaluate $a(\varpi^{-n}).e_0$ at $n_-(\varpi^k), k=0,\dots, n$ and use
	$$ \begin{pmatrix} 1 & \\ \varpi^k & 1 \end{pmatrix} \begin{pmatrix} \varpi^{-n} & \\ & 1 \end{pmatrix} = \begin{pmatrix} \varpi^{-k} & \varpi^{-n} \\ & \varpi^{k-n} \end{pmatrix} \begin{pmatrix} 0 & -1 \\ 1 & \varpi^{n-k} \end{pmatrix}. $$
	For the second, we apply Gramm-Schmidt to $D_0,D_1,\dots$ using
	$$ \Pairing{D_m}{D_n} = \left( \Vol(\gp{K}_0[\vp^{\cond(\pi)+\max(m,n)}]) / \Vol(\gp{K}_0[\vp^{\cond(\pi)+\min(m,n)}]) \right)^{1/2}. $$
	The dimension formula follows the same way as in the proof of (3).
\end{proof}
\begin{corollary}
	Let $\pi$ be unitary spherical with Satake paramter $\alpha_1,\alpha_2$. Let $\Proj_n$ denote the orthogonal projection onto the $\gp{K}_0[\vp^n]$-invariant subspace of $\pi$. Then we have
	$$ \Proj_{n-k} (a(\varpi^{-n}).e_0) = q^{-\frac{k}{2}} \frac{\bar{\alpha}_1^{k+1} - \bar{\alpha}_2^{k+1}}{\bar{\alpha}_1 - \bar{\alpha}_2} a(\varpi^{-(n-k)}).e_0 - q^{-\frac{k+1}{2}} \bar{\alpha}_1\bar{\alpha}_2 \frac{\bar{\alpha}_1^k - \bar{\alpha}_2^k}{\bar{\alpha}_1 - \bar{\alpha}_2} a(\varpi^{-(n-k-1)}).e_0,  0 \leq k \leq n-1; $$
	$$ \Proj_0 (a(\varpi^{-n}).e_0) = q^{-\frac{n}{2}} \left\{ \frac{\bar{\alpha}_1^{n+1}- \bar{\alpha}_2^{n+1}}{\bar{\alpha}_1 - \bar{\alpha}_2} - \frac{\bar{\alpha}_1 + \bar{\alpha}_2}{q+1} \frac{\bar{\alpha}_1^n- \bar{\alpha}_2^n}{\bar{\alpha}_1 - \bar{\alpha}_2} \right\} e_0. $$
\label{ProjCal}
\end{corollary}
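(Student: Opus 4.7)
The plan is to reduce both claims to coefficient matching in the orthonormal basis $\{e_0, e_1, e_2, \ldots\}$ (basis 2) of Proposition \ref{LocBaseRel}. Since $e_j$ is $\gp{K}_0[\vp^j]$-invariant and $\{e_j : j \leq m\}$ spans the $\gp{K}_0[\vp^m]$-invariant subspace, the projector $\Proj_{n-k}$ simply truncates an expansion in basis 2 to the terms with index $\leq n-k$. So I first apply Proposition \ref{LocBaseRel} (4-1) to expand $a(\varpi^{-n}).e_0$ in basis 2 and read off the truncation:
\[ \Proj_{n-k}(a(\varpi^{-n}).e_0) = \sum_{j=k}^{n-2} q^{-\frac{j}{2}} \frac{\bar{\alpha}_1^{j+1} - \bar{\alpha}_2^{j+1}}{\bar{\alpha}_1 - \bar{\alpha}_2} c^{1/2} e_{n-j} + (\text{boundary terms in } e_1, e_0). \]
The second formula, $\Proj_0(a(\varpi^{-n}).e_0)$, is then literally the coefficient of $e_0$ recorded directly in (4-1).

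For the first formula, I expand the claimed right-hand side by applying Proposition \ref{LocBaseRel} (4-1) separately to $a(\varpi^{-(n-k)}).e_0$ and $a(\varpi^{-(n-k-1)}).e_0$ and collecting coefficients. For a generic term $e_{n-k-j}$ with $j \geq 1$ and $n-k-j \geq 2$, the $c^{1/2}$'s factor out and the scalar to verify becomes
\[ q^{-\frac{k+j}{2}} \cdot \frac{(\bar{\alpha}_1^{k+1}-\bar{\alpha}_2^{k+1})(\bar{\alpha}_1^{j+1}-\bar{\alpha}_2^{j+1}) - \bar{\alpha}_1\bar{\alpha}_2 (\bar{\alpha}_1^k-\bar{\alpha}_2^k)(\bar{\alpha}_1^j-\bar{\alpha}_2^j)}{(\bar{\alpha}_1-\bar{\alpha}_2)^2}. \]
The key algebraic identity
\[ (\bar{\alpha}_1^{k+1}-\bar{\alpha}_2^{k+1})(\bar{\alpha}_1^{j+1}-\bar{\alpha}_2^{j+1}) - \bar{\alpha}_1\bar{\alpha}_2 (\bar{\alpha}_1^k-\bar{\alpha}_2^k)(\bar{\alpha}_1^j-\bar{\alpha}_2^j) = (\bar{\alpha}_1-\bar{\alpha}_2)(\bar{\alpha}_1^{k+j+1}-\bar{\alpha}_2^{k+j+1}), \]
which one verifies by expanding both sides, then collapses this to $q^{-\frac{k+j}{2}} \frac{\bar{\alpha}_1^{k+j+1}-\bar{\alpha}_2^{k+j+1}}{\bar{\alpha}_1-\bar{\alpha}_2}$, matching the coefficient of $e_{n-(k+j)}$ appearing in the truncated expansion of $a(\varpi^{-n}).e_0$ above.

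The main obstacle is purely bookkeeping for the boundary indices $n-k-j \in \{0,1\}$, since Proposition \ref{LocBaseRel} (4-1) treats $e_0$ and $e_1$ differently (with the constants $c_1^{1/2}$ and the explicit $e_0$-coefficient rather than the generic $c^{1/2} \frac{\bar{\alpha}_1^{m+1}-\bar{\alpha}_2^{m+1}}{\bar{\alpha}_1-\bar{\alpha}_2}$). A parallel computation for these boundary terms, using the two-term variant $(\bar{\alpha}_1+\bar{\alpha}_2)(\bar{\alpha}_1^n-\bar{\alpha}_2^n) - \bar{\alpha}_1\bar{\alpha}_2(\bar{\alpha}_1^{n-1}-\bar{\alpha}_2^{n-1}) = \bar{\alpha}_1^{n+1}-\bar{\alpha}_2^{n+1}$ of the same identity together with $\tfrac{1}{q+1} = \tfrac{q^{-1}}{1+q^{-1}}$, reduces them to the desired $e_0$-coefficient in (4-1). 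No deeper input is required beyond Proposition \ref{LocBaseRel} and these elementary identities in the Satake parameters.
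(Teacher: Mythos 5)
Your proof is correct but takes a genuinely different route from the paper. You verify the claimed formulas directly by expanding both $a(\varpi^{-n}).e_0$ and the asserted right-hand side (via $a(\varpi^{-(n-k)}).e_0$ and $a(\varpi^{-(n-k-1)}).e_0$) in the orthonormal basis of Proposition \ref{LocBaseRel} (4-1) and then matching coefficients, using the algebraic identity
$$(\bar{\alpha}_1^{k+1}-\bar{\alpha}_2^{k+1})(\bar{\alpha}_1^{j+1}-\bar{\alpha}_2^{j+1}) - \bar{\alpha}_1\bar{\alpha}_2 (\bar{\alpha}_1^k-\bar{\alpha}_2^k)(\bar{\alpha}_1^j-\bar{\alpha}_2^j) = (\bar{\alpha}_1-\bar{\alpha}_2)(\bar{\alpha}_1^{k+j+1}-\bar{\alpha}_2^{k+j+1}),$$
which you correctly state and which does hold. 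The paper instead only uses (4-1) to get the base case $\Proj_{n-1}(a(\varpi^{-n}).e_0)$, then observes that the ansatz $\Proj_{n-k}(a(\varpi^{-n}).e_0) = q^{-k/2}A_k\, a(\varpi^{-(n-k)}).e_0 - q^{-(k+1)/2}B_k\, a(\varpi^{-(n-k-1)}).e_0$ propagates under the identity $\Proj_{n-k-1}\circ\Proj_{n-k} = \Proj_{n-k-1}$, yielding a two-term linear recursion for $(A_k, B_k)$ whose companion matrix has eigenvalues $\bar{\alpha}_1,\bar{\alpha}_2$; diagonalizing it produces the Chebyshev-like quotients directly. The paper's approach avoids the boundary-index bookkeeping you flag at $e_0$ and $e_1$ (since the recursion only ever touches two adjacent $a(\varpi^{-m}).e_0$ terms, not the full basis-2 expansion), while your approach is self-contained and does not require spotting the right ansatz; both are sound, and your identification of $\Proj_0(a(\varpi^{-n}).e_0)$ with the explicit $e_0$-coefficient in (4-1) is exactly right.
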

\begin{proof}
	Proposition \ref{LocBaseRel} (4-1) gives
	$$ \Proj_{n-1} a(\varpi^{-n}).e_0 = q^{-1/2}(\bar{\alpha}_1 + \bar{\alpha}_2) a(\varpi^{-(n-1)}).e_0 - q^{-1} \bar{\alpha}_1 \bar{\alpha}_2 a(\varpi^{-(n-2)}).e_0, n \geq 2; $$
	$$ \Proj_1 a(\varpi^{-1}).e_0 = \frac{q^{-1/2}}{1+q^{-1}}(\bar{\alpha}_1 + \bar{\alpha}_2) e_0. $$
	Suppose we have
	$$ \Proj_{n-k} a(\varpi^{-n}).e_0 = q^{-k/2} A_k(\bar{\alpha}_1,\bar{\alpha}_2) a(\varpi^{-(n-k)}).e_0 - q^{-(k+1)/2} B_k(\bar{\alpha}_1,\bar{\alpha}_2) a(\varpi^{n-k-1}).e_0, 0 \leq k \leq n-1. $$
	Since $\Proj_{n-k-1} \circ \Proj_{n-k} = \Proj_{n-k-1}$. we get
	$$ \begin{pmatrix} A_{k+1} \\ B_{k+1} \end{pmatrix} = \begin{pmatrix} \bar{\alpha}_1 + \bar{\alpha}_2 & -1 \\ \bar{\alpha}_1 \bar{\alpha}_2 & 0 \end{pmatrix} \begin{pmatrix} A_k \\ B_k \end{pmatrix}. $$
	Diagonalizing the matrix, we easily get the desired formula.
\end{proof}

	\subsection{Transposition Formula for Local Rankin-Selberg}

	Consider a local field $\F$, a generic representation $\pi$ of $\gp{G}=\GL_2(\F)$ with central character $\omega$, two induced representations $\pi_j=\pi(\chi_j,\chi_j')$ with $\omega \chi_1 \chi_1' \chi_2 \chi_2' = 1$. There are two ways of realizing the $\GL_2(\F)$-invariant trilinear form on $\pi^{\infty} \times \pi_1^{\infty} \times \pi_2^{\infty}$. Namely, we have
	$$ \ell_1: \pi^{\infty} \times \pi_1^{\infty} \times \pi_2^{\infty} \to \ag{C}, (e,f_1,f_2) \mapsto \int_{\gp{Z}\gp{N} \backslash \gp{G}} W_e(g) W_1(g) f_2(g) dg; $$
	$$ \ell_2: \pi^{\infty} \times \pi_1^{\infty} \times \pi_2^{\infty} \to \ag{C}, (e,f_1,f_2) \mapsto \int_{\gp{Z}\gp{N} \backslash \gp{G}} W_e(g) W_2(g) f_1(g) dg; $$
where $W_e$ resp. $W_j$ is the Whittaker function with respect to $\overline{\psi}$ resp. $\psi$ of $e$ resp. $f_j$ for $j=1,2$.
\begin{proposition}
	The two trilinear forms are related by
	$$ \ell_1 = \chi_1\chi_2'(-1)\gamma(\frac{1}{2}, \pi, \chi_1'\chi_2; \overline{\psi})^{-1} \ell_2, $$
where the gamma factor is the one appearing in the theory of $\GL_2 \times \GL_1$ (Hecke-Jacquet-Langlands).
\label{TransFLocRS}
\end{proposition}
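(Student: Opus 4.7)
The plan is twofold: first show by uniqueness that $\ell_1$ and $\ell_2$ are proportional, then identify the constant of proportionality via the local Jacquet--Langlands functional equation for $\GL_2\times\GL_1$. Both $\ell_1$ and $\ell_2$ are manifestly $\gp{G}$-invariant trilinear forms on $\pi^\infty\otimes\pi_1^\infty\otimes\pi_2^\infty$: the measure on $\gp{Z}\gp{N}\backslash\gp{G}$ is right-invariant, and the Whittaker transformation laws ($W_e$ against $\overline{\psi}$ and $W_j$ against $\psi$) combine compatibly with the induced-section law for $f_j$ thanks to the central-character identity $\omega\chi_1\chi_1'\chi_2\chi_2'=1$. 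By the local multiplicity-one theorem for triple products on $\GL_2$ (Prasad in the $p$-adic case, Loke in the archimedean case), the space of such forms is at most one-dimensional, so $\ell_1=c\cdot\ell_2$ for some scalar $c$ depending only on the inducing data and $\psi$.

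To pin down $c$ I would perform an Iwasawa decomposition $g=n(x)a(y)\kappa$ in both integrals and integrate out the unipotent direction, reducing each $\ell_j$ to an integral over $\F^\times\times\gp{K}$ of the schematic form
$$ \ell_j(e,f_1,f_2) = \int_{\gp{K}} f_j(\kappa) \int_{\F^\times} W_e(a(y)\kappa)\,W_{3-j}(a(y)\kappa)\,\chi_{3-j}(y)\,\norm[y]^{-1/2}\,d^\times y\,d\kappa. $$
Next, insert the Whittaker integral $W_{3-j}(h)=\int_\F f_{3-j}(w_0 n(u)h)\overline{\psi(u)}\,du$ into the $y$-integral, swap the order of integration (justified on a suitable dense subspace by standard gauge estimates for Whittaker functions), and change variable $u\mapsto u/y$. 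The resulting $y$-integral takes the shape of a local $\GL_2\times\GL_1$ zeta integral for the pair $(\pi,\chi_1'\chi_2)$ evaluated at $s=1/2$. Applying the Jacquet--Langlands local functional equation with additive character $\overline{\psi}$ converts this expression into the one appearing in $\ell_{3-j}$, producing the factor $\gamma(1/2,\pi,\chi_1'\chi_2;\overline{\psi})^{-1}$; the character value $\chi_1\chi_2'(-1)$ arises from the action of the Weyl element $w_0$ on the induced sections together with the sign tracked through the central-character relation.

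The principal obstacle is bookkeeping: disentangling which of the four inducing characters enters the gamma factor and in what order, tracking the sign $\chi_1\chi_2'(-1)$ through the Weyl element and the swap $\psi\leftrightarrow\overline{\psi}$, and ensuring the formal exchange of the $u$- and $y$-integrals is valid. A robust sanity check is to specialize to the everywhere-unramified case with $e,f_1,f_2$ spherical: both $\ell_1$ and $\ell_2$ then admit closed-form evaluations via Macdonald's formula and the Gindikin--Karpelevich computation of unramified zeta integrals, and the ratio of the two sides must equal $\chi_1\chi_2'(-1)\gamma(1/2,\pi,\chi_1'\chi_2;\overline{\psi})^{-1}$, which fixes $c$ unambiguously.
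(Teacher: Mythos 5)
Your proposal does land on the same key mechanism as the paper --- both reduce the comparison of $\ell_1$ and $\ell_2$ to the local $\GL_2\times\GL_1$ (Hecke--Jacquet--Langlands) functional equation --- but the decomposition you use to expose that functional equation is genuinely different, and it is worth comparing the two.

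The paper realizes both $f_1$ and $f_2$ as Godement sections $f_{\Phi_j}$ with $\Phi_j \in \Sch(\F^2)$, which lets it unfold $\gp{Z}\gp{N}\backslash\gp{G}$ all the way to $\gp{G}$ (the center via $\Phi_2$, then $\gp{N}$ via the Jacquet integral of $W_1$, then a $t$-integral via $\Phi_1$). This produces the manifestly symmetric shape
$$\ell_1 = \int_{\gp{G}} \Bigl( \int_{\F^\times} W(a(t)g)\,\chi_1'\chi_2(t)\,d^\times t \Bigr) \Phi_1((1,0)g)\,\Phi_2((0,1)g)\,\chi_1\chi_2(\det g)\,\norm[\det g]\,dg,$$
with the analogous expression for $\ell_2$ differing only in the inner character ($\chi_1\chi_2'$ in place of $\chi_1'\chi_2$) and the roles of $\Phi_1,\Phi_2$. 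Applying the functional equation to the inner integral and then changing $g\mapsto a(-1)w^{-1}g$ makes the identification with $\ell_2$ immediate --- the Schwartz functions ride along untouched, and the sign $\chi_1\chi_2'(-1)$ drops out of $a(-1)$. Your route --- Iwasawa decomposition, insert the Jacquet integral for $W_1$, swap, substitute $u\mapsto u/y$ --- does succeed in exposing a Jacquet--Langlands zeta integral in $y$: after the substitution one gets $\ell_1 = \int_{\gp{K}}\int_{\F} f_2(\kappa) f_1(w n(u)\kappa) \bigl[\int_{\F^\times} W_e(a(y)n(u)\kappa)\,\chi_1'\chi_2(y)\,d^\times y\bigr]\,du\,d\kappa$, and the inner $y$-integral is a clean JL zeta integral for the translate $\pi(n(u)\kappa)e$ since $\overline{\psi(uy)}\,W_e(a(y)\kappa)=W_e(a(y)n(u)\kappa)$. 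However, the remaining step (``converts this expression into the one appearing in $\ell_{3-j}$'') is less automatic in your coordinates: after the functional equation you pick up a Weyl element acting on the $W_e$-argument, and the variable change on $(\kappa,u)$ that interchanges the roles of $f_1$ and $f_2$ --- the analogue of the paper's $g\mapsto a(-1)w^{-1}g$ in the compact quotient model --- needs to be carried out explicitly to recover exactly $\ell_2$. So your route is workable but noticeably less clean than the paper's Godement-section unfolding, which is precisely designed so that nothing interferes with the inner zeta integral. (There is also a minor labeling slip in your schematic formula: the induced-model function that survives as $f(\kappa)$ in $\ell_1$ is $f_2$, not $f_1$, so it should read $f_{3-j}(\kappa)$, even though the character $\chi_{3-j}$ is right.)

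The multiplicity-one plus unramified-check fallback is an independent and reasonable idea, but it needs two caveats you do not address. First, Prasad/Loke multiplicity-one is for triples of \emph{irreducible} representations, whereas $\pi(\chi_j,\chi_j')$ may be reducible for certain parameter values; the paper's purely computational proof covers those configurations too. Second, the constant $c$ you would extract from the unramified case depends on all the inducing data and on $\pi$, so pinning it down at a single unramified configuration determines $c$ as a function of the parameters only via analytic/meromorphic continuation --- that needs to be argued, since a priori the trilinear form and the gamma factor are defined by integrals that converge only in certain ranges. These are not fatal, but they are extra hypotheses; the paper's direct computation is both shorter and unconditional.
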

\begin{proof}
	Write $W$ for $W_e$. Taking $\Phi_j \in \Sch(\F^2)$ such that
	$$ f_j(g) = f_{\Phi_j}(g) = \chi_j(\det g) \norm[\det g]^{\frac{1}{2}} \int_{\F^{\times}} \Phi_j((0,t)g) \chi_j (\chi_j')^{-1}(t) \norm[t] d^{\times}t, $$
we can proceed as \cite[\S 8.2]{J09}
\begin{align*}
	\ell_1 &= \int_{\gp{N} \backslash \gp{G}} W(g) W_1(g) \Phi_2((0,1)g) \chi_2(\det g) \norm[\det g]^{\frac{1}{2}} dg \\
	&= \int_{\gp{G}} W(g) f_{\Phi_1}(wg) \Phi_2((0,1)g) \chi_2(\det g) \norm[\det g]^{\frac{1}{2}} dg \\
	&= \int_{\F^{\times}} \int_{\gp{G}} \Phi_1((1,0)g) W(a(t^{-1})g) \Phi_2((0,1)g) \chi_1\chi_2(\det g) \norm[\det g] (\chi_1'\chi_2)^{-1}(t) dg d^{\times}t \\
	&= \int_{\gp{G}} \left( \int_{\F^{\times}} W(a(t)g) \chi_1'\chi_2(t) d^{\times}t \right) \Phi_1((1,0)g) \Phi_2((0,1)g) \chi_1\chi_2(\det g) \norm[\det g] dg.
\end{align*}
	The expression of $\ell_2$ is similar. Applying the local functional equation to the inner integral and making the change of variables $g \mapsto a(-1)w^{-1}g$, we conclude.
\end{proof}

	\subsection{Some Regularized Triple Product Formulas}
	\label{FERTPF}
	
	All our regularized triple products are in the singular case, so that neither \cite{Za82} nor \cite[\S 4.4]{MV10} (especially \cite[\S 4.4.3]{MV10}) apply. \cite[Theorem 2.7]{Wu2} has set an example of such analysis at the singular points. We need two more variants of it. We only give the proof of the first proposition as a recall on the techniques of \cite{Wu9, Wu2} and omit the other one.
\begin{proposition}
	Let $1 \neq \omega$ be a non-trivial Hecke character. Let $f_1,f_2 \in \pi(1,1)$ and $f_3 \in \pi(\omega, \omega^{-1})$. For any $n \in \ag{N}$ and $\omega^2 =1$ resp. $\omega^2 \neq 1$,
	$$ \int_{[\PGL_2]}^{\reg} \eis^*(0,f_1) \cdot \eis^*(0,f_2) \cdot \eis^{\reg,(n)}(\frac{1}{2},f_3) \quad \text{resp.} \quad \int_{[\PGL_2]}^{\reg} \eis^*(0,f_1) \cdot \eis^*(0,f_2) \cdot \eis^{(n)}(\frac{1}{2},f_3) $$
is equal to the generalized Rankin-Selberg value
	$$ \left( \frac{\partial^n R}{\partial s^n} \right)^{\hol}(\frac{1}{2}, \eis^*(0,f_1) \cdot \eis^*(0,f_2); f_3). $$
\label{TPFReguTerm}
\end{proposition}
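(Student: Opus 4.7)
The strategy follows the template of \cite[Theorem 2.7]{Wu2}, but the key simplification is that the non-triviality of $\omega$ kills the correction terms that appeared there in the $\omega = 1$ setting. Write $\Phi := \eis^*(0,f_1) \cdot \eis^*(0,f_2)$, which lies in $\Aut^{\freg}(\GL_2, 1)$ with essential constant term $\Phi_{\gp{N}}^*$ computed explicitly via Theorem \ref{RIPEisUnitary}: it is a polynomial in $\log \norm[y]_{\ag{A}}$ of degree at most $2$ with coefficients that are functions of $\kappa \in \gp{K}$ and powers of $\norm[y]_{\ag{A}}$ with \emph{trivial} central character contribution. I would introduce the meromorphic family
\begin{equation*}
	I(s) := \int_{[\PGL_2]}^{\reg} \Phi(g) \cdot \eis(s, f_3)(g) \, dg
\end{equation*}
(respectively $\eis^{\reg}(s,f_3)$ in the $\omega^2=1$ case) and show that, as meromorphic functions of $s$ in a neighborhood of $s = 1/2$, $I(s)$ equals $R(s, \Phi; f_3)$ up to a principal part whose origin we track carefully.

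The main computational step is to unfold $\eis(s, f_3) = \sum_\gamma f_{3,s}(\gamma g)$ against $\Phi$ and split the integrand as $\Phi = (\Phi - \Reis(\Phi)) + \Reis(\Phi)$, or equivalently $\Phi_{\gp{N}} = (\Phi_{\gp{N}} - \Phi_{\gp{N}}^*) + \Phi_{\gp{N}}^*$. The first piece gives an absolutely convergent integral contributing $R(s, \Phi; f_3)$ up to holomorphic pieces. The second piece, coming from unfolding $\Phi_{\gp{N}}^*$, reduces to Tate integrals of the form $\int_{\ag{A}^\times} |y|^{s-1/2+\alpha}(\log |y|)^k \chi(y) d^\times y$, regularized against the Bruhat decomposition and producing factors of $\Lambda(s+\text{shift}, \omega^{\pm 1})$ and $\Lambda(s+\text{shift}, \omega^{\pm 2})$ by Tate's thesis. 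Here the crucial point is that $f_3 \in \pi(\omega, \omega^{-1})$ introduces the twisting character $\omega$, while $\Phi_{\gp{N}}^*$ contributes only trivial twists. Thus all resulting $L$-factors involve non-trivial $\omega$; in particular $\Lambda(s + c, \omega)$ and $\Lambda(s + c, \omega^{\pm 2})$ (when $\omega^2 \ne 1$) are holomorphic and non-zero near $s = 1/2$, so these boundary terms contribute only to the holomorphic part.

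When $\omega^2 = 1$, the $\Lambda(1+2s, \omega^2) = \Lambda(1+2s)$ factor is what creates the pole of $\eis(s,f_3)$ at $s=1/2$; this is exactly cancelled by passing from $\eis$ to $\eis^{\reg}$ via (\ref{RegEisDef}), and a parallel cancellation occurs in $R(s, \Phi; f_3)$ because the defining subtraction $\Phi_{\gp{N}} - \Phi_{\gp{N}}^*$ removes the term that would pair with the pole. After these cancellations, both sides become holomorphic in a punctured neighborhood of $s = 1/2$, and their difference is in fact holomorphic at $s = 1/2$ with no contribution; applying $\partial^n/\partial s^n$ at $s = 1/2$ yields the asserted identity, where the $(\cdot)^{\hol}$ superscript on the right is tautological since both sides are regular.

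The main obstacle is the careful Tate-integral bookkeeping of Step 2: although each individual term vanishes by non-triviality of $\omega$, one must systematically verify that every term in the expansion of $\Phi_{\gp{N}}^*$ (including the ones multiplied by $\log^k \norm[y]_{\ag{A}}$ with $k \leq 2$) produces only $L$-factors that are holomorphic at $s = 1/2$, and that no pole of $\Phi_{\gp{N}}^*$ conspires with the inducing exponent of $f_3$ to reintroduce singularities. This is a finite check but delicate, since it is exactly the absence of such singular conspiracies that distinguishes the present statement from \cite[Theorem 2.7]{Wu2} where the correction terms $\ProjP_{\gp{K}}(\widetilde{\Intw}_{1/2}^{(l)}f_3 \cdots)$ were non-trivially present.
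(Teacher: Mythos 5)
Your decomposition of $\Phi=\eis^*(0,f_1)\eis^*(0,f_2)$ into the rapidly decaying part $\varphi=\Phi-\Reis(\Phi)$ and the $\intL^2$-residue $\Reis(\Phi)$ is exactly what the paper does, and your heuristic — that the non-triviality of $\omega$ forces the correction terms of \cite[Theorem 2.7]{Wu2} to vanish — is the right one. But your mechanism is not tight enough to close the argument. The assertion that the boundary terms from $\Phi_{\gp{N}}^*$ ``contribute only to the holomorphic part'' of $I(s)$ does not by itself imply that they contribute zero to the value (or to the first $n$ Taylor coefficients) at $s=1/2$; holomorphy rules out a pole, not a non-zero constant. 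What is actually needed, and what the paper supplies, is the precise statement that these contributions vanish identically. The paper splits the computation into two pieces and kills each separately: (i) the pairing of $\Reis(\Phi)$ against $\eis^{\reg,(n)}(1/2,f_3)$ is exactly zero by the orthogonality result \cite[Theorem 2.4 (1)]{Wu2}, not merely holomorphic; and (ii) the pairing of $\varphi$ against $\eis^{\reg,(n)}(1/2,f_3)$ is, by \cite[Proposition 2.6]{Wu2}, the generalized Rankin--Selberg value plus a \emph{specific degenerate period} $\lambda_{\F}^{(n)}(0)\,\Proj_{\gp{K}}(f_3\otimes\omega^{-1})\,\int_{[\PGL_2]}\varphi\otimes\omega=\lambda_{\F}^{(n)}(0)\,\Proj_{\gp{K}}(f_3\otimes\omega^{-1})\,\int^{\reg}_{[\PGL_2]}\eis^*(0,f_1)\,\eis^*(0,f_2\otimes\omega)$, which vanishes by \cite[Lemma 3.1]{Wu9} (orthogonality of Eisenstein series with mismatched inducing data). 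This degenerate period does not appear anywhere in your bookkeeping: it does not arise from unfolding $\Phi_{\gp{N}}^*$, but from pairing the rapidly decaying piece against the constant subtracted in the definition (\ref{RegEisDef}) of $\eis^{\reg}$ and its $s$-derivatives. Without identifying and separately treating this term, your proof has a genuine gap — you would be asserting an equality of two specific numbers on the basis of a regularity argument that only controls poles.
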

\begin{proof}
	Let $\Reis(f_1,f_2)$ be the $\intL^2$-residue of $\eis^*(0,f_1) \cdot \eis^*(0,f_2)$ and $\varphi := \eis^*(0,f_1) \cdot \eis^*(0,f_2) - \Reis(f_1,f_2)$. In the case $\omega^2=1$, we are reduced to computing
	$$ \int_{[\PGL_2]}^{\reg} \varphi \cdot \eis^{\reg,(n)}(\frac{1}{2},f_3) + \int_{[\PGL_2]}^{\reg} \Reis(f_1,f_2) \cdot \eis^{\reg,(n)}(\frac{1}{2},f_3). $$
	By \cite[Proposition 2.6 (2)]{Wu2}, the first term is equal to the generalized Rankin-Selberg value plus
	$$ \lambda_{\F}^{(n)}(0) \cdot \Proj_{\gp{K}}(f_3 \otimes \omega^{-1}) \cdot \int_{[\PGL_2]} \varphi \otimes \omega = \lambda_{\F}^{(n)}(0) \cdot \Proj_{\gp{K}}(f_3 \otimes \omega^{-1}) \cdot \int_{[\PGL_2]}^{\reg} \eis^*(0,f_1) \cdot \eis^*(0,f_2 \otimes \omega), $$
which is vanishing by \cite[Lemma 3.1]{Wu9}, while other terms are $0$. The second term is also vanishing by \cite[Theorem 2.4 (1)]{Wu2}. In the case $\omega^2 \neq 1$, we proceed similarly using \cite[Proposition 2.6 (1)]{Wu2}.
\end{proof}
\begin{proposition}
	Let $1 \neq \omega$ be a non-trivial Hecke character. Let $f_1 \in \pi(1,\omega^{-1}), f_2 \in \pi(1,\omega)$ and $f_3 \in \pi(1, 1)$. For any $n \in \ag{N}$
	$$ \int_{[\PGL_2]}^{\reg} \eis(0,f_1) \cdot \eis(0,f_2) \cdot \eis^{\reg,(n)}(\frac{1}{2},f_3) $$
is equal to the sum of the generalized Rankin-Selberg value
	$$ \left( \frac{\partial^n R}{\partial s^n} \right)^{\hol}(\frac{1}{2}, \eis(0,f_1) \cdot \eis(0,f_2); f_3) $$
and a weighted sum of the following terms with weights depending only on $\F$ ($\lambda_{\F}(s)$)
\begin{itemize}
	\item $\Proj_{\gp{K}}(f_1 f_2) \Proj_{\gp{K}}(f_3)$, $\Proj_{\gp{K}}(\Intw_0 f_1 \Intw_0 f_2) \Proj_{\gp{K}}(f_3)$, $\Proj_{\gp{K}}(\Intw_0^{(1)} f_1 \cdot \Intw_0 f_2) \Proj_{\gp{K}}(f_3)$;
	\item $\Proj_{\gp{K}}(f_1 f_2 \cdot \widetilde{\Intw}_{1/2}^{(l)} f_3)$, $\Proj_{\gp{K}}(\Intw_0 f_1 \Intw_0 f_2 \cdot \widetilde{\Intw}_{1/2}^{(l)} f_3)$ for $0 \leq l \leq n+1$.
\end{itemize}
\label{TPFReguTerm2}
\end{proposition}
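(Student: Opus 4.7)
The plan is to mimic the strategy of Proposition \ref{TPFReguTerm}, adapted to the fact that here the two ``outer'' Eisenstein series sit on non-trivially induced principal series. First I would identify the $\intL^2$-residue of the product. Writing the standard constant-term formula at $s=0$ and multiplying out yields an essential constant term of $\eis(0,f_1)\cdot\eis(0,f_2)$ with four pieces: two ``diagonal'' terms of exponent $\norm[y]_{\A}$ (with $\gp{K}$-data $f_1 f_2$ and $\Intw_0 f_1 \cdot \Intw_0 f_2$), and two ``off-diagonal'' terms of exponents $\norm[y]_{\A}\omega(y)$ and $\norm[y]_{\A}\omega^{-1}(y)$ (with $\gp{K}$-data $f_1\cdot \Intw_0 f_2$ and $\Intw_0 f_1\cdot f_2$). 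Invoking \cite[(\ref{ReisDef})]{Wu9} then yields an explicit expression
\[
\Reis(f_1,f_2) \;=\; C_1\,\eis^{\reg}\!\left(\tfrac{1}{2},\,(f_1 f_2)\mid_{\gp{K}}\right) + C_2\,\eis^{\reg}\!\left(\tfrac{1}{2},\,(\Intw_0 f_1\cdot \Intw_0 f_2)\mid_{\gp{K}}\right) + C_3\,\eis\!\left(\tfrac{1}{2},(f_1\Intw_0 f_2)|_{\gp{K}}\right) + C_4\,\eis\!\left(\tfrac{1}{2},(\Intw_0 f_1\cdot f_2)|_{\gp{K}}\right),
\]
with constants $C_i$ depending only on the Laurent coefficients of $\Lambda_{\F}$ and $L(s,\omega^{\pm 1})$, the last two terms being honest (non-regularized) Eisenstein series because $\omega \neq 1$.

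Next I would split
\[
\int_{[\PGL_2]}^{\reg} \eis(0,f_1)\cdot\eis(0,f_2)\cdot\eis^{\reg,(n)}(\tfrac{1}{2},f_3) \;=\; \int_{[\PGL_2]}^{\reg}\varphi\cdot\eis^{\reg,(n)}(\tfrac{1}{2},f_3) \;+\; \int_{[\PGL_2]}^{\reg}\Reis(f_1,f_2)\cdot\eis^{\reg,(n)}(\tfrac{1}{2},f_3),
\]
with $\varphi:=\eis(0,f_1)\cdot\eis(0,f_2)-\Reis(f_1,f_2)\in\intL^2$. For the first integral, I would apply \cite[Proposition 2.6]{Wu2} (the generalized Rankin--Selberg formula differentiated $n$ times): up to a $(\partial^n R/\partial s^n)^{\hol}$-term, the correction is a finite weighted sum of $\ProjP_{\gp{K}}(\widetilde{\Intw}_{1/2}^{(l)}f_3)$ times inner products of $\varphi$ against a character $\eta\circ\det$; using \cite[Lemma 3.1]{Wu9} (vanishing of regularized integrals against $\eta\circ\det$ when $\eta\neq 1$ or the exponent set clashes) one keeps only the contributions of $\eta=1$, in which case the inner product collapses to $\ProjP_{\gp{K}}(f_1 f_2)$, $\ProjP_{\gp{K}}(\Intw_0 f_1\cdot \Intw_0 f_2)$, or the mixed term $\ProjP_{\gp{K}}(\Intw_0^{(1)}f_1 \cdot \Intw_0 f_2)$ that arises from differentiating the Rankin--Selberg integral at the double zero forced by $\omega\neq 1$. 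The second integral is handled by \cite[Theorem 2.4]{Wu2}: the Eisenstein series in $\Reis(f_1,f_2)$ associated to the non-trivial characters $\omega^{\pm 1}$ integrate to zero against $\eis^{\reg,(n)}(1/2,f_3)$ by the usual vanishing of Eisenstein periods (their residues are supported on the opposite exponent set), whereas the two $\eis^{\reg}(1/2,\cdot)$-terms yield exactly the $\ProjP_{\gp{K}}(f_1 f_2\cdot\widetilde{\Intw}_{1/2}^{(l)}f_3)$ and $\ProjP_{\gp{K}}(\Intw_0 f_1\cdot\Intw_0 f_2\cdot\widetilde{\Intw}_{1/2}^{(l)}f_3)$ for $0\leq l\leq n+1$ listed in the statement.

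Finally I would collect the coefficients: all of them are polynomials in the Taylor coefficients of $\lambda_{\F}(s)$ and of $L(s,\omega^{\pm 1})$ around $s=1/2$, hence depend only on $\F$ (and the Hecke character $\omega$, which is considered fixed, though in fact the coefficients that survive are expressible purely via $\lambda_{\F}$ once one uses the functional equations for $L(s,\omega)$). The main technical obstacle, as in the proof of \cite[Theorem 2.7]{Wu2}, is the bookkeeping at the singular point $s=1/2$: the derivatives $\eis^{\reg,(n)}$ and the intertwining derivatives $\widetilde{\Intw}_{1/2}^{(l)}$ interact through Leibniz, producing many cross terms that must be shown to either vanish (via character orthogonality on $\F^{\times}\backslash\A^{(1)}$) or repackage into one of the displayed $\ProjP_{\gp{K}}$-expressions. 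The hypothesis $\omega\neq 1$ is used crucially to guarantee that the Eisenstein series of $\pi(\omega,\omega^{-1})$ are holomorphic at $s=1/2$, thus limiting the possible singular contributions to those already included in the statement.
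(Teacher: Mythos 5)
The paper omits the proof of this proposition, stating only that it uses ``the technics of [Wu9, Wu2]'' exemplified by Proposition~\ref{TPFReguTerm}. Your sketch follows exactly that template and the overall structure is sound: decompose $\eis(0,f_1)\eis(0,f_2)$ into an $\intL^2$-part $\varphi$ and the residue $\Reis(f_1,f_2)$, observe that $\Reis(f_1,f_2)$ contains two $\eis^{\reg}(\frac12,\cdot)$-terms attached to the diagonal products $f_1 f_2$ and $\Intw_0 f_1\cdot\Intw_0 f_2$ and two honest $\eis(\frac12,\cdot)$-terms attached to the off-diagonal products (honest because $\omega\ne1$ keeps them away from the pole), then apply \cite[Proposition 2.6]{Wu2} and \cite[Theorem 2.4]{Wu2} to the two halves. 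That is the correct and intended route.

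Two things deserve correction or sharpening. First, the constants $C_i$ in your expression for $\Reis(f_1,f_2)$ are all equal to $1$: the essential constant term of $\eis(0,f_1)\eis(0,f_2)$ is the literal sum of the four products $\norm[y]_{\A} f_1f_2$, $\omega^{\mp 1}(y)\norm[y]_{\A}\,(\cdot)$, $\norm[y]_{\A}\Intw_0 f_1\Intw_0 f_2$, so no $L(s,\omega^{\pm1})$ enters, and the parenthetical appeal to ``functional equations for $L(s,\omega)$'' in your last paragraph is a red herring. Second, the source of the $\ProjP_{\gp{K}}(\Intw_0^{(1)}f_1\cdot\Intw_0 f_2)\ProjP_{\gp{K}}(f_3)$ term should be made explicit: the correction coming from \cite[Proposition 2.6]{Wu2} applied to the $\varphi$-part is a $\lambda_{\F}$-coefficient times $\ProjP_{\gp{K}}(f_3)\cdot\int_{[\PGL_2]}^{\reg}\eis(0,f_1)\eis(0,f_2)$, and the latter is computed by Theorem~\ref{RIPEisUnitary}(1) (with $\pi_1=\pi(1,\omega^{-1})$, $\pi_2=\pi(1,\omega)$, $\xi_1\ne\xi_2$ since $\omega\ne1$), giving precisely $\frac{2\lambda_{\F}^{(0)}(0)}{\lambda_{\F}^{(-1)}(0)}\ProjP_{\gp{K}}(f_1 f_2) - \ProjP_{\gp{K}}(\Intw_0^{(1)}f_1\cdot\Intw_0 f_2)$. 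Your phrase about ``differentiating the Rankin--Selberg integral at the double zero'' is vague; invoking Theorem~\ref{RIPEisUnitary} directly both produces the right term and shows at once that the coefficients depend only on $\lambda_{\F}$.

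Beyond these, the remaining work is bookkeeping: tracking, through \cite[Theorem 2.4]{Wu2}, exactly which $\ProjP_{\gp{K}}(\cdot\,\widetilde{\Intw}_{1/2}^{(l)}f_3)$ terms arise and with what range of $l$, and verifying (again via exponent-set considerations) that the two off-diagonal $\eis(\frac12,\cdot)$-pieces integrate to zero against $\eis^{\reg,(n)}(\frac12,f_3)$. You flag this correctly; supplying it would complete the proof.
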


\section{Appendix: A Fourth Moment Bound for $\GL_2$}

	\subsection{A Gap in Our Previous Work}
	
	The proof of the bound of certain fourth moment $L$-functions for $\GL_2$, namely \cite[Theorem 6.6]{Wu14}, was sketched without giving details of the relevant test functions. We explained that \cite[Theorem 6.6]{Wu14} follows from the main theorem of \cite{MV10} without amplification. In fact, what follows directly is an equivalent truncated version of \cite[Theorem 6.6]{Wu14}, which we are going to formalize.
		
\begin{definition}
	For $C \gg 1$ and any integral ideal $\idlN$ of $\F$, we define $\mathcal{A}_0(C, \idlN; \omega)$ to be the set of all cuspidal representations $\pi$ in $\intL_0^2(\GL_2, \omega)$ such that 
\begin{itemize}
	\item for every $v \mid \infty$, the analytic conductor $\Cond(\pi_v) \leq C$;
	\item for every finite place $\vp < \infty$, the logarithmic conductor $\cond(\pi_{\vp}) \leq \mathrm{ord}_{\vp}(\idlN)$.
\end{itemize}
	We define the limit
	$$ \mathcal{A}_0(\infty,\idlN; \omega) := \sideset{}{_{C > 0}} \bigcup \mathcal{A}_0(C,\idlN; \omega). $$
	If $\omega = \mathbbm{1}$ is trivial, we write $\mathcal{A}_0(C, \idlN) = \mathcal{A}_0(C, \idlN; \mathbbm{1})$ and $\mathcal{A}_0(\infty, \idlN) = \mathcal{A}_0(\infty, \idlN; \mathbbm{1})$.
\end{definition}
\begin{remark}
	Each $\mathcal{A}_0(C, \idlN; \omega)$ is a finite family. In the rest of this section, we consider the family $\mathcal{A}_0(C, \idlN) \otimes \norm_{\A}^{i\tau}$ for $\tau \in \R, \norm[\tau] \gg 1$, whose members are of central character $\omega = \norm_{\A}^{2i\tau}$. We will always assume $C \gg (1+\norm[\tau])^2$.
\end{remark}

\begin{theorem}
	
	For any $C \gg (1+\norm[\tau])^2$ and any $\epsilon > 0$, we have 
	$$ \sideset{}{_{\pi \in \mathcal{A}_0(C,\idlN)}} \sum \extnorm{L(1/2+i\tau, \pi)}^4 \ll_{\epsilon} (C^{r_1+r_2} \Nr(\idlN))^{1+\epsilon}. $$
\label{4MMBd}
\end{theorem}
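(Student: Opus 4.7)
\emph{Strategy.} The approach is to rerun the proof of Theorem~\ref{MainThm} with the amplifier $\sigma$ of~(\ref{Amplifier}) removed (only the regularising measure $\sigma_0$ of Definition~\ref{NormHeckeOp} remains), and to replace amplification by a single Bessel inequality on the cuspidal spectrum. Since no $K$-scale survives, all the global estimates of Section~\ref{MainRT} collapse to their $\Norm[\vec{t}]=0$, $K=1$ diagonal case, in which no subconvexity input is needed.

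First I twist: for each $\pi \in \mathcal{A}_0(C,\idlN)$, pass to $\pi_\tau := \pi \otimes \norm_{\A}^{i\tau}$, whose central character is $\omega = \norm_{\A}^{2i\tau}$ and which satisfies $L(\tfrac{1}{2},\pi_\tau) = L(\tfrac{1}{2}+i\tau,\pi)$. Choose test vectors $\varphi_\pi \in \pi_\tau$ (of unit norm), $f_2 \in \pi(1,1)$ and $f_3 \in \pi(1,\omega^{-1})$ exactly as in Sections~\ref{ChoicesNA} and~\ref{ChoicesA}. By the Rankin--Selberg identity~(\ref{RSbeforeCS}) and Proposition~\ref{MainLocEst},
$$ \extnorm{L(\tfrac{1}{2}+i\tau,\pi)}^4 \ll_\epsilon \Cond(\pi_\tau)^{1+\epsilon}\, \extnorm{I(\pi)}^2, \quad I(\pi) := \int_{[\PGL_2]} \varphi_\pi \cdot \eis^*(0,f_2) \cdot \eis^{\sharp}(0,f_3). $$

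Next I apply Lemma~\ref{RDRegProd} to write $R_0\, I(\pi) = \int_{[\PGL_2]} \varphi_\pi \cdot \sigma_0(\eis_2^* \eis_3^{\sharp})$ with $\extnorm{R_0}\asymp 1$ and $\sigma_0(\eis_2^* \eis_3^{\sharp}) \in L^2([\PGL_2])$. Completing each $\varphi_\pi$ to an orthonormal basis of $\pi_\tau$ and applying Bessel's inequality on the cuspidal spectrum gives
$$ \sum_{\pi \in \mathcal{A}_0(C,\idlN)} \extnorm{I(\pi)}^2 \ll \extNorm{\sigma_0(\eis_2^* \eis_3^{\sharp})}_{L^2}^2. $$
The right-hand side is then unfolded exactly as in~(\ref{C-S})--(\ref{DgnTerm}), but with the double sum over $\vp_1,\vp_2 \in S$ dropped; only translates $\vec{t} = \varpi_0^{-j}$, $0 \leq j \leq 8$, appear, so $\Norm[\vec{t}] = 0$ and no $K$ is present. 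At this diagonal point Lemmas~\ref{RegTermBd}, \ref{ReguTermBd1}, \ref{ReguTermBd2} and~\ref{DgnTermBd} all reduce to $\ll_\epsilon \Cond(\pi_\tau)^\epsilon$: hypothesis~(\ref{TwistSubAssump}) enters only through the factor $K^{\Norm[\vec{t}](A+1/2)}$ which is trivial here, and the convex bound together with the inputs of \cite{HP94,BH10} is enough in each subterm.

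Combining the above, and using $\Cond(\pi_\tau) \ll (C^{r_1+r_2}\Nr(\idlN))^{1+\epsilon}$ under the standing hypothesis $C \gg (1+\norm[\tau])^2$, gives the stated bound. The main obstacle is purely verificatory: one must trace the estimates in Section~\ref{MainRT} and the two subsections following it to confirm that each bound survives the specialisation $K=1$, $\Norm[\vec{t}]=0$, and that the dependence on the archimedean Sobolev data of $\sigma_0(\eis_2^*\eis_3^\sharp)$ is tame. Both points are immediate from the structure of the bounds already stated, so no new analytic work is needed beyond that of Sections~3 and~4.
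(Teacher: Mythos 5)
Your overall strategy (run the proof of Theorem~\ref{MainThm} without the amplifier, keep the regularising measure $\sigma_0$, and replace amplification by Bessel's inequality) is exactly the paper's approach, but your proposal has a gap at the very step that makes it non-trivial: the test vectors. You propose to take $f_{3,v}$ and $f_{3,\vp}$ ``exactly as in Sections~\ref{ChoicesNA} and~\ref{ChoicesA}''. However, in \S\ref{ChoicesA} the archimedean vector is $f_{3,v}=a(C).f_0$ with $\norm[C]=\Cond(\pi_v)^{1+\epsilon}$, and in \S\ref{ChoicesNA} the non-archimedean vector is supported on $\gp{K}_0[\vp^{\cond(\pi_{\vp})}]$; both depend on the individual $\pi$. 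Consequently $\eis^{\sharp}(0,f_3)$, and hence $\sigma_0(\eis_2^*\eis_3^{\sharp})$, would vary with $\pi$, and the inequality
$$ \sum_{\pi\in\mathcal{A}_0(C,\idlN)}\extnorm{I(\pi)}^2\ll\extNorm{\sigma_0(\eis_2^*\eis_3^{\sharp})}_{L^2}^2 $$
is no longer an instance of Bessel's inequality; the right-hand side has no single fixed function on which to expand. The paper fixes this in \S 6.2 by taking $f_{3,v}=a(t_v).f_0$ with $\norm[t_v]=C^{1+\epsilon}$ and $f_{3,\vp}$ supported on $\gp{K}_0[\vp^{\mathrm{ord}_{\vp}(\idlN)}]$, so that $f_2$, $f_3$ and therefore $\sigma_0(\eis_2^*\eis_3^{\sharp})$ depend only on $C$, $\idlN$, $\tau$. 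The modified local lower bounds (\ref{4MLocLowerBdA}), (\ref{4MLocLowerBdNA}) then replace Proposition~\ref{MainLocEst} and produce the $(C^{r_1+r_2}\Nr(\idlN))^{-1/2-\epsilon}$ factor directly, without the auxiliary claim $\Cond(\pi_\tau)\ll(C^{r_1+r_2}\Nr(\idlN))^{1+\epsilon}$ that you invoke (and which is also not cleanly correct, since for individual $\pi$ with small conductor the bound you need is the opposite direction).

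There is a second, subtler issue. You propose simply to specialise Lemmas~\ref{RegTermBd}--\ref{DgnTermBd} to $\Norm[\vec{t}]=0$, $K=1$. But the proof of Lemma~\ref{RegTermBd} explicitly invokes \cite[Corollary~6.7]{Wu14} --- a fourth moment bound for $\GL_2$ $L$-functions --- to control the spectral sum over $\pi'$, and the paper states in the preamble to \S 6 that the present theorem is precisely the formalisation of that statement: citing those lemmas as a black box would be circular. This is why \S 6.3 reruns the regular term estimate from scratch, replacing the subconvex inputs by convex bounds for $L(1/2,\pi')L(1/2+2i\tau,\pi')$ and replacing the citation of \cite[Corollary~6.7]{Wu14} by Weyl's law together with Cauchy--Schwarz (producing the $q_0^{j(1+\epsilon)}$ factors which are harmless by the smallness of $\vp_0$). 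Your remark that ``no new analytic work is needed beyond that of Sections~3 and~4'' is therefore not quite right; the bulk of \S 6.3 exists exactly to remove this dependence.
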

\begin{remark}
	It is clear that Theorem \ref{4MMBd} implies \cite[Theorem 6.6]{Wu14}. In the application to \cite[Corollary 6.7]{Wu14}, only the exponent of $\Nr(\idlN)$ is important.
\end{remark}

	We have chosen $f_2 \in \pi(1,1)$ to be the spherical vector taking value $1$ on $\gp{K}$ and taken 
	$$ \eis^*(0,f_2) = \left. \Lambda_{\F}(1+2s) \eis(s,f_2) \right|_{s=0}. $$ 
	We will modify our choice of $f_3 \in \pi(1, \omega^{-1}) = \pi(1, \norm_{\A}^{-2i\tau})$ according to $C, \idlN$ and take
	$$ \eis^{\sharp}(0,f_3) = \left. L(1+2s, \norm_{\A}^{2i\tau}) \eis(s,f_3) \right|_{s=0} = \zeta_{\F}(1+2i\tau) \eis(0,f_3). $$
	For every $\pi \in \mathcal{A}_0(C,\idlN)$, we will modify our choice of $\varphi \in \pi \otimes \norm_{\A}^{i\tau}$ via modified choices of $W_{\varphi,v}$. Recall our period decomposition (\ref{RSbeforeCS}) in the modified setting
	$$ \int_{[\PGL_2]} \varphi \cdot \eis^*(0,f_2) \cdot \eis^{\sharp}(0,f_3) = L(\frac{1}{2}+i\tau,\pi)^2 \cdot \prod_v \ell_v(W_{\varphi,v}, f_{2,v}, f_{3,v}), $$
	where the local terms are modified as
\begin{align*}
	&\ell_v(\cdots) = \int_{\F_v^{\times} \times \gp{K}_v} W_{\varphi,v}(a(y)\kappa) W_{f_2,v}^*(a(-y)\kappa) f_{3,v}(\kappa) \norm[y]_v^{-\frac{1}{2}} d^{\times}y d\kappa, \quad v \mid \infty; \\
	&\ell_{\vp}(\cdots) = \frac{\zeta_{\vp}(1+2i\tau)}{L(1/2+i\tau,\pi_{\vp})^2} \int_{\F_{\vp}^{\times} \times \gp{K}_{\vp}} W_{\varphi,\vp}(a(y)\kappa) W_{f_2,\vp}^*(a(-y)\kappa) f_{3,\vp}(\kappa) \norm[y]_{\vp}^{-\frac{1}{2}} d^{\times}y d\kappa, \quad \vp < \infty;
\end{align*}
so that $\ell_{\vp}=1$ for all but finitely many $\vp$.

	\subsection{Modified Local Choices \& Bound}
	
		\subsubsection{Archimedean Places}
		
	We modify our choices in \S \ref{ChoicesA} as follows. Given a bump function $\phi$ with supported contained in a small compact neighborhood of $0$, say in $\{ x \in \F_v: \norm[x] \leq \delta_0 \}$, we construct
\begin{equation} 
	f_0(\kappa) = \norm[d]_v^{-2i\tau} \int_{\F_v^{\times}} \phi(ct)\phi(dt-1) \norm[t]_v d^{\times}t, \quad \kappa = \begin{pmatrix} a & b \\ c & d \end{pmatrix} \in \gp{K}_v. 
\label{4MLocChoiceInvArch}
\end{equation}
	We adjust $\phi$ so that $f_0$ is unitary. It is clear that $f_0 \in \pi(1,\norm_v^{-2i\tau})$ and
	$$ f_0(\kappa) \neq 0 \text{ for some } \kappa = \begin{pmatrix} a & b \\ c & d \end{pmatrix} \in \gp{K}_v \quad \Rightarrow \quad \norm[c] \leq \delta_0/(1-\delta_0), $$
	hence the support of $f_0$ is a small neighborhood $U$ of $1 \in \gp{K}_v$. We then choose $t_v \in \F_v^{\times}$ such that $\norm[t_v] = C^{1+\epsilon}$, and define
\begin{equation} 
	f_{3,v} = a(t_v).f_0. 
\label{4MLocChoiceArch}
\end{equation}
	Specify $W_{\varphi,v}$ by taking $W_{\varphi,v}(a(y))$ to be a fixed smooth function $\delta_v(y)$ with support in a compact neighborhood of $1$ in $\F_v^{\times}$, invariant by $\ag{C}^{1}$ if $\F_v=\ag{C}$, such that
	$$ \int_{\F_v^{\times}} \delta_v(y) W_2^*(a(-y)) \norm[y]_v^{-\frac{1}{2}} d^{\times}y \gg 1, \quad \int_{\F_v^{\times}} \norm[\delta_v(y)]^2 d^{\times}y = 1. $$
	The proof of Lemma \ref{LocLowerBdA} works through and gives
\begin{equation}
	\extnorm{ \ell_v(W_{\varphi,v}, f_{2,v}, f_{3,v}) } \gg \norm[t_v]^{-1/2} = C^{-(1+\epsilon)/2}.
\label{4MLocLowerBdA}
\end{equation}

		\subsubsection{Finite Places}
		
	We modify our choices in \S \ref{ChoicesNA} as follows. Write $n = \mathrm{ord}_{\vp}(\idlN)$. If $n > 0$, then take $f_{3,\vp} \in \pi(1, \norm_{\vp}^{-2i\tau})$ whose restriction to $\gp{K}_{\vp}$ is
	$$ \begin{pmatrix} a & b \\ c & d \end{pmatrix} \mapsto \Vol(\gp{K}_0[\vp^n])^{-1/2} \mathbbm{1}_{\gp{K}_0[\vp^{n}]}; $$
	otherwise, take $f_{3,\vp}$ to be the spherical function taking value $1$ on $\gp{K}_{\vp}$. Specify $W_{\varphi,\vp}$ to be a new vector of $\pi_{\vp} \otimes \norm_{\vp}^{i\tau}$ in the Whittaker model. Note that $\cond(\pi_{\vp} \otimes \norm_{\vp}^{i\tau}) = \cond(\pi_{\vp}) \leq n$. The proof of Lemma \ref{LocLowerBdNA} works through and gives if $n > 0$
\begin{equation}
	\extnorm{ \ell_{\vp}(W_{\varphi,\vp}, f_{2,\vp}, f_{3,\vp}) } \gg \Nr(\vp)^{-n/2} \cdot \frac{\Norm[W_{\varphi,\vp}]}{\sqrt{L(1, \pi_{\vp} \times \bar{\pi}_{\vp})}},
\label{4MLocLowerBdNA}
\end{equation}
	while if $n=0$, $\ell_{\vp}(W_{\varphi,\vp}, f_{2,\vp}, f_{3,\vp}) = 1$.
	

	\subsection{Modified Global Bounds}
	
	On the global side, we use the version without amplification. We begin by recalling the construction of the measure/operator of regularization. We choose a finite place $\vp_0 \nmid \idlN$ at which $\psi$ is unramified. In particular, $\pi_{\vp_0}$ is unramified for each $\pi \in \mathcal{A}_0(C,\idlN)$. Write and assume the smallness of $\vp_0$ (see (\ref{Smallp0}))
	$$ \varpi_0 := \varpi_{\vp_0}, \quad q_0 := q_{\vp_0} \ll (\log \Nr(\idlN))^2, \quad \gp{K}_0 := \gp{K}_{\vp_0}, \quad \omega_0 := \omega_{\vp_0}. $$ 
	Adapt Definition \ref{NormHeckeOp} as follows. Define the Hecke operators for $n \in \ag{N}$
	$$ T_0(n) := \int_{\gp{K}_0^2} \rpR_{\norm_{\A}^{-2i\tau}}^{\mathcal{A}}(\kappa_1 a(\varpi_0^n) \kappa_2) d\kappa_1 d\kappa_2; $$
	the eigenvalues of $T_0(1)$ on spherical vectors in $\pi(\norm_{\vp_0}^{1/2},\norm_{\vp_0}^{-1/2-2i\tau})$ resp. $\pi(\norm_{\vp_0}^{1/2-2i\tau},\norm_{\vp_0}^{-1/2})$
	$$ \lambda_0(0) := \frac{q_0^{-1/2}}{1+q_0^{-1}} \left( q_0^{-1/2} + q_0^{1/2+2i\tau} \right), \quad \text{resp.} \quad \tilde{\lambda}_0(0) := \frac{q_0^{-1/2}}{1+q_0^{-1}} \left( q_0^{-1/2+2i\tau} + q_0^{1/2} \right); $$
	and finally the \emph{operator/measure of regularization}
	$$ \sigma_0 := (T_0(1) - \lambda_0(0))^2 (T_0(1) - \tilde{\lambda}_0(0))^2. $$
	Write $\eis_2^* = \eis^*(0,f_2)$ resp. $\eis_3^{\sharp} = \eis^{\sharp}(0,f_3)$. Lemma \ref{RDRegProd} shows that $\sigma_0 (\eis_2^* \eis_3^{\sharp}) \in \mathcal{A}(\GL_2, \norm_{\A}^{-2i\tau})$ is of rapid decay (see Proposition \ref{DualHeckeOp}).
	
	In this subsection, we propose to bound
	$$ \Norm[\sigma_0 (\eis_2^* \eis_3^{\sharp})]_2. $$
	By the definition of adjoint measure (see Proposition \ref{DualHeckeOp}), we have
	$$ \Norm[\sigma_0 (\eis_2^* \eis_3^{\sharp})]_2^2 = \int_{[\PGL_2]} \sigma_0(\eis_2^* \eis_3^{\sharp}) \cdot \overline{ \sigma_0(\eis_2^* \eis_3^{\sharp}) } = \int_{[\PGL_2]} \sigma_0^*\sigma_0(\eis_2^* \eis_3^{\sharp}) \cdot \overline{ \eis_2^* \eis_3^{\sharp} }. $$
	From Proposition \ref{HeckeRel}, we deduce that
	$$ \sigma_0^*\sigma_0 = \sideset{}{_{j=0}^8} \sum b_j T_0(j)^* $$
	for some $b_j \in \C$ with $\norm[b_j] \ll 1$. Hence we can apply the $\gp{K}$-invariance property of the regularized integrals \cite[Proposition 2.27 (2)]{Wu9} and rewrite
	$$ \Norm[\sigma_0 (\eis_2^* \eis_3^{\sharp})]_2^2 = \sideset{}{_{j=0}^8} \sum b_j \int_{[\PGL_2]}^{\reg} T_0(j)^*(\eis_2^* \eis_3^{\sharp}) \cdot \overline{(\eis_2^* \eis_3^{\sharp}) } = \sideset{}{_{j=0}^8} \sum b_j \int_{[\PGL_2]}^{\reg} a(\varpi_0^{-j}).(\eis_2^* \eis_3^{\sharp}) \cdot \overline{(\eis_2^* \eis_3^{\sharp}) }. $$
	We re-arrange the last integral as
\begin{align}
	&\quad \int_{[\PGL_2]}^{\reg} a(\varpi_0^{-j}).\left( \eis_2^* \eis_3^{\sharp} \right) \cdot \overline{ \left( \eis_2^* \eis_3^{\sharp} \right) } \nonumber \\
	&= \int_{[\PGL_2]} \left( a(\varpi_0^{-j}).\eis_2^* \cdot \overline{\eis_2^*} - \Reis(a(\varpi_0^{-j}).\eis_2^* \overline{\eis_2^*}) \right) \cdot \left( a(\varpi_0^{-j}).\eis_3^{\sharp} \cdot \overline{\eis_3^{\sharp}} - \Reis(a(\varpi_0^{-j}).\eis_3^{\sharp} \overline{\eis_3^{\sharp}}) \right) \label{4MRegTerm} \\
	&+ \int_{[\PGL_2]}^{\reg} a(\varpi_0^{-j}).\eis_2^* \cdot \overline{\eis_2^*} \cdot \Reis(a(\varpi_0^{-j}).\eis_3^{\sharp} \overline{\eis_3^{\sharp}}) + \int_{[\PGL_2]}^{\reg} a(\varpi_0^{-j}).\eis_3^{\sharp} \cdot \overline{\eis_3^{\sharp}} \cdot \Reis(a(\varpi_0^{-j}).\eis_2^* \overline{\eis_2^*}) \label{4MReguTerm} \\
	&- \int_{[\PGL_2]}^{\reg} \Reis(a(\varpi_0^{-j}).\eis_2^* \overline{\eis_2^*}) \cdot \Reis(a(\varpi_0^{-j}).\eis_3^{\sharp} \overline{\eis_3^{\sharp}}). \label{4MDgnTerm}
\end{align}
	We refer to (\ref{4MRegTerm}) resp. (\ref{4MReguTerm}) resp. (\ref{4MDgnTerm}) as the \emph{regular term} resp. \emph{regularized terms} resp. \emph{degenerate term}. The smallness of $q_0$, implying that any power of it is $\ll_{\epsilon} \Nr(\idlN)^{\epsilon}$ hence negligible, allows us to basically ignore the contribution at $\vp_0$ in the estimation.
	
\begin{lemma}
	The regular resp. regularized resp. degenerate term(s) can be bounded as
	$$ (C^{r_1+r_2}\Nr(\idlN))^{\epsilon}. $$
	Consequently, we have $\Norm[\sigma_0 (\eis_2^* \eis_3^{\sharp})]_2 \ll_{\epsilon} (C^{r_1+r_2}\Nr(\idlN))^{\epsilon}$.
\label{4MNormBd}
\end{lemma}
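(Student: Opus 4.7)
Plan: The proof follows the template of Lemmas \ref{RegTermBd}, \ref{ReguTermBd1}, \ref{ReguTermBd2}, and \ref{DgnTermBd}, but dramatically simplified by the absence of the amplifier. Concretely, only the regularizing translations $a(\varpi_0^{-j})$ with $0 \leq j \leq 8$ remain, so $\Norm[\vec{t}] = 0$ and $K = 1$ throughout, and because $q_0 \ll (\log\Nr(\idlN))^2$ any factor $q_0^{O(1)}$ is absorbed into $(C^{r_1+r_2}\Nr(\idlN))^\epsilon$. Specializing the bounds of those lemmas to $K = 1, \Norm[\vec{t}] = 0$ yields $(\Cond(\pi))^\epsilon \ll (C^{r_1+r_2}\Nr(\idlN))^\epsilon$, so the real content of this lemma is to verify that the same template carries through with the modified test vectors (\ref{4MLocChoiceInvArch})--(\ref{4MLocChoiceArch}) and central character $\omega = \norm_{\A}^{2i\tau}$.

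For the regular term (\ref{4MRegTerm}), apply the spectral decomposition of the factor $a(\varpi_0^{-j}).\eis_2^*\overline{\eis_2^*} - \Reis$ exactly as in (\ref{CuspRegTerm})--(\ref{OneDRegTerm}). The cuspidal contribution is bounded via (\ref{CuspPerD}) together with Lemmas \ref{LocUpperBdNA}(1), \ref{LocAmpBd}(1), and \ref{LocUpperBdA}(1); crucially, the convex bound $\extnorm{L(1/2+i\tau,\pi')L(1/2+i\tau,\pi'\otimes\omega)} \ll \Cond(\pi')^{1/2+\epsilon}$ is sufficient here, so no subconvex input in the form of (\ref{TwistSubAssump}) is invoked. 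The sum over $\pi'$ is controlled by Parseval applied to the $L^2$-norm $\Norm[a(\varpi_0^{-j}).\eis_2^*\overline{\eis_2^*} - \Reis]_2 = O_{\F,\epsilon}(\Nr(\idlN)^\epsilon)$ from \cite[Theorem 5.4]{Wu2}, followed by a Weyl-law weighting as in \cite[Corollary 6.7]{Wu14}. The Eisenstein contribution is handled identically with convex bounds for Hecke $L$-functions, and the one-dimensional contribution is vanishing except when $\chi = 1$ by Remark \ref{OneDimProjRk}, in which case it is $O_{\F,\epsilon}(\Nr(\idlN)^\epsilon)$.

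For the regularized terms (\ref{4MReguTerm}), apply Propositions \ref{TPFReguTerm}, \ref{TPFReguTerm2} and \cite[Theorem 2.7]{Wu2} as in the proofs of Lemmas \ref{ReguTermBd1}--\ref{ReguTermBd2} to decompose each regularized integral into a generalized Rankin--Selberg value plus boundary contributions. The Rankin--Selberg values are bounded via (\ref{Regu1PerD})--(\ref{Regu3PerD}) together with Lemmas \ref{ReguNAMainBd}, \ref{ReguAmpBd}, \ref{ReguAMainBd}, specialized to $\vec{t} = \varpi_0^{-j}$ and trivial shifts away from $\vp_0$. The boundary contributions (including those involving $\Intw_0^{(1)}$) are bounded via the $K$-projection estimates in (\ref{DT}) and Lemma \ref{IntwDerEst}. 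The degenerate term (\ref{4MDgnTerm}) reduces via \cite[Theorem 2.4]{Wu2} to the same $K$-projection bound.

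The main obstacle will be tracking the conductor dependence of the modified archimedean vector $f_{3,v} = a(t_v).f_0$ with $\norm[t_v]_v \asymp C^{1+\epsilon}$ through the spectral sum: at each infinite place the translate inflates several auxiliary Sobolev norms polynomially in $C$, and one must verify that these inflations cancel against the factor $\Cond(\pi_v)^{-1/2}$ in the local period bounds. This is handled by the uniform Kirillov estimates of Lemma \ref{Whi0Bd} and Corollary \ref{WhiUniBd}, which lose only $C^\epsilon$; combined with the finite-place contribution $\Nr(\idlN)^\epsilon$ and the negligibility of $q_0^{O(1)}$, every term is ultimately $O_{\F,\epsilon}((C^{r_1+r_2}\Nr(\idlN))^\epsilon)$, from which the stated bound on $\Norm[\sigma_0(\eis_2^*\eis_3^{\sharp})]_2$ follows by taking the square root.
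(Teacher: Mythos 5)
Your proposal correctly identifies the structure of the argument and matches the paper's approach: specialize the treatment of the regular, regularized, and degenerate terms from Lemmas~\ref{RegTermBd}--\ref{DgnTermBd} to the situation with no amplifier (so only the translates $a(\varpi_0^{-j})$, $0 \le j \le 8$, survive), absorb powers of $q_0$ into $\epsilon$-exponents using (\ref{Smallp0}), and --- crucially --- replace the subconvex input in the cuspidal and Eisenstein contributions by the convex bound, precisely so as not to run into the circularity with \cite[Corollary~6.7]{Wu14}. This last point, which you flag explicitly, is the key modification the paper also emphasizes in its own one-paragraph proof.

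Two minor imprecisions worth tightening. First, the phrasing ``Parseval applied to the $\intL^2$-norm $\ldots$ followed by a Weyl-law weighting as in \cite[Corollary~6.7]{Wu14}'' misdescribes what is actually done and is dangerously close to the circular reasoning the paper warns against. The paper instead applies a weighted Cauchy--Schwarz with Sobolev weights $S_{d+1/2}(\varphi)$, then Weyl's law for the infinite sum over $\pi$, and finally \cite[Theorem~5.4]{Wu2} to control the Sobolev norm $S_{d'}(a(\varpi_0^{-j}).\eis_2^*\overline{\eis_2^*}-\Reis(\cdot))$; that norm involves only $f_2$ (fixed spherical) and $q_0^j$, hence is $\Nr(\idlN)^{\epsilon}$ --- no reference to the result of \cite[Corollary~6.7]{Wu14} is, or can be, made. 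Second, the ``main obstacle'' paragraph is somewhat misdirected. The $C$-dependence of $f_{3,v}=a(t_v).f_0$ in the \emph{regular} term is handled entirely by the local estimates Lemmas~\ref{LocUpperBdNA}, \ref{LocAmpBd}, \ref{LocUpperBdA}; it does not enter the global Sobolev weights. Where the translate $a(t_v)$ genuinely inflates a global norm is in the \emph{regularized} term, through $\Norm[\IntwR_0^{(1)}f_3]$, and there Lemma~\ref{IntwDerEst}~(1) bounds the Casimir norm of $a(C).f_{\infty}$ and extracts only a $C^{\epsilon}$ loss; you should cite that lemma rather than Lemma~\ref{Whi0Bd} / Corollary~\ref{WhiUniBd}, which concern the Kirillov-model pointwise bounds entering the local period integrals. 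Finally, the paper's proof points out (in the following Remark) that the one-dimensional contribution is the dominant term $\asymp q_0^{-j}(q_0 C^{r_1+r_2}\Nr(\idlN))^{\epsilon}$, which is what one would attack with amplification to improve the fourth-moment bound to subconvexity; mentioning this makes the comparison with the main theorem clearer, but its omission does not affect the validity of your argument.
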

\begin{proof}
	The proof is analogous to the proof of our main theorem. We will give details for the regular term, to illustrate the similarity. The technical estimations will occupy the rest of the subsection, where some modifications/variations will be emphasized. In particular, the possible circular reasoning using \cite[Corollary 6.7]{Wu14} should be removed.
\end{proof}
		
\begin{remark}
	For convenience of notations, we shall write $\idlN_{\vp} := \vp^{\mathrm{ord}_{\vp}(\idlN)}$ in the sequel.
\end{remark}
			
	The treatment of the regular term is parallel to \S \ref{MainRT}. We expand (\ref{4MRegTerm}) by the Plancherel formula for $\intL^2(\GL_2, \mathbbm{1})$, inspecting the invariance of $a(\varpi_0^{-j}).\eis_2^* \cdot \overline{\eis_2^*}$, as
\begin{align*}
	&\quad \sum_{\pi \in \mathcal{A}_0(\infty,\vp_0^j)} \sum_{\varphi \in \Bas(\pi)} \left( \int_{[\PGL_2]} a(\varpi_0^{-j}).\eis_2^* \cdot \overline{\eis_2^*} \cdot \overline{\varphi} \right) \cdot \left( \int_{[\PGL_2]} \varphi \cdot a(\varpi_0^{-j}).\eis_3^{\sharp} \cdot \overline{\eis_3^{\sharp}} \right) \\
	&+ \sum_{\xi} \sum_{\Phi \in \Bas(\xi,\xi^{-1})} \int_{-\infty}^{\infty} \left( \int_{[\PGL_2]}^{\reg} a(\varpi_0^{-j}).\eis_2^* \cdot \overline{\eis_2^*} \cdot \overline{\eis(i\tau_1,\Phi)} \right) \cdot \left( \int_{[\PGL_2]}^{\reg} \eis(i\tau_1,\Phi) \cdot a(\varpi_0^{-j}).\eis_3^{\sharp} \cdot \overline{\eis_3^{\sharp}} \right) \frac{d\tau_1}{4\pi} \\
	&+ \frac{1}{\Vol([\PGL_2])} \sum_{\chi^2 = 1} \left( \int_{[\PGL_2]}^{\reg} a(\varpi_0^{-j}).\eis_2^* \cdot \overline{\eis_2^*} \cdot \overline{\chi \circ \det} \right) \cdot \left( \int_{[\PGL_2]}^{\reg} \chi \circ \det \cdot a(\varpi_0^{-j}).\eis_3^{\sharp} \cdot \overline{\eis_3^{\sharp}} \right),
\end{align*}
	where $\xi$ resp. $\chi$ is subject to the conditions 
	$$ \cond(\xi_{\vp}) \text{ resp. } \cond(\chi_{\vp}) = 0, \quad \forall \vp \neq \vp_0, \quad \cond(\xi_{\vp_0}) \text{ resp. } \cond(\chi_{\vp_0}) \leq j/2. $$
	We refer to the first resp. second resp. third line as the cuspidal resp. Eisenstein resp. one-dimensional contribution, similar to (\ref{CuspRegTerm}) - (\ref{OneDRegTerm}). We put $f_3' := a(\varpi_0^{-j})f_3$.
	
	For the cuspidal contribution, we apply (\ref{CuspPerD}) to decompose
	$$ \int_{[\PGL_2]} \varphi \cdot a(\varpi_0^{-j}).\eis_3^{\sharp} \cdot \overline{\eis_3^{\sharp}} = L(\frac{1}{2},\pi) L(\frac{1}{2}+2i\tau,\pi) \cdot \prod_v \ell_v(W_{\varphi,v}, \overline{f_{3,v}}, f_{3,v}'). $$
	At $\vp_0 \neq \vp < \infty$, Lemma \ref{LocUpperBdNA} (1) (with $\cond(\omega_{\vp}) = 0$) gives
	$$ \extnorm{ \ell_{\vp}(W_{\varphi,\vp}, \overline{f_{3,\vp}}, f_{3,\vp}') } \ll \Nr(\idlN_{\vp})^{-1/2+\theta} \cdot \frac{\Norm[W_{\varphi,\vp}]}{\sqrt{L(1,\pi_{\vp} \times \overline{\pi}_{\vp})}}; $$
	at $\vp = \vp_0$, Lemma \ref{LocAmpBd} (1) gives
	$$ \extnorm{ \ell_{\vp}(W_{\varphi,\vp}, \overline{f_{3,\vp}}, f_{3,\vp}') } \ll q_0^{-j/2} \cdot \frac{\Norm[W_{\varphi,\vp}]}{\sqrt{L(1,\pi_{\vp} \times \overline{\pi}_{\vp})}}; $$
	at $v \mid \infty$, Lemma \ref{LocUpperBdA} (1) gives
	$$ \extnorm{ \ell_v(W_{\varphi,v}, \overline{f_{3,v}}, f_{3,v}') } \ll_{\epsilon} C^{-1/2} \left( C/(1+\norm[\tau]) \right)^{\theta+\epsilon} \cdot S_d(W_{\varphi,v}). $$
	Unlike \S \ref{MainRT}, we bound the $L$-values by the convex bound because we are proving \cite[Corollary 6.7]{Wu14} hence can not apply it. Taking into account \cite{HL94} and \cite[Lemma 3]{BH10} giving bounds for $L(1,\pi \times \overline{\pi})$, we get
	$$ \extnorm{ \int_{[\PGL_2]} \varphi \cdot a(\varpi_0^{-j}).\eis_3^{\sharp} \cdot \overline{\eis_3^{\sharp}} } \ll_{\epsilon} \left( (1+\norm[\tau])/C \right)^{(r_1+r_2)(1/2-\theta-\epsilon)} \Nr(\idlN)^{-1/2+\theta} q_0^{j\epsilon} S_{d+1/2}(\varphi). $$
	Inserting Weyl's law and applying Cauchy-Schwarz, we get for some $d' \gg d+1/2$
\begin{align*}
	&\quad \sum_{\pi \in \mathcal{A}_0(\infty,\vp_0^j)} \sum_{\varphi \in \Bas(\pi)} \extnorm{ \int_{[\PGL_2]} a(\varpi_0^{-j}).\eis_2^* \cdot \overline{\eis_2^*} \cdot \overline{\varphi} } \cdot S_{d+1/2}(\varphi) \\
	&\ll \sum_{\pi \in \mathcal{A}_0(\infty,\vp_0^j)} \sum_{\varphi \in \Bas(\pi)} \extnorm{ \Pairing{a(\varpi_0^{-j}).\eis_2^* \cdot \overline{\eis_2^*}}{\Delta_{\infty}^{d'} \varphi}} \cdot \lambda_{\varphi,\infty}^{-(d'-d-1/2)} \\
	&\ll_{\epsilon} S_{d'}(a(\varpi_0^{-j}).\eis_2^* \cdot \overline{\eis_2^*} - \Reis(a(\varpi_0^{-j}).\eis_2^* \overline{\eis_2^*})) \cdot q_0^{j(1+\epsilon)}.
\end{align*}
	\cite[Theorem 5.4]{Wu2} applies to bound the above Sobolev norm as a power of $j \log q_0$. We obtain a bound for the cuspidal contribution as
\begin{align}
	&\quad \sum_{\pi \in \mathcal{A}_0(\infty,\vp_0^j)} \sum_{\varphi \in \Bas(\pi)} \left| \int_{[\PGL_2]} a(\varpi_0^{-j}).\eis_2^* \cdot \overline{\eis_2^*} \cdot \overline{\varphi} \right| \cdot \left| \int_{[\PGL_2]} \varphi \cdot a(\varpi_0^{-j}).\eis_3^{\sharp} \cdot \overline{\eis_3^{\sharp}} \right| \nonumber \\
	&\ll_{\epsilon} \left( (1+\norm[\tau])/C \right)^{(r_1+r_2)(1/2-\theta-\epsilon)} \Nr(\idlN)^{-1/2+\theta+\epsilon}. \label{4MRTCusp}
\end{align}

	For the Eisenstein contribution, we apply (\ref{EisPerD}) to decompose
\begin{align*} 
	&\quad \int_{[\PGL_2]}^{\reg} \eis(i\tau_1,\Phi) \cdot a(\varpi_0^{-j}).\eis_3^{\sharp} \cdot \overline{\eis_3^{\sharp}} \\
	&= \frac{L(\frac{1}{2}+i\tau_1, \xi)^2 L(\frac{1}{2}+i(\tau_1+2\tau), \xi) L(\frac{1}{2}+i(\tau_1-2\tau), \xi)}{L(1+2i\tau_1, \xi^2)} \prod_v \ell_v(i\tau_1; \Phi_v, \overline{f_{3,v}}, f_{3,v}').
\end{align*}
	At $\vp_0 \neq \vp < \infty$, Lemma \ref{LocUpperBdNA} (2) gives
	$$ \extnorm{ \ell_{\vp}(i\tau_1; \Phi_{\vp}, \overline{f_{3,\vp}}, f_{3,\vp}') } \ll_{\epsilon} \Nr(\idlN_{\vp})^{-1/2+\epsilon}; $$
	at $\vp = \vp_0$, Lemma \ref{LocAmpBd} (2) gives
	$$ \extnorm{ \ell_{\vp}(i\tau_1; \Phi_{\vp}, \overline{f_{3,\vp}}, f_{3,\vp}') } \ll q_0^{-j/2}; $$
	at $v \mid \infty$, Lemma \ref{LocUpperBdA} (2) gives
	$$ \extnorm{ \ell_v(i\tau_1; \Phi_v, \overline{f_{3,v}}, f_{3,v}') } \ll_{\epsilon} C^{-1/2+\epsilon} S_d(\Phi_{v,i\tau_1}). $$
	We apply the convex bounds for $L(s,\xi)$ and Siegel's lower bound for $L(1+2i\tau_1, \xi^2)$ to get
	$$ \extnorm{ \int_{[\PGL_2]}^{\reg} \eis(i\tau_1,\Phi) \cdot a(\varpi_0^{-j}).\eis_3^{\sharp} \cdot \overline{\eis_3^{\sharp}} } \ll_{\epsilon} \left( (1+\norm[\tau])/C \right)^{(r_1+r_2)(1/2-\epsilon)} \Nr(\idlN)^{-1/2} q_0^{j\epsilon} S_{d+1/2}(\Phi_{i\tau_1}). $$
	Applying Cauchy-Schwarz, we get for some $d' \gg d+1/2$
\begin{align*}
	&\quad \sum_{\xi} \sum_{\Phi \in \Bas(\xi,\xi^{-1})} \int_{-\infty}^{\infty} \left| \int_{[\PGL_2]}^{\reg} a(\varpi_0^{-j}).\eis_2^* \cdot \overline{\eis_2^*} \cdot \overline{\eis(i\tau_1,\Phi)} \right| \cdot S_{d+1/2}(\Phi_{i\tau_1}) \frac{d\tau_1}{4\pi} \\
	&\ll \sum_{\xi} \sum_{\Phi \in \Bas(\xi,\xi^{-1})} \int_{-\infty}^{\infty} \extnorm{ \Pairing{a(\varpi_0^{-j}).\eis_2^* \cdot \overline{\eis_2^*} - \Reis(a(\varpi_0^{-j}).\eis_2^* \overline{\eis_2^*}) }{\Delta_{\infty}^{d'}\eis(i\tau_1,\Phi)} } \cdot \lambda_{\Phi_{i\tau_1}, \infty}^{-(d'-d-1/2)} \frac{d\tau_1}{4\pi} \\
	&\ll_{\epsilon} S_{d'}(a(\varpi_0^{-j}).\eis_2^* \cdot \overline{\eis_2^*} - \Reis(a(\varpi_0^{-j}).\eis_2^* \overline{\eis_2^*})) \cdot \left( \sum_{\xi} \sum_{\Phi \in \Bas(\xi,\xi^{-1})} \int_{-\infty}^{\infty} \lambda_{\Phi_{i\tau_1}, \infty}^{-2(d'-d-1/2)} \frac{d\tau_1}{4\pi} \right)^{1/2} \\
	&\ll_{\epsilon} S_{d'}(a(\varpi_0^{-j}).\eis_2^* \cdot \overline{\eis_2^*} - \Reis(a(\varpi_0^{-j}).\eis_2^* \overline{\eis_2^*})) \cdot q_0^{j(1+\epsilon)/2}.
\end{align*}
	\cite[Theorem 5.4]{Wu2} applies to bound the above Sobolev norm as a power of $j \log q_0$. We obtain a bound for the Eisenstein contribution as
\begin{align}
	&\quad \sum_{\xi} \sum_{\Phi \in \Bas(\xi,\xi^{-1})} \int_{-\infty}^{\infty} \left| \int_{[\PGL_2]}^{\reg} a(\varpi_0^{-j}).\eis_2^* \cdot \overline{\eis_2^*} \cdot \overline{\eis(i\tau_1,\Phi)} \right| \cdot \left| \int_{[\PGL_2]}^{\reg} \eis(i\tau_1,\Phi) \cdot a(\varpi_0^{-j}).\eis_3^{\sharp} \cdot \overline{\eis_3^{\sharp}} \right| \frac{d\tau_1}{4\pi} \nonumber \\
	&\ll_{\epsilon} \left( (1+\norm[\tau])/C \right)^{(r_1+r_2)(1/2-\epsilon)} \Nr(\idlN)^{-1/2+\epsilon}. \label{4MRTEis}
\end{align}

	Applying Remark \ref{OneDimProjRk} with $\omega=1$, we see that
	$$ \int_{[\PGL_2]}^{\reg} a(\varpi_0^{-j}).\eis_2^* \cdot \overline{\eis_2^*} \cdot \overline{\chi \circ \det} $$
is non-vanishing only if $\chi=1$. Denote by $T(\vp_0^j)$ the Hecke operator 
	$$ T(\vp_0^j) := \int_{\gp{K}_0^2} \rpR_{\mathbbm{1}}^{\mathcal{A}}(\kappa_1 a(\varpi_0^j) \kappa_2) d\kappa_1 d\kappa_2 = T(\vp_0^j)^*. $$
The $\gp{K}$-invariance of regularized integrals \cite[Proposition 2.27]{Wu9} implies
	$$ \extnorm{ \int_{[\PGL_2]}^{\reg} a(\varpi_0^{-j}).\eis_2^* \cdot \overline{\eis_2^*} } = \extnorm{ \int_{[\PGL_2]}^{\reg} T(\vp_0^j)^*.\eis_2^* \cdot \overline{\eis_2^*} } \ll_{\epsilon} q_0^{-\frac{j}{2}+\epsilon} \extnorm{ \int_{[\PGL_2]}^{\reg} \eis_2^* \cdot \overline{\eis_2^*} } \ll_{\F,\epsilon} q_0^{-\frac{j}{2}+\epsilon}. $$
	A similar argument leads to
	$$ \extnorm{ \int_{[\PGL_2]}^{\reg} a(\varpi_0^{-j}).\eis_3^{\sharp} \cdot \overline{\eis_3^{\sharp}} } \ll_{\epsilon} q_0^{-\frac{j}{2}+\epsilon} \extnorm{ \int_{[\PGL_2]}^{\reg} \eis_3^{\sharp} \cdot \overline{\eis_3^{\sharp}} }. $$
	By (\ref{OneDimProj}), Lemma \ref{LocUpperBdNA} (3), Lemma \ref{LocUpperBdA} (3), together with $\extnorm{\zeta_{\F}(1\pm 2i\tau)} \ll_{\epsilon} (1+\norm[\tau])^{\epsilon}$, we get
\begin{align} 
	&\quad \frac{1}{\Vol([\PGL_2])} \sum_{\chi^2 = 1} \left| \int_{[\PGL_2]}^{\reg} a(\varpi_0^{-j}).\eis_2^* \cdot \overline{\eis_2^*} \cdot \overline{\chi \circ \det} \right| \cdot \left| \int_{[\PGL_2]}^{\reg} \chi \circ \det \cdot a(\varpi_0^{-j}).\eis_3^{\sharp} \cdot \overline{\eis_3^{\sharp}} \right| \nonumber \\
	&\ll_{\epsilon} q_0^{-j} (q_0 C^{r_1+r_2}\Nr(\idlN))^{\epsilon}. \label{4MRTOneD}
\end{align}

	The relevant bound in Lemma \ref{4MNormBd} follows from (\ref{4MRTCusp}), (\ref{4MRTEis}) and (\ref{4MRTOneD}).
\begin{remark}
	Note that the one-dimensional contribution is the main term, to which amplification should apply in order to break the convexity bound.
\end{remark}

	\subsection{Proof of Main Bound}
	
	We are ready to prove Theorem \ref{4MMBd}. We have constructed two Eisenstein series $\eis_2^*$ and $\eis_3^{\sharp}$, and chosen $\varphi \in \pi \otimes \norm_{\A}^{i\tau}$ for each $\pi \in \mathcal{A}(C,\idlN)$. Note that Lemma \ref{RDRegProd} (1) is still valid, i.e., $\varphi$ is an eigenvector for $\sigma_0^{\vee}$ whose eigenvalue $R_0$ satisfies $\norm[R_0] \asymp 1$. The local lower bounds (\ref{4MLocLowerBdA}) and (\ref{4MLocLowerBdNA}) readily imply
\begin{align*} 
	&\quad \extnorm{ \int_{[\PGL_2]} \varphi \cdot \sigma_0(\eis_2^* \cdot \eis_3^{\sharp}) } = \extnorm{ \int_{[\PGL_2]} \sigma_0^{\vee}(\varphi) \cdot \eis_2^* \cdot \eis_3^{\sharp} } = \extnorm{ R_0 \int_{[\PGL_2]} \varphi \cdot \eis_2^* \cdot \eis_3^{\sharp} } \\
	&\gg \extnorm{ \int_{[\PGL_2]} \varphi \cdot \eis_2^* \cdot \eis_3^{\sharp} } \gg_{\epsilon} \extnorm{L(1/2+i\tau, \pi)}^2 (C\Nr(\idlN))^{-1/2-\epsilon}. 
\end{align*}
	Square the above equality. Sum over $\varphi$ and apply Lemma \ref{4MNormBd}. We get and conclude by
\begin{align*}
	\sideset{}{_{\pi \in \mathcal{A}_0(C,\idlN)}} \sum \extnorm{L(1/2+i\tau, \pi)}^4 (C\Nr(\idlN))^{-1-2\epsilon} &\ll_{\epsilon} \sideset{}{_{\pi \in \mathcal{A}_0(C,\idlN)}} \sum \extnorm{ \int_{[\PGL_2]} \varphi \cdot \sigma_0(\eis_2^* \cdot \eis_3^{\sharp}) }^2 \\
	&\leq \extNorm{ \sigma_0(\eis_2^* \cdot \eis_3^{\sharp}) }_2^2 \ll_{\epsilon} (C\Nr(\idlN))^{\epsilon}.
\end{align*}

\section*{Acknowledgement}
	
	The preparation of this paper is completed during the stay of the author at the math department of BUAA, in the MTA R\'enyi Int\'ezet Lend\"ulet Automorphic Research Group at Alf\'ed Renyi Institute in Hungary, in TAN at EPFL and in IMS at the National University of Singapore. The author would like to thank all these institutes for their hospitality. The author would also like to thank the anonymous referee for careful reading and useful suggestions, which improves the readability of the paper.

\bibliographystyle{acm}
	
\bibliography{mathbib}

\address{\quad \\ Han WU \\ School of Mathematical Sciences \\ Queen Mary University of London \\ Mile End Road \\ London, E1 4NS \\ United Kingdom \\ wuhan1121@yahoo.com}	
	
\end{document}